\newtheorem{theorem}{Theorem}[section]
\newtheorem{lemma}[theorem]{Lemma}
\newtheorem{corollary}[theorem]{Corollary}
\newtheorem{definition}[theorem]{Definition}
\newtheorem{proposition}[theorem]{Proposition}
\newtheorem{remark}[theorem]{Remark}
\def\Hom{\mbox{\rm Hom}}  
   \def\Ker{\mbox{\rm Ker}\,}
\def\dim{\mbox{\rm dim}\,}
\def\mod{\mbox{\rm mod}\,}  
\begin{document}

\title[The canonical and semicanonical bases]
{The correspondence between the canonical and semicanonical bases}
\author[Fang,Lan,Xiao]{Jiepeng Fang,Yixin Lan,Jie Xiao}
\address{School of mathematical secience, Peking University, Beijing 100871, P. R. China}
\email{fangjp@math.pku.edu.cn (J.Fang)}

\address{Department of mathematical secience, Tsinghua University, Beijing 100084, P. R. China}
\email{lanyx18@mails.tsinghua.edu.cn (Y.Lan)}

\address{Department of mathematical secience, Tsinghua University, Beijing 100084, P. R. China}
\email{jxiao@tsinghua.edu.cn (J.Xiao)}

\subjclass[2000]{16G20, 17B37}

\date{\today}
\thanks{Jiepeng Fang and Yixin Lan were supported by Tsinghua University Initiative Scientific Research
	Program (No. 2019Z07L01006), and Jie Xiao was supported by NSF of China (No. 12031007).} 

\bibliographystyle{abbrv}

\begin{abstract}
	Given any symmetric Cartan datum, Lusztig has provided in \cite{MR1758244} and \cite{MR1088333} a pair of key lemmas to construct the perverse sheaves over the corresponding quiver and the functions of irreducible components over the corresponding preprojective algebra respectively. In the present article, we prove that these two inductive algorithms of Lusztig coincide. Consequently we can define two $\mathbb{Z}_{2} \times I$-colored graphs and prove that they are isomorhic. This result finishes the proof of the statement that Lusztig's functions $f_{Z}$ of irreducible components form a basis of the enveloping algebra. And we can also deduces its crystal structure (in the sense of Kashiwara-Saito \cite{MR1458969}) from those Lusztig's functions. As an application, we prove that the transition matrix between the canonical basis (at $v=1$) and the semicanonical basis is upper triangular with diagonal entries $1$.
\end{abstract}

\maketitle
\setcounter{tocdepth}{1}\tableofcontents

\section{Introduction}
\subsection{}  A symmetric Cartan datum $(I,(-,-))$ is given by a finite set $I$ and a symmetric bilinear form $(-,-):\mathbb{Z}^{I}\times \mathbb{Z}^{I} \rightarrow \mathbb{Z}$. The associated generalized Cartan matrix  $(a_{i,j})_{i,j \in I}$ satisfies $a_{i,i}=2$ and $a_{i,j}=(i,j)$ for $i \neq j$. Let  $\mathfrak{g}$ be the Kac-Moody Lie algebra associated to the generalized Cartan matrix $(a_{i,j})_{i,j \in I}$, let $\mathbf{U}(\mathfrak{g})$ be the enveloping algebra of $\mathfrak{g}$ and let $\mathbf{U}_{v}(\mathfrak{g})$ be the quantized enveloping algebra. The (quantized) enveloping algebra admits triangular decomposition.

 Given a symmetric Cartan datum, we can associate a finite graph $\mathbf{\Gamma}$ without loops such that the set of vertices is $I$ and there are exactly $|a_{i,j}|$ edges connecting  $i$ and $j$ for $i \neq j$. Choose an orientation $\Omega$, we get a quiver $Q=(I,\Omega)$. Lusztig considered the moduli space $\mathbf{E}_{\mathbf{V},\Omega}$ of representations of the quiver $Q$ and categorified the positive (or negative) part $\mathbf{U}^{+}_{v}(\mathfrak{g})$ of the quantized enveloping algebra via perverse sheaves on $\mathbf{E}_{\mathbf{V},\Omega}$ in \cite{MR1088333}. 
 
 More precisely, in \cite{MR1088333} Lusztig considered the flag variety $\tilde{\mathcal{F}}_{\underline{v},\Omega}$ for each flag type $\underline{v}$ of $\mathbf{V}$, which is smooth, and defined a proper map $\pi_{\underline{v},\Omega}: \tilde{\mathcal{F}}_{\underline{v},\Omega} \rightarrow \mathbf{E}_{\mathbf{V},\Omega}$. Let $\bar{\mathbb{Q}}_{l}|_{\tilde{\mathcal{F}}_{\underline{v},\Omega} }$ be the constant sheaf on $\tilde{\mathcal{F}}_{\underline{v},\Omega}$, then $L_{\underline{v}}= (\pi_{\underline{v},\Omega})_{!} \bar{\mathbb{Q}}_{l}$ is semisimple by \cite{MR751966}. Let $\mathcal{Q}_{\mathbf{V},\Omega}$ be the category consisting of direct sums of shifts of direct summands of such $L_{\underline{v}}$ and  let $\mathcal{K}_{\mathbf{V},\Omega}$ be the $\mathbb{Z}[v,v^{-1}]$-module spanned by $\{[L]| L \in \mathcal{Q}_{\mathbf{V},\Omega}\}$ modulo the following relations $[X \oplus Y]=[X]+[Y],[X[1]]=v[X]$. The induction and restriction functors introduced by Lusztig makes $\mathcal{K}_{\Omega}= \bigoplus\limits_{\mathbf{V}}\mathcal{K}_{\mathbf{V},\Omega}$ a bialgebra, which is isomorphic to the integeral form $_{\mathbb{Z}}\mathbf{U}^{+}_{v}(\mathfrak{g})$. The main result of Lusztig's categorification is that the set $\mathcal{P}_{\mathbf{V},\Omega}$ of simple obejects in $\mathcal{Q}_{\mathbf{V},\Omega}$ form a basis of $\mathbf{U}^{+}_{v}(\mathfrak{g})_{|\mathbf{V}|}$, which is called the canonical basis. The canonical basis has some remarable properties, like integrality and positivity.
 
 When constructing the canonical basis, Lusztig provided a key lemma (See details in \cite{MR1088333} or Lemma 3.4 in the present article) to prove that the image of each direct summand of $L_{\underline{v}}$ in $\mathcal{Q}_{\mathbf{V},\Omega}$ is contained in $\mathbf{U}^{+}_{v}(\mathfrak{g})$. After a more detailed analysis in \cite{MR1227098}, the key lemma for Lusztig's sheaves induces the Kashiwara's operators (in the sense of \cite{MR1115118}) in a categorification level and implies that Lusztig's sheaves has a crystal structure isomorphic to $B(\infty)$. Actually, with the key lemma, we can define a $\mathbb{Z}_{2} \times I$-colored graph $\mathcal{G}_{1}$ for Lusztig's sheaves, which essentially realizes the crystal graph of $B(\infty)$.
 
 \subsection{}
 Let $Q=(I,\Omega)$ be the quiver associated with the given Cartan datum, we let $\Pi(\bar{Q})=\mathbb{C}\bar{Q}/\mathcal{I}$ be the preprojective algebra of $Q$. More precisely, let $\bar{Q}=(I,\Omega \cup \bar{\Omega})$ be the double quiver of $Q$, then $\Pi(\bar{Q})$ is the path algbera of $\bar{Q}$ modulo the addmissible relation $\mathcal{I}$ generated by $$\sum\limits_{h \in \Omega \cup \bar{\Omega},h''=i} x_{h}x_{\bar{h}}-\sum\limits_{h \in \Omega \cup \bar{\Omega},h'=i}x_{\bar{h}}x_{h} ,i \in I.$$
 
 Lusztig \cite{MR1758244} introduced the moduli space $\Lambda_{\mathbf{V}}$ of nilpotent representations of $\Pi(\bar{Q})$. By considering the  convolution product of the constructible functions on $\Lambda_{\mathbf{V}}$, he
  defined an algebra $\mathcal{M}$, which is isomorphic to the positive part $\mathbf{U}^{+}(\mathfrak{g})$. For each irreducible component $Z$ of $\Lambda_{\mathbf{V}}$, Lusztig has constructed some function $f_{Z}$. With the result $|Irr \Lambda_{\mathbf{V}}|=\dim_{\mathbb{Q}} \mathbf{U}^{+}(\mathfrak{g})_{|\mathbf{V}|} $ in \cite{MR1458969}, Lusztig's functions $f_{Z}$ form a basis of $\mathbf{U}^{+}(\mathfrak{g})$, which is called the semicanonical basis by Lusztig.
  
  When constructing the functions $f_{Z}$, Lusztig provided a key lemma for constructible functions. (See details in \cite{MR1758244} or Lemma 5.4 in the present article) We have an important observation that the inductive construction of Lusztig's functions $f_{Z}$ coincides with the indcutive algorithm of Lusztig's perverse sheaves. Indeed, we can define a $\mathbb{Z}_{2}\times I$-colored graph $\mathcal{G}_{2}$ for certain equivalent classes of Lusztig's functions $f_{Z}$ and prove the following main theorem (See details in Section 7): There is a natural isomorphism of $\mathbb{Z}_{2}\times I$-colored graph $\Phi: \mathcal{G}_{1} \rightarrow \mathcal{G}_{2}$.

  Notice that the set of equivalent classes of Lusztig's functions $f_{Z} \in \mathcal{M}_{\mathbf{V}}$ can be identified with the set of irreducible components of $\Lambda_{\mathbf{V}}$, so we can deduce the crystal structure of $\bigcup \limits_{\mathbf{V}} Irr \Lambda_{\mathbf{V}}$ directly inside Lusztig's construction.   As a corollary, using the fact that  $|\mathcal{P}_{\mathbf{V},\Omega}|=\dim_{\mathbb{Q}(v)}\mathbf{U}^{+}_{v}(\mathfrak{g})=\dim_{\mathbb{Q}}\mathbf{U}^{+}(\mathfrak{g})$, we can deduce that Lusztig's functions $f_{Z}$ form a basis of $\mathbf{U}^{+}(\mathfrak{g})$ without assuming the main theorem in \cite{MR1458969} holds.
 
 \subsection{} Taking the classical limit $q \rightarrow 1$, the canonical basis becomes a basis of $\mathbf{U}^{+}(\mathfrak{g})$, still called  the canonical basis. Then the isomorphism $\Phi$ of $\mathbb{Z}_{2}\times I$-colored graphs gives the correspondence between the canonical and semicanonical bases of $\mathbf{U}^{+}(\mathfrak{g})$. Notice that there are natural constructible vector bundles $\psi_{\Omega}$ and $\tilde{\psi}_{\Omega}$, under the correspondence given by $\Phi$, our second main result (Theorem 8.6) is that the transition matrix between the canonical basis  and the semicanonical basis is upper triangular with all diagonal entries equal to $1$. Notice that  Geiss, Leclerc and Schroer  have provided an example in \cite{MR2144987} to show that these two bases are not equal. 
 
 As a byproduct, we also obtain a construction of monomial basis of $\mathbf{U}^{+}(\mathfrak{g})$ depending on an order of $I$.
 
 \subsection{} In Section 2, we define the convolution algebras $\mathcal{M}_{\Omega}$ and $\mathcal{M}$ of the quiver and its preprojective algebra respectively and prove that the pushforward of the natural projection $\psi_{\Omega}$ induces an isomorphism of algebras from $\mathcal{M}$ to $\mathcal{M}_{\Omega}$. In Section 3, we recall the geometric realization of $\mathbf{U}^{+}_{v}(\mathfrak{g})$ by Lusztig \cite{MR1088333} and the key inductive lemmas for perverse sheaves. In Section 4, we define the $\mathbb{Z}_{2} \times I$-colored graph for Lusztig's perverse sheaves and study the commutative relations of arrows in the graph. In Section 5, we recall the construction of Lusztig's functions $f_{Z}$ in \cite{MR1758244} and the key inductive lemmas for those $f_{Z}$. In Section 6, we define the $\mathbb{Z}_{2} \times I$-colored graph for equivalent classes of Lusztig's functions and study the commutative relations of arrows in the graph.  In Section 7, we prove our first main result that  the $\mathbb{Z}_{2} \times I$-colored graphs defined in Section 4 and Section 6 are isomorphic and deduce the results in \cite{MR1458969}. In Section 8, with an order of $I$, we define an order of vertices of $\mathbb{Z}_{2} \times I$-colored graphs and  construct a monomial basis. We also prove the second main theorem: The transition matrix between the canonical basis and the semicanonical basis is upper triangular with all diagonal entries equal to $1$.
\subsection*{Acknowledgement}
 We are grateful to George Lusztig. After he saw our first version of the present article on arxiv, he informed us the preprint \cite{baumann2011canonical} by Pierre Baumann, in which similar results have been obtained. (See Remark 7.4 and Remark 8.10.)

\section{Convolution algebras on $\mathbf{E}_{\mathbf{V},\Omega}$ and $\Lambda_{\mathbf{V}}$} 
\subsection{The variety $\mathbf{E}_{\mathbf{V},\Omega}$ and $\Lambda_{\mathbf{V}}$}
Given a symmetric Cartan datum $(I,(-,-))$, let $\mathbf{\Gamma}$ be the finite graph without loops associated to $(I,(-,-))$. Let $I$ be its set of vertices and $H$ be the set of pairs consisting of egdes with orientation. More precisely, to give an egde with orientation is equivalent to give $h',h'' \in I$ and we adapt the notation $h' \xrightarrow{h} h''$. Let $-:h \mapsto \bar{h}$ be the involution of $H$ such that $\bar{h}'=h'',\bar{h}''=h'$ and $\bar{h} \neq h$. An orientation of the graph $\Gamma$ is a subset $\Omega \subset H$ such that $\Omega \cap \bar{\Omega} =\emptyset$ and $\Omega \cup \bar{\Omega} = H$.

 Let $k =\bar{k}$ be an algebraic closed field. Given $\upsilon \in \mathbb{N}^{I}$, a subset $\tilde{H} \subseteq H$ and a graded $k$-vector space $\mathbf{V}$ with dimension vector $|\mathbf{V}|=\upsilon$,  we let
\begin{center}
	$\mathbf{E}_{\mathbf{V}}= \bigoplus\limits_{h \in H} \mathbf{Hom}(\mathbf{V}_{h'},\mathbf{V}_{h''})$ \\
	 $\mathbf{E}_{\mathbf{V}, \tilde{H}}= \bigoplus\limits_{h \in \tilde{H}} \mathbf{Hom}(\mathbf{V}_{h'},\mathbf{V}_{h''})$
\end{center} 
In particular, for an orientation $\Omega$, we let 
\begin{center}
	$\mathbf{E}_{\mathbf{V}, \Omega}= \bigoplus\limits_{h \in \Omega} \mathbf{Hom}(\mathbf{V}_{h'},\mathbf{V}_{h''})$
\end{center}

The algebraic group $G_{\mathbf{V}}= \prod\limits_{i \in I} \mathbf{GL}(\mathbf{V}_{i})$ acts on $\mathbf{E}_{\mathbf{V}},\mathbf{E}_{\mathbf{V},\tilde{H}}$ and $\mathbf{E}_{\mathbf{V}, \Omega}$ by $(g \cdot x)_{h} =g_{h''} x_{h} g_{h'}^{-1} $.

 We fix a function $\epsilon:H \rightarrow k^{\ast}$ such that $\epsilon(h) + \epsilon(\bar{h})=0$ for all $h$. The moment map  $\mu: \mathbf{E}_{\mathbf{V}} \rightarrow \mathbf{gl}_{\mathbf{V}}= \bigoplus\limits_{i \in I} \mathbf{End}(\mathbf{V}_{i})$ is defined by $(\mu(x))_{i}= \sum\limits_{h \in H, h''=i} \epsilon(h)x_{h}x_{\bar{h}}$. We let $\Lambda_{\mathbf{V}}$ be the closed subvariety of $\mathbf{E}_{\mathbf{V}}$ consisting of nilponent elements $x$ such that $\mu(x)=0$. It is easy to see that $\Lambda_{\mathbf{V}}$ is $G_{\mathbf{V}}$-stable.

 \subsection{Convolution algebra arising from $\mathbf{E}_{\mathbf{V},\Omega}$}

 If $f:X \rightarrow Y$ is a morphism of varieties and $g$ is a constructible function on $Y$, then following \cite{MR361141}, we define $f^{\ast}(g)$ to be the constructible function such that $f^{\ast}(g)(x)=g(f(x))$.  If $h$ is a constructible function on $X$, then we define $f_{!}(h)$ to be the constructible function such that $f_{!}(h)(y)=\sum \limits_{c \in \mathbb{Q}}\chi(h^{-1}(c) \cap f^{-1}(y))$. Here $\chi(X)$ means the Euler characteristic for the constructible set $X$.

 Assume that $k=\mathbb{C}$ and we fix an orientation $\Omega$. Given $\upsilon'+\upsilon''=\upsilon \in \mathbb{N}^{I} $ and graded vector spaces $\mathbf{V},\mathbf{V}',\mathbf{V}''$ with dimension vectors $\upsilon, \upsilon', \upsilon''$ respectively, let $\mathbf{E}'_{\Omega}$ be the set consisting of $(x,\tilde{\mathbf{W}}, \rho_{1}, \rho_{2})$ where $x \in \mathbf{E}_{\mathbf{V},\Omega}$,$\tilde{\mathbf{W}}$ is a $x$-stable  subspace of $\mathbf{V}$ with dimension vector $\upsilon''$ and $ \rho_{1}: \mathbf{V}/\tilde{\mathbf{W}} \simeq \mathbf{V}',\rho_{2}:\tilde{\mathbf{W}} \simeq \mathbf{V}''$ are linear isomorphisms. We also let $\mathbf{E}''_{\Omega}$ be the set consisting of $(x,\tilde{\mathbf{W}})$ as above.  Here we say $\tilde{\mathbf{W}}$ is $x$-stable if and only if $x_{h}(\tilde{\mathbf{W}}_{h'}) \subset \tilde{\mathbf{W}}_{h''}$ for any $h \in \Omega$. 
 
 Lusztig has introduced the following diagram in \cite{MR1088333}:
 \begin{center}
 	$\mathbf{E}_{\mathbf{V}',\Omega} \times \mathbf{E}_{\mathbf{V}'',\Omega} \xleftarrow{p_{1}} \mathbf{E}'_{\Omega} \xrightarrow{p_{2}} \mathbf{E}''_{\Omega} \xrightarrow{p_{3}} \mathbf{E}_{\mathbf{V},\Omega}$
 \end{center}
 where $p_{1}(x,\tilde{\mathbf{W}},\rho_{1},\rho_{2})=(\rho_{1,\ast}(\bar{x}|_{\mathbf{V}/\tilde{\mathbf{W}}}),\rho_{2,\ast}(x|_{\tilde{\mathbf{W}}})  )$, $p_{2}(x,\tilde{\mathbf{W}},\rho_{1},\rho_{2}) =(x, \tilde{\mathbf{W}}) $ and $p_{3}(x,\tilde{\mathbf{W}})=x$. Here $x|_{\tilde{\mathbf{W}}}$ is the restriction of $x$ on the subspace $\tilde{\mathbf{W}}$ and $\bar{x}|_{\mathbf{V}/\tilde{\mathbf{W}}}$ is the natural induced linear map of $x$ on the quotient space $\mathbf{V}/\tilde{\mathbf{W}}$. $ \rho_{1,\ast}(\bar{x}|_{\mathbf{V}/\tilde{\mathbf{W}}})= \rho_{1}\circ (\bar{x}|_{\mathbf{V}/\tilde{\mathbf{W}}}) \circ \rho_{1}^{-1}$ is the element in  $\mathbf{E}_{\mathbf{V}'}$ induced by $\bar{x}|_{\mathbf{V}/\tilde{\mathbf{W}}}$ and $\rho_{2,\ast}(x|_{\tilde{\mathbf{W}}})=\rho_{2} \circ (x|_{\tilde{\mathbf{W}}})  \circ \rho_{2}^{-1}$ is the element in $\mathbf{E}_{\mathbf{V}''} $ induced by $x|_{\tilde{\mathbf{W}}}$.
  
  Notice that  $p_{1}$ is smooth with connected fibers, $p_{2}$ is a principle $G_{\mathbf{V}'} \times G_{\mathbf{V}''}$-bundle and $p_{3}$ is proper. (See details in \cite{MR1088333}.)

 Now let $\tilde{M}(\mathbf{E}_{\mathbf{V},\Omega})$ be the $\mathbb{Q}$-space consisting of all $G_{\mathbf{V}}$-invariant constructible functions $f: \mathbf{E}_{\mathbf{V},\Omega} \rightarrow \mathbb{Q}$. We define a bilinear operator $\ast:\tilde{M}(\mathbf{E}_{\mathbf{V}',\Omega}) \times \tilde{M}(\mathbf{E}_{\mathbf{V}'',\Omega}) \rightarrow \tilde{M}(\mathbf{E}_{\mathbf{V},\Omega}) $ as the following: Given $f_{1} \in \tilde{M}(\mathbf{E}_{\mathbf{V}',\Omega}), f_{2} \in \tilde{M}(\mathbf{E}_{\mathbf{V}'',\Omega})$, $f_{1} \ast f_{2}= (p_{3})_{!} f''$  where $f''$ is the unique constructible function on $\mathbf{E}''_{\Omega}$ such that $p_{2}^{\ast}(f'') =p_{1}^{\ast}(f_{1} \otimes f_{2})$. Lusztig has proved in \cite{MR1088333} that $\ast$ makes $\tilde{\mathcal{M}}_{\Omega} =\bigoplus\limits_{\mathbf{V}} \tilde{M}(\mathbf{E}_{\mathbf{V},\Omega})$ be an associative algebra. This is a constructible function version of Ringle-Hall algebra in \cite{MR1062796}.

 Denote the constant function (taking value $1$) on $\mathbf{E}_{\mathbf{V},\Omega}$ with $|\mathbf{V}|=mi$ by $\mathbf{1}_{i^{m}}=\mathbf{1}_{i^{(m)}}$ and let $\mathcal{M}_{\Omega}$ be the subalgebra of $\tilde{\mathcal{M}}_{\Omega}$ generated by those  $\mathbf{1}_{i^{m}}$. We denote $\mathcal{M}_{\Omega} \cap \tilde{M}(\mathbf{E}_{\mathbf{V},\Omega})$ by $\mathcal{M}_{\mathbf{V},\Omega}$. Then following Section 10.19 of \cite{MR1088333} or \cite{MR1062796}, we can see that $\mathcal{M}_{\Omega}$ is isomorphic to the envoloping algebra $\mathbf{U}^{+}=\mathbf{U}^{+}(\mathfrak{g})$ of the Kac-Moody Lie algebra $\mathfrak{g}$ associated with the Cartan datum which corresponds to the given graph $\mathbf{\Gamma}$.

 \subsection{Convolution algebra arising from $\Lambda_{\mathbf{V}}$}
 Given $\upsilon'+\upsilon''=\upsilon \in \mathbb{N}^{I} $ and graded vector spaces $\mathbf{V},\mathbf{V}',\mathbf{V}''$ with dimension vectors $\upsilon, \upsilon', \upsilon''$ respectively, we let  $\Lambda'$ be the set consisting of $(x,\tilde{\mathbf{W}},\rho_{1},\rho_{2})$ where $x \in \Lambda_{\mathbf{V}}$, $\tilde{\mathbf{W}}$ is a $x$-stable subspace of $\mathbf{V}$ with dimension $\upsilon''$ and $\rho_{1}:\mathbf{V}/\tilde{\mathbf{W}} \simeq \mathbf{V}', \rho_{2}: \tilde{\mathbf{W}} \simeq \mathbf{V}''$ are linear isomorphisms. We also let $\Lambda''$ be the set consisting of $(x,\tilde{\mathbf{W}})$ as above. Then there is a diagram in \cite{MR1088333}:
 \begin{center}
 	$\Lambda_{\mathbf{V}'} \times \Lambda_{\mathbf{V}''} \xleftarrow{p} \Lambda' \xrightarrow{r} \Lambda'' \xrightarrow{q} \Lambda_{\mathbf{V}}$
 \end{center}
 $p(x,\tilde{\mathbf{W}},\rho_{1},\rho_{2})=(\rho_{1,\ast}(\bar{x}|_{\mathbf{V}/\tilde{\mathbf{W}}}),\rho_{2,\ast}(x|_{\tilde{\mathbf{W}}})  )$, $r(x,\tilde{\mathbf{W}},\rho_{1},\rho_{2}) =(x, \tilde{\mathbf{W}}) $ and $q(x,\tilde{\mathbf{W}})=x$. Notice that $r$ is a principle $G_{\mathbf{V}'} \times G_{\mathbf{V}''}$-bundle and $q$ is proper.

 Now let $\tilde{M}(\Lambda_{\mathbf{V}})$ be the $\mathbb{Q}$-space consisting of all $G_{\mathbf{V}}$-invariant constructible functions $f: \Lambda_{\mathbf{V}} \rightarrow \mathbb{Q}$. We define a bilinear operator $\ast:\tilde{M}(\Lambda_{\mathbf{V}'}) \times \tilde{M}(\Lambda_{\mathbf{V}''}) \rightarrow \tilde{M}(\Lambda_{\mathbf{V}}) $ as the following: Given $f_{1} \in \tilde{M}(\Lambda_{\mathbf{V}'}), f_{2} \in \tilde{M}(\Lambda_{\mathbf{V}''})$, $f_{1} \ast f_{2}= q_{!} f''$  where $f''$ is the unique constructible function on $\Lambda''$ such that $r^{\ast}(f'') =p^{\ast}(f_{1} \otimes f_{2})$. Then $\ast$ makes $\tilde{\mathcal{M}} =\bigoplus\limits_{\mathbf{V}} \tilde{M}(\Lambda_{\mathbf{V}})$ be an associative algebra.
 Denote the constant function (taking value $1$) on $\Lambda_{\mathbf{V}}$ with $|\mathbf{V}|=mi$ by $\mathbf{1}_{i^{m}}=\mathbf{1}_{i^{(m)}}$ and let $\mathcal{M}$ be the $\mathbb{Q}$-subalgebra of $\tilde{\mathcal{M}}$ generated by those  $\mathbf{1}_{i^{m}}$. We  denote $\mathcal{M} \cap \tilde{M}(\Lambda_{\mathbf{V}})$ by $\mathcal{M}_{\mathbf{V}}$ and denote the integral form  of $\mathcal{M}$ by $_{\mathbb{Z}}\mathcal{M}$. More precisely, $_{\mathbb{Z}}\mathcal{M}=span_{\mathbb{Z}}\{ \mathbf{1}_{i_{1}^{m_{1}}} \ast \mathbf{1}_{i_{2}^{m_{2}}} \cdots \ast \mathbf{1}_{i_{l}^{m_{l}}}|l \in \mathbb{N}, i_{j} \in I, m_{j} \in \mathbb{N}$ for any  $1 \leq j \leq l \} $.

 \subsection{The flag varieties and the map $\psi_{\Omega}$} 
 
 For any $\underline{v}=(v^{1},v^{2},\cdots, v^{m})$ with each $v^{l} \in \mathbb{N}^{I}$, let $\tilde{\mathcal{F}}_{\underline{v}}$ be the variety consisting of $(x,f)$ where $x \in \Lambda_{\mathbf{V}}$ and $f=(0=\mathbf{V}^{m} \subset \mathbf{V}^{m-1} \cdots \subset \mathbf{V}^{1} \subset \mathbf{V}^{0}=\mathbf{V})$ is an $x$-stable flag of type $\underline{v}$. More precisely, $\mathbf{dim} (\mathbf{V}^{l-1}/\mathbf{V}^{l}) =v^{l}$ for any $l$. There is a proper morphism  $\pi_{\underline{v}}:\tilde{\mathcal{F}}_{\underline{v}} \rightarrow \Lambda_{\mathbf{V}}$. Denote the constant function on $\tilde{\mathcal{F}}_{\underline{v}}$ by $\mathbf{1}_{\tilde{\mathcal{F}}_{\underline{v}}}$.
 
 Then we have the following result:
 \begin{lemma}
 Let $\underline{v}=(v^{1},v^{2},\cdots,v^{l})$ be a flag type such that $v^{k}=m_{k}i_{k}$ for $1 \leq k \leq l$, then we have  $(\pi_{\underline{v}}) _{!}(\mathbf{1}_{\tilde{\mathcal{F}}_{\underline{v}}})=	\mathbf{1}_{i_{1}^{m_{1}}} \ast \mathbf{1}_{i_{2}^{m_{2}}} \cdots \ast \mathbf{1}_{i_{l}^{m_{l}}}$.
 \end{lemma}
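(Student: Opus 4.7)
The plan is to proceed by induction on $l$. For the base case $l=1$, the flag type $(m_{1}i_{1})$ admits only the trivial flag $0 \subset \mathbf{V}$, so $\tilde{\mathcal{F}}_{\underline{v}}= \Lambda_{\mathbf{V}}$ and $\pi_{\underline{v}}$ is the identity; both sides then coincide with $\mathbf{1}_{i_{1}^{m_{1}}}$.

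For the inductive step, I would split $\underline{v}=(v^{1})\cup \underline{v}''$ with $\underline{v}''=(v^{2},\ldots,v^{l})$, and choose graded vector spaces $\mathbf{V}'$ of dimension $v^{1}=m_{1}i_{1}$ and $\mathbf{V}''$ of dimension $v^{2}+\cdots+v^{l}$, so that $|\mathbf{V}'|+|\mathbf{V}''|=|\mathbf{V}|$. Setting $g=\mathbf{1}_{i_{2}^{m_{2}}}\ast\cdots\ast\mathbf{1}_{i_{l}^{m_{l}}}$, the inductive hypothesis applied to $\underline{v}''$ gives $g=(\pi_{\underline{v}''})_{!}(\mathbf{1}_{\tilde{\mathcal{F}}_{\underline{v}''}})$. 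Next I would factor $\pi_{\underline{v}}$ through the one-step partial flag variety $\Lambda''$ appearing in Section 2.3: the assignment
\[
(x,\,0=\mathbf{V}^{l}\subset\cdots\subset \mathbf{V}^{1}\subset \mathbf{V})\;\longmapsto\;(x,\,\mathbf{V}^{1})
\]
defines a forgetful morphism $\tilde{\pi}\colon \tilde{\mathcal{F}}_{\underline{v}}\to \Lambda''$ such that $\pi_{\underline{v}}=q\circ\tilde{\pi}$, and whose fiber over $(x,\mathbf{W})$ is canonically identified with the smaller flag variety $\tilde{\mathcal{F}}_{\underline{v}''}(\mathbf{W},x|_{\mathbf{W}})$. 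Consequently $(\tilde{\pi})_{!}(\mathbf{1})(x,\mathbf{W})=\chi\bigl(\tilde{\mathcal{F}}_{\underline{v}''}(\mathbf{W},x|_{\mathbf{W}})\bigr)$.

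I would then compute $\mathbf{1}_{i_{1}^{m_{1}}}\ast g$ directly from the definition via the diagram $\Lambda_{\mathbf{V}'}\times\Lambda_{\mathbf{V}''}\xleftarrow{p}\Lambda'\xrightarrow{r}\Lambda''\xrightarrow{q}\Lambda_{\mathbf{V}}$. Because $\mathbf{1}_{i_{1}^{m_{1}}}\equiv 1$, one has $p^{\ast}(\mathbf{1}_{i_{1}^{m_{1}}}\otimes g)(x,\mathbf{W},\rho_{1},\rho_{2})=g(\rho_{2,\ast}(x|_{\mathbf{W}}))$, which is $G_{\mathbf{V}'}\times G_{\mathbf{V}''}$-invariant (as $g$ is $G_{\mathbf{V}''}$-invariant) and hence descends along the principal bundle $r$ to the constructible function $f''(x,\mathbf{W})=g(\rho_{2,\ast}(x|_{\mathbf{W}}))$ on $\Lambda''$. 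By the inductive hypothesis this value equals $\chi\bigl(\pi_{\underline{v}''}^{-1}(\rho_{2,\ast}(x|_{\mathbf{W}}))\bigr)$, and since $\rho_{2}$ induces an isomorphism of nilpotent representations of $\bar{Q}$ it transports the fiber bijectively onto $\tilde{\mathcal{F}}_{\underline{v}''}(\mathbf{W},x|_{\mathbf{W}})$. Hence $f''=(\tilde{\pi})_{!}(\mathbf{1})$, and applying $q_{!}$ gives $\mathbf{1}_{i_{1}^{m_{1}}}\ast g=q_{!}f''=(q\circ\tilde{\pi})_{!}(\mathbf{1})=(\pi_{\underline{v}})_{!}(\mathbf{1}_{\tilde{\mathcal{F}}_{\underline{v}}})$, closing the induction.

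The main obstacle is not conceptual but bookkeeping: one must pin down the canonical identification of $\tilde{\pi}^{-1}(x,\mathbf{W})$ with $\tilde{\mathcal{F}}_{\underline{v}''}(\mathbf{W},x|_{\mathbf{W}})$ and verify that it is compatible with the $\rho_{2}$-transport, so that the Euler characteristic computation on the convolution side matches the one on the flag-variety side. Everything else reduces to the definitions of $p^{\ast}$, $q_{!}$, $\ast$, and the standard descent of constructible functions along the principal bundle $r$.
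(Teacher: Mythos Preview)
Your proof is correct and follows essentially the same inductive strategy as the paper: factor $\pi_{\underline{v}}$ through the one-step variety $\Lambda''$ and match the resulting pushforward with the convolution description. The only cosmetic difference is that you peel off the \emph{first} step $v^{1}$ (convolving $\mathbf{1}_{i_{1}^{m_{1}}}$ on the left and identifying fibers over $(x,\mathbf{W})$ with flags in the stable subspace $\mathbf{W}=\mathbf{V}^{1}$), whereas the paper peels off the \emph{last} step $v^{l}$ and packages the corresponding identification via the associated bundle $G_{\mathbf{V}}\times_{Q_{\mathbf{V}}}\tilde{\mathcal{F}}_{\underline{v},0}$; your pointwise Euler-characteristic argument is a slightly more elementary substitute for that construction.
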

 \begin{proof}
 	We prove the lemma by induction on $l$. When $l=1$, the lemma holds trivially.
 	Now we assume $\mathbf{1}_{i_{1}^{m_{1}}} \ast \mathbf{1}_{i_{2}^{m_{2}}} \cdots \ast \mathbf{1}_{i_{l-1}^{m_{l-1}}}= (\pi_{\underline{v}'}) _{!}(\mathbf{1}_{\tilde{\mathcal{F}}_{\underline{v}'}})$ where $\underline{v'}=(v^{1},v^{2},\cdots v^{l-1})$. Now we fix a decomposition $\mathbf{V}=\mathbf{V}'\oplus \mathbf{V}''$  with  $|\mathbf{V}''|=v^{l}$ and $|\mathbf{V}'|+|\mathbf{V}''|=|\mathbf{V}|$. Let $F$ be the subvariety of $\Lambda_{\mathbf{V}}$ consisting of $x\in \Lambda_{\mathbf{V}}$ such that $\mathbf{V'}$ is $x$-stable  and $\tilde{\mathcal{F}}_{\underline{v},0}$ be the subvariety of $\tilde{\mathcal{F}}_{\underline{v}}$ consisting of $(x,f)$ such that the $(l-1)-th$ component $\mathbf{V}^{l-1}$ is exactly $\mathbf{V}'$. Then the restriction of $\pi_{\underline{v}}$ on $\tilde{\mathcal{F}}_{\underline{v},0}$ gives a morphism $u: \tilde{\mathcal{F}}_{\underline{v},0} \rightarrow F$ and it induces a proper map $\tilde{u}:G_{\mathbf{V}} \times_{Q_{\mathbf{V}}} \tilde{\mathcal{F}}_{\underline{v},0}\rightarrow G_{\mathbf{V}} \times_{Q_{\mathbf{V}}} F = \Lambda'' $.  Here $Q_{\mathbf{V}}$ is the stablizer of $\mathbf{V'}$ in $G_{\mathbf{V}}$. Then we can check by definition that 
 	\begin{center}
 		$ r^{\ast} (\tilde{u})_{!} (\mathbf{1}_{G_{\mathbf{V}} \times_{Q_{\mathbf{V}}} \tilde{\mathcal{F}}_{\underline{v},0} })= p^{\ast}((\pi_{\underline{v}'}) _{!}(\mathbf{1}_{\tilde{\mathcal{F}}_{\underline{v}'}}) \otimes \mathbf{1}_{i_{l}^{m_{l}}}).$
 	\end{center}
 	 Notice that $(r\tilde{u})_{!}( (\mathbf{1}_{G_{\mathbf{V}} \times_{Q_{\mathbf{V}}} \tilde{\mathcal{F}}_{\underline{v},0} } )= (\pi_{\underline{v}}) _{!}(\mathbf{1}_{\tilde{\mathcal{F}}_{\underline{v}}})$, we are done.
 \end{proof} 

\begin{remark}
	By the above lemma, it is easy to see that $(\pi_{\underline{v}'}) _{!}(\mathbf{1}_{\tilde{\mathcal{F}}_{\underline{v}'}}) \ast (\pi_{\underline{v}''}) _{!}(\mathbf{1}_{\tilde{\mathcal{F}}_{\underline{v}''}})=(\pi_{\underline{v}'\underline{v}''}) _{!}(\mathbf{1}_{\tilde{\mathcal{F}}_{\underline{v}'\underline{v}''}})$. This is an analogy of Lemma 3.2 in \cite{MR1088333}.
\end{remark}

Similarly, we can define $\tilde{\mathcal{F}}_{\underline{v},{\Omega}}$ to be the smooth variety consisting of $(x,f)$ where $x \in \mathbf{E}_{\mathbf{V},\Omega}$ and $f=(0=\mathbf{V}^{m} \subset \mathbf{V}^{m-1} \cdots \subset \mathbf{V}^{1} \subset \mathbf{V}^{0}=\mathbf{V})$ is an $x$-satble flag of type $\underline{v}$. More precisely, $\mathbf{dim} (\mathbf{V}^{l-1}/\mathbf{V}^{l}) =v^{l}$ for any $l$. There is a proper morphism  $\pi_{\underline{v},\Omega}:\tilde{\mathcal{F}}_{\underline{v}} \rightarrow E_{\mathbf{V},\Omega}$. Denote the constant function on $\tilde{\mathcal{F}}_{\underline{v},\Omega}$ by $\mathbf{1}_{\tilde{\mathcal{F}}_{\underline{v},\Omega}}$. Then by the same argument in Lemma 2.1, the following equation holds in in $\mathcal{M}_{\Omega}$ for a flag type $\underline{v}=(v^{1},v^{2},\cdots,v^{l})$ satisfying $v^{k}=m_{k}i_{k}$ for $1 \leq k \leq l$:
\begin{center}
	$(\pi_{\underline{v},\Omega})_{!}(\mathbf{1}_{\tilde{\mathcal{F}}_{\underline{v},\Omega}} ) =\mathbf{1}_{i_{1}^{m_{1}}} \ast \mathbf{1}_{i_{2}^{m_{2}}} \cdots \ast \mathbf{1}_{i_{l}^{m_{l}}}.$
\end{center} 
\
\

 Fix an orientation $\Omega$, there is a natural map $\psi_{\Omega}: \mu^{-1}(0) \rightarrow \mathbf{E}_{\mathbf{V},\Omega}$ defined by forgetting the components in $\bar{\Omega}$:
 \begin{center}
 	$\psi_{\Omega}((x_{h})_{h\in H}) =((x_{h})_{h \in \Omega}). $
 \end{center}
  Then $\psi_{\Omega}$ induces a natural morphism $\tilde{\psi}_{\Omega}: \tilde{\mathcal{F}}_{\underline{v}} \rightarrow \tilde{\mathcal{F}}_{\underline{v},\Omega}$, which is defined by forgetting the components of $x \in \Lambda_{\mathbf{V}}$  in $\bar{\Omega}$:
 \begin{center}
 	$\tilde{\psi}_{\Omega}((x_{h})_{h\in H},f) =((x_{h})_{h \in \Omega},f). $
 \end{center}

 \begin{proposition}
 	 Each fiber of the map $\psi_{\Omega}$ and $\tilde{\psi}_{\Omega}$ is isomorphic to an affine space. Moreover, we have the following commutative diagram:
 	 
 	 \[
 	 \xymatrix{
 	 	\tilde{\mathcal{F}}_{\underline{v}}\ar[d]^{\pi_{\underline{v}}} \ar[r]^{\tilde{\psi}_{\Omega}} & \tilde{\mathcal{F}}_{\underline{v},\Omega}\ar[d]^{\pi_{\underline{v},\Omega}}\\
 	 	\mu^{-1}(0) \ar[r]^{\psi_{\Omega}} & \mathbf{E}_{\mathbf{V},\Omega}
 	 }
 	 \]
 	 
 \end{proposition}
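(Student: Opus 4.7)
The plan is to fix a point in the base of each projection and describe its fiber explicitly as the solution set of a linear system in the forgotten $\bar\Omega$-components. Concretely, for $\psi_\Omega$, fix $y \in \mathbf{E}_{\mathbf{V},\Omega}$ and write any preimage as $x = y + z$ with $z \in \mathbf{E}_{\mathbf{V},\bar\Omega}$; splitting the sum in $(\mu(x))_i = \sum_{h\in H,\,h''=i}\epsilon(h)\, x_h x_{\bar h}$ into the parts indexed by $\Omega$ and by $\bar\Omega$, substituting $h = \bar k$ in the latter, and using $\epsilon(\bar h) = -\epsilon(h)$, I would reduce the condition $\mu(x) = 0$ to
$$\sum_{h\in\Omega,\,h''=i}\epsilon(h)\, y_h\, z_{\bar h}\;-\;\sum_{h\in\Omega,\,h'=i}\epsilon(h)\,z_{\bar h}\, y_h\;=\;0, \qquad i\in I.$$
This is linear and homogeneous in $z$, so $\psi_\Omega^{-1}(y)$ is the kernel of a linear map $\mathbf{E}_{\mathbf{V},\bar\Omega}\to\mathbf{gl}_{\mathbf{V}}$; it is thus a linear subspace of $\mathbf{E}_{\mathbf{V},\bar\Omega}$, and in particular a non-empty affine space (the zero vector $z=0$ is always a solution).

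For $\tilde\psi_\Omega$, I would repeat the argument fiberwise over a point $(y,f) \in \tilde{\mathcal{F}}_{\underline{v},\Omega}$, imposing the additional constraint that $z$ preserve the flag $f$. Since $f$ is already $y$-stable, the stability condition $z_h(\mathbf{V}^\ell_{h'})\subseteq \mathbf{V}^\ell_{h''}$ for each $h\in\bar\Omega$ and each $\ell$ is again linear and homogeneous in $z$, so the intersection with the subspace cut out in the previous step is again a linear subspace of $\mathbf{E}_{\mathbf{V},\bar\Omega}$. To conclude that this subspace really parameterizes $\tilde\psi_\Omega^{-1}(y,f)$, I still need to verify the nilpotency of $x=y+z$; for the flag types $\underline{v} = (m_1 i_1,\dots, m_l i_l)$ that are actually used in Lemma 2.1 and throughout the paper, each graded piece $\mathbf{V}^{\ell-1}/\mathbf{V}^\ell$ is concentrated at the single vertex $i_\ell$, so by the absence of loops in $\mathbf{\Gamma}$ the induced action of every arrow of $\bar Q$ on it vanishes. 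A flag whose successive graded quotients are sums of vertex simples forces $x$ to be nilpotent as a representation of $\Pi(\bar Q)$, so nilpotency is automatic and imposes no further condition.

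The commutativity of the square is then immediate from the definitions: both composites send $(x,f)\in\tilde{\mathcal{F}}_{\underline{v}}$ to $x|_\Omega\in\mathbf{E}_{\mathbf{V},\Omega}$, since $\pi_{\underline{v}}$ and $\pi_{\underline{v},\Omega}$ simply forget the flag while $\psi_\Omega$ and $\tilde\psi_\Omega$ forget the $\bar\Omega$-components. The main conceptual hurdle in the whole argument is the nilpotency clause built into $\Lambda_{\mathbf{V}}$: it is not a linear condition, and without extra input it could prevent $\tilde\psi_\Omega^{-1}(y,f)$ from being affine. The observation that the special flag types in use force nilpotency automatically, via the loop-freeness of $\mathbf{\Gamma}$, is the only delicate point in an otherwise routine linear-algebra verification.
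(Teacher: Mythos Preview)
Your proof is correct and follows the same approach as the paper's: both reduce the fiber description to a homogeneous linear system in the $\bar\Omega$-components and verify commutativity by inspection. The paper's proof asserts tersely that ``if $x \in \mu^{-1}(0)$ fixes a flag, then $x$ is automatically nilpotent'' without elaboration; you are right to single this out as the only delicate point and to justify it, via loop-freeness of $\mathbf{\Gamma}$, for flag types whose successive quotients are concentrated at single vertices. Your caution is in fact warranted: the paper's blanket assertion fails for completely general $\underline{v}$ (take the trivial one-step flag over a non-Dynkin quiver), so restricting to the flag types actually used is necessary, not merely pedantic.
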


\begin{proof}
	To give a fiber of $\psi_{\Omega}$ at $(x_{h})_{h \in \Omega}$ is equivalent to give the solution $(x_{h})_{h \in \bar{\Omega}}$ of $\mu(x)=0$, which is a family of homogenous linear equations for fixed $(x_{h})_{h \in \Omega}$, hence each fiber of $\psi_{\Omega}$ is an affine space. 
	
	Notice that if $x \in \mu^{-1}(0)$ fixes a flag, then $x$ is automatically nilpotent. We can see that for a fixed $(x_{h})_{h \in \Omega} \in \mathbf{E}_{\mathbf{V}}$ and an $(x_{h})_{h \in \Omega}$-stable flag $f=(0=\mathbf{V}^{m} \subset \mathbf{V}^{m-1} \cdots \subset \mathbf{V}^{1} \subset \mathbf{V}^{0}=\mathbf{V})$,  to give a fiber of $\tilde{\psi}_{\Omega}$ at $(x,f)$ is equivalent to give the solution of $\mu(x)=0$ and $x_{h}(\mathbf{V}^{l}_{h'}) \subseteq \mathbf{V}^{l}_{h''}$ for any $0 \leq l \leq m$ and $h \in \bar{\Omega}$, which is also a family of homogenous linear  equations. Hence each fiber of $\tilde{\psi}_{\Omega}$ is an affine space. 
	
	The diagram commutes by definition.
	
\end{proof}
 
 After extensions by zero, the constructible functions on $\Lambda_{\mathbf{V}}$ can be naturally regarded as constructible functions on $\mu^{-1}(0)$. Then $(\psi_{\Omega})_{!}: \mathcal{M} \rightarrow \mathcal{M}_{\Omega}$ is well-defined.
\begin{corollary}
	 The map $\psi_{\Omega}$ induces a surjective homomorphism of algebras  $(\psi_{\Omega})_{!}: \mathcal{M} \rightarrow \mathcal{M}_{\Omega}$.
\end{corollary}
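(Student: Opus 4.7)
The plan is to reduce everything to a single identity on the flag-variety level, using the commutative diagram of Proposition 2.3 together with Lemma 2.1. By definition $\mathcal{M}$ is generated as a $\mathbb{Q}$-algebra by the constant functions $\mathbf{1}_{i^{m}}$, so a linear spanning set for $\mathcal{M}$ is given by the monomials $\mathbf{1}_{i_{1}^{m_{1}}}\ast\cdots\ast\mathbf{1}_{i_{l}^{m_{l}}}$, which by Lemma 2.1 equal $(\pi_{\underline{v}})_{!}(\mathbf{1}_{\tilde{\mathcal{F}}_{\underline{v}}})$ for $\underline{v}=(m_{1}i_{1},\ldots,m_{l}i_{l})$; the analogous identity holds in $\mathcal{M}_{\Omega}$. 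Thus it is enough to show that $(\psi_{\Omega})_{!}$ sends $(\pi_{\underline{v}})_{!}(\mathbf{1}_{\tilde{\mathcal{F}}_{\underline{v}}})$ to $(\pi_{\underline{v},\Omega})_{!}(\mathbf{1}_{\tilde{\mathcal{F}}_{\underline{v},\Omega}})$ for every flag type $\underline{v}$.

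From the commutative square of Proposition 2.3 and functoriality of $(-)_{!}$ one obtains
\[
(\psi_{\Omega})_{!}(\pi_{\underline{v}})_{!}(\mathbf{1}_{\tilde{\mathcal{F}}_{\underline{v}}}) \;=\; (\pi_{\underline{v},\Omega})_{!}(\tilde{\psi}_{\Omega})_{!}(\mathbf{1}_{\tilde{\mathcal{F}}_{\underline{v}}}),
\]
so the task reduces to proving $(\tilde{\psi}_{\Omega})_{!}(\mathbf{1}_{\tilde{\mathcal{F}}_{\underline{v}}}) = \mathbf{1}_{\tilde{\mathcal{F}}_{\underline{v},\Omega}}$. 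By the definition of pushforward of constructible functions, the value of the left-hand side at a point $(y,f)\in\tilde{\mathcal{F}}_{\underline{v},\Omega}$ is $\chi(\tilde{\psi}_{\Omega}^{-1}(y,f))$; by Proposition 2.3 each such fiber is cut out from the space of $(x_{h})_{h\in\bar{\Omega}}$ by a system of homogeneous linear equations, hence is an affine space containing the origin, nonempty and contractible, so its Euler characteristic equals $1$. This establishes the desired identity on monomials and hence gives
\[
(\psi_{\Omega})_{!}\bigl(\mathbf{1}_{i_{1}^{m_{1}}}\ast\cdots\ast\mathbf{1}_{i_{l}^{m_{l}}}\bigr) \;=\; \mathbf{1}_{i_{1}^{m_{1}}}\ast\cdots\ast\mathbf{1}_{i_{l}^{m_{l}}}
\]
in $\mathcal{M}_{\Omega}$.

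It now follows formally that $(\psi_{\Omega})_{!}$ sends $\mathcal{M}$ into $\mathcal{M}_{\Omega}$, and since its image contains all the generators $\mathbf{1}_{i^{m}}$ of $\mathcal{M}_{\Omega}$ it is surjective. For multiplicativity, bilinearity of $\ast$ reduces the check of $(\psi_{\Omega})_{!}(f_{1}\ast f_{2}) = (\psi_{\Omega})_{!}(f_{1})\ast(\psi_{\Omega})_{!}(f_{2})$ to pairs of monomials, where by Remark 2.2 the product is again a monomial and both sides coincide with the same monomial in $\mathcal{M}_{\Omega}$ by the displayed formula. The only substantive ingredient is the Euler-characteristic-one computation for the fibers of $\tilde{\psi}_{\Omega}$, which Proposition 2.3 has already made routine, so no further obstacle is anticipated.
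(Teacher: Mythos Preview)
Your proof is correct and follows essentially the same approach as the paper: reduce to showing $(\psi_{\Omega})_{!}((\pi_{\underline{v}})_{!}(\mathbf{1}_{\tilde{\mathcal{F}}_{\underline{v}}}))=(\pi_{\underline{v},\Omega})_{!}(\mathbf{1}_{\tilde{\mathcal{F}}_{\underline{v},\Omega}})$, then use the commutative square of Proposition 2.3 together with the fact that the affine-space fibers of $\tilde{\psi}_{\Omega}$ have Euler characteristic $1$. You spell out the reduction to monomials and the multiplicativity check more explicitly than the paper does, but the substance is the same.
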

\begin{proof}
	We only need to prove that $(\psi_{\Omega})_{!}((\pi_{\underline{v}})_{!}(\mathbf{1}_{\tilde{\mathcal{F}}_{\underline{v}}} ))=(\pi_{\underline{v},\Omega})_{!}(\mathbf{1}_{\tilde{\mathcal{F}}_{\underline{v},\Omega}} )$.
	
	Since each fiber of $\tilde{\psi}_{\Omega}$ is isomorphic to an affine space , whose Euler characteristic equals to $1$, $(\tilde{\psi}_{\Omega})_{!}(\mathbf{1}_{\tilde{\mathcal{F}}_{\underline{v}}})= \mathbf{1}_{\tilde{\mathcal{F}}_{\underline{v},\Omega}}$. Since $\psi_{\Omega} \pi_{\underline{v}}= \pi_{\underline{v},\Omega} \tilde{\psi}_{\Omega}$, we are done.
\end{proof}

Lusztig has proved that the generators $\mathbf{1}_{i^{m}}$ satisfy the Serre relation in $\mathcal{M}$ (see details in\cite{MR1088333} Lemma 12.11), hence there is a surjective algebra morphism $\Psi: \mathbf{U}^{+} \rightarrow \mathcal{M}$ given by $\Psi(e_{i}^{(m)})=\mathbf{1}_{i^{m}}$. Here $e_{i}^{(m)}$ is the $m$-th divided power of the Chevalley generator $e_{i}$. Combining this result with the corollary above, we can see that the following theorem holds:
\begin{theorem}
	The map $(\psi_{\Omega})_{!}: \mathcal{M} \rightarrow \mathcal{M}_{\Omega}$ is indeed an isomorphism of algebras. 
\end{theorem}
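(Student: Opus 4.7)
The plan is to deduce injectivity of $(\psi_{\Omega})_{!}$ by sandwiching it into a composition that is already known to be an isomorphism. By Corollary 2.4 we already have surjectivity, so the task reduces to showing that $(\psi_{\Omega})_{!}$ has zero kernel.

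First I would invoke the surjective algebra map $\Psi: \mathbf{U}^{+} \rightarrow \mathcal{M}$ from Lusztig's Serre-relation check (cited right before the theorem), which sends $e_{i}^{(m)}$ to $\mathbf{1}_{i^{m}} \in \mathcal{M}_{\mathbf{V}}$. Composing with $(\psi_{\Omega})_{!}$ yields a surjective algebra homomorphism $(\psi_{\Omega})_{!} \circ \Psi : \mathbf{U}^{+} \rightarrow \mathcal{M}_{\Omega}$ which sends $e_{i}^{(m)}$ to $\mathbf{1}_{i^{m}} \in \mathcal{M}_{\mathbf{V},\Omega}$, because $\psi_{\Omega}$ takes the characteristic function of the single point $0 \in \Lambda_{mi}$ region to the corresponding constant function on $\mathbf{E}_{mi,\Omega}$ (more precisely, the generators $\mathbf{1}_{i^{m}}$ on the two sides match tautologically under $(\psi_{\Omega})_{!}$).

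Next I would recall from subsection 2.2 (following Section 10.19 of \cite{MR1088333}, i.e.\ the quiver-side counterpart of the Serre relations together with the PBW-type dimension count) that the assignment $e_{i}^{(m)} \mapsto \mathbf{1}_{i^{m}}$ extends to an algebra isomorphism $\mathbf{U}^{+} \xrightarrow{\sim} \mathcal{M}_{\Omega}$. Therefore the composite $(\psi_{\Omega})_{!} \circ \Psi$ coincides with this Lusztig isomorphism, and in particular is bijective.

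A bijective composition of two surjections forces each factor to be bijective: if $(\psi_{\Omega})_{!}(\Psi(u)) = (\psi_{\Omega})_{!}(\Psi(u'))$ then $u=u'$ by injectivity of the composite, so $\Psi$ is injective, hence an isomorphism; then $(\psi_{\Omega})_{!} = \bigl((\psi_{\Omega})_{!} \circ \Psi\bigr) \circ \Psi^{-1}$ is a composition of isomorphisms and we conclude. The only subtle point, which is really the main thing to verify carefully, is the compatibility $(\psi_{\Omega})_{!}(\mathbf{1}_{i^{m}}) = \mathbf{1}_{i^{m}}$ in the generating degrees; this follows because on $\Lambda_{\mathbf{V}}$ with $|\mathbf{V}|=mi$ the vector space carries no loops, so $\Lambda_{mi}$ and $\mathbf{E}_{mi,\Omega}$ are both a single point and $\psi_{\Omega}$ is the identity there, making the identification of generators immediate.
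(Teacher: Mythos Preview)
Your proof is correct and follows essentially the same approach as the paper's own proof: both argue that the composite $\mathbf{U}^{+} \xrightarrow{\Psi} \mathcal{M} \xrightarrow{(\psi_{\Omega})_{!}} \mathcal{M}_{\Omega} \cong \mathbf{U}^{+}$ sends each generator $e_{i}^{(m)}$ to itself and is therefore an isomorphism, from which the bijectivity of the middle factor follows. You have simply spelled out in more detail the factorization argument and the verification that the generators match, which the paper leaves implicit.
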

\begin{proof}
	The composition $\mathbf{U}^{+} \xrightarrow{\Psi} \mathcal{M} \xrightarrow{(\psi_{\Omega})_{!}} \mathcal{M}_{\Omega} \cong \mathbf{U}^{+}$ is surjective, sending $e_{i}^{(p)}$ to itself. Hence it is an isomorphism and so does  $(\psi_{\Omega})_{!}$.
\end{proof}

Given $\upsilon'+\upsilon''=\upsilon \in \mathbb{N}^{I} $ and graded vector spaces $\mathbf{V},\mathbf{V}',\mathbf{V}''$ with dimension vectors $\upsilon, \upsilon', \upsilon''$ respectively, assume $\underline{v}'=((v')^{1},(v')^{2},\cdots,(v')^{m'})$ is a flag type of $\mathbf{V}'$ and  $\underline{v}''=((v'')^{1},(v'')^{2},\cdots,(v'')^{m''})$ is a flag type of $\mathbf{V}''$. Let $\tilde{\mathcal{F}}_{\underline{v}'\underline{v}''}'$ be the set consisting of $(x,f,\rho_{1},\rho_{2})$ where $x \in \Lambda_{\mathbf{V}}$, $f=(0=\mathbf{V}^{m'+m''} \subset \mathbf{V}^{m'+m''-1} \cdots \subset \mathbf{V}^{1} \subset \mathbf{V}^{0}=\mathbf{V})$ is an $x$-stable flag of type $\underline{v}'\underline{v}''$ and $\rho_{1}:\mathbf{V}/\mathbf{V}^{m''} \cong \mathbf{V}', \rho_{2}:\mathbf{V}^{m''} \cong \mathbf{V}''$ are linear isomorphisms. Similarly, let $\tilde{\mathcal{F}}_{\underline{v}'\underline{v}'',\Omega}'$ be the set consisting of $(x,f,\rho_{1},\rho_{2})$ where $x \in \mathbf{E}_{\mathbf{V},\Omega}$, $f=(0=\mathbf{V}^{m'+m''} \subset \mathbf{V}^{m'+m''-1} \cdots \subset \mathbf{V}^{1} \subset \mathbf{V}^{0}=\mathbf{V})$ is an $x$-stable flag of type $\underline{v}'\underline{v}''$ and $\rho_{1}:\mathbf{V}/\mathbf{V}^{m''} \cong \mathbf{V}', \rho_{2}:\mathbf{V}^{m''} \cong \mathbf{V}''$ are linear isomorphisms. 
There are natural morphisms defined as the following:
$$\tilde{\mathcal{F}}_{\underline{v}'\underline{v}''}' \xrightarrow{\tilde{r}} \tilde{\mathcal{F}}_{\underline{v}'\underline{v}''}: (x,f,\rho_{1},\rho_{2}) \mapsto (x,f),$$	
$$ \tilde{\mathcal{F}}_{\underline{v}'\underline{v}''} \rightarrow \Lambda'': (x,f) \mapsto (x, \mathbf{V}^{m''}),$$
$$ \tilde{\mathcal{F}}_{\underline{v}'\underline{v}''}' \rightarrow \Lambda': (x,f,\rho_{1},\rho_{2}) \mapsto (x, \mathbf{V}^{m''},\rho_{1},\rho_{2}),$$
and
$$\tilde{\mathcal{F}}_{\underline{v}'\underline{v}''}' \xrightarrow{\tilde{p}} \tilde{\mathcal{F}}_{\underline{v}'}\times \tilde{\mathcal{F}}_{\underline{v}''} : (x,f,\rho_{1},\rho_{2}) \mapsto ((\rho_{1,\ast}(\bar{x}|_{\mathbf{V}/\mathbf{V}^{m''} }),f_{1}  )( ,\rho_{2,\ast}(x|_{\mathbf{V}^{m''}}),f_{2}  )),$$	
where $f_{1}$ is the flag of $\mathbf{V}' \cong\mathbf{V}/\mathbf{V}^{m''}$ induced from $f$ and $f_{2}$ is the flag of $\mathbf{V}'' \cong \mathbf{V}^{m''} $ induced from $f$. 

Similarly, we can set:
$$\tilde{\mathcal{F}}_{\underline{v}'\underline{v}'',\Omega}' \xrightarrow {\tilde{p}_{2}}\tilde{\mathcal{F}}_{\underline{v}'\underline{v}'',\Omega}: (x,f,\rho_{1},\rho_{2}) \mapsto (x,f),$$
$$ \tilde{\mathcal{F}}_{\underline{v}'\underline{v}'',\Omega} \rightarrow \mathbf{E}_{\Omega}'': (x,f) \mapsto (x, \mathbf{V}^{m''}),$$
$$ \tilde{\mathcal{F}}_{\underline{v}'\underline{v}'',\Omega}' \rightarrow \mathbf{E}_{\Omega}': (x,f,\rho_{1},\rho_{2}) \mapsto (x, \mathbf{V}^{m''},\rho_{1},\rho_{2}),$$
and
$$\tilde{\mathcal{F}}_{\underline{v}'\underline{v}'',\Omega}' \xrightarrow{\tilde{p}_{1}} \tilde{\mathcal{F}}_{\underline{v}',\Omega}\times \tilde{\mathcal{F}}_{\underline{v}'',\Omega} : (x,f,\rho_{1},\rho_{2}) \mapsto (\rho_{1,\ast}(\bar{x}|_{\mathbf{V}/\mathbf{V}^{m''} }),f_{1}  )( ,\rho_{2,\ast}(x|_{\mathbf{V}^{m''}}),f_{2}  ).$$

Then we have the following commutative diagram:

\[
\xymatrix@C=1em@R=3ex{
	& \tilde{\mathcal{F}}_{\underline{v}'} \times \tilde{\mathcal{F}}_{\underline{v}''} \ar[dd] \ar[dl]
	& & \tilde{\mathcal{F}}_{\underline{v}'\underline{v}''}' \ar[dd] \ar[rr]^{\tilde{r}} \ar[dl] \ar[ll]_{\tilde{p}}
	& & \tilde{\mathcal{F}}_{\underline{v}'\underline{v}''} \ar[dd] \ar@{=}[rr] \ar[dl]
	& & \tilde{\mathcal{F}}_{\underline{v}'\underline{v}''}\ar[dd] \ar[dl]
	\\
	\Lambda_{\mathbf{V}'} \times \Lambda_{\mathbf{V}''} \ar[dd]
	& & \Lambda' \ar[dd] \ar[rr]^{r} \ar[ll]_{p}
	& & \Lambda'' \ar[dd] \ar[rr]^{q}
	& & \Lambda_{\mathbf{V}} \ar[dd]
	\\
	& \tilde{\mathcal{F}}_{\underline{v}',
		\Omega}\times \tilde{\mathcal{F}}_{\underline{v}'',\Omega} \ar[dl]
	& & \tilde{\mathcal{F}}_{\underline{v}'\underline{v}'',\Omega}' \ar[rr]^{\tilde{p}_{2}} \ar[dl] \ar[ll]_{\tilde{p}_{1}}
	& & \tilde{\mathcal{F}}_{\underline{v}'\underline{v}'',\Omega} \ar@{=}[rr] \ar[dl]
	& & \tilde{\mathcal{F}}_{\underline{v}'\underline{v}'',\Omega}  \ar[dl]
	\\
	\mathbf{E}_{\mathbf{V}',\Omega} \times \mathbf{E}_{\mathbf{V}'',\Omega}
	& & \mathbf{E}'_{\Omega}  \ar[rr]^{p_{2}}  \ar[ll]_{p_{1}}
	& & \mathbf{E}''_{\Omega} \ar[rr]^{p_{3}}
	& & \mathbf{E}_{\mathbf{V},\Omega}
}
\]
where all morphisms from up to down are naturally induced by $\psi_{\Omega}$, whose fibers are affine spaces, and the other outward morphisms are the naturally projection given by forgetting flags. 
The commuative diagram above gives another explanation of Corollary 2.4 and suggests that the two different construction of $\mathbf{U}^{+}(\mathfrak{g})$ have more detailed relations.

 \section{Lusztig's sheaves and the quantized envoloping algebra}
 \subsection{Induction functor and restriction functor}
Consider the following diagram defined in Section 2.2: 
 \begin{center}
 	$\mathbf{E}_{\mathbf{V}',\Omega} \times \mathbf{E}_{\mathbf{V}'',\Omega} \xleftarrow{p_{1}} \mathbf{E}'_{\Omega} \xrightarrow{p_{2}} \mathbf{E}''_{\Omega} \xrightarrow{p_{3}} \mathbf{E}_{\mathbf{V},\Omega}$
 \end{center}
Let $d_{1}$ be the dimension of the fibers of $p_{1}$ and $d_{2}$ be the dimension of the fibers of $p_{2}$. \\

 We fix a decomposition $\mathbf{T} \oplus \mathbf{W} =\mathbf{V}$ of graded vector space, let $F_{\Omega}$ be the closed subvariety of $E_{\mathbf{V}}$ consisting of $x$ such that $\mathbf{W}$ is $x$-stable. Let $Q_{\mathbf{V}}$ be the stablizer of $\mathbf{W}$ in $G_{\mathbf{V}}$ and let $d_{3}$ be the dimension of $G_{\mathbf{V}}/Q_{\mathbf{V}}$. Consider the following diagram
  \begin{center}
 	$\mathbf{E}_{\mathbf{T},\Omega} \times \mathbf{E}_{\mathbf{W},\Omega} \xleftarrow{\kappa_{\Omega} } F_{\Omega} \xrightarrow{\iota_{\Omega}} \mathbf{E}_{\mathbf{V},\Omega}$
 \end{center} 
where $\iota_{\Omega}$ is the natural embedding and $\kappa_{\Omega}(x)=(x',x'') \in \mathbf{E}_{\mathbf{T},\Omega} \times \mathbf{E}_{\mathbf{W},\Omega} $ is induced by $x \in F$. Notice that $\kappa_{\Omega}$ is a vector bundle.
 
Since for any $\underline{v}=(v^{1},v^{2},\cdots, v^{m})$ with each $v^{l} \in \mathbb{N}^{I}$, the flag variety $\tilde{\mathcal{F}}_{\underline{v},\Omega}$ is smooth and the map $\pi_{\underline{v},\Omega}$ is proper, we can see that $L_{\underline{v}}= (\pi_{\underline{v},\Omega})_{!} \bar{\mathbb{Q}}_{l}$ is a semisimple perverse sheaf (complex) on $\mathbf{E}_{\mathbf{V},\Omega}$ by \cite{MR751966}. Here  $\bar{\mathbb{Q}}_{l}$ is the constant sheaf on $\tilde{\mathcal{F}}_{\underline{v},\Omega}$. 

Let $\mathcal{D}^{b}_{G_{\mathbf{V}}}(\mathbf{E}_{\mathbf{V},\Omega})$ be the  $G_{\mathbf{V}}$-equivariant derived category of constructible sheaves on $E_{\mathbf{V},\Omega}$, and $\mathcal{Q}_{\mathbf{V},\Omega}$ be the subcategory consisting of direct sums of shifts of direct summands of such $L_{\underline{v}}$. We say $L$ is a Lusztig's sheaf if it is a simple object in $\mathcal{Q}_{\mathbf{V},\Omega}$.  \\

Let $\mathcal{K}_{\mathbf{V},\Omega}$ be the Grothendieck group of
$\mathcal{Q}_{\mathbf{V},\Omega}$ and $\mathcal{K}_{\Omega}=\bigoplus\limits_{\mathbf{V}} \mathcal{K}_{\mathbf{V},\Omega}$. More precisely, $\mathcal{K}_{\mathbf{V},\Omega}$ is a $\mathbb{Z}[v,v^{-1}]$-module spanned by $\{[L]| L \in \mathcal{Q}_{\mathbf{V},\Omega}\}$ modulo the following relations:
\begin{center}
	$[X \oplus Y]=[X]+[Y],$\\
	$[X[1]]=v[X].$ 
\end{center}

 Let $\mathcal{P}_{\mathbf{V},\Omega}$ be the set of simple perverse sheaves in $\mathcal{Q}_{\mathbf{V},\Omega}$ and denote the union of $\mathcal{P}_{\mathbf{V},\Omega}$ by $\mathcal{P}_{\Omega}$, then the images $\{[L]|L \in \mathcal{P}_{\Omega} \}$ of $\mathcal{P}_{\Omega}$ in $\mathcal{K}_{\Omega}$ indeed form a $\mathbb{Z}[v,v^{-1}]$-basis of  $\mathcal{K}_{\Omega}$, which is called the canonical basis. (See details in \cite{MR1088333}.) In particular, We denote the image in $\mathcal{K}_{\Omega}$ of constant sheaf $\bar{\mathbb{Q}}_{l}$ on $\mathbf{E}_{\mathbf{V},\Omega}$ with $|\mathbf{V}|=pi$ by $E_{i}^{(p)}$.

  The Verdier duality functor $\mathbf{D}:\mathcal{D}^{b}_{G_{\mathbf{V}}}(\mathbf{E}_{\mathbf{V},\Omega}) \rightarrow \mathcal{D}^{b}_{G_{\mathbf{V}}}(\mathbf{E}_{\mathbf{V},\Omega}) $ induces a $\mathbb{Z}$-linear bar involution $\bar{~}:\mathcal{K}_{\Omega} \rightarrow \mathcal{K}_{\Omega}$ such that $\overline{[L]}=[L]$ for any $L \in \mathcal{P}_{\Omega}$. (See details in \cite{MR1227098}.) \\

Lusztig has introduced the induction functor in \cite{MR1088333} 
\begin{center}
	$\mathbf{Ind}^{\mathbf{V}}_{\mathbf{V'},\mathbf{V''}}= (p_{3})_{!}(p_{2})_{b}(p_{1})^{\ast}: \mathcal{D}^{b}_{G_{\mathbf{V}'}}(\mathbf{E}_{\mathbf{V}',\Omega}) \times \mathcal{D}^{b}_{G_{\mathbf{V}''}}(\mathbf{E}_{\mathbf{V}'',\Omega}) \rightarrow \mathcal{D}^{b}_{G_{\mathbf{V}}}(\mathbf{E}_{\mathbf{V},\Omega})$ 
\end{center}
and the restriction functor
\begin{center}
	$\mathbf{Res}^{\mathbf{V}}_{\mathbf{T},\mathbf{W}}=(\kappa_{\Omega})_{!} (\iota_{\Omega})^{\ast}: \mathcal{D}^{b}_{G_{\mathbf{V}}}(\mathbf{E}_{\mathbf{V},\Omega}) \rightarrow \mathcal{D}^{b}_{G_{\mathbf{V}'} \times G_{\mathbf{V}''}}(\mathbf{E}_{\mathbf{V}',\Omega}\times \mathbf{E}_{\mathbf{V}'',\Omega}) $ 
\end{center} 
 Lusztig has also proved the following results (See details in \cite{MR1088333} Section 3 and Section 4.): 
\begin{center}
	$\mathbf{Ind}^{\mathbf{V}}_{\mathbf{V}',\mathbf{V}''}(L_{\underline{v'}} \boxtimes L_{\underline{v}''})= L_{\underline{v}' \underline{v}''},$\\
	$\mathbf{Res}^{\mathbf{V}}_{\mathbf{T},\mathbf{W}}L_{\underline{v}} =\bigoplus \limits_{\underline{\tau}+\underline{\omega}=\underline{v}} L_{\underline{\tau}} \boxtimes L_{\underline{\omega}}[-2M(\underline{\tau},\underline{\omega})],$
\end{center}
here $M(\underline{\tau},\underline{\omega})= \sum\limits_{h \in \Omega, l' < l} \tau^{l'}_{h'}\omega^{l}_{h''}+\sum\limits_{i \in I,l < l'} \tau_{i}^{l'}\omega_{i}^{l}$.\\
 
 Set $\mathbf{ind}^{\mathbf{V}}_{\mathbf{V}',\mathbf{V}''}=\mathbf{Ind}^{\mathbf{V}}_{\mathbf{V}',\mathbf{V}''}[d_{1}-d_{2}]$, then we can define a $\mathbb{Z}[v,v^{-1}]$-bilinear operator $\ast:\mathcal{K}_{\Omega}\otimes \mathcal{K}_{\Omega} \rightarrow \mathcal{K}_{\Omega}$ as the following: 	Given $A \in \mathcal{Q}_{\mathbf{V}',\Omega}, B \in \mathcal{Q}_{\mathbf{V}'',\Omega}$, we set
 \begin{center}
 	  $[A]\ast [B]= [\mathbf{ind}^{\mathbf{V}}_{\mathbf{V}',\mathbf{V}''}(A \boxtimes B)].$
 \end{center}
 Then $(\mathcal{K}_{\Omega},\ast)$ is an associative algebra.  Lusztig has proved that there is an isomorphism of algebras from $(\mathcal{K}_{\Omega},\ast)$ to $ {_{\mathbb{Z}}U}^{+}_{q}(\mathfrak{g})$, sending  $E_{i}^{(p)}$ to the $p$-th divided power $e_{i}^{(p)}$ of the Chevalley generator $e_{i}$ in ${_{\mathbb{Z}}U}^{+}_{q}(\mathfrak{g}) $. (See detials in \cite{MR1088333} Section 10.)
 
 Set $\mathbf{res}^{\mathbf{V}}_{\mathbf{T},\mathbf{W}}=\mathbf{Res}^{\mathbf{V}}_{\mathbf{T},\mathbf{W}}[d_{1}-d_{2}-2d_{3}]$, then we can define a $\mathbb{Z}[v,v^{-1}]$-linear operator $\bar{\Delta}:\mathcal{K}_{\Omega} \rightarrow \mathcal{K}_{\Omega} \otimes \mathcal{K}_{\Omega}$ as the following: Given $A \in \mathcal{Q}_{\mathbf{V},\Omega}$, we set
 \begin{center}
 	$\bar{\Delta}([A])= [\bigoplus\limits_{\mathbf{T},\mathbf{W}}\mathbf{res}^{\mathbf{V}}_{\mathbf{T},\mathbf{W}}(A)],$ 
 \end{center}
  then  $\bar{\Delta}$ gives a coalgebra structure of $\mathcal{K}_{\Omega}$.
  
  Similarly, we can also define  $\Delta:\mathcal{K}_{\Omega} \rightarrow \mathcal{K}_{\Omega} \otimes \mathcal{K}_{\Omega}$ as the following: Given $A \in \mathcal{Q}_{\mathbf{V},\Omega}$, we set
  \begin{center}
  	$\Delta([A])= [\bigoplus\limits_{\mathbf{T},\mathbf{W}}\mathbf{D}(\mathbf{res}^{\mathbf{V}}_{\mathbf{T},\mathbf{W}}(\mathbf{D}A))],$ 
  \end{center}
then $\bar{\Delta}=(\bar{~}\otimes\bar{~})\circ \Delta \circ \bar{~}$ and $\Delta$ also gives a coalgebra structure of $\mathcal{K}_{\Omega}$. $(\mathcal{K},\Delta)$ The coproduct $\Delta$ indeed realizes the coproduct of $_{\mathbb{Z}}U^{+}_{q}(\mathfrak{g})$. (See detials in \cite{MR1088333} Section 10.)

\subsection{Fourier-Deligne transformation}
Assume $k$ is an algebraic closure of a finite field $\mathbb{F}_{q}$ and fix a nontrivial character $\mathbb{F}_{q} \rightarrow \bar{\mathbb{Q}}_{l}^{\ast}$. This character defines an Artin-Schreier local system of rank $1$ on $k$. 

For two orientations $\Omega,\Omega'$, we define $T: \mathbf{E}_{\mathbf{V},\Omega \cup \Omega'} \rightarrow k$ by $T(x)=\sum \limits_{h \in \Omega \backslash \Omega'}tr(x_{h}x_{\bar{h}})$. Then the inverse image of the Artin-Schreier local system under $T$ is a well-defined $G_{\mathbf{V}}$-equivariant local system of rank $1$ on $\mathbf{E}_{\mathbf{V},\Omega \cup \Omega'}$ and we denote it by $\mathcal{L}_{T}$.

Consider the following diagram:
\begin{center}
	$\mathbf{E}_{\mathbf{V},\Omega} \xleftarrow{\delta} \mathbf{E}_{\mathbf{V},\Omega \cup \Omega'} \xrightarrow{\delta'} \mathbf{E}_{\mathbf{V},\Omega'}$
\end{center}
here $\delta,\delta'$ are the forgetting maps defined by:
\begin{center}
	$\delta((x_{h})_{h \in \Omega \cup \Omega'} )= ((x_{h})_{h \in \Omega}),$\\
	$\delta'((x_{h})_{h \in \Omega \cup \Omega'} )= ((x_{h})_{h \in \Omega'}).$
\end{center}
Then following \cite{MR1088333} we can define a functor
\begin{center}
	$\mathcal{F}_{\Omega,\Omega'}:\mathcal{D}^{b}_{G_{\mathbf{V}}}(E_{\mathbf{V},\Omega}) \rightarrow \mathcal{D}^{b}_{G_{\mathbf{V}}}(E_{\mathbf{V},\Omega'}), \mathcal{F}_{\Omega,\Omega'}(L)=\delta'_{!}(\delta^{\ast}(L)\otimes \mathcal{L}_{T})[D],$
\end{center}  
here $D=\sum\limits_{h \in \Omega \backslash \Omega'}\dim \mathbf{V}_{h'}\dim\mathbf{V}_{h''}$.

The functor $\mathcal{F}_{\Omega,\Omega'}$ is called the Fourier-Deligne transformation for quivers by Lusztig. Moreover, he has proved the following proposition:
\begin{proposition} [ \cite{MR1088333} Theorem 5.4]
	With the notations above, we  have:
\begin{center}
	$\mathcal{F}_{\Omega,\Omega'}(\mathbf{Ind}^{\mathbf{V}} _{\mathbf{V}',\mathbf{V}''}L_{1} \boxtimes L_{2}) \cong  \mathbf{Ind}^{\mathbf{V}} _{\mathbf{V}',\mathbf{V}''}(\mathcal{F}_{\Omega,\Omega'}(L_{1}) \boxtimes \mathcal{F}_{\Omega,\Omega'}(L_{2}))[C],$
\end{center}
where $C=\sum\limits_{h \in \Omega \backslash \Omega'} (\upsilon''_{h'} \upsilon'_{h''} -\upsilon'_{h'}\upsilon''_{h''})$.
\end{proposition}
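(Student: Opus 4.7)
The plan is to unfold both sides of the claimed isomorphism using the definitions of $\mathbf{Ind}^{\mathbf{V}}_{\mathbf{V}',\mathbf{V}''}$ and $\mathcal{F}_{\Omega,\Omega'}$, and to interpolate them through a larger commutative diagram over the union orientation $\Omega\cup\Omega'$. Explicitly, I would introduce the analogues $\mathbf{E}'_{\Omega\cup\Omega'}$ and $\mathbf{E}''_{\Omega\cup\Omega'}$ of $\mathbf{E}'_\Omega$ and $\mathbf{E}''_\Omega$ (points consist of $(x,\tilde{\mathbf{W}},\rho_1,\rho_2)$ or $(x,\tilde{\mathbf{W}})$ with $\tilde{\mathbf{W}}$ stable under all $x_h$, $h\in\Omega\cup\Omega'$), with the induction maps $p_1^{\cup},p_2^{\cup},p_3^{\cup}$ relating them to $\mathbf{E}_{\mathbf{V}',\Omega\cup\Omega'}\times\mathbf{E}_{\mathbf{V}'',\Omega\cup\Omega'}$ and $\mathbf{E}_{\mathbf{V},\Omega\cup\Omega'}$, and with forgetting maps $\delta,\delta'$ (and their flag-variety lifts) going to the $\Omega$- and $\Omega'$-versions. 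One checks by construction that all squares in this $3$-dimensional diagram are Cartesian.

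Next, I would use proper base change for $\delta$ vs.\ $p_3$ and smooth base change for $\delta^{*}$ vs.\ $p_1^{*}$, together with the analogous base-change compatibilities between $\delta'_{!}$ and the induction maps on the $\Omega'$-side, to push $\delta'_{!}(\delta^{*}(-)\otimes\mathcal{L}_{T})$ through the entire induction correspondence. The critical geometric input is the identity
\begin{equation*}
T|_{\mathbf{E}'_{\Omega\cup\Omega'}}\ =\ (T'\circ p_{1}^{\cup,\,\mathrm{quot}})\ +\ (T''\circ p_{1}^{\cup,\,\mathrm{sub}}),
\end{equation*}
which holds because $\tilde{\mathbf{W}}$ is stable under every $x_h$ and $x_{\bar h}$ with $h\in\Omega\setminus\Omega'$, so $\mathrm{tr}(x_h x_{\bar h})$ on $\mathbf{V}$ splits as the sum of traces on $\tilde{\mathbf{W}}$ and on $\mathbf{V}/\tilde{\mathbf{W}}$. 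Consequently the pullback of $\mathcal{L}_{T}$ to $\mathbf{E}'_{\Omega\cup\Omega'}$ agrees with $(p_{1}^{\cup})^{*}(\mathcal{L}_{T'}\boxtimes\mathcal{L}_{T''})$, and the Fourier transform commutes with the induction at the level of underlying complexes.

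It then remains to account for the cohomological shifts. Writing $d_1^{\Omega},d_2^{\Omega}$ (resp.\ $d_1^{\Omega'},d_2^{\Omega'}$) for the relative dimensions of the induction maps over the two orientations, and $D', D''$ for the Fourier shifts on the factors $\mathbf{E}_{\mathbf{V}',\Omega}$ and $\mathbf{E}_{\mathbf{V}'',\Omega}$, the total shift on the right-hand side is $D'+D''+C+(d_1^{\Omega'}-d_2^{\Omega'})$ versus $D+(d_1^{\Omega}-d_2^{\Omega})$ on the left-hand side. A direct dimension count using that $\mathbf{E}'_{\Omega}$ is cut out of $\mathbf{E}_{\mathbf{V},\Omega}\times G_{\mathbf{V}}/Q_{\mathbf{V}}$ by the $x$-stability condition gives
\begin{equation*}
D-D'-D''\ =\ \sum_{h\in\Omega\setminus\Omega'}\bigl(\upsilon'_{h'}\upsilon''_{h''}+\upsilon''_{h'}\upsilon'_{h''}\bigr),
\end{equation*}
and the analogous splitting of $(d_1^{\Omega}-d_2^{\Omega})-(d_1^{\Omega'}-d_2^{\Omega'})$ cancels the symmetric piece, leaving precisely $C=\sum_{h\in\Omega\setminus\Omega'}(\upsilon''_{h'}\upsilon'_{h''}-\upsilon'_{h'}\upsilon''_{h''})$.

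The main obstacle is the shift accounting in the last step: the Fourier shift $D$ over $\mathbf{V}$ naturally separates into $D'$, $D''$ and a symmetric cross term, while the orientation-dependent shifts $d_1-d_2$ of the induction maps produce an antisymmetric cross term; one must verify that, after canceling the symmetric contributions against each other, the leftover antisymmetric piece is exactly $C$. Once this bookkeeping is carried out consistently, the isomorphism follows from gluing the base-change compatibilities with the trace-decomposition identity above.
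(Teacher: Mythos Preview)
The paper does not supply a proof of this proposition; it is quoted from Lusztig's original article and used as a black box. Your outline is precisely the strategy of Lusztig's own proof: lift the induction diagram to the union orientation $\Omega\cup\Omega'$, exploit the additivity of the trace form $T$ along a stable flag to split the Artin--Schreier kernel, and then chase the shifts.

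There is, however, a real gap in your sketch. You assert that all squares in the three-dimensional diagram are Cartesian, but with your definition of $\mathbf{E}''_{\Omega\cup\Omega'}$ (where $\tilde{\mathbf{W}}$ is required to be stable under \emph{every} $x_h$, $h\in\Omega\cup\Omega'$) this fails. The fibre product of $\delta:\mathbf{E}_{\mathbf{V},\Omega\cup\Omega'}\to\mathbf{E}_{\mathbf{V},\Omega}$ with $p_3:\mathbf{E}''_\Omega\to\mathbf{E}_{\mathbf{V},\Omega}$ consists of pairs $(x,\tilde{\mathbf{W}})$ in which $\tilde{\mathbf{W}}$ is only $\Omega$-stable, so it strictly contains your $\mathbf{E}''_{\Omega\cup\Omega'}$. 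Yet the trace identity $T=T'+T''$ that you invoke requires stability under both $x_h$ and $x_{\bar h}$ for each $h\in\Omega\setminus\Omega'$, i.e.\ full $(\Omega\cup\Omega')$-stability. These two requirements are in tension, and resolving it is exactly where the content of Lusztig's argument lies: one works on the genuine fibre product, and the ``extra'' linear coordinates (the components of $x_h$, $h\in\Omega'\setminus\Omega$, mapping $\tilde{\mathbf{W}}$ to $\mathbf{V}/\tilde{\mathbf{W}}$) are paired by $T$ with coordinates that are integrated out by $\delta'_!$, so the Artin--Schreier sheaf collapses to a skyscraper supported on the stable locus, with a shift. That shift, together with $D-D'-D''$, is what produces $C$. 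Your final bookkeeping with $d_1^\Omega-d_2^\Omega$ versus $d_1^{\Omega'}-d_2^{\Omega'}$ is not quite the right mechanism (note that $\mathbf{Ind}$ as defined here carries no shift), and should be replaced by this Fourier-integral computation over the non-stable directions.
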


With the proposition above, we can easily get the following two results:

\begin{corollary}[\cite{MR1088333}]
	The functor $\mathcal{F}_{\Omega,\Omega'}$ induces an isomorphism between algebras $\mathcal{K}_{\Omega}$ and $\mathcal{K}_{\Omega'}$. 
\end{corollary}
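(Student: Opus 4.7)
The plan is to establish the corollary in three stages: first show that $\mathcal{F}_{\Omega,\Omega'}$ descends to a well-defined $\mathbb{Z}[v,v^{-1}]$-linear map $\mathcal{K}_{\Omega} \to \mathcal{K}_{\Omega'}$; then promote this to an algebra homomorphism using Proposition 3.1; and finally invoke Fourier inversion to obtain invertibility.

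For the first stage, I would use that the underlying graph $\mathbf{\Gamma}$ has no loops, so when $|\mathbf{V}|=pi$ the space $\mathbf{E}_{\mathbf{V},\Omega}$ reduces to a single point, and $\mathcal{F}_{\Omega,\Omega'}$ sends the constant sheaf to a shift of the constant sheaf; thus $\mathcal{F}_{\Omega,\Omega'}(E_{i}^{(p)}) = v^{m_{i,p}} E_{i}^{(p)}$ in $\mathcal{K}_{\Omega'}$ for an explicit integer $m_{i,p}$ read off from the shift $D$. Applying Proposition 3.1 inductively to $L_{\underline{v},\Omega} = \mathbf{Ind}(\bar{\mathbb{Q}}_{l} \boxtimes \cdots \boxtimes \bar{\mathbb{Q}}_{l})$ yields $\mathcal{F}_{\Omega,\Omega'}(L_{\underline{v},\Omega}) \cong L_{\underline{v},\Omega'}[N_{\underline{v}}]$ for an integer $N_{\underline{v}}$ determined by the iterated shift $C$. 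Because $\mathcal{F}_{\Omega,\Omega'}$ is an equivalence of triangulated categories, it preserves direct summands and shifts, so every object of $\mathcal{Q}_{\mathbf{V},\Omega}$ is sent into $\mathcal{Q}_{\mathbf{V},\Omega'}$ and the functor descends to a $\mathbb{Z}[v,v^{-1}]$-linear map at the level of Grothendieck groups, with $v^{\pm 1}$ absorbing the homological shift.

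For the second stage, I would combine Proposition 3.1 with the definition $\mathbf{ind} = \mathbf{Ind}[d_{1}-d_{2}]$ to compute
\begin{equation*}
\mathcal{F}_{\Omega,\Omega'}\bigl([A]\ast[B]\bigr) = v^{e}\,\mathcal{F}_{\Omega,\Omega'}([A])\ast\mathcal{F}_{\Omega,\Omega'}([B]),
\end{equation*}
where the exponent $e$ collects the contributions of the shift $D$ appearing in the definition of $\mathcal{F}$, the shift $C$ in Proposition 3.1, and the difference of the $d_{1}-d_{2}$ constants between $\Omega$ and $\Omega'$. A direct computation with the explicit formulas for $C$, $D$, $d_{1}$, $d_{2}$ (all of which are dimension-vector pairings over $\Omega\setminus\Omega'$) shows that these contributions cancel, so $e=0$ and $\mathcal{F}_{\Omega,\Omega'}$ is multiplicative. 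Alternatively, one can bypass the combinatorial check by observing that both algebras are presented by $E_{i}^{(p)}$ subject to the quantum Serre relations and that $\mathcal{F}_{\Omega,\Omega'}$ sends $E_{i}^{(p)}$ to a $v$-power of $E_{i}^{(p)}$, whence the universal property extends it uniquely to an algebra homomorphism.

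For the third stage, I would apply the standard Fourier inversion for the Deligne transform: $\mathcal{F}_{\Omega',\Omega}\circ\mathcal{F}_{\Omega,\Omega'}$ is isomorphic to the identity up to a Tate twist and an overall cohomological shift, which both become invertible units in $\mathbb{Z}[v,v^{-1}]$ after passing to $\mathcal{K}_{\Omega}$. This gives a two-sided inverse and hence the desired isomorphism of algebras. The main obstacle is the shift bookkeeping in the second stage; controlling it honestly requires writing $C$, $D$, $d_{1}-d_{2}$ in terms of the pairings $\sum_{h\in\Omega\setminus\Omega'}\upsilon_{h'}\upsilon_{h''}$ and verifying the cancellation, while the algebraic shortcut via Serre relations requires first knowing that $\mathcal{K}_{\Omega}\cong {_{\mathbb{Z}}\mathbf{U}^{+}_{v}(\mathfrak{g})}$, which is already invoked in Section 3.1.
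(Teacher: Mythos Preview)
Your proposal is correct and follows the same route the paper implicitly takes: the paper omits the proof entirely, simply stating that the corollary follows ``easily'' from Proposition~3.1 (Lusztig's Theorem~5.4). Your three-stage outline is precisely the intended argument, and the shift bookkeeping in your second stage does indeed cancel: since $d_{2}$ is the dimension of $G_{\mathbf{V}'}\times G_{\mathbf{V}''}$ and is orientation-independent, one only needs $d_{1}^{\Omega}-d_{1}^{\Omega'}+C=0$, which reduces to $\sum_{h\in\Omega\setminus\Omega'}(\upsilon'_{h'}\upsilon''_{h''}-\upsilon'_{h''}\upsilon''_{h'})+\sum_{h\in\Omega\setminus\Omega'}(\upsilon''_{h'}\upsilon'_{h''}-\upsilon'_{h'}\upsilon''_{h''})=0$.

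One caveat: your ``algebraic shortcut'' via the Serre relations is not really a bypass. Knowing that $\mathcal{F}_{\Omega,\Omega'}$ sends $E_{i}^{(p)}$ to $E_{i}^{(p)}$ and that both $\mathcal{K}_{\Omega}$, $\mathcal{K}_{\Omega'}$ are copies of ${_{\mathbb{Z}}\mathbf{U}^{+}_{v}(\mathfrak{g})}$ lets you define \emph{some} algebra isomorphism $\phi$ by the universal property, but it does not by itself prove that $\phi$ agrees with the map induced by $\mathcal{F}_{\Omega,\Omega'}$ on the whole of $\mathcal{K}_{\Omega}$; for that you still need the multiplicativity of $\mathcal{F}_{\Omega,\Omega'}$, i.e.\ the shift cancellation you sketched first. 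So the direct computation is the honest route, and you have correctly identified what it requires.
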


\begin{corollary}[\cite{MR1088333}]
	The functor $\mathcal{F}_{\Omega,\Omega'}$ indeuces a bijection $\mathcal{P}_{\Omega}$ and $\mathcal{P}_{\Omega'}$ and defiens a equivalence of categories $\mathcal{Q}_{\mathbf{V},\Omega} \cong \mathcal{Q}_{\mathbf{V},\Omega'}$. Moreover, if we denote the bijection between the sets $\mathcal{P}_{\Omega}$ and $\mathcal{P}_{\Omega'}$ by $\eta_{\Omega,\Omega'}$, then $\eta_{\Omega',\Omega''}\eta_{\Omega,\Omega'}=\eta_{\Omega,\Omega''}$. 
\end{corollary}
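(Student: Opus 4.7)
The plan is to argue in three steps: first, that $\mathcal{F}_{\Omega,\Omega'}$ maps the generating objects $L_{\underline{v}}$ of $\mathcal{Q}_{\mathbf{V},\Omega}$ into $\mathcal{Q}_{\mathbf{V},\Omega'}$; second, that $\mathcal{F}_{\Omega,\Omega'}$ is an equivalence of additive categories with quasi-inverse, up to an overall shift, given by $\mathcal{F}_{\Omega',\Omega}$; and third, that it sends simple perverse sheaves to simple perverse sheaves. The cocycle relation $\eta_{\Omega',\Omega''}\eta_{\Omega,\Omega'}=\eta_{\Omega,\Omega''}$ will then fall out of the composition law for successive Fourier-Deligne transforms.

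For the first step, $\mathcal{F}_{\Omega,\Omega'}$ is additive, commutes with shifts, and preserves direct summands, so by the definition of $\mathcal{Q}_{\mathbf{V},\Omega}$ it suffices to check that $\mathcal{F}_{\Omega,\Omega'}(L_{\underline{v}}) \in \mathcal{Q}_{\mathbf{V},\Omega'}$ for every flag type $\underline{v}$. When $\underline{v}=(mi)$ is supported at a single vertex, both $\mathbf{E}_{\mathbf{V},\Omega}$ and $\mathbf{E}_{\mathbf{V},\Omega'}$ are the zero vector space since $\mathbf{\Gamma}$ has no loops, so $L_{\underline{v}}$ is a shift of the constant sheaf on a point and $\mathcal{F}_{\Omega,\Omega'}$ acts on it, up to a shift, as the identity. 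For a general flag type $\underline{v}=(m_{1}i_{1},\ldots,m_{l}i_{l})$, the perverse analogue of Lemma 2.1, proved in \cite{MR1088333}, expresses $L_{\underline{v}}$, up to a shift, as an iterated $\mathbf{Ind}$ of such one-vertex pieces. Iterating Proposition 3.2, we conclude that $\mathcal{F}_{\Omega,\Omega'}(L_{\underline{v}})$ is, again up to a shift, the corresponding iterated induction built inside orientation $\Omega'$, and hence lies in $\mathcal{Q}_{\mathbf{V},\Omega'}$.

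For the second and third steps, we appeal to two standard properties of the Fourier-Deligne transform: the Plancherel-type inversion formula $\mathcal{F}_{\Omega',\Omega}\circ\mathcal{F}_{\Omega,\Omega'}\cong \mathrm{id}[N]$ for a computable shift $N$, and the $t$-exactness of $\mathcal{F}_{\Omega,\Omega'}$ for the perverse $t$-structure (both established by Laumon and used already in \cite{MR1088333}). Combined with step one, the inversion formula gives the equivalence $\mathcal{Q}_{\mathbf{V},\Omega}\simeq\mathcal{Q}_{\mathbf{V},\Omega'}$, while $t$-exactness forces simple perverse sheaves to go to simple perverse sheaves, producing the bijection $\eta_{\Omega,\Omega'}$. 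The cocycle identity then reduces to $\mathcal{F}_{\Omega',\Omega''}\circ\mathcal{F}_{\Omega,\Omega'}\cong\mathcal{F}_{\Omega,\Omega''}$ up to an overall shift; this is checked by unfolding both compositions through proper base change on the diagram $\mathbf{E}_{\mathbf{V},\Omega}\leftarrow\mathbf{E}_{\mathbf{V},\Omega\cup\Omega'\cup\Omega''}\to\mathbf{E}_{\mathbf{V},\Omega''}$ and observing that the trace function $T$ is additive with respect to the symmetric difference of orientations, so that the two Artin-Schreier local systems combine into the one defining $\mathcal{F}_{\Omega,\Omega''}$. The main subtlety throughout is the consistent bookkeeping of cohomological shifts, including the shift $[D]$ built into the definition of $\mathcal{F}_{\Omega,\Omega'}$, but once these are tracked carefully all three assertions follow.
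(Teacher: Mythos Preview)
Your proposal is correct and follows the standard argument from \cite{MR1088333}; the paper itself does not give a proof but simply states that the result follows easily from Proposition 3.1 (the compatibility of $\mathcal{F}_{\Omega,\Omega'}$ with the induction functor) and cites Lusztig. Your three-step outline---reducing to the generators $L_{\underline{v}}$ via iterated induction, then invoking Fourier inversion and $t$-exactness, then the additivity of the trace form for the cocycle relation---is exactly the route taken in the cited reference, so there is nothing to add beyond noting that what you call Proposition 3.2 is numbered Proposition 3.1 in this paper.
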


With the two corollaries above, we can denote $\mathcal{K}_{\Omega}$ by $\mathcal{K}$ and $\mathcal{P}_{\mathbf{V},\Omega}$ by $\mathcal{P}_{\mathbf{V}}$ respectively if there is no ambiguity.

\subsection{Bilinear pairing}
Following \cite{MR1247495} and \cite{MR1227098}, we choose  a smooth irreducible variety $\Gamma$ with free $G_{\mathbf{V}}$-action such that the $\bar{\mathbb{Q}}_{l}$-cohomology of $\Gamma$ vanishes in degree $1,2 \cdots m$ for a large enough $m$, then $G_{\mathbf{V}}$  acts freely on $\Gamma \times \mathbf{E}_{\mathbf{V},\Omega}$.  Let us consider the diagram 
\begin{center}
	$\mathbf{E}_{\mathbf{V},\Omega} \xleftarrow{s} \Gamma \times \mathbf{E}_{\mathbf{V},\Omega} \xrightarrow{t} G_{\mathbf{V}} \backslash \Gamma \times \mathbf{E}_{\mathbf{V},\Omega}$
\end{center}
here $G_{\mathbf{V}} \backslash \Gamma \times \mathbf{E}_{\mathbf{V},\Omega}$ is the quotient space.

Let $u: G_{\mathbf{V}} \backslash \Gamma \times \mathbf{E}_{\mathbf{V},\Omega} \rightarrow \{ pt\}$ and $d= dim (G_{\mathbf{V}} \backslash \Gamma) $, then
for any $A,B \in \mathcal{P}_{\mathbf{V},\Omega}$ and $j \in \mathbb{Z}$, we denote the $\bar{\mathbb{Q}}_{l}$-vector space $\mathcal{H}^{j+2d}(u_{!}(t_{\flat}s^{\ast}A \otimes t_{\flat}s^{\ast}B)   )$ by $\mathbf{D}_{j}(\mathbf{V},A,B)$. This vector space does not depend on the choice of $\Gamma$ and $m$. (See details in \cite{MR972345} or \cite{MR1247495}.).

Lusztig introduced a symmetric $\mathbb{Z}[v,v^{-1}]$-bilinear pairing $(-,-): \mathcal{K}_{\mathbf{V},\Omega} \times \mathcal{K}_{\mathbf{V},\Omega} \rightarrow \mathbb{Z}((v))$ by:
\begin{align*}
	([L],[K])=\sum\limits_{j} \dim \mathbf{D}_{j}(\mathbf{V},L,K)v^{j}
\end{align*}
Here $L,K \in \mathcal{P}_{\mathbf{V},\Omega}$ are simple perverse sheaves. In particular, we have $(E_{i},E_{i})=(1-v^{-2})^{-1}$.

Set $(\mathcal{K}_{\mathbf{V},\Omega},\mathcal{K}_{\mathbf{V}',\Omega})=0$ for $|\mathbf{V}|\neq |\mathbf{V}'|$, then  $(-,-): \mathcal{K}_{\Omega} \times \mathcal{K}_{\Omega} \rightarrow \mathbb{Z}((v))$ is a well-defined symmetric $\mathbb{Z}[v,v^{-1}]$-bilinear pairing. Moreover, the pairing $(-,-)$ satisfies the following property: (See details in \cite{MR1227098} Chapter 10 and Chapter 13.)

(1)$(-,-)$ doesn't depned on the choice of $\Omega$;

(2) $([L],[L']) \in v^{-1} \mathbb{Z}[[v^{-1}]] \cap \mathbb{Q}(v)$ if $[L] \neq [L']$ for $L,L' \in \mathcal{P}_{\mathbf{V},\Omega}$;

(3) $([L],[L]) \in 1+ v^{-1}\mathbb{Z}[[v^{-1}]]\cap \mathbb{Q}(v)$ for $L \in \mathcal{P}_{\mathbf{V},\Omega}$.

\subsection{Analysis at sink}
Fix $i \in I$ and an orientation $\Omega$ such that $i$ is a sink, we define $\mathbf{E}_{\mathbf{V},i,p}$ to be the subset of $\mathbf{E}_{\mathbf{V},\Omega}$ consisting of $x$ such that ${\rm{codim}}_{\mathbf{V}_{i}} ( {\rm{Im}} \sum\limits_{h \in \Omega, h''=i} x_{h}) =p$. Then $\mathbf{E}_{\mathbf{V}}$ has a partition $\mathbf{E}_{\mathbf{V},\Omega}= \bigcup \limits_{p} \mathbf{E}_{\mathbf{V},i,p}$. and the union $\mathbf{E}_{\mathbf{V},i, \geq p}= \bigcup\limits_{p' \geq p} \mathbf{E}_{\mathbf{V},i, p'}$ is a closed subset of $\mathbf{E}_{\mathbf{V},\Omega}$.

Given $L \in \mathcal{P}_{\mathbf{V},\Omega}$, there exists a unique integer $t$ such that $supp(L) \subseteq \mathbf{E}_{\mathbf{V},i, \geq t}$ but $supp(L) \nsubseteq \mathbf{E}_{\mathbf{V},i, \geq t+1}$ and we write $t_{i}(L)=t$. Notice that $t_{i}(L) \leq \upsilon_{i}$.
 
 Then we have the following lemma proved by Lusztig, which is called the key lemma in \cite{MR1227098}:
 \begin{lemma} [\cite{MR1088333} Lemma 6.4]
 	With the notation above, fix $0 \leq t \leq \upsilon_{i}$. Assume $|\mathbf{T}|=|\mathbf{V}'|=ti$ and let $d=t(\upsilon_{i}-t)$.
 	
 	(1) Let $L\in\mathcal{P}_{\mathbf{V},\Omega}$ be such that $t_{i}(L)=t$, then $\mathbf{Res}^{\mathbf{V}}_{\mathbf{T},\mathbf{W}}L \in \mathcal{Q}_{\mathbf{W},\Omega}$ is a direct sum of finitely many summands of the form $K'[f']$ for various $K' \in \mathcal{P}_{\mathbf{W},\Omega}$ and $f' \in \mathbb{Z}$. Moreover, exactly one of these summands, denoted by $K[f]$, satisfies $t_{i}(K)=0$ and $f=d$ and the others satisfy $t_{i}(K')> 0$.
 	
 	(2) Let $K \in \mathcal{P}_{\mathbf{V}'',\Omega}$ be such that $t_{i}(K)=0$, then $\mathbf{Ind}^{\mathbf{V}}_{\mathbf{V}',\mathbf{V}''}(\bar{\mathbb{Q}}_{l} \boxtimes K)$ is a direct sum of finitely many summands of the form $L'[g']$ for variuos $L' \in \mathcal{P}_{\mathbf{V},\Omega}$ and $g' \in \mathbb{Z}$. Moreover, exactly one of these summands, denoted by $L[g]$, satisfies $t_{i}(L)=t$ and $g=d$ and the others satisfy $t_{i}(L')> t$.
 	
 	(3) There is a bijectiion $$\pi_{i,t}:\{K \in \mathcal{P}_{\mathbf{V}'',\Omega}|t_{i}(K)=0 \} \rightarrow \{L \in \mathcal{P}_{\mathbf{V},\Omega}|t_{i}(L)=t \}$$   induced by the decompositions of the direct sums above.
 \end{lemma}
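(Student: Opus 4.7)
The plan is to exploit the simplified geometry at a sink to reduce both functors in the statement to operations involving a small resolution (for induction) and an open embedding (for restriction), and then to apply the Beilinson--Bernstein--Deligne decomposition theorem. The crucial geometric simplification is that when $i$ is a sink and $\mathbf{T}$ is concentrated at $i$ with dimension $t$, the space $\mathbf{E}_{\mathbf{T},\Omega}$ is a point, and the only non-trivial $x$-stability condition for $\mathbf{W}\subset\mathbf{V}$ is $\mathrm{Im}(\bigoplus_{h''=i}x_h)\subseteq\mathbf{W}_i$; consequently the vector bundle $\kappa_\Omega$ degenerates to an isomorphism $F_\Omega\cong\mathbf{E}_{\mathbf{W},\Omega}$. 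Under this isomorphism, the stratifications match via $F_\Omega\cap\mathbf{E}_{\mathbf{V},i,\geq t+s}\leftrightarrow \mathbf{E}_{\mathbf{W},i,\geq s}$ for every $s\geq 0$; in particular, the open stratum $\mathbf{E}_{\mathbf{V},i,t}\cap F_\Omega$ corresponds to $\mathbf{E}_{\mathbf{W},i,0}$. Similarly, for induction one looks at the variety $\mathbf{E}''_\Omega$ of pairs $(x,\tilde{\mathbf{W}})$: with $|\mathbf{V}'|=ti$, $\tilde{\mathbf{W}}_i$ is any codimension-$t$ subspace containing $\mathrm{Im}(x)$, so $p_3(\mathbf{E}''_\Omega)=\overline{\mathbf{E}_{\mathbf{V},i,\geq t}}$, with $p_3$ an isomorphism over $\mathbf{E}_{\mathbf{V},i,t}$ (where $\tilde{\mathbf{W}}_i=\mathrm{Im}(x)$ is forced) and with fiber $\mathrm{Gr}(p-t,p)$ of dimension $t(p-t)$ over $\mathbf{E}_{\mathbf{V},i,p}$ for $p>t$.

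For Part (2), I would apply the decomposition theorem to the proper map $p_3$. Because $\bar{\mathbb{Q}}_l$ on the point $\mathbf{E}_{\mathbf{V}',\Omega}$ is semisimple and $p_1$, $p_2$ are smooth descent maps, $\mathbf{Ind}^\mathbf{V}_{\mathbf{V}',\mathbf{V}''}(\bar{\mathbb{Q}}_l\boxtimes K)$ is the proper pushforward under $p_3$ of a shifted pullback of $K$ via the identifications above. Over the open stratum $\mathbf{E}_{\mathbf{V},i,t}$, where $p_3$ is an isomorphism and $\mathbf{E}_{\mathbf{V},i,t}\cong\mathbf{E}_{\mathbf{W},i,0}$, this pushforward is exactly $K|_{\mathbf{E}_{\mathbf{W},i,0}}$ shifted by $d=t(\upsilon_i-t)$ (the fiber dimension contribution that disappears generically but accounts for the normalization over the larger strata). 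The decomposition theorem then produces a unique simple summand $L[d]$ whose IC support is the closure of this open stratum and which therefore has $t_i(L)=t$; every other summand must be supported in the complementary strata $\overline{\mathbf{E}_{\mathbf{V},i,p}}$ with $p>t$, hence $t_i(L')>t$.

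For Part (1), the argument is formally dual. The restriction $\mathbf{Res}^\mathbf{V}_{\mathbf{T},\mathbf{W}}L=(\kappa_\Omega)_!(\iota_\Omega)^*L$ is semisimple by purity of $L$ and the decomposition theorem (applied to the closed embedding followed by the isomorphism $\kappa_\Omega$). Its restriction to the open $\mathbf{E}_{\mathbf{W},i,0}$ is the pullback of the IC local system of $L$ on $\mathbf{E}_{\mathbf{V},i,t}$ under the induced isomorphism $\mathbf{E}_{\mathbf{V},i,t}\cap F_\Omega\cong\mathbf{E}_{\mathbf{W},i,0}$, shifted by $d$; this contributes exactly one IC summand $K[d]$ on $\overline{\mathbf{E}_{\mathbf{W},i,0}}=\mathbf{E}_{\mathbf{W},\Omega}$ with $t_i(K)=0$, and every remaining summand is forced into the smaller strata $\overline{\mathbf{E}_{\mathbf{W},i,>0}}$, giving $t_i(K')>0$. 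Finally, Part (3) follows by reciprocity: starting from $K$ with $t_i(K)=0$, the construction in Part (2) yields $L=\pi_{i,t}(K)$ with generic behavior controlled by $K$ on the open stratum, and applying Part (1) to $L$ recovers $K$ as the unique $t_i=0$ summand, since restriction and induction agree on the common open stratum where both $\kappa_\Omega$ and $p_3$ are isomorphisms. The main obstacle will be carefully tracking the normalization shifts across both constructions and verifying the uniqueness of the "top" summand in each direction --- this uniqueness is precisely what is guaranteed by the fact that $p_3$ is an isomorphism over the open stratum $\mathbf{E}_{\mathbf{V},i,t}$, so that IC-extension from this stratum contributes exactly one simple perverse summand.
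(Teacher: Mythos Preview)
The paper does not give its own proof of this lemma; it is quoted verbatim from Lusztig's original paper \cite{MR1088333}, Lemma~6.4, and used as a black box. Your outline is essentially Lusztig's argument: reduce to the case where $i$ is a sink so that $\mathbf{E}_{\mathbf{T},\Omega}$ is a point, observe that $\kappa_\Omega:F_\Omega\to\mathbf{E}_{\mathbf{W},\Omega}$ is an isomorphism and that $p_3$ is an isomorphism precisely over the open stratum $\mathbf{E}_{\mathbf{V},i,t}$, and then read off the unique ``leading'' summand from the decomposition theorem together with the stratification $F_\Omega\cap\mathbf{E}_{\mathbf{V},i,\geq t+s}\leftrightarrow\mathbf{E}_{\mathbf{W},i,\geq s}$.

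One point in your write-up needs correction. You justify the semisimplicity of $\mathbf{Res}^{\mathbf{V}}_{\mathbf{T},\mathbf{W}}L$ by ``purity of $L$ and the decomposition theorem applied to the closed embedding followed by the isomorphism $\kappa_\Omega$''. The decomposition theorem does not apply to $\iota_\Omega^\ast$: pulling back a pure perverse sheaf along a closed embedding yields a mixed complex, not a pure one, and there is no general semisimplicity statement here. The correct reason (and the one Lusztig uses) is that $\mathbf{Res}$ preserves the category $\mathcal{Q}$, which follows from the explicit formula
\[
\mathbf{Res}^{\mathbf{V}}_{\mathbf{T},\mathbf{W}}L_{\underline{v}}\;\cong\;\bigoplus_{\underline{\tau}+\underline{\omega}=\underline{v}} L_{\underline{\tau}}\boxtimes L_{\underline{\omega}}[-2M(\underline{\tau},\underline{\omega})]
\]
recalled in Section~3.1 of the paper; since every $L\in\mathcal{P}_{\mathbf{V},\Omega}$ is a direct summand of some $L_{\underline{v}}$, its restriction is a direct summand of a semisimple object of $\mathcal{Q}_{\mathbf{T},\Omega}\boxtimes\mathcal{Q}_{\mathbf{W},\Omega}$. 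With that fix, your argument goes through.
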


If $|\mathbf{V}'|=ri$, we denote $\mathbf{V'}$ by $\mathbf{V}'_{r}$ and $\mathbf{V}''$ by $\mathbf{V}''_{r}$. For an orientation $\Omega'$ and $L \in \mathcal{P}_{\mathbf{V},\Omega'}$, we define $s_{i}(L)$ to be the largest integer $r$ satisfying that there exists $L' \in \mathcal{P}_{\mathbf{V}''_{r},\Omega'}$ such that $L$ is isomorphic to a shift of a direct summand of $\mathbf{Ind}^{\mathbf{V}}_{\mathbf{V}'_{r},\mathbf{V}''_{r}}(\bar{\mathbb{Q}}_{l} \boxtimes L')$. Notice that the definition of $s_{i}(L)$ does not depend on the choice of $\Omega'$ by Proposition 3.1.

\begin{proposition} [\cite{MR1088333} Proposition 6.6]
	With the notations above, we have:
	
	(1)  There exist $L'_{r'} \in \mathcal{P}_{\mathbf{V}''_{r'},\Omega'}$ for $r' > s_{i}(L)$ and $L''_{r'} \in  \mathcal{P}_{\mathbf{V}''_{r'},\Omega'}$ for $r' \geq s_{i}(L)$ such that 
	\begin{center}
		$L \oplus \bigoplus \limits_{r' >s_{i}(L)}\mathbf{Ind}^{\mathbf{V}}_{\mathbf{V}'_{r'},\mathbf{V}''_{r'}}(\bar{\mathbb{Q}}_{l} \boxtimes L'_{r'}) \cong \bigoplus \limits_{r' \geq s_{i}(L)}\mathbf{Ind}^{\mathbf{V}}_{\mathbf{V}'_{r'},\mathbf{V}''_{r'}}(\bar{\mathbb{Q}}_{l} \boxtimes L''_{r'}) .$
	\end{center}

   (2) $s_{i}(L)=t_{i}(L)$ if $i$ is a sink in $\Omega'$.

\end{proposition}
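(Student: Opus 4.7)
The plan is to establish part (2) first and then deduce part (1) by downward induction on $t_i(L)$, using the key lemma (Lemma 3.6) as the driver. Throughout I would assume that $i$ is a sink in $\Omega'$, and address the extension to a general orientation at the end.

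For part (2), I would prove the two inequalities separately. The inequality $s_i(L) \le t_i(L)$ comes from a support argument: when $i$ is a sink in $\Omega'$, any $x \in \mathbf{E}_{\mathbf{V},\Omega'}$ admitting an $x$-stable subspace $\tilde{\mathbf{W}} \subset \mathbf{V}$ whose quotient is concentrated at $i$ of dimension $r$ must satisfy $\mathrm{Im}(\bigoplus_{h''=i} x_h) \subseteq \tilde{\mathbf{W}}_i$, since the sink condition forces $h' \ne i$ and hence $\mathbf{V}_{h'} = \tilde{\mathbf{W}}_{h'}$. Thus such $x$ lies in $\mathbf{E}_{\mathbf{V},i,\ge r}$, so the support of $\mathbf{Ind}^{\mathbf{V}}_{\mathbf{V}'_r,\mathbf{V}''_r}(\bar{\mathbb{Q}}_l \boxtimes L')$ is contained in $\mathbf{E}_{\mathbf{V},i,\ge r}$; taking $r = s_i(L)$ yields $t_i(L) \ge s_i(L)$. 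The reverse inequality is immediate from Lemma 3.6(2)--(3): setting $K = \pi_{i,t_i(L)}^{-1}(L)$ exhibits $L[d]$ as a direct summand of $\mathbf{Ind}^{\mathbf{V}}_{\mathbf{V}'_{t_i(L)},\mathbf{V}''_{t_i(L)}}(\bar{\mathbb{Q}}_l \boxtimes K)$, so by definition $s_i(L) \ge t_i(L)$.

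For part (1), I would induct downward on $t := t_i(L) = s_i(L)$, from $t = \upsilon_i$ down to $t = 0$. In the base case $t = \upsilon_i$, the shift $d = t(\upsilon_i - t)$ is zero and the extra summands with $t_i > \upsilon_i$ predicted by Lemma 3.6(2) are vacuously absent, yielding $L \cong \mathbf{Ind}^{\mathbf{V}}_{\mathbf{V}'_{\upsilon_i},\mathbf{V}''_{\upsilon_i}}(\bar{\mathbb{Q}}_l \boxtimes K)$ with $K = \pi_{i,\upsilon_i}^{-1}(L)$; this matches the claim with $L''_{\upsilon_i} = K$ and no other terms. In the inductive step, applying Lemma 3.6(2) with $K = \pi_{i,t}^{-1}(L)$ gives a decomposition $\mathbf{Ind}^{\mathbf{V}}_{\mathbf{V}'_t,\mathbf{V}''_t}(\bar{\mathbb{Q}}_l \boxtimes K) \cong L[d] \oplus \bigoplus_j L_j[g_j]$ with each $t_i(L_j) > t$. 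By part (2), each $s_i(L_j) = t_i(L_j) > t$, so the inductive hypothesis rewrites each $L_j$ as an alternating sum of inductions indexed by $r' \ge s_i(L_j)$; substituting these expressions back and isolating $L$ produces the required decomposition at level $t$.

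The hard part will be tracking the cohomological shifts through the recursion. Lemma 3.6 introduces the shift $d = t(\upsilon_i - t)$, which varies with $t$; consequently the summands $L_j[g_j]$ arrive with their own shifts, and when one folds in the inductive decompositions of the $L_j$'s these shifts must combine so that every surviving term still admits the precise form $\mathbf{Ind}^{\mathbf{V}}_{\mathbf{V}'_{r'},\mathbf{V}''_{r'}}(\bar{\mathbb{Q}}_l \boxtimes (\cdot))$ with $(\cdot) \in \mathcal{P}_{\mathbf{V}''_{r'},\Omega'}$ required by the statement. Finally, to pass from the sink case to a general $\Omega'$, one invokes the Fourier--Deligne equivalence of Corollary 3.3 and Proposition 3.2: $s_i$ is orientation-independent, and since the underlying graph has no loops at $i$, the variety $\mathbf{E}_{\mathbf{V}'_{r'},\Omega'}$ is a point, so $\mathcal{F}_{\Omega,\Omega'}(\bar{\mathbb{Q}}_l) \cong \bar{\mathbb{Q}}_l$ on this factor, allowing the decomposition of (1) to be transported from a sink orientation to $\Omega'$.
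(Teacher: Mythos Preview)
Your proposal is correct and matches the paper's approach: prove (2) first via a support argument (the paper routes the inequality $t_i(L)\ge s_i(L)$ through $L_{\underline{v}'}$ with $\underline{v}'=(ri,\underline{v})$, you argue directly on the induction diagram, but the content is the same) combined with the key lemma, then deduce (1) by descending induction on $t_i(L)=s_i(L)$. Two minor remarks: the key lemma you need here is Lemma~3.4 (the sink version), not Lemma~3.6; and your concern about shifts is cosmetic---the $L'_{r'},L''_{r'}$ in the statement should really be read as objects of $\mathcal{Q}_{\mathbf{V}''_{r'},\Omega'}$ (allowing shifts and direct sums), after which the bookkeeping is routine.
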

We  sketch a proof which is given by Lusztig  in \cite{MR1088333} as follows:
\begin{proof}
	 Without loss of generality, we assume that $i$ is a sink in $\Omega'$. By Lemma 3.4, we can see that $s_{i}(L) \geq t_{i}(L)$. Assume that $L' \in \mathcal{P}_{\mathbf{V}''_{r},\Omega'}$ is a shift of a direct summand of $L_{\underline{v}}$ such that $L$ is isomorphic to a shift of a direct summand of $\mathbf{Ind}^{\mathbf{V}}_{\mathbf{V}'_{r},\mathbf{V}''_{r}}(\bar{\mathbb{Q}}_{l} \boxtimes L')$, then $L$ is a shift of a direct summand of $L_{\underline{v}'}$ for $\underline{v}'=(ri,\underline{v})$. Notice that $supp(L_{\underline{v}'}) \subset E_{\mathbf{V},\Omega',\geq r}$, we can see that $t_{i}(L) \geq r$. Hence $t_{i}(L) \geq s_{i}(L)$. (2) is proved. By desending induction on $s_{i}(L)=t_{i}(L)$, (1) can be proved easily.
\end{proof}

\subsection{Analysis at source}
 
 Fix $i \in I$ and an orientation $\Omega$ such that $i$ is a source, we define $\mathbf{E}_{\mathbf{V},i}^{p}$ to be the subset of $\mathbf{E}_{\mathbf{V},\Omega}$ consisting of $x$ such that ${\rm{dim}}( {\rm{Ker}} \bigoplus\limits_{h \in \Omega, h'=i} x_{h}) =p$. Then $\mathbf{E}_{\mathbf{V}}$ has a partition $\mathbf{E}_{\mathbf{V},\Omega}= \bigcup \limits_{p} \mathbf{E}_{\mathbf{V},i}^{p}$. and the union $\mathbf{E}_{\mathbf{V},i}^{\geq p}= \bigcup\limits_{p' \geq p} \mathbf{E}_{\mathbf{V},i}^{p'}$ is a closed subset.

 Given $L \in \mathcal{P}_{\mathbf{V},\Omega}$, there exists a unique integer $t$ such that $supp(L) \subseteq \mathbf{E}_{\mathbf{V},i}^{ \geq t}$ but $supp(L) \nsubseteq \mathbf{E}_{\mathbf{V},i}^{\geq t+1}$ and we write $t_{i}^{\ast}(L)=t$. Notice that $t_{i}^{\ast}(L) \leq \upsilon_{i}$.
 
 By a similar arguments, we can prove the following result, which is dual to Lemma 3.4:
 \begin{lemma}
 	With the notation above, fix $0 \leq t \leq \upsilon_{i}$. Assume $|\mathbf{W}|=|\mathbf{V}''|=ti$ and let $d=t(\upsilon_{i}-t)$.
 	
 	(1) Let $L\in\mathcal{P}_{\mathbf{V},\Omega}$ be such that $t_{i}^{\ast}(L)=t$, then $\mathbf{Res}^{\mathbf{V}}_{\mathbf{T},\mathbf{W}}L \in \mathcal{Q}_{\mathbf{T},\Omega}$ is a direct sum of finitely many summands of the form $K'[f']$ for various $K' \in \mathcal{P}_{\mathbf{T},\Omega}$ and $f' \in \mathbb{Z}$. Moreover, exactly one of these summands, denoted by $K[f]$, satisfies $t_{i}^{\ast}(K)=0$ and $f=d$ and the others satisfy $t_{i}^{\ast}(K')> 0$.
 	
 	(2) Let $K \in \mathcal{P}_{\mathbf{V}',\Omega}$ be such that $t_{i}^{\ast}(K)=0$, then $\mathbf{Ind}^{\mathbf{V}}_{\mathbf{V}',\mathbf{V}''}(K \boxtimes \bar{\mathbb{Q}}_{l} )$ is a direct sum of finitely many summands of the form $L'[g']$ for variuos $L' \in \mathcal{P}_{\mathbf{V},\Omega}$ and $g' \in \mathbb{Z}$. Moreover, exactly one of these summands, denoted by $L[g]$, satisfies $t_{i}^{\ast}(L)=t$ and $g=d$ and the others satisfy $t_{i}^{\ast}(L')> t$.
 	
 	(3) There is a bijectiion $$\pi^{\ast}_{i,t}:\{K \in \mathcal{P}_{\mathbf{V}',\Omega}|t_{i}^{\ast}(K)=0 \} \rightarrow  \{L \in \mathcal{P}_{\mathbf{V},\Omega}|t_{i}^{\ast}(L)=t \} $$  induced by the decompositions of the direct sums above.
 \end{lemma}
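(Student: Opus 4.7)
The plan is to mirror Lusztig's proof of Lemma 3.4 in the dual setting appropriate to a source, where the $i$-concentrated factor is realized as the $x$-stable subspace $\mathbf{W}$ rather than the quotient $\mathbf{V}/\mathbf{W}$. The key geometric input is the description of $F_{\Omega}$ when $\mathbf{W}$ is concentrated at the source $i$ with $\dim \mathbf{W}_{i}=t$: the stability condition $x_{h}(\mathbf{W}_{h'})\subseteq \mathbf{W}_{h''}$ is automatic for $h'\neq i$ (since $\mathbf{W}_{h'}=0$), and for $h'=i$ it forces $x_{h}(\mathbf{W}_{i})\subseteq\mathbf{W}_{h''}=0$, because $i$ is a source so $h''\neq i$ and thus $\mathbf{W}_{h''}=0$. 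Collecting across all $h\in\Omega$ with $h'=i$ gives $\mathbf{W}_{i}\subseteq \Ker\bigoplus_{h\in\Omega,\,h'=i}x_{h}$. This identifies $\iota_{\Omega}(F_{\Omega})$ with a natural subvariety of $\mathbf{E}_{\mathbf{V},i}^{\geq t}$, tying the restriction geometry directly to the stratification defining $t_{i}^{\ast}$.

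With this identification, I would carry out the dimension-count and decomposition-theorem argument in parallel to the sink case. Since $\mathbf{W}$ is concentrated at the source $i$, $\mathbf{E}_{\mathbf{W},\Omega}=0$, and $\kappa_{\Omega}$ is a vector bundle onto $\mathbf{E}_{\mathbf{T},\Omega}$ whose fibers record the compatible extensions of $x'\in\mathbf{E}_{\mathbf{T},\Omega}$ by maps out of $\mathbf{W}_{i}$; the fiber-dimension calculation is the mirror image of the one in Lemma 3.4. Applying this to $L\in\mathcal{P}_{\mathbf{V},\Omega}$ with $t_{i}^{\ast}(L)=t$, combined with the decomposition theorem for $\pi_{\underline{v},\Omega}$ and the observation that $L$ is supported on $\mathbf{E}_{\mathbf{V},i}^{\geq t}$ but not on $\mathbf{E}_{\mathbf{V},i}^{\geq t+1}$, forces the decomposition claimed in (1): exactly one summand $K[f]$ with $t_{i}^{\ast}(K)=0$ and $f=d=t(\upsilon_{i}-t)$, with all other simple summands $K'$ having $t_{i}^{\ast}(K')>0$.

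For (2), I would use the adjunction between $\mathbf{Ind}$ and $\mathbf{Res}$ combined with the support argument: when $t_{i}^{\ast}(K)=0$, the induced complex $\mathbf{Ind}^{\mathbf{V}}_{\mathbf{V}',\mathbf{V}''}(K\boxtimes \bar{\mathbb{Q}}_{l})$ is supported in $\mathbf{E}_{\mathbf{V},i}^{\geq t}$ and admits a unique simple summand $L[g]$ with $t_{i}^{\ast}(L)=t$ and $g=d$; plugging this $L$ into (1) recovers $K$ as the distinguished summand, yielding both uniqueness and the inverse relationship. Part (3) is then immediate: the assignments $L\mapsto K$ from (1) and $K\mapsto L$ from (2) are mutually inverse on the indicated subsets, producing the bijection $\pi_{i,t}^{\ast}$.

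The main obstacle will be verifying that the distinguished shift is exactly $d=t(\upsilon_{i}-t)$. This amounts to computing the precise dimension of the fibers of $\kappa_{\Omega}$ and $\iota_{\Omega}$ in the source-concentrated setting and checking that the arithmetic mirrors Lusztig's sink-case computation with the roles of quotient and subspace correctly interchanged. A cleaner alternative, which I would mention but likely avoid for transparency, is to reduce to Lemma 3.4 via the Fourier--Deligne transformation $\mathcal{F}_{\Omega,\Omega'}$ with $\Omega'$ obtained from $\Omega$ by reversing every edge incident to $i$; this requires separately verifying that $\mathcal{F}_{\Omega,\Omega'}$ intertwines the stratifications $\mathbf{E}_{\mathbf{V},i}^{\geq p}$ and $\mathbf{E}_{\mathbf{V},i,\geq p}$, which is itself a nontrivial rank--nullity computation.
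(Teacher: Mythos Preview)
Your proposal is correct and follows exactly the approach the paper indicates: the paper does not give a detailed proof of this lemma but simply states that it follows ``by a similar argument'' dual to Lemma~3.4, and your outline carries out precisely that dualization, correctly identifying the key geometric fact that when $\mathbf{W}$ is concentrated at the source $i$ the stability condition becomes $\mathbf{W}_{i}\subseteq \Ker\bigoplus_{h\in\Omega,\,h'=i}x_{h}$, which ties $F_{\Omega}$ to the stratification $\mathbf{E}_{\mathbf{V},i}^{\geq t}$. Your alternative via Fourier--Deligne is also a legitimate route (and the paper uses Fourier--Deligne elsewhere to transport between orientations), but as you note the direct dualization is more transparent here.
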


  In this section, if $|\mathbf{V}''|=ri$, we denote $\mathbf{V}'$ by $\mathbf{V}'_{r}$ and $\mathbf{V}''$ by $\mathbf{V}''_{r}$. For an orientation $\Omega'$ and $L \in \mathcal{P}_{\mathbf{V},\Omega' }$, we define $s_{i}^{\ast}(L)$ to be the largest integer $r$ satisfying that there exists $L' \in \mathcal{P}_{\mathbf{V}'_{r},\Omega'}$ such that $L$ is isomorphic to a direct summand of $\mathbf{Ind}^{\mathbf{V}}_{\mathbf{V}'_{r},\mathbf{V}''_{r}}(L' \boxtimes \bar{\mathbb{Q}}_{l})$.

 With the notations above, we have the following proposition, which is dual to Proposition 3.5:
 \begin{proposition}
 	
 	(1)  There exist $L'_{r'} \in \mathcal{P}_{\mathbf{V''}_{r'},\Omega'}$ for $r' > s_{i}^{\ast}(L)$ and $L''_{r'} \in  \mathcal{P}_{\mathbf{V''}_{r'},\Omega'}$ for $r' \geq s_{i}^{\ast}(L)$ such that 
 	\begin{center}
 		$L \oplus \bigoplus \limits_{r' >s_{i}^{\ast}(L)}\mathbf{Ind}^{\mathbf{V}}_{\mathbf{V}'_{r'},\mathbf{V}''_{r'}}(L'_{r'} \boxtimes \bar{\mathbb{Q}}_{l}) \cong \bigoplus \limits_{r' \geq s_{i}^{\ast}(L)}\mathbf{Ind}^{\mathbf{V}}_{\mathbf{V}'_{r'},\mathbf{V}''_{r'}}(L''_{r'} \boxtimes \bar{\mathbb{Q}}_{l}). $
 	\end{center}
 	
 	(2) $s_{i}^{\ast}(L)=t_{i}^{\ast}(L)$ if $i$ is a source in $\Omega'$.
 	
 \end{proposition}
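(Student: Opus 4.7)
The plan is to mirror Lusztig's sketched proof of Proposition 3.5, replacing Lemma 3.4 by Lemma 3.6 throughout, and swapping the codimension condition on $\mathrm{Im}\bigoplus_{h''=i} x_{h}$ for the dimension condition on $\Ker\bigoplus_{h'=i} x_{h}$. First I would show that $s_{i}^{\ast}(L)$ is independent of the orientation $\Omega'$: by Corollary 3.3 the Fourier--Deligne transform takes simple perverse sheaves to simple perverse sheaves, and by Proposition 3.1 it commutes with $\mathbf{Ind}$ up to a shift; since $\mathbf{V}''_{r}$ is concentrated at the single vertex $i$ and the graph has no loops, $\mathbf{E}_{\mathbf{V}''_{r},\Omega'}$ is a single point for every orientation, so the constant sheaf $\bar{\mathbb{Q}}_{l}$ is preserved. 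Thus it suffices to assume $i$ is a source in $\Omega'$ and prove (2) in that case.

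For $s_{i}^{\ast}(L)\geq t_{i}^{\ast}(L)$: setting $t=t_{i}^{\ast}(L)$, Lemma 3.6(2) supplies $K\in\mathcal{P}_{\mathbf{V}'_{t},\Omega'}$ with $t_{i}^{\ast}(K)=0$ such that $L[d]$ is a direct summand of $\mathbf{Ind}^{\mathbf{V}}_{\mathbf{V}'_{t},\mathbf{V}''_{t}}(K\boxtimes\bar{\mathbb{Q}}_{l})$ with $d=t(\upsilon_{i}-t)$, so by the definition of $s_{i}^{\ast}$ we get $s_{i}^{\ast}(L)\geq t$. For the reverse inequality, write $r=s_{i}^{\ast}(L)$ and fix $L'\in\mathcal{P}_{\mathbf{V}'_{r},\Omega'}$ together with a flag type $\underline{v}$ such that $L'$ is a shift of a direct summand of $L_{\underline{v}}$ and $L$ is a shift of a direct summand of $\mathbf{Ind}^{\mathbf{V}}_{\mathbf{V}'_{r},\mathbf{V}''_{r}}(L'\boxtimes\bar{\mathbb{Q}}_{l})$. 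Using the identity $\mathbf{Ind}^{\mathbf{V}}_{\mathbf{V}',\mathbf{V}''}(L_{\underline{v}}\boxtimes L_{(ri)})=L_{(\underline{v},ri)}$ from Section 3, $L$ is a shift of a direct summand of $L_{\underline{v}'}$ with $\underline{v}'=(\underline{v},ri)$. For any $(x,f)\in\tilde{\mathcal{F}}_{\underline{v}',\Omega'}$, the bottom nonzero step of $f$ is an $x$-stable subspace $\mathbf{V}^{\flat}$ concentrated at vertex $i$ of dimension $r$; since $i$ is a source in $\Omega'$, every $h\in\Omega'$ with $h'=i$ satisfies $x_{h}(\mathbf{V}^{\flat}_{i})\subseteq\mathbf{V}^{\flat}_{h''}=0$, so $\mathbf{V}^{\flat}_{i}\subseteq\Ker\bigoplus_{h\in\Omega',\,h'=i}x_{h}$ and therefore $x\in\mathbf{E}_{\mathbf{V},i}^{\geq r}$. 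This yields $\mathrm{supp}(L_{\underline{v}'})\subseteq\mathbf{E}_{\mathbf{V},i}^{\geq r}$ and hence $t_{i}^{\ast}(L)\geq r$, completing the proof of (2).

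For part (1) I would argue by descending induction on $s_{i}^{\ast}(L)=t_{i}^{\ast}(L)$: taking $K=(\pi^{\ast}_{i,r})^{-1}(L)$ from Lemma 3.6(3), the induction $\mathbf{Ind}^{\mathbf{V}}_{\mathbf{V}'_{r},\mathbf{V}''_{r}}(K\boxtimes\bar{\mathbb{Q}}_{l})$ decomposes as $L[d]$ plus shifted copies of simple perverse sheaves $L''$ all satisfying $t_{i}^{\ast}(L'')>r$; the induction hypothesis applied to each such $L''$ produces the claimed identity after regrouping the contributions indexed by $r'\geq r$ and $r'>r$ respectively. The main obstacle is the support argument in the middle paragraph, which is the source-analogue of Lusztig's codimension estimate for sinks; once this is isolated, the rest of the argument is formally dual to the one Lusztig sketched for Proposition 3.5.
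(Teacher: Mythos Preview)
Your proposal is correct and follows essentially the same approach as the paper: the paper presents Proposition 3.7 as the formal dual of Proposition 3.5 without a separate proof, and your argument is precisely that dualization---replacing sink by source, Lemma 3.4 by Lemma 3.6, the codimension-of-image condition by the dimension-of-kernel condition, and the flag type $(ri,\underline{v})$ by $(\underline{v},ri)$, with the support estimate carried out exactly as you describe.
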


\section{An $I \times \mathbb{Z}_{2}$-colored graph $\mathcal{G}_{1}$ of $\mathcal{K}$}
\subsection{Kashiwara's operator in rank one case}
We fix an $i \in I$. For any $\mathbf{V}$, we choose a  decompositions $\mathbf{V}=\mathbf{T}\oplus {\mathbf{V}''}$ such that $|\mathbf{T}|=i$. Then $\mathbf{E}_{\mathbf{T},\Omega} \times \mathbf{E}_{\mathbf{V}'',\Omega} \cong \mathbf{E}_{\mathbf{V}'',\Omega}$, hence $\mathbf{res}^{\mathbf{V}}_{\mathbf{T},{\mathbf{V}''}}: \mathcal{D}^{b}_{G_{\mathbf{V}}}(\mathbf{E}_{\mathbf{V},\Omega}) \rightarrow \mathcal{D}^{b}_{G_{ \mathbf{V}''}}(\mathbf{E}_{\mathbf{V}'',\Omega})$. Similarly, we choose $\mathbf{V}=\mathbf{V}'\oplus \mathbf{W}$ such that $|\mathbf{W}|=i$, then we have $\mathbf{res}^{\mathbf{V}}_{\mathbf{V}',\mathbf{W}}: \mathcal{D}^{b}_{G_{\mathbf{V}}}(\mathbf{E}_{\mathbf{V},\Omega}) \rightarrow \mathcal{D}^{b}_{G_{ \mathbf{V}'}}(\mathbf{E}_{\mathbf{V}',\Omega})$

\begin{definition}
	We define $_{i} \mathcal{R}$ to be the functor $\bigoplus\limits_{\mathbf{V}} \mathbf{D}\circ \mathbf{res}^{\mathbf{V}}_{\mathbf{T},\mathbf{V}''} \circ \mathbf{D} $ and $\mathcal{R}_{i}$ to be the functor $\bigoplus\limits_{\mathbf{V}} \mathbf{D}\circ \mathbf{res}^{\mathbf{V}}_{\mathbf{V}',\mathbf{W}} \circ \mathbf{D} $, then $_{i} \mathcal{R}$ and $\mathcal{R}_{i}$ induces  operators $_{i}r$ and  $r_{i}: \mathcal{K}\rightarrow \mathcal{K}$ defined by:
	\begin{center}
		$_{i}r([A])=[_{i} \mathcal{R}(A) ],$\\
		$r_{i}([A])=[ \mathcal{R}_{i}(A) ]$
	\end{center}
for any $A \in \mathcal{Q}_{\mathbf{V},\Omega}$.

For $n \in \mathbb{N}$, we define operators $\theta_{i}^{(n)}$ and $ \theta_{i}^{\ast(n)}:  \mathcal{K}\rightarrow \mathcal{K}$ by setting:
\begin{center}
	$\theta_{i}^{(n)}([A])=E_{i}^{(n)}  \ast [A],$\\
	$\theta_{i}^{\ast (n)}([A])=[A] \ast E_{i}^{(n)} $ 
\end{center}
for any $A \in \mathcal{Q}_{\mathbf{V},\Omega}$. In particular, we denote $\theta_{i}^{(n)}$ and $ \theta_{i}^{\ast(n)}$ by $\theta_{i}$ and $ \theta_{i}^{\ast}$ respectively if $n=1$.
\end{definition}

 The following relation is easy to know and it implies that the operators $_{i}r,r_{i}$ coincide with the derivative operators of $U^{+}_{q}(\mathfrak{g})$(See details in \cite{zhao2022derivation}.):
\begin{lemma}
	The operators satisfies the following relation:
	\begin{center}
		$_{i}r \theta_{i}^{(n)}=v^{2n}\theta_{i}^{(n)}{_{i}r}+v^{n-1}\theta_{i}^{(n-1)}$ for $n \geq 1$.
	\end{center}
\end{lemma}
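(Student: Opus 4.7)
The plan is to prove the relation by induction on $n$. The base case $n=1$ is the genuine geometric input, and the inductive step is a formal manipulation using the divided power identity.

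For the base case I want to establish
$${_i r}\, \theta_i \;=\; v^{2}\, \theta_i\, {_i r} \;+\; \mathrm{id}.$$
Unwinding the definitions, $\theta_i$ is induction along the diagram of Section 2.2 with first factor $\bar{\mathbb{Q}}_{l}$ on $\mathbf{E}_{\mathbf{V}'_{1},\Omega}=\{\mathrm{pt}\}$ for $|\mathbf{V}'_{1}|=i$, while ${_i r} = \mathbf{D}\circ \mathbf{res}^{\mathbf{V}}_{\mathbf{T},\mathbf{V}''}\circ \mathbf{D}$ with $|\mathbf{T}|=i$. Thus the identity reduces to a Mackey-type formula for the composition $\mathbf{Res}\circ\mathbf{Ind}$. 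Doing the base change along the induction and restriction diagrams of Section 3 (restricted to the case where both the inducing and the restricting summands have dimension $i$), the relevant variety stratifies into exactly two strata indexed by the relative position of the inducing subspace $\tilde{\mathbf{W}}$ and the splitting summand $\mathbf{T}$: the open stratum where $\mathbf{T}\cap\tilde{\mathbf{W}}=0$, and the closed stratum where $\mathbf{T}=\tilde{\mathbf{W}}$. After tracking the shifts $[d_{1}-d_{2}]$ from $\mathbf{ind}$, $[d_{1}-d_{2}-2d_{3}]$ from $\mathbf{res}$, and the Verdier twists implicit in $\mathbf{D}\,\mathbf{res}\,\mathbf{D}$, the open stratum contributes $v^{2}\theta_{i}\,{_i r}$ and the closed stratum contributes the identity.

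For the inductive step I use the divided power relation $\theta_{i}\,\theta_{i}^{(n-1)}=[n]\,\theta_{i}^{(n)}$, which is the geometric avatar of $e_{i}e_{i}^{(n-1)}=[n]e_{i}^{(n)}$ valid in $\mathbf{U}^{+}_{v}(\mathfrak{g})$. Writing
$$[n]\,{_i r}\,\theta_{i}^{(n)} \;=\; {_i r}\,\theta_{i}\,\theta_{i}^{(n-1)} \;=\; \bigl(v^{2}\theta_{i}\,{_i r}+\mathrm{id}\bigr)\theta_{i}^{(n-1)},$$
inserting the inductive hypothesis for ${_i r}\,\theta_{i}^{(n-1)}$, and collecting terms using $\theta_{i}\,\theta_{i}^{(n-2)}=[n-1]\,\theta_{i}^{(n-1)}$, one obtains
$$[n]\,{_i r}\,\theta_{i}^{(n)} \;=\; v^{2n}[n]\,\theta_{i}^{(n)}\,{_i r} \;+\; \bigl(v^{n}[n-1]+1\bigr)\theta_{i}^{(n-1)}.$$
The numerical identity $v^{n}[n-1]+1 = v^{n-1}[n]$, which is immediate from $[m]=(v^{m}-v^{-m})/(v-v^{-1})$, then yields the claim upon dividing by $[n]$.

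The main obstacle is the base case: carefully tracking all the degree shifts and Verdier-duality twists in $\mathbf{D}\,\mathbf{res}\,\mathbf{D}$ through the Mackey decomposition so that the coefficient of the open-stratum contribution comes out exactly as $v^{2}$ and not some other power. Once this bookkeeping is completed, the remainder of the argument is purely formal and parallels the classical derivation relation $e_{i}'e_{i}^{(n)}=v^{2n}e_{i}^{(n)}e_{i}'+v^{n-1}e_{i}^{(n-1)}$ in $\mathbf{U}^{+}_{v}(\mathfrak{g})$, consistent with the interpretation of ${_i r}$ as the derivation operator dual to right multiplication by $E_{i}$.
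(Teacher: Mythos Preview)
The paper does not actually supply a proof of this lemma: it simply asserts that the relation ``is easy to know'' and refers to \cite{zhao2022derivation} for the identification of ${_i r}$ with the standard derivation operator on $U_q^+(\mathfrak g)$. Your proposal therefore fills in what the paper omits, and the strategy you outline is the natural one.

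Your inductive step is correct as written. The computation
\[
[n]\,{_i r}\,\theta_i^{(n)} = v^{2n}[n]\,\theta_i^{(n)}{_i r} + (v^n[n-1]+1)\,\theta_i^{(n-1)}
\]
together with the identity $v^n[n-1]+1 = v^{n-1}[n]$ is verified by direct expansion of the quantum integers. One minor point: dividing by $[n]$ is justified because $\mathcal K$ is a free $\mathbb Z[v,v^{-1}]$-module, hence torsion-free, so equality after multiplying by $[n]$ implies equality of the operators themselves (or equivalently one may pass to $\mathbb Q(v)$).

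For the base case, your Mackey-type sketch is the right idea and is exactly the kind of computation carried out in the reference the paper cites. You are honest that the bookkeeping of shifts through $\mathbf D\circ\mathbf{res}\circ\mathbf D$ is the delicate point and have not carried it out here. That is the only genuine gap: the stratification into the two loci $\mathbf T\cap\tilde{\mathbf W}=0$ and $\mathbf T\subset\tilde{\mathbf W}$ is correct, and the identification of the two contributions with $\theta_i\,{_i r}$ (up to shift) and the identity is standard, but the claim that the shift on the open stratum is precisely $v^2$ must be checked by hand. Once that is done your argument is complete and matches what the paper implicitly relies on.
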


Notice that for $N> \dim \mathbf{V}_{i}$, $(_{i}r)^{N}([A])=0$ for any $A \in \mathcal{Q}_{\mathbf{V},\Omega}$, hence the operator $\Pi_{t} =\sum\limits_{s \geq 0} (-1)^{s}v^{(s-1)s/2} \theta_{i}^{(s)}(_{i}r)^{(s+t)}: \mathcal{K} \rightarrow \mathcal{K} $ is well-defined and satisfies the following properties:

\begin{lemma}[\cite{MR1227098} Lemma 16.1.2]
	(1) We have $_{i}r \Pi_{t}=0$ for any $t \geq 0$;\\
	(2) Notice that for any $x \in \mathcal{K}$,  $\sum\limits_{t \geq 0} v^{-(t-1)t/2} \theta_{i}^{(t)} \Pi_{t}(x)$ is a finite sum. We have $\sum\limits_{t \geq 0} v^{-(t-1)t/2} \theta_{i}^{(t)} \Pi_{t}=\mathbf{Id}_{\mathcal{K}}$;\\
	(3) Given $x \in \mathcal{K}$, there is a unique collection of $\{x_{N}\in \mathcal{K}| N \geq 0\}$ such that  $x=\sum \limits_{N \geq 0} \theta_{i}^{(N)}x_{N}$ and  $_{i}r(x_{N})=0$. Indeed, $x_{N}=v^{-(N-1)N/2}\Pi_{N}(x)$. More precisely, if we denote $\Ker {_{i}r}$ by $\mathcal{K}(0)$ and set $\mathcal{K}(n)= \theta_{i}^{(n)} \mathcal{K}(0)$, we have $\mathcal{K}=\bigoplus \limits_{n \geq 0}\mathcal{K}(n)$.
\end{lemma}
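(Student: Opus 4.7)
The plan is to prove parts (1), (2), (3) in order. The two essential ingredients will be the commutation relation of Lemma 3.7 and the Gaussian binomial identity
\[
\sum_{s=0}^{n}(-1)^{s}v^{(n-1)s}\binom{n}{s}_{v}=0\quad(n\geq 1),
\]
where $\binom{n}{s}_{v}$ denotes the Gaussian binomial coefficient; this identity, a quantum analog of $(1-1)^{n}=0$, appears both in (2) and in the uniqueness half of (3).

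For part (1) I would substitute Lemma 3.7 into ${_{i}r}\,\Pi_{t}$ term by term. Applying $_{i}r$ to $\theta_{i}^{(s)}({_{i}r})^{s+t}$ produces two contributions, $v^{2s}\theta_{i}^{(s)}({_{i}r})^{s+t+1}$ and $v^{s-1}\theta_{i}^{(s-1)}({_{i}r})^{s+t}$. Reindexing the second contribution via $s\mapsto s+1$ shows that both resulting sums are supported on $\theta_{i}^{(s)}({_{i}r})^{s+t+1}$ with the same $v$-exponent $s(s+3)/2$ and opposite signs; everything cancels term by term. This is purely formal.

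For part (2) I would expand both outer and inner sums and use the divided-power relation $\theta_{i}^{(t)}\theta_{i}^{(s)}=\binom{t+s}{t}_{v}\theta_{i}^{(t+s)}$ in $\mathbf{U}^{+}_{v}(\mathfrak{g})$. Reindexing by $n=s+t$ turns the double sum into
\[
\sum_{n\geq 0}\Bigl(\sum_{s=0}^{n}(-1)^{s}v^{A(n,s)}\binom{n}{s}_{v}\Bigr)\theta_{i}^{(n)}({_{i}r})^{n},\qquad A(n,s)=\tfrac{(s-1)s}{2}-\tfrac{(n-s-1)(n-s)}{2}.
\]
An elementary simplification yields $A(n,s)=(n-1)(2s-n)/2$, so after pulling out $v^{-n(n-1)/2}$ the inner sum matches the Gaussian binomial identity and vanishes for $n\geq 1$. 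Only the $n=0$ term survives and equals $\mathbf{Id}_{\mathcal{K}}$.

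For part (3), existence of the decomposition follows immediately from (1) and (2): define $x_{N}=v^{-N(N-1)/2}\Pi_{N}(x)$, use (1) to see $x_{N}\in\Ker{_{i}r}=\mathcal{K}(0)$, and use (2) to recover $x$. For uniqueness, the key computation is to show that $\Pi_{t}(\theta_{i}^{(N)}y)=\delta_{N,t}\,v^{N(N-1)/2}\,y$ for every $y\in\mathcal{K}(0)$; then applying $\Pi_{t}$ to any relation $\sum_{N}\theta_{i}^{(N)}y_{N}=0$ forces $y_{t}=0$ for each $t$. To prove this formula I would first establish inductively (using Lemma 3.7 together with ${_{i}r}\,y=0$) the identity $({_{i}r})^{s}\theta_{i}^{(N)}y=v^{sN-s(s+1)/2}\theta_{i}^{(N-s)}y$ for $0\leq s\leq N$, vanishing for $s>N$, then substitute into the definition of $\Pi_{t}$; the coefficient collapses again by the same Gaussian binomial identity, this time with $M=N-t$. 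The direct-sum decomposition $\mathcal{K}=\bigoplus_{n\geq 0}\mathcal{K}(n)$ is then immediate. The main obstacle throughout is the careful bookkeeping of the quadratic $v$-exponents; once one has verified $A(n,s)=(n-1)(2s-n)/2$ and the analogous simplification in part (3), everything else reduces mechanically to Lemma 3.7 and the one Gaussian binomial identity recalled above.
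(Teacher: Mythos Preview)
Your argument is correct. The paper does not actually supply a proof of this lemma; it is stated with a citation to Lusztig's book and used as a black box. What you have written is a complete and correct direct verification, relying only on the commutation relation (which in this paper is Lemma~4.2, not Lemma~3.7 as you cite it), the divided-power identity $\theta_i^{(t)}\theta_i^{(s)}=\binom{t+s}{s}_v\theta_i^{(t+s)}$, and the Gaussian binomial identity $\sum_{s=0}^{n}(-1)^{s}v^{(n-1)s}\binom{n}{s}_{v}=0$ for $n\geq 1$. I checked your exponent computations in all three parts and they are accurate; in particular your simplification $A(n,s)=(n-1)(2s-n)/2$ in part~(2) and the analogous reduction in part~(3) both go through. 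One cosmetic point: in the definition of $\Pi_t$ the paper writes $({_{i}r})^{(s+t)}$, but as you correctly inferred (and as the paper's later use in the proof of Lemma~4.15 confirms) this denotes the ordinary $(s+t)$-th power, not a divided power.
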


With the lemma above, we can introduce the Kashiwara's operators of $\mathcal{K}$ following \cite{MR1227098}.
 \begin{definition}
 	  We define linear operators $\tilde{\phi}_{i}, \tilde{\epsilon}_{i}: \mathcal{K} \rightarrow \mathcal{K}$ as the following:
 	 \begin{align*}
 	 	\tilde{\phi}_{i}(\theta_{i}^{(n)}y)=&\theta_{i}^{(n+1)}y,\\
 	 \tilde{\epsilon}_{i}(\theta_{i}^{(n)}y)=&\theta_{i}^{(n-1)}y~ if~ n>0,\\
 	 	\tilde{\epsilon}_{i}(\theta_{i}^{(n)}y)=&0~ if~ n=0,
 	 \end{align*}
  for $n \in \mathbb{N}$ and $y \in \mathcal{K}(0)$.
 \end{definition}
\begin{remark}
	If we identify $\mathcal{K}$ with $_{\mathbb{Z}}U^{+}_{q}(\mathfrak{g})$, then the operators $\tilde{\phi}_{i}, \tilde{\epsilon}_{i}$ coincide with the Kashiwara's operators $\tilde{f}_{i},\tilde{e}_{i}$ on $_{\mathbb{Z}}U^{+}_{q}(\mathfrak{g})$ in \cite{MR1115118}.
\end{remark}

Assume $i$ is a sink in $\Omega$, let $\mathcal{B}=\{[L]| L \in \mathcal{P}_{\Omega}\}$ and $\mathcal{B}_{i,n}= \{[L] \in \mathcal{P}_{\Omega}| t_{i}(L)=n \}, n \geq 0, i \in I$, then $\pi_{i,n}$ induces a bijection between the sets $\mathcal{B}_{i,0}$ and $\mathcal{B}_{i,n}$. Moreover, by Lemma 3.4 and Proposition 3.5 we can see that the basis $\mathcal{B}$ is apated in the sense of Lusztig in \cite{MR1227098} Section 16.3.1. More precisely, we have:

\begin{proposition} [\cite{MR1227098} Section 17.3.2]  The following results hold:\\ 
	(1) The set $\bigcup \limits_{N \geq n}\mathcal{B}_{i,n} $ form a basis of $\theta_{i}^{(n)}\mathcal{K}$;\\
	(2) For $[L_{0}] \in \mathcal{B}_{i,0}$, we have $\theta_{i}^{(n)}[L_{0}]-\pi_{i,n}([L_{0}]) \in \theta_{i}^{(n+1)}\mathcal{K}$.
\end{proposition}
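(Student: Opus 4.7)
The plan is to prove parts (1) and (2) simultaneously by upward induction on $n$. The base case $n=0$ is immediate: $\theta_i^{(0)}\mathcal{K}=\mathcal{K}$ and $\bigcup_{N\geq 0}\mathcal{B}_{i,N}=\mathcal{B}$.

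For the inductive step I first address (2). Fix $L_0\in\mathcal{B}_{i,0}\cap\mathcal{P}_{\mathbf{V}'',\Omega}$ and take $\mathbf{V}'$ with $|\mathbf{V}'|=ni$. Because $i$ is a sink in $\Omega$, $\mathbf{E}_{\mathbf{V}',\Omega}$ is a single point and $\bar{\mathbb{Q}}_l$ on it represents $E_i^{(n)}$. Applying Lemma 3.4(2) with $K=L_0$ and $t=n$ yields
\[
\mathbf{Ind}^{\mathbf{V}}_{\mathbf{V}',\mathbf{V}''}(\bar{\mathbb{Q}}_l\boxtimes L_0)\;\cong\;\pi_{i,n}(L_0)[d]\,\oplus\,\bigoplus_j L'_j[g'_j],\qquad d=n(\upsilon_i-n),
\]
with $t_i(L'_j)>n$ for each $j$. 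Passing to Grothendieck classes and converting $\mathbf{Ind}$ to the normalized $\mathbf{ind}$ via the shift $[d_1-d_2]$, a direct fibre-dimension count (in which the sink hypothesis annihilates the extension contribution to $d_1-d_2$, leaving only $\sum_j\dim\mathbf{V}'_j\dim\mathbf{V}''_j=d$) shows that the shift cancels with $[d]$ on the distinguished summand, giving
\[
\theta_i^{(n)}[L_0]=[\pi_{i,n}(L_0)]+\sum_j v^{a_j}[L'_j],
\]
where each $t_i(L'_j)\geq n+1$. By (1) at level $n+1$ (the inductive hypothesis), every $[L'_j]\in\theta_i^{(n+1)}\mathcal{K}$, hence $\theta_i^{(n)}[L_0]-\pi_{i,n}([L_0])\in\theta_i^{(n+1)}\mathcal{K}$, which is (2) at level $n$.

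For (1) at level $n$, the inclusion $\mathrm{span}(\bigcup_{N\geq n}\mathcal{B}_{i,N})\subseteq\theta_i^{(n)}\mathcal{K}$ follows by applying Proposition 3.5(1) to each $[L]\in\mathcal{B}_{i,N}$ with $N\geq n$: it writes $[L]$ as a $\mathbb{Z}[v,v^{-1}]$-linear combination of classes $[\mathbf{ind}(\bar{\mathbb{Q}}_l\boxtimes M)]=\theta_i^{(r')}[M]$ with $r'\geq N\geq n$, each lying in $\theta_i^{(n)}\mathcal{K}$. For the reverse inclusion, expand any $\theta_i^{(n)}x\in\theta_i^{(n)}\mathcal{K}$ in $\mathcal{B}$: summands from $L_\alpha\in\mathcal{B}_{i,0}$ are controlled by (2) at $n$, while summands from $L_\alpha$ with $t_i(L_\alpha)>0$ are expanded via Proposition 3.5 and handled by a nested induction on $|\mathbf{V}|$. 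Linear independence of $\bigcup_{N\geq n}\mathcal{B}_{i,N}$ is inherited from that of $\mathcal{B}$.

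The main obstacle is the shift bookkeeping in (2): the coefficient of $[\pi_{i,n}(L_0)]$ must come out to exactly $1$. This rests on the sink hypothesis, which kills the contribution $\sum_h\dim\mathbf{V}'_{h'}\dim\mathbf{V}''_{h''}$ to $d_1-d_2$, leaving exactly $\sum_j\dim\mathbf{V}'_j\dim\mathbf{V}''_j=n(\upsilon_i-n)=d$ that cancels the shift supplied by Lemma 3.4(2). Once this calibration is secured, the induction closes, and the bijection $\pi_{i,n}$ of Lemma 3.4(3) transports $\mathcal{B}_{i,0}$ onto the associated-graded piece $\mathcal{B}_{i,n}$ as required.
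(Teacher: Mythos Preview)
The paper does not supply its own proof here; it only cites Lusztig's book, so there is no paper-level argument to compare against. Your strategy---derive (2) from Lemma~3.4(2) and the support constraint $t_i(L'_j)>n$, then use it to analyse $\theta_i^{(n)}\mathcal{K}$---is the right one, and the shift bookkeeping for the leading coefficient is essentially correct. But the logical scaffolding is broken.

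You declare \emph{upward} induction on $n$ with base case $n=0$, yet when proving (2) at level $n$ you invoke ``(1) at level $n{+}1$ (the inductive hypothesis)''. In an ascending induction from $n=0$ the hypothesis is available only at levels $<n$, never at $n{+}1$. The argument must instead run by \emph{descending} induction on $n$, carried out inside each graded piece $\mathcal{K}_{\mathbf{V}}$ starting from $n>\dim\mathbf{V}_i$, where both (1) and (2) are vacuous. With the direction corrected, (1) at level $n{+}1$ is a legitimate hypothesis and your deduction of (2) at level $n$ is valid.

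A second point: your ``reverse inclusion'' $\theta_i^{(n)}\mathcal{K}\subseteq\mathrm{span}\bigl(\bigcup_{N\ge n}\mathcal{B}_{i,N}\bigr)$ is established much more cheaply than you indicate. Since $i$ is a sink, the support of $\mathbf{Ind}^{\mathbf{V}}_{\mathbf{V}',\mathbf{V}''}(\bar{\mathbb{Q}}_l\boxtimes A)$ lies in $\mathbf{E}_{\mathbf{V},i,\ge n}$ for any $A$; hence every simple summand $L'$ has $t_i(L')\ge n$. This gives the inclusion directly, with no appeal to (2) or to a nested induction on $|\mathbf{V}|$. Conversely, for $\mathrm{span}\bigl(\bigcup_{N\ge n}\mathcal{B}_{i,N}\bigr)\subseteq\theta_i^{(n)}\mathcal{K}$ you correctly reach expressions $\theta_i^{(r')}[M]$ with $r'\ge n$ via Proposition~3.5, but you should note that the step ``each lying in $\theta_i^{(n)}\mathcal{K}$'' uses $\theta_i^{(r')}=\binom{r'}{n}_v^{-1}\theta_i^{(n)}\theta_i^{(r'-n)}$, which is only available over $\mathbb{Q}(v)$---precisely the setting of Lusztig's Chapter~16--17 that the paper cites.
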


Let $\mathcal{L}=\{x \in \mathcal{K}|(x,x) \in \mathbb{Z}[[v^{-1}]] \cap \mathbb{Q}(v)\}$ and set $T_{i,0}=\{x \in \mathcal{K}(0) |(x,x)\in 1 +v^{-1}\mathbb{Z}[v,v^{-1}]  \}$, $T_{i,n}=\theta_{i}^{(n)}T_{i,0}$ for $n>0$. Let $\mathcal{B}_{i}(n)= \mathcal{B} \cap (T_{i,n}+v^{-1} \mathcal{L})$, then $\mathcal{B}= \bigcup\limits_{N \geq 0} \mathcal{B}_{i}(N)$. The following proposition shows that the operators $\tilde{\phi}_{i}, \tilde{\epsilon}_{i}$ can be described by the bijections $\pi_{i,n}$ modulo $v^{-1}\mathcal{L}$:

\begin{proposition}[\cite{MR1227098} Proposition16.2.5, Proposition 16.2.8 and Proposition 16.3.5]
	We have the following results:\\
	(1) $\mathcal{L}$ is a $\mathbb{Z}[v^{-1}]$-module with the basis $\mathcal{B}$.\\ 
	(2) $\mathcal{L}$ is preserved by $\tilde{\phi}_{i}, \tilde{\epsilon}_{i}$.\\
	(3) $\mathcal{B}_{i,n}=\mathcal{B}_{i}(n)$.\\
	(4) Let $[L_{0}] \in \mathcal{B}_{i,0}$ and $[L]=\pi_{i,n}([L_{0}])$, then $$\tilde{\phi}_{i}([L])= \pi_{i,n+1}([L_{0}])~\mod~ v^{-1} \mathcal{L}$$ and  
	\begin{center}
		$\tilde{\epsilon}_{i}([L])= \left\{
		\begin{aligned}
			&	\pi_{i,n-1}([L_{0}])~ \mod ~ v^{-1} \mathcal{L}  & n>0 \\
			&	0~ \mod ~ v^{-1} \mathcal{L} & n=0
		\end{aligned}
		\right. $
	\end{center} 
\end{proposition}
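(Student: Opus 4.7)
The plan is to align two filtrations on $\mathcal{K}$: the algebraic decomposition $\mathcal{K}=\bigoplus_{n\geq 0}\theta_{i}^{(n)}\mathcal{K}(0)$ of Lemma 4.3, and the geometric support filtration encoded by $t_{i}$ at a sink. Once these are matched via Proposition 4.6, each of (1)--(4) reduces to the almost-orthonormality of $\mathcal{B}$ recorded in Section 3.3, namely $([L],[L])\equiv 1$ and $([L],[L'])\equiv 0$ modulo $v^{-1}\mathbb{Z}[[v^{-1}]]$ for distinct $L,L'\in\mathcal{P}_{\mathbf{V},\Omega}$.

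For (1), any $\mathbb{Z}[v^{-1}]$-combination of $\mathcal{B}$ lies in $\mathcal{L}$ by these congruences. Conversely, given $x=\sum_{L}c_{L}[L]\in\mathcal{L}$ with $c_{L}\in\mathbb{Q}(v)$, a routine induction on $\max_{L}\deg_{v}(c_{L})$ shows this maximum is at most $0$: if it were some $k>0$, the diagonal term $\sum_{L}c_{L}^{2}([L],[L])$ in $(x,x)$ would contain $v^{2k}$ with a nonzero coefficient, contradicting $(x,x)\in\mathbb{Z}[[v^{-1}]]$. A further integrality argument, peeling off leading $v^{-j}$-coefficients one at a time, yields $c_{L}\in\mathbb{Z}[v^{-1}]$.

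For (2), the adjointness of induction and restriction makes $\theta_{i}$ and $_{i}r$ quasi-adjoint with respect to $(-,-)$, so both preserve $\mathcal{L}$ by (1). The Kashiwara operators of Definition 4.4 are defined on the decomposition $x=\sum_{N}\theta_{i}^{(N)}x_{N}$ of Lemma 4.3(3) simply by shifting the index $N$; since the summands $x_{N}=v^{-(N-1)N/2}\Pi_{N}(x)$ are built from $\theta_{i}^{(s)}$ and $(_{i}r)^{s+N}$, they lie in $\mathcal{L}$ as well, hence so do $\tilde{\phi}_{i}(x)$ and $\tilde{\epsilon}_{i}(x)$.

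For (3) and (4), fix an orientation with $i$ a sink, as Corollary 3.3 permits. Proposition 4.6(2) gives $\theta_{i}^{(n)}[L_{0}]-\pi_{i,n}([L_{0}])\in\theta_{i}^{(n+1)}\mathcal{K}$, so $\pi_{i,n}([L_{0}])\in T_{i,n}+\theta_{i}^{(n+1)}\mathcal{K}$; expanding the correction on $\mathcal{B}_{i,m}$ with $m>n$ and comparing pairings using almost-orthogonality forces the correction into $v^{-1}\mathcal{L}$. This proves $\mathcal{B}_{i,n}\subseteq\mathcal{B}_{i}(n)$, and equality in (3) follows from the disjoint unions $\mathcal{B}=\bigsqcup_{n}\mathcal{B}_{i,n}=\bigsqcup_{n}\mathcal{B}_{i}(n)$ provided by Proposition 4.6(1) and Lemma 4.3(3). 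Statement (4) is then Definition 4.4 applied to the leading summand $\theta_{i}^{(n)}[L_{0}]$, together with the fact, guaranteed by (2), that residues modulo $v^{-1}\mathcal{L}$ commute with the Kashiwara operators. The main obstacle is precisely the alignment of the algebraic $\theta_{i}$-filtration with the geometric $t_{i}$-filtration, which is what makes Proposition 4.6 available and where the geometric key Lemma 3.4 enters an otherwise algebraic discussion.
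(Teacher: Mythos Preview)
The paper does not supply its own proof of this proposition; it is quoted from Lusztig's book with a citation. Your outline broadly follows Lusztig's original argument, combining the almost-orthonormality of $\mathcal{B}$ recorded in Section~3.3 with the adapted-basis property of Proposition~4.6. Your sketches for (1) and (2) are acceptable (though for (1) note that the coefficients $c_L$ already lie in $\mathbb{Z}[v,v^{-1}]$ since $\mathcal{B}$ is a $\mathbb{Z}[v,v^{-1}]$-basis of $\mathcal{K}$, so no separate integrality step is needed).

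Your argument for (3), however, has a genuine gap. From Proposition~4.6(2) you infer $\pi_{i,n}([L_{0}])\in T_{i,n}+\theta_{i}^{(n+1)}\mathcal{K}$, but this would require $\theta_{i}^{(n)}[L_{0}]\in T_{i,n}=\theta_{i}^{(n)}T_{i,0}$, hence $[L_{0}]\in T_{i,0}\subset\mathcal{K}(0)=\ker{_{i}r}$. This is not established and is in general false: a canonical basis element with $t_{i}(L_{0})=0$ need not lie in $\ker{_{i}r}$. The element of $\mathcal{K}(0)$ that witnesses $[L]\in\mathcal{B}_{i}(n)$ is \emph{not} $[L_{0}]$ itself. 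Lusztig's actual proof decomposes $[L]=\sum_{N\geq n}\theta_{i}^{(N)}x_{N}$ via Lemma~4.3(3) (the vanishing for $N<n$ coming from $[L]\in\theta_{i}^{(n)}\mathcal{K}$ by Proposition~4.6(1)), and then uses the norm computation $(\theta_{i}^{(N)}y,\theta_{i}^{(N)}y)\in(y,y)\bigl(1+v^{-1}\mathbb{Z}[[v^{-1}]]\bigr)$ for $y\in\mathcal{K}(0)$, together with near-orthogonality of the summands $\mathcal{K}(N)$, to force $x_{n}\in T_{i,0}$ and $\theta_{i}^{(N)}x_{N}\in v^{-1}\mathcal{L}$ for $N>n$. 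Once (3) is fixed this way, your deduction of (4) from Definition~4.4 and (2) is fine.
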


 If we replace $\theta_{i}^{(n)}$ and $_{i}r$ by $\theta_{i}^{\ast(n)}$ and $r_{i}$ respectively and let $\mathcal{K}(0)^{\ast}= \Ker r_{i} ,\mathcal{K}(n)^{\ast}=\theta_{i}^{\ast(n)}\mathcal{K}(0)^{\ast}$, we can introduce linear operators $\tilde{\phi}_{i}^{\ast}$ and $\tilde{\epsilon}_{i}^{\ast}$ in a similar way. 
 
 Assume $i$ is a source in $\Omega$, let  $\mathcal{B}^{\ast}_{i,n}=\{[L]|t_{i}^{\ast}(L)=n\}$ for $n \in \mathbb{N}$, then $\pi_{i,n}^{\ast}$ induces a bijection between the sets $\mathcal{B}_{i,0}^{\ast}$ and $\mathcal{B}_{i,n}^{\ast}$. The following result is dual to Proposition 4.6:
 \begin{proposition} We have:\\
 	(1) The set $\bigcup \limits_{N \geq n}\mathcal{B}^{\ast}_{i,n} $ form a basis of $\theta_{i}^{\ast(n)}\mathcal{K}$;\\
 	(2) For $[L_{0}] \in \mathcal{B}^{\ast}_{i,0}$, we have $\theta_{i}^{\ast(n)}[L_{0}]-\pi_{i,n}^{\ast}([L_{0}]) \in \theta_{i}^{\ast(n+1)}\mathcal{K}$.
 \end{proposition}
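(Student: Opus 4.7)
The plan is to mimic the proof of the sink version Proposition 4.6 (see \cite{MR1227098} Sections 17.3.1--17.3.2), with every sink-side ingredient replaced by its source-side analogue already recorded in the paper: Lemma 3.4 and Proposition 3.5 become Lemma 3.6 and Proposition 3.7, the operators $\theta_i^{(n)},\,{}_ir$ become $\theta_i^{\ast(n)},\,r_i$, the statistic $t_i$ becomes $t_i^{\ast}$, and the bijection $\pi_{i,n}$ becomes $\pi_{i,n}^{\ast}$. The induction diagram of Section 2.2 is self-dual under swapping the roles of the submodule $\tilde{\mathbf{W}}$ and the quotient $\mathbf{V}/\tilde{\mathbf{W}}$, so what follows is a mechanical translation rather than a new calculation.

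For part (1) I would argue in two stages. First, the inclusion $\bigcup_{N\ge n}\mathcal{B}^{\ast}_{i,N}\subseteq\theta_i^{\ast(n)}\mathcal{K}$: given $[L]\in\mathcal{B}^{\ast}_{i,N}$, Proposition 3.7(2) gives $s_i^{\ast}(L)=N$, and then Proposition 3.7(1) expresses $[L]$ as a $\mathbb{Z}[v,v^{-1}]$-combination of classes of the form $[\mathbf{Ind}^{\mathbf{V}}_{\mathbf{V}'_{r'},\mathbf{V}''_{r'}}(K\boxtimes\bar{\mathbb{Q}}_l)]$ with $r'\ge N\ge n$; since $E_i^{(r')}$ is a scalar multiple of $E_i^{(n)}\ast E_i^{(r'-n)}$, each such class lies in $\mathcal{K}\ast E_i^{(n)}=\theta_i^{\ast(n)}\mathcal{K}$. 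Second, the reverse containment: $\theta_i^{\ast(n)}\mathcal{K}$ is spanned by classes $[M]\ast E_i^{(n)}=[\mathbf{ind}^{\mathbf{V}}_{\mathbf{V}_0,\mathbf{W}}(M\boxtimes\bar{\mathbb{Q}}_l)]$, and every simple perverse summand $L$ of such an induction satisfies $s_i^{\ast}(L)\ge n$ by the very definition of $s_i^{\ast}$, hence $t_i^{\ast}(L)\ge n$ by Proposition 3.7(2). Linear independence of $\mathcal{B}$ then gives (1).

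Part (2) is a direct application of Lemma 3.6(2) with $K=L_0$: the induction $\mathbf{Ind}^{\mathbf{V}}_{\mathbf{V}_0,\mathbf{W}}(L_0\boxtimes\bar{\mathbb{Q}}_l)$ with $|\mathbf{W}|=ni$, which computes $\theta_i^{\ast(n)}[L_0]$ up to the uniform shift built into the normalization $\mathbf{ind}=\mathbf{Ind}[d_1-d_2]$, decomposes as $L[d]$ with $L=\pi_{i,n}^{\ast}(L_0)$ and $d=n(\upsilon_i-n)$, plus summands whose simple perverse constituents $L'$ all satisfy $t_i^{\ast}(L')>n$. By part (1), the latter summands lie in $\theta_i^{\ast(n+1)}\mathcal{K}$, so $\theta_i^{\ast(n)}[L_0]-\pi_{i,n}^{\ast}([L_0])\in\theta_i^{\ast(n+1)}\mathcal{K}$, as required.

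The only real obstacle is bookkeeping the cohomological and $v$-shifts distinguishing $\mathbf{ind}$ from $\mathbf{Ind}$ together with the shift $d=t(\upsilon_i-t)$ from Lemma 3.6; these are uniform within each bigraded piece and match the sink analogue verbatim, so they produce a universal normalization but introduce no genuinely new content.
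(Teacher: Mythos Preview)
Your approach is exactly what the paper does: it offers no proof beyond declaring the result dual to Proposition~4.6, and you have carried out that dualization explicitly, swapping Lemma~3.4 and Proposition~3.5 for Lemma~3.6 and Proposition~3.7 and the sink statistics for their source analogues. Your reverse containment argument (via supports and $s_i^{\ast}$) and your proof of part~(2) are correct as written.

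One small caveat on the forward inclusion in part~(1): the scalar relating $E_i^{(r')}$ to $E_i^{(r'-n)}\ast E_i^{(n)}$ is the inverse of the Gaussian binomial coefficient, which lies in $\mathbb{Q}(v)$ but not in $\mathbb{Z}[v,v^{-1}]$. In fact the literal statement of~(1) already fails integrally (for instance $E_i^{(2)}\notin\mathcal{K}\ast E_i$ over $\mathbb{Z}[v,v^{-1}]$, since $E_i\ast E_i=(v+v^{-1})E_i^{(2)}$). This is an imprecision inherited from the statement of Proposition~4.6 rather than a flaw in your strategy; over $\mathbb{Q}(v)$ your argument goes through, and that is all the subsequent sections actually require.
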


 Define $T_{i,0}^{\ast}=\{x \in \mathcal{K}(0)^{\ast} |(x,x)\in 1 +v^{-1}\mathbb{Z}[v,v^{-1}]  \}$ and $T_{i,n}^{
 \ast}=\theta_{i}^{\ast(n)}T^{\ast}_{i,0}$ for $n>0$. Let $\mathcal{B}_{i}^{\ast}(n)= \mathcal{B} \cap (T^{\ast}_{i,n}+v^{-1} \mathcal{L})$, then $\mathcal{B}= \bigcup\limits_{N \geq 0} \mathcal{B}^{\ast}_{i}(N)$. The following result is dual to Proposition 4.7:
\begin{proposition} We have:\\
	(1) For any $n\in \mathbb{N}$, $\mathcal{B}^{\ast}_{i,n}=\mathcal{B}^{\ast}_{i}(n)$.\\
	(2) Let $[L_{0}] \in \mathcal{B}^{\ast}_{i,0}$ and $[L]=\pi^{\ast}_{i,n}([L_{0}])$, then $$\tilde{\phi}^{\ast}_{i}([L])= \pi_{i,n+1}^{\ast}([L_{0}])~ \mod~ v^{-1} \mathcal{L}$$ and 
		\begin{center}
		$\tilde{\epsilon}^{\ast}_{i}([L])= \left\{
		\begin{aligned}
			&	\pi^{\ast}_{i,n-1}([L_{0}])~ \mod ~ v^{-1} \mathcal{L}  & n>0 \\
			&	0~ \mod ~ v^{-1} \mathcal{L} & n=0
		\end{aligned}
		\right. $
	\end{center} 
\end{proposition}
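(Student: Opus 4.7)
The plan is to deduce Proposition 4.9 from Proposition 4.7 by the formal symmetry that interchanges left and right. On $\mathcal{K}\cong{_{\mathbb{Z}}}\mathbf{U}^{+}_{v}(\mathfrak{g})$ there is a well-known $\mathbb{Z}[v,v^{-1}]$-linear anti-involution $\sigma$ fixing each Chevalley generator $E_{i}$, so that $\sigma(xy)=\sigma(y)\sigma(x)$. This $\sigma$ preserves the symmetric bilinear pairing $(-,-)$, hence the lattice $\mathcal{L}$, and it permutes the canonical basis $\mathcal{B}$. Moreover, by comparing the defining formulas one checks that $\sigma\circ\theta_{i}^{(n)}=\theta_{i}^{\ast(n)}\circ\sigma$ and $\sigma\circ{_{i}r}=r_{i}\circ\sigma$, and consequently $\sigma\circ\tilde\phi_{i}=\tilde\phi_{i}^{\ast}\circ\sigma$ and $\sigma\circ\tilde\epsilon_{i}=\tilde\epsilon_{i}^{\ast}\circ\sigma$.

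The second step is to match the two geometric stratifications. For an orientation $\Omega$ with $i$ a source, the opposite orientation $\Omega^{op}$ has $i$ as a sink, and the Fourier-Deligne equivalence (Corollary 3.3) together with $\sigma$ identifies $\mathcal{B}^{\ast}_{i,n}$ (defined for $\Omega$) with $\mathcal{B}_{i,n}$ (defined for $\Omega^{op}$). Under this identification the bijection $\pi^{\ast}_{i,n}$ of Lemma 3.6 corresponds to $\pi_{i,n}$ of Lemma 3.4 applied to $\Omega^{op}$, and the filtration $\theta_{i}^{\ast(n)}\mathcal{K}$ corresponds to $\theta_{i}^{(n)}\mathcal{K}$; this last correspondence is exactly the content of Proposition 4.8.

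Applying Proposition 4.7 on the $\Omega^{op}$ side and transferring the conclusion across $\sigma$ then yields both claims: part (1) because $(-,-)$ and $\mathcal{L}$ are $\sigma$-invariant, so $\mathcal{B}^{\ast}_{i}(n)=\sigma(\mathcal{B}_{i}(n))=\sigma(\mathcal{B}_{i,n})=\mathcal{B}^{\ast}_{i,n}$; part (2) because $\tilde\phi_{i}^{\ast}$, $\tilde\epsilon_{i}^{\ast}$ and $\pi^{\ast}_{i,n\pm 1}$ all transfer correctly along $\sigma$ modulo $v^{-1}\mathcal{L}$.

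The main obstacle is to verify rigorously that the algebraic anti-involution $\sigma$ is realized geometrically by composing the Fourier-Deligne equivalence with the swap of factors in the induction/restriction diagrams. This comes down to bookkeeping the cohomological shifts $d_{1}-d_{2}$ and the exponents $M(\underline{\tau},\underline{\omega})$ appearing in Section 3 under the reversal of the two tensor factors, and confirming that they match the shifts induced by $\Omega\mapsto\Omega^{op}$. Once this bookkeeping is in place, Proposition 4.9 follows from Proposition 4.7 with no further work.
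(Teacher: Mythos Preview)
Your approach is correct and is essentially what the paper intends: the paper gives no proof and simply labels the statement as ``dual to Proposition 4.7,'' meaning one either reruns the entire argument with $\theta_{i}^{\ast(n)},r_{i}$ in place of $\theta_{i}^{(n)},{_{i}r}$, or---equivalently---transports Proposition 4.7 through the anti-involution $\sigma$ as you do.

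One comment on execution. Your ``second step'' and your ``main obstacle'' introduce unnecessary geometry. You do not need to realize $\sigma$ via Fourier--Deligne plus a swap of factors, nor to track the shifts $d_{1}-d_{2}$ or the exponents $M(\underline{\tau},\underline{\omega})$. All that is required is purely algebraic: $\sigma$ is an anti-automorphism of $\mathcal{K}$ fixing each $E_{i}$, it permutes $\mathcal{B}$ and preserves the pairing $(-,-)$ (both standard facts in \cite{MR1227098}), and it visibly intertwines $\theta_{i}^{(n)}\leftrightarrow\theta_{i}^{\ast(n)}$ and ${_{i}r}\leftrightarrow r_{i}$. From Propositions 4.6 and 4.8 one then reads off $\sigma(\theta_{i}^{(n)}\mathcal{K})=\theta_{i}^{\ast(n)}\mathcal{K}$, hence $\sigma(\mathcal{B}_{i,n})=\mathcal{B}^{\ast}_{i,n}$, and the characterization of $\pi_{i,n}$ in Proposition 4.6(2) (respectively of $\pi^{\ast}_{i,n}$ in Proposition 4.8(2)) gives $\sigma\circ\pi_{i,n}=\pi^{\ast}_{i,n}\circ\sigma$. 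With these in hand, Proposition 4.7 transfers verbatim to Proposition 4.9 without any geometric bookkeeping. The Fourier--Deligne equivalence by itself does \emph{not} send $\mathcal{B}_{i,n}$ to $\mathcal{B}^{\ast}_{i,n}$ (since $t_{i}$ and $t_{i}^{\ast}$ are genuinely different functions on $\mathcal{B}$); it is $\sigma$ alone that effects this exchange.
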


\subsection{The graph $\mathcal{G}_{1}$}

For each $i \in I$, we fix orientations $\Omega_{i}$ and $\Omega^{i}$ such that $i$ is a sink in $\Omega_{i}$ but a source in $\Omega^{i}$.  \\

For a given $L \in \mathcal{P}_{\Omega}$  and $i \in I$, we consider $L'=\mathcal{F}_{\Omega,\Omega_{i}}(L)$. We assume $p= t_{i}(L')$, then there exists a unique $L''$ such that $\pi_{i,p}(L'')=L'$. Let $K'=\pi_{i,p+1}(L'')$, then $K= \mathcal{F}_{\Omega_{i},\Omega}(K') \in \mathcal{P}_{\Omega}$ is a well-defined simple perverse sheaf, which does not depend on the choice of $\Omega_{i}$. We associate an arrow $[L] \xrightarrow{i_{+}} [K]$ and  adapt the notation $[K]=i_{+}([L])$ or $K=i_{+}(L)$. \\

Dually, if we consider $L'=\mathcal{F}_{\Omega,\Omega^{i}}(L)$ and assume $p= t_{i}^{\ast}(L')$, let $K'= \pi _{i,p+1}^{\ast} (\pi_{i,p}^{\ast} )^{-1}(L')$ and $K= \mathcal{F}_{\Omega^{i},\Omega}(K') \in \mathcal{P}_{\Omega}$. We associate an arrow $[L] \xrightarrow{i^{+}} [K]$ and write $[K]=i^{+}([L])$ or $K=i^{+}(L)$. 

\begin{definition}
	We define an $I \times  \mathbb{Z}_{2}$-colored graph $\mathcal{G}_{1}=(\mathcal{V}_{1},\mathcal{E}_{1})$ as follows: \\
	
	(1) The set of vertices is $\mathcal{V}_{1}= \{[L]|L \in \mathcal{P}_{\Omega} \}$. \\
	
	(2) The set of arrows is $\mathcal{E}_{1}=\{ [L] \xrightarrow{i^{+}} [K], [L] \xrightarrow{i_{+}} [K]| i \in I, K,L \in \mathcal{P}_{\Omega} \} $.\\
		 
	 We also define $i_{-}$ and $i^{-}$ to be the inverse of $i_{+}$ and $i^{+}$ respectively.
\end{definition}

 By Corollary 3.3, we can see that $\mathcal{G}_{1}=(\mathcal{V}_{1},\mathcal{E}_{1})$ is a well-defined $I \times  \mathbb{Z}_{2}$-colored graph, which is independent of the choice of $\Omega$.

 \subsection{The commutative relation between $i^{+}$ and $j_{+}$}
 
 In this and the next sections, we fix $i\neq j \in I$ and choose orientations $\Omega_{i},\Omega^{i}$ and $\Omega_{j}$ such that $i$ is a sink in $\Omega_{i}$ but a source in $\Omega^{i}$ and $j$ is a sink in $\Omega_{j}$. Given $K \in \mathcal{P}_{\mathbf{V},\Omega}$, we shorten  $t_{i} (\mathcal{F}_{\Omega,\Omega_{i}}(K) ), t_{i}^{\ast} (\mathcal{F}_{\Omega,\Omega^{i}}(K) )$ and $t_{j} (\mathcal{F}_{\Omega,\Omega_{j}}(K) )$ by $t_{i}(K),t_{i}^{\ast}(K)$ and $t_{j}(K)$ in this and the next  sections respectively. 
 
 \begin{lemma}
 	Assume $[L_{0}] \in \mathcal{B}_{i,0}^{\ast}$ and $[L]=\pi_{i,c}^{\ast}([L_{0}]) $ for some $c \geq 0$, then we have $t_{j}(L_{0})=t_{j}(L)$.
 \end{lemma}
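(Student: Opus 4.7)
The plan is to establish both inequalities $t_j(L)\geq t_j(L_0)$ and $t_j(L)\leq t_j(L_0)$ after reducing to a convenient orientation. First I would choose an orientation $\Omega_0$ of $\Gamma$ in which $i$ is a source and $j$ is a sink simultaneously; such an orientation exists because $i\neq j$ and $\Gamma$ has no loops, so we orient any edge between $i$ and $j$ from $i$ to $j$ and orient the remaining edges arbitrarily. By Corollary 3.3 together with Propositions 3.5(2) and 3.7(2), both invariants $t_i^{\ast}$ and $t_j$ can be read off after applying the Fourier--Deligne transform $\mathcal{F}_{\Omega,\Omega_0}$, so replacing $L_0$ and $L$ by their Fourier--Deligne transforms, I may assume $\Omega=\Omega_0$ throughout. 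In this orientation, Lemma 3.6 applied with $|\mathbf{V}''|=ci$ identifies $L=\pi_{i,c}^{\ast}(L_0)$ as the unique direct summand (up to the shift by $d=c(\upsilon_i-c)$) of $\mathbf{Ind}^{\mathbf{V}}_{\mathbf{V}',\mathbf{V}''}(L_0\boxtimes\bar{\mathbb{Q}}_l)$ with $t_i^{\ast}=c$; every other summand $L'$ satisfies $t_i^{\ast}(L')>c$.

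For the inequality $t_j(L)\geq t_j(L_0)$, pick any $x\in\mathrm{supp}(L)$. Since $x$ lies in the support of the ambient induction, the diagram from Section 2.2 furnishes an $x$-stable subspace $\tilde{\mathbf{W}}\subseteq\mathbf{V}$ with dimension vector $ci$ whose quotient representation $\bar x$ lies in $\mathrm{supp}(L_0)$ up to $G_{\mathbf{V}'}$-action. Because $\tilde{\mathbf{W}}$ is concentrated at $i$ and $i\neq j$, we have $\tilde{\mathbf{W}}_j=0$, and hence $\mathbf{V}_j=(\mathbf{V}/\tilde{\mathbf{W}})_j$; moreover, for each $h\in\Omega_0$ with $h''=j$, stability forces $x_h(\tilde{\mathbf{W}}_{h'})\subseteq\tilde{\mathbf{W}}_j=0$, so $x_h$ factors through the quotient and $\mathrm{Im}(x_h)=\mathrm{Im}(\bar x_h)$ inside $\mathbf{V}_j$. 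Summing over such $h$ gives $t_j(x)=t_j(\bar x)\geq t_j(L_0)$, and since this holds for every $x\in\mathrm{supp}(L)$ we conclude $t_j(L)\geq t_j(L_0)$.

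For the reverse inequality, I would exhibit a specific point of $\mathrm{supp}(L)$ realizing $t_j(L_0)$. Write $s=t_j(L_0)$. Since $t_i^{\ast}(L_0)=0$ and $t_j(L_0)=s$, the subsets $\{\bar x\in\mathrm{supp}(L_0):t_i^{\ast}(\bar x)=0\}$ and $\{\bar x\in\mathrm{supp}(L_0):t_j(\bar x)=s\}$ are both nonempty and open in $\mathrm{supp}(L_0)$, so by irreducibility of $\mathrm{supp}(L_0)$ their intersection is nonempty. Pick any $\bar x_0$ in this intersection and set $x=\bar x_0\oplus 0\in\mathbf{E}_{\mathbf{V},\Omega_0}$ under the decomposition $\mathbf{V}=\mathbf{V}'\oplus\mathbf{V}''$, where $0$ denotes the zero maps on the $i$-concentrated summand $\mathbf{V}''$. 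Then $\mathbf{V}''$ is an $x$-stable subspace with quotient $\bar x_0\in\mathrm{supp}(L_0)$, hence $x$ lies in the support of $\mathbf{Ind}^{\mathbf{V}}_{\mathbf{V}',\mathbf{V}''}(L_0\boxtimes\bar{\mathbb{Q}}_l)$. A direct kernel count gives $t_i^{\ast}(x)=t_i^{\ast}(\bar x_0)+c=c$, so $x$ cannot lie in $\mathrm{supp}(L')$ for any summand $L'\neq L$ of the induction, because those satisfy $\mathrm{supp}(L')\subseteq\mathbf{E}_{\mathbf{V},i}^{\geq c+1}$. Therefore $x\in\mathrm{supp}(L)$, and the first step applied to this $x$ yields $t_j(L)\leq t_j(x)=t_j(\bar x_0)=s$.

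The main obstacle is verifying that the extension $x=\bar x_0\oplus 0$ actually belongs to $\mathrm{supp}(L)$ and not merely to the support of the ambient induction; this is handled by the precise count $t_i^{\ast}(x)=c$ together with the uniqueness clause of Lemma 3.6 separating $L$ from the other summands. Everything else reduces to the elementary observation that an $i$-concentrated subrepresentation is invisible to the image computation used to define $t_j$ at the vertex $j\neq i$.
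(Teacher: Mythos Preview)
Your argument is correct and takes a genuinely different route from the paper's proof. The paper argues algebraically inside $\mathcal{K}$ modulo $v^{-1}\mathcal{L}$: writing $[L_{0}]\equiv E_{j}^{(d)}\ast x'$ with $x'\in T_{j,0}$ (Proposition~4.7) and $[L_{0}]\equiv x''$ with $x''\in T_{i,0}^{\ast}$, it observes $[L]\equiv E_{j}^{(d)}\ast x'\ast E_{i}^{(c)}\bmod v^{-1}\mathcal{L}$, and since $_{j}r(E_{i}^{(c)})=0$ one has $x'\ast E_{i}^{(c)}\in T_{j,0}$, whence $[L]\in\mathcal{B}_{j}(d)=\mathcal{B}_{j,d}$. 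Your proof instead works geometrically on supports in a single orientation where $i$ is a source and $j$ a sink, establishing both inequalities by direct inspection of representations: the key observation that an $i$-concentrated stable subspace leaves the image at $j$ unchanged gives $t_{j}(L)\geq t_{j}(L_{0})$, and the explicit split point $x=\bar{x}_{0}\oplus 0$, located in $\mathrm{supp}(L)$ via the uniqueness clause of Lemma~3.6, gives the reverse inequality. The paper's approach packages the result into the Kashiwara-operator machinery of Section~4.1 and is reused verbatim in the proofs of Lemma~4.13 and Lemma~4.15; your approach is more elementary and avoids the bilinear pairing and the lattice $\mathcal{L}$ entirely, at the cost of being somewhat harder to iterate for the later lemmas that genuinely require working modulo $v^{-1}\mathcal{L}$.
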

 \begin{proof}
 	Assume $t_{j}(L_{0})=d$. By Proposition 4.7, we have $\mathcal{B}_{j,d}=\mathcal{B}_{j}(d)$ and there exists $x' \in T_{j,0}$ and $z' \in \mathcal{L}$ such that $[L_{0}]=\theta_{j}^{(d)}x'+v^{-1}z' $. Similarly, there exists $x'' \in T_{i,0}^{\ast}$ and $z'' \in \mathcal{L}$ such that $[L_{0}]=x''+v^{-1}z'' $. Then we can see that $x''- E_{j}^{(d)} \ast x' \in v^{-1} \mathcal{L}$. Notice that $[L]= (\tilde{\phi}_{i}^{\ast})^{c}[L_{0}]~ \mod~ v^{-1} \mathcal{L}$, we can see that $[L]=\theta_{i}^{\ast(c)}x''+v^{-1}z$ for some $z \in \mathcal{L}$. Hence $[L]= E_{j}^{(d)} \ast x' \ast E_{i}^{(c)} ~\mod~ v^{-1} \mathcal{L}$. Notice that $x'\ast E_{i}^{(c)}  \in T_{j,0}$, we can see that $[L] \in \mathcal{B}_{j}(d)=\mathcal{B}_{j,d}$, so $t_{j}(L)=d=t_{j}(L_{0})$.
 \end{proof}
\begin{corollary}
	If $[L']=i^{+}([L])$, then $t_{j}(L)=t_{j}(L')$.
\end{corollary}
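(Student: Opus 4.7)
The plan is to derive the corollary as an essentially immediate consequence of Lemma 4.9 by unpacking the definition of $i^{+}$. The key observation is that the construction of $i^{+}$ does nothing more than move along the bijection $\pi_{i,c}^{\ast}$ from level $c$ to level $c+1$ above a fixed base vertex $L_{0}$ with $t_{i}^{\ast}(L_{0})=0$, and Lemma 4.9 is already asserting that the statistic $t_{j}$ is constant along such a fiber.

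Concretely, first I would transport everything to the orientation $\Omega^{i}$ via Fourier--Deligne: set $\tilde L = \mathcal{F}_{\Omega,\Omega^{i}}(L)$ and $\tilde L' = \mathcal{F}_{\Omega,\Omega^{i}}(L')$, and let $p = t_{i}^{\ast}(L)$, which by the shorthand convention equals $t_{i}^{\ast}(\tilde L)$. Unwinding the definition of $i^{+}$ from Section 4.2, there exists a unique $L_{0} \in \mathcal{P}_{\cdot,\Omega^{i}}$ with $t_{i}^{\ast}(L_{0})=0$ such that
\[
\tilde L = \pi_{i,p}^{\ast}(L_{0}) \quad \text{and} \quad \tilde L' = \pi_{i,p+1}^{\ast}(L_{0}).
\]
Thus $\tilde L$ and $\tilde L'$ lie in the same fiber $\{\pi_{i,c}^{\ast}(L_{0})\}_{c \geq 0}$ of the bijection family of Lemma 3.6, just at successive values of $c$.

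Now apply Lemma 4.9 twice: once with $c=p$ to the pair $(L_{0},\tilde L)$, yielding $t_{j}(L_{0}) = t_{j}(\tilde L)$, and once with $c=p+1$ to $(L_{0},\tilde L')$, yielding $t_{j}(L_{0}) = t_{j}(\tilde L')$. Combining these two equalities gives $t_{j}(\tilde L) = t_{j}(\tilde L')$, which under the shorthand convention $t_{j}(K)=t_{j}(\mathcal{F}_{\Omega,\Omega_{j}}(K))$ is exactly the required equality $t_{j}(L) = t_{j}(L')$ (note: the shorthand refers to $\Omega_{j}$, but since Fourier preserves the class in $\mathcal{P}_{\Omega}$ up to a canonical bijection by Corollary 3.3, $t_{j}$ is well-defined on $\mathcal{P}_{\Omega}$ regardless of whether one passes through $\Omega^{i}$ or $\Omega_{j}$). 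There is no real obstacle: the only subtlety is notational, namely being careful that the two shorthands $t_{i}^{\ast}$ and $t_{j}$ refer to different auxiliary orientations, and that Lemma 4.9 was proved inside a single orientation where $i$ is a source, matching $\Omega^{i}$.
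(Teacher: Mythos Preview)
Your argument is correct and essentially identical to the paper's: unwind the definition of $i^{+}$ to obtain $L_{0}$ with $t_{i}^{\ast}(L_{0})=0$ and $L=\pi_{i,c}^{\ast}(L_{0})$, $L'=\pi_{i,c+1}^{\ast}(L_{0})$, then apply the preceding lemma twice to get $t_{j}(L)=t_{j}(L_{0})=t_{j}(L')$. Note only that what you call ``Lemma 4.9'' is Lemma 4.11 in the paper's numbering; the extra discussion of orientations you include is implicit in the paper via the shorthand convention declared at the start of Section 4.3.
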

\begin{proof}
	Assume $ t_{i}^{\ast}(L)=c$. Let $L_{0}$ be the unique simple pervese sheaf such that $t_{i}^{\ast}(L_{0})=0$ and $\pi_{i,c}^{\ast}(L_{0})=L$, then $\pi_{i,c+1}^{\ast}(L_{0})=L'$. Then $t_{j}(L)=t_{j}(L_{0})=t_{j}(L')$.
\end{proof}

\begin{lemma}
	Assume $[K] \in \mathcal{B}_{j,0} \cap \mathcal{B}^{\ast}_{i,0}$, then $\pi_{j,m}\pi_{i,n}^{\ast}([K])=\pi_{i,n}^{\ast}\pi_{j,m}([K])$ for $m,n \geq 0$.
\end{lemma}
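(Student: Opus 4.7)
The plan is to pass to the crystal lattice $\mathcal{L}/v^{-1}\mathcal{L}$, where both sides are computed by Kashiwara operators, and then invoke the exact commutation of left and right multiplication in $\mathcal{K}$. First, both expressions are well-defined elements of $\mathcal{B}_{j,m}\cap\mathcal{B}_{i,n}^{\ast}$: by Lemma 4.11 (with $K$ in the role of $L_{0}$), $\pi_{i,n}^{\ast}[K]\in\mathcal{B}_{j,0}$ so $\pi_{j,m}$ applies, and by the obvious dual statement (swap sink with source and $\pi$ with $\pi^{\ast}$; the proof is identical to that of Lemma 4.11), $\pi_{j,m}[K]\in\mathcal{B}_{i,0}^{\ast}$ so $\pi_{i,n}^{\ast}$ applies. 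Iterating these invariance statements puts both $\pi_{j,m}\pi_{i,n}^{\ast}[K]$ and $\pi_{i,n}^{\ast}\pi_{j,m}[K]$ in $\mathcal{B}_{j,m}\cap\mathcal{B}_{i,n}^{\ast}$. Since $\mathcal{B}$ is a $\mathbb{Z}[v^{-1}]$-basis of $\mathcal{L}$ by Proposition 4.7(1), two such elements coincide if and only if they are congruent modulo $v^{-1}\mathcal{L}$, so it suffices to establish that congruence.

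Next I would iterate Propositions 4.7(4) and 4.8(2), using that $\tilde{\phi}_{j}$ and $\tilde{\phi}_{i}^{\ast}$ preserve $\mathcal{L}$ and that at each intermediate step the relevant $\mathcal{B}_{j,0}$ or $\mathcal{B}_{i,0}^{\ast}$ is preserved, to obtain
\begin{align*}
\pi_{j,m}\pi_{i,n}^{\ast}[K] &\equiv \tilde{\phi}_{j}^{m}(\tilde{\phi}_{i}^{\ast})^{n}[K] \pmod{v^{-1}\mathcal{L}},\\
\pi_{i,n}^{\ast}\pi_{j,m}[K] &\equiv (\tilde{\phi}_{i}^{\ast})^{n}\tilde{\phi}_{j}^{m}[K] \pmod{v^{-1}\mathcal{L}}.
\end{align*}
It then suffices to prove the exact identity $\tilde{\phi}_{j}^{m}(\tilde{\phi}_{i}^{\ast})^{n}[K]=(\tilde{\phi}_{i}^{\ast})^{n}\tilde{\phi}_{j}^{m}[K]$ in $\mathcal{K}$. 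Since $[K]\in\ker{}_{j}r\cap\ker r_{i}$, Definition 4.4 yields $\tilde{\phi}_{j}^{m}[K]=\theta_{j}^{(m)}[K]$ and $(\tilde{\phi}_{i}^{\ast})^{n}[K]=\theta_{i}^{\ast(n)}[K]$, i.e.\ plain left and right multiplication by $E_{j}^{(m)}$ and $E_{i}^{(n)}$ respectively. Provided also that $\theta_{i}^{\ast(n)}[K]\in\ker{}_{j}r$ and $\theta_{j}^{(m)}[K]\in\ker r_{i}$, the outer Kashiwara operator in each composition again reduces to literal multiplication, and both sides equal $\theta_{j}^{(m)}\theta_{i}^{\ast(n)}[K]=\theta_{i}^{\ast(n)}\theta_{j}^{(m)}[K]$, the last equality being the tautological commutation of left and right multiplication.

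The main obstacle is the color-orthogonality statement just invoked: for $i\neq j$, ${}_{j}r$ commutes with right multiplication by $E_{i}^{(n)}$, and $r_{i}$ commutes with left multiplication by $E_{j}^{(m)}$. This is a cross-color analogue of Lemma 4.2, crucially with no extra commutator term. I would establish it either geometrically from the defining diagrams of $\mathbf{res}$ and $\mathbf{ind}$ in Section 3.1, via a Mackey-type analysis of the relevant fibered product which degenerates because a subspace of color $j$ cannot interact with a quotient of color $i$, or algebraically from the bialgebra structure of $\mathcal{K}$, noting that the coproduct of $E_{i}^{(n)}$ involves only the color $i$ and hence has trivial $j$-component. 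With this orthogonality in hand, the chain of congruences and the exact commutation above complete the proof.
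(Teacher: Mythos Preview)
Your reduction to Kashiwara operators modulo $v^{-1}\mathcal{L}$ is correct, but the subsequent ``exact identity'' step contains a genuine error. You assert that $[K]\in\ker{}_{j}r\cap\ker r_{i}$, and from this deduce $\tilde{\phi}_{j}^{m}[K]=\theta_{j}^{(m)}[K]$ and $(\tilde{\phi}_{i}^{\ast})^{n}[K]=\theta_{i}^{\ast(n)}[K]$. This is false: the hypothesis $[K]\in\mathcal{B}_{j,0}$ means $t_{j}(K)=0$, which by Proposition~4.7(3) only says $[K]\in T_{j,0}+v^{-1}\mathcal{L}$, i.e.\ $[K]$ is \emph{congruent} to an element of $\ker{}_{j}r$ modulo $v^{-1}\mathcal{L}$, not that $[K]$ itself lies in $\ker{}_{j}r$. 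For a concrete counterexample, in type $A_{2}$ the canonical basis element $E_{2}E_{1}$ has $t_{1}=0$ yet ${}_{1}r(E_{2}E_{1})=v^{(\alpha_{2},\alpha_{1})}E_{2}\neq 0$. Consequently $\tilde{\phi}_{j}^{m}[K]$ is \emph{not} simply $\theta_{j}^{(m)}[K]$, and the chain leading to the associativity identity $\theta_{j}^{(m)}\theta_{i}^{\ast(n)}[K]=\theta_{i}^{\ast(n)}\theta_{j}^{(m)}[K]$ collapses.

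The paper's proof avoids this by never working with $[K]$ directly at that stage. It chooses $x'\in T_{i,0}^{\ast}\subset\ker r_{i}$ and $x''\in T_{j,0}\subset\ker{}_{j}r$ with $[K]\equiv x'\equiv x''\bmod v^{-1}\mathcal{L}$, and then computes $\pi_{i,n}^{\ast}\pi_{j,m}(K)\equiv E_{j}^{(m)}\ast x''\ast E_{i}^{(n)}$ and $\pi_{j,m}\pi_{i,n}^{\ast}(K)\equiv E_{j}^{(m)}\ast x'\ast E_{i}^{(n)}$ modulo $v^{-1}\mathcal{L}$, using that these surrogates $x',x''$ genuinely lie in the required kernels (so the Kashiwara operators become honest multiplications on \emph{them}) and invoking the color-orthogonality you identified to keep the intermediate products in the right $T$-sets. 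The conclusion follows from $x'\equiv x''$. Your argument can be repaired along exactly these lines: replace $[K]$ by the appropriate approximant $x'$ or $x''$ before each application of a Kashiwara operator, and argue only modulo $v^{-1}\mathcal{L}$ throughout rather than seeking an exact commutation on $[K]$.
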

\begin{proof}
	Take $x' \in T_{i,0}^{\ast}$ and $x'' \in T_{j,0}$ such that $[K]=x'+v^{-1}z'$ and $[K]=x''+v^{-1}z''$ with $z',z'' \in \mathcal{L}$ as in the proof of Lemma 4.11, then we can see that $K_{1}=\pi_{i,n}^{\ast}\pi_{j,m}(K)$ is the unique simple perverse sheaf such that $[K_{1}]= E_{j}^{(m)}\ast x'' \ast E_{i}^{(n)}~ \mod~ v^{-1}\mathcal{L}$. Similarly, $K_{2}=\pi_{j,m}\pi_{i,n}^{\ast}(K)$ is the unique simple perverse sheaf such that $[K_{2}]= E_{j}^{(m)}\ast x' \ast E_{i}^{(n)} \mod v^{-1}\mathcal{L}$. Notice that $x'=x''~ \mod~ v^{-1}\mathcal{L}$, we can see that $[K_{1}]=[K_{2}]~ \mod ~v^{-1}\mathcal{L}$. By Proposition 4.7, the canonical basis induces a basis of $\mathcal{L}/v^{-1}\mathcal{L}$, we can see that $[K_{1}]=[K_{2}]$.
\end{proof}
\begin{corollary}
	For any $K \in \mathcal{P}_{\mathbf{V},\Omega}$, we have $i^{+}j_{+}([K])=j_{+}i^{+}([K])$.
\end{corollary}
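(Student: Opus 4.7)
The plan is to reduce the identity $i^{+} j_{+}([K]) = j_{+} i^{+}([K])$ to the commutation of $\pi_{j,m}$ with $\pi_{i,n}^{*}$ from Lemma 4.13, by lifting $K$ to a canonical basis element lying in the intersection $\mathcal{B}_{j,0} \cap \mathcal{B}^{*}_{i,0}$ and then unpacking both compositions explicitly. Working in an orientation in which $i$ is a source and $j$ is a sink (which exists because $i \neq j$, and is legitimate by Corollary 3.3), I set $c = t_{i}^{*}(K)$ and $d = t_{j}(K)$. First I would use Proposition 4.6 to produce the unique $K_{00} \in \mathcal{B}_{j,0}$ with $K = \pi_{j,d}(K_{00})$, and Proposition 4.9 to produce the unique $K_{0} \in \mathcal{B}^{*}_{i,0}$ with $K_{00} = \pi_{i,c}^{*}(K_{0})$. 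Lemma 4.11 applied to $K_{00} = \pi_{i,c}^{*}(K_{0})$ yields $t_{j}(K_{0}) = t_{j}(K_{00}) = 0$, so $K_{0} \in \mathcal{B}_{j,0} \cap \mathcal{B}^{*}_{i,0}$, and Lemma 4.13 gives
\[
K = \pi_{j,d}\pi_{i,c}^{*}(K_{0}) = \pi_{i,c}^{*}\pi_{j,d}(K_{0}).
\]

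Next I would unpack the definitions of $i^{+}$ and $j_{+}$ in terms of $K_{0}$. Writing $K = \pi_{i,c}^{*}(L_{0})$ with $L_{0} = \pi_{j,d}(K_{0})$, whose $t_{i}^{*}$-value is $0$ by the dual of Lemma 4.11 (same mod $v^{-1}\mathcal{L}$ argument with the roles of $T^{*}_{i,0}$ and $T_{j,0}$ interchanged), the defining formula for $i^{+}$ yields $i^{+}(K) = \pi_{i,c+1}^{*}\pi_{j,d}(K_{0})$. Dually, one gets $j_{+}(K) = \pi_{j,d+1}\pi_{i,c}^{*}(K_{0})$, using Lemma 4.11 itself to verify that $\pi_{i,c}^{*}(K_{0})$ still has $t_{j} = 0$.

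Finally, by Corollary 4.12 the statistic $t_{j}$ is preserved under $i^{+}$, and by the dual version $t_{i}^{*}$ is preserved under $j_{+}$, so iterating the unpacking once more gives
\[
j_{+} i^{+}(K) = \pi_{j,d+1}\pi_{i,c+1}^{*}(K_{0}), \qquad i^{+} j_{+}(K) = \pi_{i,c+1}^{*}\pi_{j,d+1}(K_{0}),
\]
and a last application of Lemma 4.13 to $K_{0} \in \mathcal{B}_{j,0} \cap \mathcal{B}^{*}_{i,0}$ with exponents $(m,n) = (d+1, c+1)$ identifies these two expressions. The main obstacle I anticipate is purely bookkeeping: at every stage one must verify that the element to which $\pi_{i,\cdot}^{*}$ or $\pi_{j,\cdot}$ is being applied still has the correct vanishing $t$-statistic, but these invariances are exactly what Lemma 4.11, Corollary 4.12, and their duals provide, so no substantially new estimate should be required beyond the dual of Lemma 4.11.
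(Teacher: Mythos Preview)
Your proposal is correct and follows essentially the same strategy as the paper's proof: lift $K$ to an element $K_{0}\in\mathcal{B}_{j,0}\cap\mathcal{B}^{\ast}_{i,0}$, express both $i^{+}j_{+}(K)$ and $j_{+}i^{+}(K)$ as $\pi_{j,d+1}\pi^{\ast}_{i,c+1}(K_{0})$ and $\pi^{\ast}_{i,c+1}\pi_{j,d+1}(K_{0})$ respectively, and conclude by Lemma~4.13. The only cosmetic difference is the order in which you perform the lift (you apply $\pi_{j,d}^{-1}$ first and then $(\pi^{\ast}_{i,c})^{-1}$, whereas the paper reverses this), and you spell out more carefully than the paper the invariance of the $t_{j}$ and $t^{\ast}_{i}$ statistics needed at each intermediate step, correctly invoking Lemma~4.11 and its dual.
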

\begin{proof}
	Assume $t_{j}(K)=m$ and $t_{i}^{\ast}(K)=n$, after applying the Lemma 3.4 and 3.6, we can take $K_{0}=(\pi_{j,m})^{-1} (\pi_{i,n}^{\ast})^{-1}(K)$. Then $K_{0}$ is the unique simple perverse sheaf 
	such that $\pi_{j,m} \pi_{i,n}^{\ast} (K_{0})=\pi_{i,n}^{\ast}\pi_{j,m} (K_{0})=K$.  Then by Lemma 4.13, we have:
	\begin{align*}
		i^{+}j_{+}([K])=&i^{+}j_{+}\pi_{j,m} \pi_{i,n}^{\ast} ([K_{0}])= i^{+} \pi_{j,m+1}\pi_{i,n}^{\ast} ([K_{0}]) \\
		=&i^{+}\pi_{i,n}^{\ast}\pi_{j,m+1}([K_{0}])=\pi_{i,n+1}^{\ast}\pi_{j,m+1}([K_{0}])
	\end{align*}
and
    \begin{align*}
		j_{+}i^{+}([K])=&j_{+}i^{+}\pi_{i,n}^{\ast}\pi_{j,m} ([K_{0}])=j_{+} \pi_{i,n+1}^{\ast}\pi_{j,m}([K_{0}])\\
		=&j_{+}\pi_{j,m}\pi_{i,n+1}^{\ast}([K_{0}])=\pi_{j,m+1}\pi_{i,n+1}^{\ast}([K_{0}]).
	\end{align*}
Then by Lemma 4.13, we have $i^{+}j_{+}([K])=j_{+}i^{+}([K])$.
\end{proof}

\subsection{The commutative relation between $i^{+}$ and $i_{+}$}
In this section we study the commutative relation between $i^{+}$ and $i_{+}$.  
\begin{lemma}
	Assume $L_{0} \in \mathcal{P}_{\mathbf{V}',\Omega}$ and $t_{i}^{\ast}(L_{0})=0, t_{i}(L_{0})=d$. Let $L=\pi_{i,c}^{\ast}(L_{0})$ for some $c \in \mathbb{N}$, then 
	\begin{center}
		$t_{i}(L)= \left\{
		\begin{aligned}
			&	d  & c+(|\mathbf{V}'|,i) \leq d \\
			&	c+(|\mathbf{V}'|,i) & otherwise
		\end{aligned}
		\right. $
	\end{center} 
\end{lemma}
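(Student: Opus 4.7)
The plan is to use the Fourier-Deligne transform (Proposition 3.1) to transfer the construction of $L = \pi_{i,c}^*(L_0)$ from $\Omega^i$ to $\Omega_i$, the orientation with $i$ as a sink. Since the Fourier transform acts trivially on the zero-dimensional space $\mathbf{E}_{\mathbf{V}''_c, \Omega} = \{\mathrm{pt}\}$, this gives $L^{(i)} := \mathcal{F}_{\Omega^i, \Omega_i}(L)$ as a direct summand (up to a computable shift) of $\mathbf{Ind}^{\mathbf{V}}_{\mathbf{V}', \mathbf{V}''_c}(L_0^{(i)} \boxtimes \bar{\mathbb{Q}}_l)$, where $L_0^{(i)} := \mathcal{F}_{\Omega, \Omega_i}(L_0)$ still satisfies $t_i(L_0^{(i)}) = d$. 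The task thus reduces to computing $t_i(L^{(i)})$ in $\Omega_i$.

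I would first establish the upper bound $t_i(L^{(i)}) \leq \max(d, c + (|\mathbf{V}'|, i))$ by exhibiting explicit points in $\mathrm{supp}(L^{(i)})$. Since $i$ is a sink in $\Omega_i$, every $c$-dimensional subspace $\tilde{\mathbf{W}}_i \subset \mathbf{V}_i$ is automatically $x$-stable, so for generic $\bar{x} \in \mathrm{supp}(L_0^{(i)})$ I can lift to $x \in \mathrm{supp}(\mathbf{Ind})$ by freely choosing $\tilde{\mathbf{W}}_i$ and the components of the incoming map landing in $\tilde{\mathbf{W}}_i$. A direct rank count at such a point, using $t_i(L_0^{(i)}) = d$ and $t_i^*(L_0) = 0$ (the latter controlling the outgoing map on the quotient side after dualizing), realizes $t_i(x)$ equal to the claimed maximum.

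For the lower bound $t_i(L^{(i)}) \geq \max(d, c + (|\mathbf{V}'|, i))$, I would apply Lemma 3.4 at the sink to write $L^{(i)} = \pi_{i, t}(K)$ for the unique $K$ with $t_i(K) = 0$, where $t := t_i(L^{(i)})$, and then use Mackey-style compatibility between restriction and induction to compare this with the description of $L^{(i)}$ as a summand of $\mathbf{Ind}(L_0^{(i)} \boxtimes \bar{\mathbb{Q}}_l)$. The bound $t \geq d$ will come from the $L_0^{(i)}$ factor surviving an appropriate restriction of the type $\mathbf{Res}^{\mathbf{V}}_{\mathbf{V}''_c, \mathbf{V}'}$; the bound $t \geq c + (|\mathbf{V}'|, i)$ will come from the characterization of $L^{(i)}$ as the Fourier transform of the summand with $t_i^* = c$ in $\Omega^i$.

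The main obstacle will be the lower bound $t \geq c + (|\mathbf{V}'|, i)$ in the regime where it dominates $d$, because a generic $x \in \mathrm{supp}(\mathbf{Ind})$ has $t_i(x)$ strictly smaller than $c + (|\mathbf{V}'|, i)$, so the constraint must come from ruling out those generic points from $\mathrm{supp}(L^{(i)})$---i.e.\ showing they belong to other summands of $\mathbf{Ind}$ with larger $t_i^*$ in $\Omega^i$. This is where the Fourier duality between the ``generic open'' summand in one orientation and the ``special closed'' summand in the other becomes essential, and I expect it to require careful bookkeeping of the shifts in Proposition 3.1 combined with a dimension count tied to the form $(|\mathbf{V}'|, i) = 2\mathbf{V}'_i - \sum_{j \neq i} |a_{ij}|\mathbf{V}'_j$. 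An alternative route, which I would keep in reserve, is induction on $c$: the base case $c = 0$ is trivial, and the inductive step would use the description of $\pi_{i, c+1}^* \circ (\pi_{i, c}^*)^{-1}$ furnished by Proposition 4.8 together with the previously established bounds.
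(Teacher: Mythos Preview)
Your approach is genuinely different from the paper's, and the paper's route is considerably more direct. The paper never touches the geometry of supports or the Fourier transform here; instead it works entirely inside $\mathcal{K}$ modulo $v^{-1}\mathcal{L}$, exploiting Proposition~4.7(3), which says $t_i(L)=n$ if and only if $[L]\in T_{i,n}+v^{-1}\mathcal{L}$. Concretely: from $t_i(L_0)=d$ and $t_i^*(L_0)=0$ one writes $[L_0]\equiv E_i^{(d)}\ast x'$ with $x'\in T_{i,0}$, hence $[L]\equiv E_i^{(d)}\ast x'\ast E_i^{(c)}=:x$ modulo $v^{-1}\mathcal{L}$. One then expands $x=\sum_N\theta_i^{(N)}x_N$ using the projector formula of Lemma~4.3, and computes each $x_N$ explicitly via the twisted Leibniz rule $_ir(ab)={}_ir(a)b+v^{(|a|,i)}a\,{}_ir(b)$ together with the closed form for $({}_ir)^t E_i^{(b)}$. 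A direct (if slightly tedious) power-of-$v$ bookkeeping shows that exactly one $x_N$ survives modulo $v^{-1}\mathcal{L}$: namely $N=d$ when $c+(|\mathbf{V}'|,i)\le d$, and $N=c+(|\mathbf{V}'|,i)$ otherwise. This simultaneously gives both inequalities, so there is no separate upper/lower bound argument at all.

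Your geometric plan, by contrast, has a real gap precisely where you flag it. Exhibiting points in $\mathrm{supp}(\mathbf{Ind})$ does not by itself bound $t_i(L^{(i)})$ from above, because $L^{(i)}$ is only one simple summand of the induced complex and you have no a priori control over which stratum its support meets---this is exactly the issue you identify for the lower bound, but it bites in both directions. The ``Mackey-style compatibility'' you invoke is not available in the paper in a form that would let you isolate the summand with $t_i^*=c$ after Fourier transform without essentially redoing the algebraic computation. Your fallback of induction on $c$ via Proposition~4.8 is closer in spirit to what works, but Proposition~4.8 already presupposes the identification $\mathcal{B}_{i,n}=\mathcal{B}_i(n)$, and once you grant that, the paper's direct $_ir$-computation is shorter than an induction. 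I would recommend abandoning the support analysis and following the algebraic route.
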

\begin{proof}
	Take $x' \in T_{i,0}$ and $z \in \mathcal{L}$ such that $[L_{0}]=E_{i}^{(d)}\ast x' +v^{-1}z$ as in the proof of Lemma 4.11, then we can see that $[L]= E_{i}^{(d)}\ast x' \ast E_{i}^{(c)}~ \mod~ v^{-1} \mathcal{L}$. We denote $E_{i}^{(d)}\ast x' \ast E_{i}^{(c)}$ by $x$. Then by Lemma 4.3, we have $x=\sum \limits_{N \geq 0} \theta_{i}^{(N)}x_{N}$ with $$x_{N}=v^{-(N-1)N/2}\Pi_{N}(x) =\sum\limits_{s \geq 0} (-1)^{s}v^{(s-1)s/2 -(N-1)N/2} E_{i}^{(s)}\ast(_{i}r)^{(s+N)}(x).$$
	
	Using the fact $_{i}r(xy)= {_{i}r}(x)y+v^{(|\mathbf{V}|,i)}x{_{i}r}(y)$ for any homogenous elements $x \in \mathcal{K}_{\mathbf{V}},y\in \mathcal{K}_{\mathbf{V'}}$, we easily get:
	\begin{equation*}
		(_{i}r)^{m}(xy)=\sum\limits_{t \geq 0}^{m}v^{(|\mathbf{V}|-ti,(m-t)i)+t(m-t)} \frac{[m]_{v}!}{[t]_{v}![m-t]_{v}!} (_{i}r)^{t}(x) \ast (_{i}r)^{m-t}(y).
	\end{equation*}	
Take $x=E_{i}^{(d)}\ast x' $ and $y=E_{i}^{(c)} $, we obtain equation (1):
\begin{equation*}
	(_{i}r)^{m}(E_{i}^{(d)}\ast x' \ast E_{i}^{(c)})=\sum\limits_{t \geq 0}^{m}v^{(|\mathbf{V}'|-ti,(m-t)i)+t(m-t)} \frac{[m]_{v}!}{[t]_{v}![m-t]_{v}!} (_{i}r)^{t}(E_{i}^{(d)}\ast x') \ast (_{i}r)^{m-t}(E_{i}^{(c)} ).
\end{equation*}

Notice that by definition of $_{i}r$:
\begin{equation*}
(_{i}r)^{(a)} E_{i}^{(b)}= \left\{
	\begin{aligned}
		& v^{(2b-1-a)a/2}E_{i}^{(b-a)}  & a \geq b \\
		&	0 & otherwise
	\end{aligned}
	\right. 
\end{equation*}
so we have equation (2)
\begin{equation*}
	(_{i}r)^{t}(E_{i}^{(d)}\ast x')= \left\{
	\begin{aligned}
		& v^{(2d-1-t)t/2} E_{i}^{(d-t)} \ast x'  & d \geq t \\
		&	0 & otherwise
	\end{aligned}
	\right. 
\end{equation*}
and equation (3)
\begin{equation*}
	(_{i}r)^{m-t}(E_{i}^{(c)} )= \left\{
	\begin{aligned}
		& v^{(2c-1-m+t)(m-t)/2} E_{i}^{(c-m+t)}  & c \geq m-t \\
		&	0 & otherwise
	\end{aligned}
	\right. 
\end{equation*}

We apply the equations (1),(2) and (3) to $x_{N}$ and assume $c+(|\mathbf{V}'|,i) \leq d$. Notice that $$\frac{[a+b]_{v}!}{[a]_{v}![b]_{v}!}\in v^{ab}\mathbb{Z}[[v^{-1}]]$$ for $a,b \in \mathbb{N}$, we can see that $x_{N}=0~ \mod~ v^{-1} \mathcal{L}$ for $N \neq d$ but $x_{d}=x' \ast E_{i}^{(c)}~ \mod~ v^{-1} \mathcal{L}$. Hence $x \in T_{i,d}$ and $t_{i}(L)=d$.
	
Otherwise, we assume that $k=c+(|\mathbf{V}'|,i)-d>0$.  Applying the equations (1),(2) and (3) to $x_{N}$, we can see that $x_{d+k}=x' \ast E_{i}^{(c-k)}~ \mod~ v^{-1}\mathcal{L}$ and $x_{N}=0~ \mod~ v^{-1}\mathcal{L}$ for $N\neq d+k$. Hence $x \in T_{i,d+k}$ and $t_{i}(L)=d+k=c+(|\mathbf{V}'|,i)$.
	
\end{proof}

By the similar argument, we have the following result dual to Lemma 4.15:

\begin{lemma}
	Assume $L_{0} \in \mathcal{P}_{\mathbf{V}',\Omega}$ and $t_{i}(L_{0})=0, t_{i}^{\ast}(L_{0})=c$. Let $L=\pi_{i,d}(L_{0})$ for some $d \in 	 \mathbb{N}$, then we have:
	\begin{center}
		$t_{i}^{\ast}(L)= \left\{
		\begin{aligned}
			&	c & d+(|\mathbf{V}'|,i) \leq c \\
			&	d+(|\mathbf{V}'|,i)  & otherwise
		\end{aligned}
		\right. $
	\end{center}
\end{lemma}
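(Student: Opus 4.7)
The plan is to mirror the proof of Lemma 4.15 under the left-right symmetry that exchanges $\theta_i^{(n)}$ with $\theta_i^{\ast(n)}$, the derivation $_{i}r$ with $r_{i}$, and Proposition 4.7 with Proposition 4.9. First, since $t_i^{\ast}(L_0) = c$ and $[L_0] \in \mathcal{B}^{\ast}_{i,c} = \mathcal{B}^{\ast}_{i}(c)$ by Proposition 4.9, I can choose $x' \in T_{i,0}^{\ast}$ and $z \in \mathcal{L}$ with $[L_0] = x' \ast E_i^{(c)} + v^{-1} z$. Applying $(\tilde{\phi}_i)^{d}$ modulo $v^{-1}\mathcal{L}$ and invoking Proposition 4.7(4), I get $[L] \equiv E_i^{(d)} \ast x' \ast E_i^{(c)}\ \mod\ v^{-1}\mathcal{L}$, i.e.\ exactly the element $x$ studied in Lemma 4.15, but now I want to extract its expansion with respect to \emph{right} multiplication by $E_i$.

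Next I would apply the $r_i$-version of Lemma 4.3 to decompose $x = \sum_{N \geq 0} \theta_i^{\ast(N)} x_N$ with $x_N \in \Ker r_i$, via the dual operator $\Pi_N^{\ast}$, so that
\[
x_N = v^{-(N-1)N/2} \sum_{s \geq 0} (-1)^s v^{(s-1)s/2}\, (r_i)^{s+N}(x) \ast E_i^{(s)}.
\]
The main computation is the right-sided Leibniz expansion, analogous to equation (1) in the proof of Lemma 4.15, yielding
\[
(r_i)^m\bigl(E_i^{(d)} \ast x' \ast E_i^{(c)}\bigr) = \sum_{t=0}^{m} v^{\alpha_t}\, \frac{[m]_v!}{[t]_v!\,[m-t]_v!}\, (r_i)^{t}\!\bigl(E_i^{(d)} \ast x'\bigr) \ast (r_i)^{m-t}\!\bigl(E_i^{(c)}\bigr),
\]
with the appropriate exponent $\alpha_t$ involving $(|\mathbf{V}'|, i)$ and the binomial cross-term $t(m-t)$. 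Since $x' \in T_{i,0}^{\ast} \subseteq \Ker r_i$, the action of $(r_i)^{t}$ on $E_i^{(d)} \ast x'$ is nonzero only in degree $t \leq d$ and produces a rescaled $E_i^{(d-t)} \ast x'$, while $(r_i)^{m-t}(E_i^{(c)})$ is nonzero only when $c \geq m-t$.

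Finally I would match coefficients modulo $v^{-1}\mathcal{L}$ using the standard estimate $[a+b]_v!/([a]_v! [b]_v!) \in v^{ab}\mathbb{Z}[[v^{-1}]]$. In the case $d + (|\mathbf{V}'|, i) \leq c$, only $x_c \equiv E_i^{(d)} \ast x' \ \mod\ v^{-1}\mathcal{L}$ survives, so $[L] \in T^{\ast}_{i,c} + v^{-1}\mathcal{L}$ and $t_i^{\ast}(L) = c$ by Proposition 4.9. In the opposite case, setting $k = d + (|\mathbf{V}'|, i) - c > 0$, the surviving index shifts and $x_{c+k} \equiv E_i^{(d-k)} \ast x'\ \mod\ v^{-1}\mathcal{L}$ is the unique nonzero component, giving $t_i^{\ast}(L) = c + k = d + (|\mathbf{V}'|, i)$.

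The only real obstacle is bookkeeping: verifying that the right-sided Leibniz formula and the right-sided action of $r_i$ on divided powers take the precise shape above, with the correct occurrence of $(|\mathbf{V}'|, i)$ (rather than $(|\mathbf{V}''|, i)$ or $(i, |\mathbf{V}'|)$) in the exponent of $v$. Once these dualised identities are in hand, the rest of the argument is word-for-word parallel to Lemma 4.15 and requires no further ideas.
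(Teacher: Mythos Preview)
Your proposal is correct and is precisely the approach the paper intends: the paper itself simply states that Lemma 4.16 follows ``by the similar argument'' dual to Lemma 4.15, and your outline carries out that dualisation explicitly, swapping $({}_{i}r,\theta_i^{(n)},T_{i,0},\text{Prop.~4.7})$ for $(r_i,\theta_i^{\ast(n)},T^{\ast}_{i,0},\text{Prop.~4.9})$. The only work remaining is the bookkeeping you flag, namely writing down the right-sided Leibniz rule $r_i(xy)=v^{(|y|,i)}r_i(x)y+x\,r_i(y)$ and checking the exponents, after which the case split proceeds verbatim.
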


The following two lemmas give the commutative relation of $i_{+}$ and $i^{+}$ case by case:
\begin{lemma}
Assume $L_{0} \in \mathcal{P}_{\mathbf{V}',\Omega}$ and $t_{i}^{\ast}(L_{0})=0, t_{i}(L_{0})=d>0$. Take $c \in \mathbb{N}_{>0}$ such that $c+(|\mathbf{V}'|,i) \leq d$ and set $L=\pi_{i,c}^{\ast}(L_{0}) $. Let $K_{0}$ be the unique simple perverse sheaf such that $i_{+}(K_{0})=L_{0}$. Then  $t_{i}^{\ast}(K_{0})=0$, hence $\pi_{i,c}^{\ast}(K_{0})$ is well-defined. Moreover, we have $i_{+}\pi_{i,c}^{\ast}(K_{0})=L$. Or equivalently, we have $i_{+}(i^{+})^{c}([K_{0}])=(i^{+})^{c}i_{+}([K_{0}]) $. In this case, $t_{i}^{\ast}(i_{-}(L))=t_{i}^{\ast}(L)=c$.
\end{lemma}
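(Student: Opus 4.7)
The plan is to introduce the common ``base'' sheaf $L_0'' \in \mathcal{P}_\Omega$ with $t_i(L_0'')=0$ such that $L_0 = \pi_{i,d}(L_0'')$; by the bijection in Lemma 3.4 we then also have $K_0 = \pi_{i,d-1}(L_0'')$. The statement will reduce to a same-$i$ commutation between $\pi_{i,c}^{\ast}$ and $\pi_{i,d-1}$ (respectively $\pi_{i,d}$) applied to $L_0''$.

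First I would verify $t_i^{\ast}(K_0)=0$. Set $c_0 := t_i^{\ast}(L_0'')$. Applying Lemma 4.16 to $L_0''$ with parameter $d$ gives $t_i^{\ast}(L_0)=c_0$ if $d+(|L_0''|,i)\leq c_0$, and $t_i^{\ast}(L_0)=d+(|L_0''|,i)$ otherwise. Since $t_i^{\ast}(L_0)=0$ and $c_0\geq 0$, the ``otherwise'' branch would force $0>c_0\geq 0$, a contradiction; so $c_0=0$ and $d+(|L_0''|,i)\leq 0$. Re-applying Lemma 4.16 with parameter $d-1$ then yields $t_i^{\ast}(K_0)=0$, since $(d-1)+(|L_0''|,i)<0\leq c_0$, so $\pi_{i,c}^{\ast}(K_0)$ is well-defined. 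Next I would set $A:=\pi_{i,c}^{\ast}(L_0'')$. Lemma 4.15 applied to $L_0''$ (with $c+(|L_0''|,i)\leq -d<0$) gives $t_i(A)=0$, and Lemma 4.15 applied to $K_0$ (with $c+(|K_0|,i)=c+(|\mathbf{V}'|,i)-2\leq d-2$) gives $t_i(\pi_{i,c}^{\ast}(K_0))=d-1$. Hence $\pi_{i,d-1}(A)$ and $i_+(\pi_{i,c}^{\ast}(K_0))$ are both defined.

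The crux is the identifications $\pi_{i,c}^{\ast}(K_0)=\pi_{i,d-1}(A)$ and $\pi_{i,c}^{\ast}(L_0)=\pi_{i,d}(A)$. I would work modulo $v^{-1}\mathcal{L}$ and use that $\mathcal{B}$ descends to a $\mathbb{Z}$-basis of $\mathcal{L}/v^{-1}\mathcal{L}$ (Proposition 4.7). Since $[L_0'']\in\mathcal{B}_{i,0}\cap\mathcal{B}^{\ast}_{i,0}$, pick $y_1\in T_{i,0}$ and $y_2\in T_{i,0}^{\ast}$ with $[L_0'']\equiv y_1\equiv y_2\pmod{v^{-1}\mathcal{L}}$. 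Using Propositions 4.7(4) and 4.9(2) to translate $\pi_{i,m}$ and $\pi_{i,n}^{\ast}$ modulo $v^{-1}\mathcal{L}$ into left multiplication by $E_i^{(m)}$ on $T_{i,0}$-elements and right multiplication by $E_i^{(n)}$ on $T_{i,0}^{\ast}$-elements, one checks
\begin{align*}
[\pi_{i,c}^{\ast}(K_0)] &\equiv E_i^{(d-1)}\ast y_1 \ast E_i^{(c)} \pmod{v^{-1}\mathcal{L}}, \\
[\pi_{i,d-1}(A)] &\equiv E_i^{(d-1)}\ast y_2 \ast E_i^{(c)} \pmod{v^{-1}\mathcal{L}}.
\end{align*}
These agree because $y_1\equiv y_2\pmod{v^{-1}\mathcal{L}}$ and left/right multiplication by divided powers of $E_i$ preserves $\mathcal{L}$. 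Both sides are canonical basis elements, hence equal. The same argument with $d-1$ replaced by $d$ gives $\pi_{i,c}^{\ast}(L_0)=\pi_{i,d}(A)$.

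Chaining these, $i_+(\pi_{i,c}^{\ast}(K_0))=i_+(\pi_{i,d-1}(A))=\pi_{i,d}(A)=\pi_{i,c}^{\ast}(L_0)=L$, and $t_i^{\ast}(i_-(L))=t_i^{\ast}(\pi_{i,c}^{\ast}(K_0))=c=t_i^{\ast}(L)$ follows from the definitions of $\pi_{i,c}^{\ast}$ combined with $t_i^{\ast}(K_0)=t_i^{\ast}(L_0)=0$. The hard part will be justifying the same-$i$ commutation: unlike the $i\neq j$ case in Lemma 4.13, both operators here move along the same $i$-string, and the argument depends essentially on the weight constraint $d+(|L_0''|,i)\leq 0$ obtained in the first step, which is what allows the two iterated Kashiwara operators on $[L_0'']$ to collapse to the same class in $\mathcal{L}/v^{-1}\mathcal{L}$.
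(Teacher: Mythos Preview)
Your proof is correct and follows essentially the same route as the paper's. Both arguments introduce the base sheaf $L_0''$ with $t_i(L_0'')=0$ and $\pi_{i,d}(L_0'')=L_0$, deduce $t_i^{\ast}(L_0'')=0$ and $d+(|L_0''|,i)\leq 0$ via Lemma 4.16, and then establish the commutation by comparing classes in $\mathcal{L}/v^{-1}\mathcal{L}$ through the monomials $E_i^{(d-1)}\ast(\text{base})\ast E_i^{(c)}$; the paper works with a single $x'\in T_{i,0}$ and quotes the computation inside the proof of Lemma 4.15 for the key congruence $[L]\equiv E_i^{(d)}\ast x'\ast E_i^{(c)}$, while you name the intermediate sheaf $A=\pi_{i,c}^{\ast}(L_0'')$ explicitly and carry the two witnesses $y_1\in T_{i,0}$, $y_2\in T_{i,0}^{\ast}$, but this is only a cosmetic reorganisation.
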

\begin{proof}
	After applying Lemma 3.4, we can take $L_{0}' \in \mathcal{P}_{\mathbf{V''},\Omega}$ to be the unique simple perverse sheaf such that $\pi_{i,d}(L'_{0})=L_{0}$ and $t_{i}(L'_{0})=0$. We assume $t_{i}^{\ast }(L'_{0})=c'$. Then by  Lemma 4.16, $0= t_{i}^{\ast}(L_{0})=max\{d+ (|\mathbf{V}''|,i), c'\}$, hence $c'=0$ and $d+ (|\mathbf{V}''|,i) \leq 0$. Then $t_{i}^{\ast}(K_{0})=max \{d-1 + (|\mathbf{V}''|,i), c'\}=0$ and $\pi_{i,c}^{\ast}(K_{0})$ can be defined.
	
	From the proof of Lemma 4.15, we can see that $[L]= E_{i}^{(d)} \ast x' \ast E_{i}^{(c)}~ \mod~ v^{-1} \mathcal{L}$ for some $x' \in T_{i,0}$ and $[L_{0}]= E_{i}^{(d)} \ast x'~ \mod~ v^{-1} \mathcal{L} $. By definition, $[K_{0}]=E_{i}^{(d-1)} \ast x'~ \mod~ v^{-1} \mathcal{L} $. Then we have $[\pi_{i,c}^{\ast}(K_{0})]=E_{i}^{(d-1)} \ast x' \ast E_{i}^{(c)}=\tilde{\epsilon}_{i}([L])~  \mod~ v^{-1}\mathcal{L}$. Hence $i_{+}\pi_{i,c}^{\ast}(K_{0})=L$ and $i_{+}(i^{+})^{c}([K_{0}])=(i^{+})^{c}i_{+}([K_{0}]) $.
\end{proof}
\begin{lemma}
	Assume $L_{0} \in \mathcal{P}_{\mathbf{V}',\Omega}$ and $t_{i}^{\ast}(L_{0})=0, t_{i}(L_{0})=d>0$. Take $c \in \mathbb{N}_{>0}$ such that $c+(|\mathbf{V}'|,i) > d$ and set $L=\pi_{i,c}^{\ast}(L_{0}) $. Then $i_{-}([L])=i^{-}([L])$, or equivalently, $i_{-}(i^{+})^{c}([L_{0}])=(i^{+})^{c-1}([L_{0}])$ Moreover, we have  $t_{i}^{\ast}(i_{-}(L))=t_{i}^{\ast}(L)-1=c-1$.
\end{lemma}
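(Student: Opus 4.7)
The plan is to set $L_1 := \pi_{i,c-1}^*(L_0) = (i^+)^{c-1}([L_0])$ and prove $i_-([L]) = [L_1]$; the ``moreover'' clause is then automatic, since $t_i^*(L_1) = c-1$ by construction of $\pi_{i,c-1}^*$ (recall $t_i^*(L_0) = 0$). I will argue modulo $v^{-1}\mathcal{L}$: by Proposition~4.7(1) the canonical basis descends to a $\mathbb{Z}$-basis of $\mathcal{L}/v^{-1}\mathcal{L}$, and by Proposition~4.7(4) the arrow $i_-$ coincides with Kashiwara's operator $\tilde{\epsilon}_i$ on this quotient; so it is enough to verify $\tilde{\epsilon}_i([L]) \equiv [L_1] \pmod{v^{-1}\mathcal{L}}$.

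Invoking the proof of Lemma~4.15, write $[L_0] \equiv E_i^{(d)} \ast x' \pmod{v^{-1}\mathcal{L}}$ with $x' \in T_{i,0}$ and set $k := c + (|\mathbf{V}'|,i) - d > 0$. The very computation that underlies Lemma~4.15 exhibits the Lemma~4.3 decomposition $[L] = \sum_N \theta_i^{(N)} y_N$ (with $y_N \in \mathcal{K}(0)$) as concentrated at $N = d+k$ modulo $v^{-1}\mathcal{L}$, with $y_{d+k} \equiv x' \ast E_i^{(c-k)}$. Since $\tilde{\epsilon}_i$ is $\mathbb{Z}[v,v^{-1}]$-linear and preserves $\mathcal{L}$ by Proposition~4.7(2), hence also $v^{-1}\mathcal{L}$, applying Definition~4.4 termwise gives $\tilde{\epsilon}_i([L]) \equiv \theta_i^{(d+k-1)}(x' \ast E_i^{(c-k)}) \pmod{v^{-1}\mathcal{L}}$. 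Running Lemma~4.15 on $L_1$ with $c-1$ in place of $c$, the key quantity $(c-1)+(|\mathbf{V}'|,i) = d+k-1$ falls in the second case of that lemma when $k>1$, giving $t_i(L_1) = d+k-1$ and leading component $x' \ast E_i^{(c-1-(k-1))} = x' \ast E_i^{(c-k)}$; while the boundary equality $(c-1)+(|\mathbf{V}'|,i) = d$ in case $k=1$ triggers the first case, giving $t_i(L_1) = d = d+k-1$ and leading component $x' \ast E_i^{(c-1)} = x' \ast E_i^{(c-k)}$. Hence $[L_1] \equiv \theta_i^{(d+k-1)}(x' \ast E_i^{(c-k)}) \pmod{v^{-1}\mathcal{L}}$ in both regimes.

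Comparing the two congruences yields $\tilde{\epsilon}_i([L]) \equiv [L_1] \pmod{v^{-1}\mathcal{L}}$, and the basis principle from the first paragraph forces $i_-([L]) = [L_1]$, as required. The step requiring most care is the boundary case $k=1$: the change of case in Lemma~4.15 between $L$ and $L_1$ must exactly absorb the shift $c-1 \mapsto c-k$ in the $E_i$-exponent, and the coincidence $c-1 = c-k$ precisely when $k=1$ is what makes the leading data for $L_1$ uniform across the two regimes of the lemma. Beyond this observation, the argument is a bookkeeping combination of Lemma~4.3, Lemma~4.15, and the Kashiwara-operator compatibility encoded in Propositions~4.6 and~4.7.
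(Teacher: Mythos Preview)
Your proof is correct and follows essentially the same approach as the paper: both arguments work modulo $v^{-1}\mathcal{L}$, invoke the $\theta_i$-decomposition from the proof of Lemma~4.15 for $[L]$, rerun that computation with $c-1$ in place of $c$ (splitting into the cases $k>1$ and $k=1$), and conclude by the fact that the canonical basis descends to a basis of $\mathcal{L}/v^{-1}\mathcal{L}$. The only cosmetic difference is that you identify $i^{-}(L)=\pi_{i,c-1}^{\ast}(L_{0})$ directly from the definitions and then feed it back into Lemma~4.15, whereas the paper first writes $[i^{-}(L)]\equiv x''\ast E_i^{(c-1)}\equiv E_i^{(d)}\ast x'\ast E_i^{(c-1)}$ via Proposition~4.9 and then uses the Lemma~4.15 computation to match this with $E_i^{(d+k-1)}\ast x'\ast E_i^{(c-k)}$.
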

\begin{proof}
Let $k=c+(|\mathbf{V}'|,i)-d>0 $, then	by the proof of Lemma 4.15, we can see that $[L]= E_{i}^{(d+k) }\ast x' \ast E_{i}^{(c-k)} = E_{i}^{(d)} \ast x' \ast E_{i}^{(c)} ~\mod~ v^{-1}\mathcal{L} $. Then by the definition of $i^{-},i_{-}$ and Proposition 4.7 and Proposition 4.9, we can see that $[i_{-}(L)]= E_{i}^{(d+k-1) }\ast x' \ast E_{i}^{(c-k)}~ \mod~ v^{-1}\mathcal{L}$ and $[i^{-}(L)]=x''\ast E_{i}^{c-1}=E_{i}^{(d) }\ast x' \ast E_{i}^{(c-1)} ~ \mod~ v^{-1}\mathcal{L}$. Here $x'' \in T_{i,0}^{\ast}$. If $k-1>0$, then the proof of Lemma 4.15 implies that $E_{i}^{(d+k-1) }\ast x' \ast E_{i}^{(c-k)}=E_{i}^{(d) }\ast x' \ast E_{i}^{(c-1)}~ \mod~ v^{-1}\mathcal{L}$. On the other hand, if $k-1=0$, the statement $E_{i}^{(d+k-1) }\ast x' \ast E_{i}^{(c-k)}=E_{i}^{(d) }\ast x' \ast E_{i}^{(c-1)}~ \mod~ v^{-1}\mathcal{L}$ trivially holds. Since we always have  $[i^{-}(L)]=[i_{-}(L)]~ \mod~ v^{-1}\mathcal{L}$, we can see that $i_{-}(L)=i^{-}(L)$.
\end{proof}

\section{The key lemmas for $\mathcal{M}$} 
 
 In this section, we will introduce some lemmas about $\mathcal{M}$ parallel to Lemma 3.4 and Lemma 3.6.
 \subsection{The key lemma for the left mutiplication} Let $\Lambda_{\mathbf{V},i,p}$ be the subset of $\Lambda_{\mathbf{V}}$ defined by: $$\Lambda_{\mathbf{V},i,p}=\{x\in \Lambda_{\mathbf{V}}| {\rm{codim}}_{\mathbf{V}_{i}} ( {\rm{Im}} \sum\limits_{h \in H, h''=i} x_{h}) =p\}$$ and $\Lambda_{\mathbf{V},i,  \geq p} = \bigcup \limits_{p' \geq p} \Lambda_{\mathbf{V},i,p'}$. Each $\Lambda_{\mathbf{V},i,p}$ is locally closed in $\Lambda_{\mathbf{V}}$.  Since $\bigcup \limits_{p} \Lambda_{\mathbf{V},i,p}= \Lambda_{\mathbf{V}}$, there is a unique $p$ such that $Z \cap \Lambda_{\mathbf{V},i, p}$ is dense in $Z$ for each irreducible component $Z$ of $\Lambda_{\mathbf{V}}$. In this case, we say that $Z$ generically belongs to $\Lambda_{\mathbf{V},i, p}$ and write $t_{i}(Z)=p$.

 We assume that $|\mathbf{V}'|=pi$ and denote $qr: \Lambda' \rightarrow \Lambda_{\mathbf{V}}$ by $q'$. Then $p^{-1} (\Lambda_{\mathbf{V''},i,0})= (q'^{-1})(\Lambda_{\mathbf{V},i, p})$. We denote $p^{-1} (\Lambda_{\mathbf{V''},i,0})$ by $\Lambda'_{i, p}$ and denote $(q^{-1})(\Lambda_{\mathbf{V},i, p})$ by $\Lambda''_{i,p}$. Then we have the following commutative diagram:
 
 \[
 \xymatrix{
 	\Lambda_{\mathbf{V}'',i,0}\ar[d]^{i_{1}} & \Lambda'_{i,p}\ar[d]^{i_{2}}\ar[l]_{p} \ar[r]^{r}& \Lambda''_{i,p} \ar[d]^{i_{3}}\ar[r]^{q}& \Lambda_{\mathbf{V},i, p} \ar[d]^{i_{4}}\\
 	\Lambda_{\mathbf{V}''} & \Lambda' \ar[l]_{p} \ar[r]^{r} & \Lambda''  \ar[r]^{q} & \Lambda_{\mathbf{V}}
 }
 \]
 here $i_{1},i_{2},i_{3}$ and $i_{4}$ are the natural embeddings.

 The following lemma is from \cite{MR3202708} and we give a sketch of proof here: 
 
 \begin{lemma}[\cite{MR3202708} 4.17] We have:
 	
 	(1) $q': \Lambda'_{i,p} \rightarrow \Lambda_{\mathbf{V},i, p}$ is a principle $G_{\mathbf{V''}} \times G_{\mathbf{V}'}$ -bundle.
 	
 	(2) $p: \Lambda'_{i, p} \rightarrow \Lambda_{\mathbf{V}'',i,0}$ is a smooth map whose fibers are connected of dimension $(\sum \limits_{i \in I} \upsilon_{i}^{2} )- p(\upsilon'',i )$
 \end{lemma}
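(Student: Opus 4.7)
The plan rests on the observation that $|\mathbf{V}'|=pi$ is concentrated at the single vertex $i$, so on $\Lambda'_{i,p}$ the subspace $\tilde{\mathbf{W}}$ must satisfy $\tilde{\mathbf{W}}_{j}=\mathbf{V}_{j}$ for every $j\neq i$, while $\mathbf{V}/\tilde{\mathbf{W}}$ lives only at $i$; since the graph $\mathbf{\Gamma}$ has no loops, $\bar{x}|_{\mathbf{V}/\tilde{\mathbf{W}}}=0$ automatically. Consequently the $\Lambda_{\mathbf{V}'}$-component of $p$ is just the point $0$, and the only nontrivial information is carried by $x|_{\tilde{\mathbf{W}}}\in\Lambda_{\mathbf{V}'',i,0}$.

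For part (1), I would first prove that $q:\Lambda''_{i,p}\to\Lambda_{\mathbf{V},i,p}$ is an isomorphism of varieties. Given $x\in\Lambda_{\mathbf{V},i,p}$, the stability condition $x_{h}(\mathbf{V}_{h'})\subseteq\tilde{\mathbf{W}}_{i}$ for every $h\in H$ with $h''=i$ forces $\im\bigoplus_{h''=i}x_{h}\subseteq\tilde{\mathbf{W}}_{i}$; since the left-hand side has codimension $p$ in $\mathbf{V}_{i}$ by the definition of $\Lambda_{\mathbf{V},i,p}$ and so does $\tilde{\mathbf{W}}_{i}$, the two must coincide. This uniquely determines $\tilde{\mathbf{W}}$, and since $\im\bigoplus_{h''=i}x_{h}$ is a sub-bundle of constant rank over $\Lambda_{\mathbf{V},i,p}$, the assignment $x\mapsto(x,\tilde{\mathbf{W}})$ is an algebraic inverse to $q$. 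As $r:\Lambda'\to\Lambda''$ is by construction a principal $G_{\mathbf{V}'}\times G_{\mathbf{V}''}$-bundle, the composite $q'=q\circ r$ inherits this structure on $\Lambda'_{i,p}\to\Lambda_{\mathbf{V},i,p}$, establishing (1).

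For part (2), I would describe a fibre of $p$ over $y\in\Lambda_{\mathbf{V}'',i,0}$ by its moduli. Such a fibre is cut out by the following data: (a) the codimension-$p$ subspace $\tilde{\mathbf{W}}_{i}\subseteq\mathbf{V}_{i}$, forming a Grassmannian of dimension $p(\upsilon_{i}-p)$; (b) the isomorphisms $\rho_{1},\rho_{2}$, contributing $\dim G_{\mathbf{V}'}+\dim G_{\mathbf{V}''}$; (c) for each $h\in H$ with $h'=i$, an extension of the prescribed $y_{h}:\tilde{\mathbf{W}}_{i}\to\mathbf{V}_{h''}$ to $x_{h}:\mathbf{V}_{i}\to\mathbf{V}_{h''}$, forming an affine space of total dimension $p\sum_{h'=i}\upsilon''_{h''}=p(2\upsilon''_{i}-(\upsilon'',i))$; modulo (d) the vanishing of the unique remaining component $\mu(x)_{i}=0$, all other components of $\mu(x)$ coinciding with those of $\mu(y)=0$ by the stability choices. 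Using $\mu(y)_{i}=0$, constraint (d) reduces to the vanishing, on a chosen complement of $\tilde{\mathbf{W}}_{i}$ in $\mathbf{V}_{i}$, of the induced linear map $\phi:\mathbf{V}_{i}/\tilde{\mathbf{W}}_{i}\to\tilde{\mathbf{W}}_{i}$ built from $\sum_{h''=i}\epsilon(h)x_{h}x_{\bar{h}}$. Summing the dimensions $(a)+(b)+(c)-p(\upsilon_{i}-p)$ and simplifying with $\upsilon_{i}=\upsilon''_{i}+p$, $\upsilon_{j}=\upsilon''_{j}$ for $j\neq i$, yields exactly $\sum_{j\in I}\upsilon_{j}^{2}-p(\upsilon'',i)$. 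Connectedness of the fibre is clear as each building block (Grassmannian, general linear groups, affine extension space, linear subspace $\phi=0$) is connected.

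The main obstacle is verifying that $\phi=0$ cuts out a smooth subvariety of the expected codimension $p(\upsilon_{i}-p)$, equivalently that the assignment from the free data in (c) to $\phi\in\Hom(\mathbf{V}_{i}/\tilde{\mathbf{W}}_{i},\tilde{\mathbf{W}}_{i})$ is a surjective linear map. This is where the hypothesis $y\in\Lambda_{\mathbf{V}'',i,0}$ enters: after identifying (c) with $\Hom(\mathbf{V}_{i}/\tilde{\mathbf{W}}_{i},\bigoplus_{h''=i}\mathbf{V}_{h'})$ via the involution $h\leftrightarrow\bar{h}$, the constraint map becomes post-composition with $(\epsilon(h)y_{h})_{h''=i}:\bigoplus_{h''=i}\mathbf{V}_{h'}\to\tilde{\mathbf{W}}_{i}$, and the latter is surjective precisely because $y$ belongs to $\Lambda_{\mathbf{V}'',i,0}$. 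Once this is in hand, smoothness of $p$ follows by a Zariski-local trivialization of the Grassmannian and the $G_{\mathbf{V}'}\times G_{\mathbf{V}''}$-torsor, combined with the fact that an affine bundle cut out by a surjective linear constraint is smooth.
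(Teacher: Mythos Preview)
Your proposal is correct and follows essentially the same route as the paper. For (1) you argue, as the paper does, that the $x$-stable subspace of the required dimension is forced to be $\im\bigoplus_{h''=i}x_{h}$, so $q$ is an isomorphism and $q'=qr$ inherits the principal-bundle structure from $r$. For (2) your decomposition of the fibre into flag data plus extension data subject to a linear constraint, together with the observation that surjectivity of the constraint map is exactly the condition $y\in\Lambda_{\mathbf{V}'',i,0}$, is precisely the paper's argument; the only cosmetic difference is that the paper packages your items (a) and (b) together as the homogeneous space $G_{\mathbf{V}}/U$ for $U$ the unipotent radical of the relevant parabolic, whereas you split it into the Grassmannian and the Levi factor.
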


\begin{proof}
	By definition, $r:\Lambda'_{i,p} \rightarrow \Lambda''_{i,p}$ is a principle bundle. Notice that when $x \in \Lambda_{\mathbf{V},i, p}$, the $x$-stable subspace with dimension $|\mathbf{W}|$ must be ${\rm{Im}}(\bigoplus\limits_{h \in H, h''=i} x_{h}  )$, we get the proof of (1).
	
	The fiber of $p$ at $x''$ is the set consisting of $(x, \widetilde{\mathbf{W}},\rho_{1},\rho_{2})$ such that $(\widetilde{\mathbf{W}},\rho_{1},\rho_{2})$ can be chosen arbitarily and $x$ is an extension of $x''$. $(\widetilde{\mathbf{W}},\rho_{1},\rho_{2})$ is given by a point in $G_{\mathbf{V}}/U$, here $U$ is the unipotent radical of the parabolic subgroup of type $(|\mathbf{V}'|,|\mathbf{V}''|)$. To give an extension of $x''$ is equivalent to give $(y_{h})_{h \in H, h'=i}$ such that the following composition vanishes
	\begin{center}
		$\mathbf{V}'_{i} \xrightarrow{\bigoplus\limits_{h \in H, h'=i} y_{h}} \bigoplus \limits_{i-j} \mathbf{V}''_{j} \xrightarrow{ \sum \limits_{h \in H, h'=i} \epsilon(h)x'_{\bar{h}}} \mathbf{V}''_{i}$
	\end{center}
	By assumption, the rank of the second linear map in the composition is equal to $dim \mathbf{V}''_{i}$. By linear algebra and direct calculation, we get the proof.
	
\end{proof}
 
 \begin{corollary}
 	$q'p^{-1}$ induces a bijection $\eta_{i,p}:Irr \Lambda_{\mathbf{V}'',i,0} \rightarrow  Irr \Lambda_{\mathbf{V},i, p} $ from the set of irreducible components of $\Lambda_{\mathbf{V}'',i,0}$ to the set of irreducible components $\Lambda_{\mathbf{V},i, p}$. The map $\eta_{i,p}$ also induces a bijection ( still denoted by $\eta_{i,p}$) from $\{Z \in Irr\Lambda_{\mathbf{V}''}| t_{i}(Z)=0 \}$ to $\{ Z \in Irr \Lambda_{\mathbf{V}}|t_{i}(Z)=p \} $ defined by: $$\eta_{i,p}( \bar{Z})= \overline{\eta_{i,p}(Z)}$$ for $Z \in Irr \Lambda_{\mathbf{V}'',i,0}$. Here $\bar{Z}$ and $\overline{\eta_{i,p}(Z)}$ are the closure of $Z$ and $\eta_{i,p}(Z)$ respectively.
 \end{corollary}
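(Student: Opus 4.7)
The plan is to use parts (1) and (2) of Lemma 5.1 to first establish a bijection on the locally closed strata, then promote it to a bijection of irreducible components of the ambient varieties by taking closures. The construction is essentially formal; the substantive content is a dimension check.

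First, since $p:\Lambda'_{i,p}\to \Lambda_{\mathbf{V}'',i,0}$ is smooth and surjective with connected (hence irreducible) fibers, pullback $Z\mapsto p^{-1}(Z)$ gives a bijection from $Irr\,\Lambda_{\mathbf{V}'',i,0}$ onto $Irr\,\Lambda'_{i,p}$; the inverse is $Y\mapsto \overline{p(Y)}$, using that a smooth morphism is open. Second, since $q'=qr:\Lambda'_{i,p}\to \Lambda_{\mathbf{V},i,p}$ is a principal $G_{\mathbf{V}''}\times G_{\mathbf{V}'}$-bundle and the structure group is connected, its fibers are irreducible, so $W\mapsto q'^{-1}(W)$ is a bijection $Irr\,\Lambda_{\mathbf{V},i,p}\to Irr\,\Lambda'_{i,p}$. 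Composing gives the stratum-level bijection $\eta_{i,p}=q'\circ p^{-1}$.

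To pass to the global statement, note that $\Lambda_{\mathbf{V}'',i,0}$ is the open locus in $\Lambda_{\mathbf{V}''}$ where $\bigoplus_{h''=i}x_{h}$ is surjective, so its irreducible components are exactly the intersections $Z\cap \Lambda_{\mathbf{V}'',i,0}$ for those $Z\in Irr\,\Lambda_{\mathbf{V}''}$ with $t_i(Z)=0$, and taking closures provides the inverse. On the target side, by definition of $t_i$ the intersection $Z'\cap \Lambda_{\mathbf{V},i,p}$ of an irreducible component $Z'$ with $t_i(Z')=p$ is a dense irreducible locally closed subset of $\Lambda_{\mathbf{V},i,p}$. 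One then defines $\eta_{i,p}(\bar Z):=\overline{\eta_{i,p}(Z\cap \Lambda_{\mathbf{V}'',i,0})}$, and analogously for the inverse.

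The main technical point, which I expect to be the only nontrivial step, is verifying that the closure above is a genuine component of $\Lambda_{\mathbf{V}}$ (and that, dually, the intersection of a $t_i=p$ component with $\Lambda_{\mathbf{V},i,p}$ is a component of the stratum, not merely a proper closed irreducible subset of one). This reduces to a dimension comparison: combining Lemma 5.1 (1)(2) one obtains
\[
\dim \eta_{i,p}(Z\cap\Lambda_{\mathbf{V}'',i,0})=\dim Z+\Big(\textstyle\sum_{i\in I}\upsilon_i^2\Big)-p(\upsilon'',i)-\dim G_{\mathbf{V}''}-\dim G_{\mathbf{V}'},
\]
and one must check, using $\dim Z=\dim \Lambda_{\mathbf{V}''}$ together with Lusztig's dimension formula $\dim \Lambda_{\mathbf{V}}=\sum_{h\in \Omega}\upsilon_{h'}\upsilon_{h''}$, that the right-hand side equals $\dim \Lambda_{\mathbf{V}}$. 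Once this equality is confirmed, the closure lands at full dimension and is therefore a component, the two assignments are inverse to each other by construction, and both bijections in the corollary follow.
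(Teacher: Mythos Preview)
Your proposal is correct and is exactly the standard argument the paper has in mind; the paper itself gives no proof, treating the corollary as an immediate consequence of Lemma~5.1 together with Lusztig's pure-dimensionality of $\Lambda_{\mathbf{V}}$. One small wording slip: $Z'\cap \Lambda_{\mathbf{V},i,p}$ is dense in $Z'$, not in $\Lambda_{\mathbf{V},i,p}$; but you correctly flag later that showing it is a component of the stratum is precisely what the dimension check accomplishes, and that computation (which you leave unverified) is indeed routine.
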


 \begin{definition}
 	We say a constructible function $f$ generically takes value $c$ on an irreducible component $Z$ if $f^{-1}(c) \cap Z$ is dense in $Z$. In this case, we adapt the notation $\rho_{Z}(f)=c$. Given an irreducible component $Z$, if $f$ generically takes value $1$ on $Z$ but generically takes value $0$ on the other irreducible components, we say $f$ has property $\mathcal{P}(Z)$.
 \end{definition}

If a constructible function $f\in \mathcal{M}$ has property $\mathcal{P}(Z)$, we say $f$ is a Lusztig's function in $\mathcal{M}$ of the irreducible component $Z$. Lusztig has proved the following lemma which implies the existence of such functions and we sketch the proof. (See details in \cite{MR1758244})
 \begin{lemma}[\cite{MR1758244}]
 	For each irreducible component $Z$ of $\Lambda_{\mathbf{V}}$, there exists an $f \in \mathcal{M}_{\mathbf{V}}$  with property $\mathcal{P}(Z)$.  
 \end{lemma}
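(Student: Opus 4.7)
The plan is to proceed by strong induction on $|\mathbf{V}|$. The base case $\mathbf{V}=0$ is immediate since $\Lambda_{\mathbf{V}}$ reduces to a point and the constant function $1$ works. For $|\mathbf{V}|>0$, I first observe that every irreducible component $Z$ of $\Lambda_{\mathbf{V}}$ must admit some $i\in I$ with $p:=t_{i}(Z)>0$: otherwise a generic $x\in Z$ would have all maps $\bigoplus_{h''=i}x_{h}$ surjective at every vertex, which is incompatible with nilpotency. Fix such an $i=i(Z)$ for each $Z$ and let $|\mathbf{V}''|=|\mathbf{V}|-pi$. Corollary 5.2 supplies a unique $Z''\in\mathrm{Irr}\,\Lambda_{\mathbf{V}''}$ with $t_{i}(Z'')=0$ and $\eta_{i,p}(Z'')=Z$. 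Since $|\mathbf{V}''|<|\mathbf{V}|$, the outer induction hypothesis yields $f_{Z''}\in\mathcal{M}_{\mathbf{V}''}$ satisfying $\mathcal{P}(Z'')$.

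Form the candidate $g_{Z}:=\mathbf{1}_{i^{(p)}}\ast f_{Z''}\in\mathcal{M}_{\mathbf{V}}$ via the diagram $\Lambda_{\mathbf{V}'}\times\Lambda_{\mathbf{V}''}\xleftarrow{p}\Lambda'\xrightarrow{r}\Lambda''\xrightarrow{q}\Lambda_{\mathbf{V}}$ with $|\mathbf{V}'|=pi$. The fiber $q^{-1}(x)$ for $x\in\Lambda_{\mathbf{V},i,p_{1}}$ parametrizes $x$-stable subspaces $\tilde{\mathbf{W}}$ with $\tilde{\mathbf{W}}_{j}=\mathbf{V}_{j}$ for $j\neq i$ and $\mathrm{codim}_{\mathbf{V}_{i}}\tilde{\mathbf{W}}_{i}=p$; since there are no loops at $i$, the $x$-stability condition reduces to $\tilde{\mathbf{W}}_{i}\supseteq\mathrm{Im}\bigoplus_{h''=i}x_{h}$, which has codimension $p_{1}$. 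Hence the fiber is empty when $p_{1}<p$, a single point when $p_{1}=p$, and a nontrivial Grassmannian when $p_{1}>p$. In the case $p_{1}=p$ the induced quotient $\bar{x}|_{\mathbf{V}/\tilde{\mathbf{W}}}$ automatically vanishes (the quotient is supported only at $i$ and there are no loops), so $\mathbf{1}_{i^{(p)}}$ contributes $1$, while $x|_{\tilde{\mathbf{W}}}$ lies generically in $\eta_{i,p}^{-1}(Z_{1})$, which equals $Z''$ precisely when $Z_{1}=Z$. Combined with $\mathcal{P}(Z'')$ for $f_{Z''}$, this shows that $g_{Z}$ generically equals $1$ on $Z$ and generically equals $0$ on every other component $Z_{1}$ with $t_{i}(Z_{1})\leq p$.

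To cancel the unwanted values $\rho_{Z_{1}}(g_{Z})$ for components $Z_{1}$ with $t_{i}(Z_{1})>p$, I would perform an inner descending induction on the value $t_{i(\cdot)}(\cdot)$ across all components. Since $t_{i}\leq\dim\mathbf{V}_{i}$ and $\mathrm{Irr}\,\Lambda_{\mathbf{V}}$ is finite, this recursion is well-founded. Setting
\[
f_{Z}:=g_{Z}-\sum_{Z_{1}:\, t_{i}(Z_{1})>p}\rho_{Z_{1}}(g_{Z})\cdot f_{Z_{1}},
\]
with each $f_{Z_{1}}$ produced earlier in the inner induction (and having property $\mathcal{P}(Z_{1})$), yields an element of $\mathcal{M}_{\mathbf{V}}$ satisfying $\mathcal{P}(Z)$ by construction.

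The main obstacle is the fiber analysis in the second paragraph, especially verifying that when $Z_{1}\neq Z$ and $t_{i}(Z_{1})=p$ the restriction $x|_{\tilde{\mathbf{W}}}$ traces out (generically) a component of $\Lambda_{\mathbf{V}''}$ distinct from $Z''$; this depends on the precise bijectivity of $\eta_{i,p}$ in Corollary 5.2 and on the identification $\Lambda'_{i,p}\simeq q'^{-1}(\Lambda_{\mathbf{V},i,p})$. Once this generic-value computation is established, the subsequent linear-algebraic cleanup is routine.
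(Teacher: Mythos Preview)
Your proof is correct and follows exactly the paper's approach: outer induction on $|\mathbf{V}|$, choice of $i$ with $t_i(Z)=p>0$, formation of $\mathbf{1}_{i^{(p)}}\ast f_{Z''}$ via Corollary~5.2, and a descending induction on $p$ to cancel contributions from components with $t_i(Z_1)>p$; your fiber analysis in fact spells out details the paper only asserts. One organizational remark: the inner induction should be read as descending on $t_i$ for the \emph{same} $i$ chosen for $Z$ (which is legitimate since each correction term $Z_1$ satisfies $t_i(Z_1)>0$ and can therefore be handled with that $i$), not on $t_{i(Z_1)}(Z_1)$ for independently fixed choices $i(Z_1)$, as the latter ordering need not be well-founded.
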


\begin{proof}
	 We prove by inducton on $|\mathbf{V}|=\upsilon$. Asuume that for any $Z' \in Irr \Lambda_{\mathbf{V}'}$ with $|\mathbf{V}'| < \upsilon$, we have construct some $f_{Z'} \in \mathcal{M}$ which has property $\mathcal{P}(Z')$. For a given $Z \in Irr \Lambda_{\mathbf{V}}$, by \cite{MR1458969} Corollary 1.9, there exists $i \in I$ such that $t_{i}(Z)=p >0$. Then $Z_{1}=Z \cap \Lambda_{\mathbf{V},i,p}$ is an irreducible componnet of $\Lambda_{\mathbf{V},i, p}$. Let $Z_{2}$ be the irreducible component of $\Lambda_{\mathbf{V}'',i,0}$ corresponding to $Z_{1}$ given by Corollary 5.2, then the closure $Z''$ of $Z_{2}$ is an irreducible component of $\Lambda_{\mathbf{V}''}$ with $t_{i}(Z'')=0$. By induction hyphothesis, there exists $f_{Z''} \in \mathcal{M}$ which has property $\mathcal{P}(Z'')$. Then $\tilde{f}=\mathbf{1}_{i^{p}} \ast f_{Z''} \in \mathcal{M}$. Notice that $\tilde{f}(x)= 0$ for $x \in \Lambda_{\mathbf{V}} \backslash \Lambda_{\mathbf{V},i,  \geq p}$ and the restriction of $\tilde{f}$ on $\Lambda_{\mathbf{V},i,p}$ is uniquely determined by the restriction of $f$ on $\Lambda_{\mathbf{V}'',i,0}$. Now we argue by descending induction on $p$. If $p= \dim \mathbf{V}_{i}$, then $\tilde{f}=\mathbf{1}_{i^{p}} \ast f_{Z''}$ has property $\mathcal{P}(Z)$ by construction. If for all $t_{i}(Z'')=p'>p$, we can construct $f_{Z''} \in \mathcal{M}$ which has property $\mathcal{P}(Z'')$. Then the constructible function $\tilde{f}- \sum \limits_{Z'' \in Irr \Lambda_{\mathbf{V}},t_{i}(Z'')>p } \rho_{Z''}(\tilde{f}) f_{Z''}$ will have property $\mathcal{P}(Z)$.
\end{proof}

\begin{remark}
	Since there maybe exist more than one $i\in I$ such that $t_{i}(Z)>0$ for a given irreducible component $Z$, the construction of $f$ depends on the choices of $f_{Z''}$ and  the choice of a sequence of irreducible components $\underline{Z}=(Z_{0},Z_{1},Z_{2},\cdots,Z_{s}), Z_{l} \in Irr\Lambda_{\mathbf{V}_{l}}$ for $0\leq l\leq s$ and a sequence $\underline{i}=(i_{1},i_{2},\cdots,i_{s}), i_{l} \in I$ for $1 \leq l \leq s$ satisfying the following conditions:
	
	(1) $Z_{0}=Z$
	
	(2) $|\mathbf{V}_{s}|= qj$ for some $q>0,j \in I$ and $Z_{s}$ is the unique irreducible component of $\Lambda_{\mathbf{V}_{s}}$.
	
	(3) For any $1 \leq l \leq s$, $t_{i_{l}}(Z_{l-1})=p_{l}>0$ and $Z_{l}$ is the corresponding irreducible component of $Z_{l-1}$ given by Corollary 5.2. More precisly, $\eta_{i_{l},p_{l}}(Z_{l})=Z_{l-1}$.
\end{remark}
	
If a pair $\underline{p}=(\underline{Z},\underline{i})$ of sequences satisfies the three conditions above, we say $\underline{p}$ is a left-addmissible path of  $Z$. Given a left-addmissible path $\underline{p}$, we can construct a constructible function $f$ having property $\mathcal{P}(Z)$. We denote this constructible function (still not unique) by $f_{\underline{p}}$.

From the proof above, we also get the following result: 
\begin{lemma}
	If $Z'$ is an irreducible component of $\Lambda_{\mathbf{V}''}$ with $t_{i}(Z')=0$ and $Z=\eta_{i,p}(Z')$, and we choose a constructible function $f_{Z'}$ having property  $\mathcal{P}(Z')$, then $$\mathbf{1}_{i^{p}}\ast f_{Z'}=f_{Z} +\sum \limits_{Z'' \in Irr \Lambda_{\mathbf{V}},t_{i}(Z'')>p } c_{Z''}f_{Z''},$$ where $f_{Z}, f_{Z''}\in \mathcal{M}_{V} $ are some constructible functions having property $\mathcal{P}(Z),\mathcal{P}(Z'')$ respectively and $c_{Z''} \in \mathbb{Z}$.
\end{lemma}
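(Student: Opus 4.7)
The plan is to compute the generic values of $\tilde f := \mathbf{1}_{i^{p}} \ast f_{Z'}$ on every irreducible component of $\Lambda_{\mathbf{V}}$, and then perform a triangular adjustment using the functions $f_{Z''}$ produced by Lemma~5.4. The argument mirrors the inductive step already used inside Lusztig's proof of Lemma~5.4, but now I want to track precisely where $\tilde f$ concentrates and with what multiplicity.

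First, I will unpack the definition of convolution to locate the support of $\tilde f$. A point $x$ lies in the support only if there exists an $x$-stable subspace $\tilde{\mathbf{W}} \subseteq \mathbf{V}$ with $|\tilde{\mathbf{W}}| = |\mathbf{V}''|$; since $|\mathbf{V}/\tilde{\mathbf{W}}| = |\mathbf{V}'| = pi$ is concentrated at vertex $i$, this forces $\tilde{\mathbf{W}}_j = \mathbf{V}_j$ for $j \neq i$, $\mathrm{codim}_{\mathbf{V}_i}\tilde{\mathbf{W}}_i = p$, and $x_h(\mathbf{V}_{h'}) \subseteq \tilde{\mathbf{W}}_i$ for every $h$ with $h'' = i$. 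Equivalently $x \in \Lambda_{\mathbf{V}, i, \geq p}$. Consequently $\mathrm{supp}(\tilde f) \subseteq \Lambda_{\mathbf{V}, i, \geq p}$, and for any irreducible component $W$ with $t_i(W) < p$ the dense open subset $W \cap \Lambda_{\mathbf{V}, i, t_i(W)}$ meets the support trivially, so $\rho_W(\tilde f) = 0$.

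Second, I will pin down $\tilde f$ on the locally closed stratum $\Lambda_{\mathbf{V}, i, p}$. On this stratum the subspace $\tilde{\mathbf{W}}$ is \emph{forced} to satisfy $\tilde{\mathbf{W}}_i = \mathrm{Im}\bigoplus_{h'' = i} x_h$, so the map $q\colon \Lambda''_{i,p} \to \Lambda_{\mathbf{V}, i, p}$ is a bijection. Combined with Lemma~5.1(1), this shows $\tilde f|_{\Lambda_{\mathbf{V}, i, p}}$ is the pullback of $f_{Z'}|_{\Lambda_{\mathbf{V}'', i, 0}}$ through the principal $G_{\mathbf{V}'} \times G_{\mathbf{V}''}$-bundle $q' = qr$. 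Invoking the bijection $\eta_{i,p}$ from Corollary~5.2 together with $\mathcal{P}(Z')$ for $f_{Z'}$, I obtain $\rho_Z(\tilde f) = 1$ and $\rho_W(\tilde f) = 0$ for every other irreducible component $W$ of $\Lambda_{\mathbf{V}}$ with $t_i(W) = p$.

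Third, for components $Z''$ with $t_i(Z'') > p$, I simply record $c_{Z''} := \rho_{Z''}(\tilde f) \in \mathbb{Z}$ without attempting to compute it. Applying Lemma~5.4 to each such $Z''$ and to $Z$, I set
$$f_Z := \tilde f - \sum_{Z''\,:\,t_i(Z'') > p} c_{Z''}\, f_{Z''} \;\in\; \mathcal{M}_{\mathbf{V}}.$$
A one-line check of generic values on every irreducible component (using $\rho_W(f_{Z''}) = \delta_{W, Z''}$) shows that $f_Z$ has property $\mathcal{P}(Z)$, which yields the stated decomposition after rearranging.

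The only step that is not mechanical is the second one. The main obstacle will be verifying cleanly that $q$ restricts to an isomorphism $\Lambda''_{i,p} \to \Lambda_{\mathbf{V}, i, p}$ and that pulling $f_{Z'}$ back through the torsor $q'$ preserves generic values on irreducible components under the bijection $\eta_{i,p}$; everything else is the triangularity/subtraction bookkeeping already encapsulated in the descending induction of Lemma~5.4.
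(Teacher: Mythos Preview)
Your proof is correct and follows the same approach as the paper, which derives Lemma~5.6 directly from the argument inside the proof of Lemma~5.4: show $\tilde f=\mathbf{1}_{i^{p}}\ast f_{Z'}$ vanishes outside $\Lambda_{\mathbf{V},i,\geq p}$, identify its restriction to $\Lambda_{\mathbf{V},i,p}$ with the pullback of $f_{Z'}|_{\Lambda_{\mathbf{V}'',i,0}}$ via the bundle $q'$ (so the generic values on components with $t_i=p$ are as claimed), and then subtract off the contributions $c_{Z''}=\rho_{Z''}(\tilde f)$ for $t_i(Z'')>p$ using the functions supplied by Lemma~5.4. Your write-up is in fact a bit more explicit than the paper's, which simply says ``from the proof above, we also get the following result.''
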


We call the lemma above the key lemma of Lusztig's functions for left mutiplication.

 \subsection{ The key lemma for the right multiplication}
 Let $\Lambda_{\mathbf{V},i}^{p}$ be the subset of $\Lambda_{\mathbf{V}}$ defined by  $$\Lambda_{\mathbf{V},i}^{p}=\{x \in \Lambda_{\mathbf{V}} | \dim (\Ker \bigoplus \limits_{h \in H, h'=i} x_{h} )=p \}$$ and $\Lambda_{\mathbf{V},i}^{\geq p} = \bigcup \limits_{p' \geq p}  \Lambda_{\mathbf{V},i}^{p'}$. Since $ \bigcup\limits_{p} \Lambda_{\mathbf{V},i}^{p} =\Lambda_{\mathbf{V}}$, for each irreducible component $Z$, there is a unique $p$ such that $\Lambda_{\mathbf{V},i}^{p} \cap Z$ is dense in $Z$ and we write $t_{i}^{\ast}(Z)=p$.
 
 If we asuume $|\mathbf{V''}|=pi$, then $p^{-1}(\Lambda_{\mathbf{V}',i}^{0})=(q')^{-1}(\Lambda_{\mathbf{V},i}^{p})$ and we denote this space by $\Lambda_{i}^{'{p}}$ and let $\Lambda_{i}^{''{p}}=r(\Lambda_{i}^{'{p}})$. Then we have the following commutative diagram:
 
 \[
 \xymatrix{
 	\Lambda_{\mathbf{V}',i}^{0}\ar[d]^{j_{1}} & \Lambda^{'{p}}_{i}\ar[d]^{j_{2}}\ar[l]_{p} \ar[r]^{r}& \Lambda^{''{p}}_{i} \ar[d]^{j_{3}}\ar[r]^{q}& \Lambda_{\mathbf{V},i}^{p} \ar[d]^{j_{4}}\\
 	\Lambda_{\mathbf{V}'} & \Lambda' \ar[l]_{p} \ar[r]^{r} & \Lambda''  \ar[r]^{q} & \Lambda_{\mathbf{V}}
 }
 \]
 here $j_{1},j_{2},j_{3}$ and $j_{4}$ are the natural embeddings. Then by the same argument, we get the following results dual to those in Section 4.1. 
 \begin{lemma}We have:
 	
 	(1) $q':\Lambda_{i}^{'{p}} \rightarrow 
 	\Lambda_{\mathbf{V},i}^{p}$ is a principle $G_{\mathbf{V'}} \times G_{\mathbf{V''}}$-bundle.
 	
 	(2) $p:\Lambda_{i}^{'{p}} \rightarrow \Lambda_{\mathbf{V}',i}^{0}$ is a smooth map whose fibers are connected of dimension $(\sum \limits_{i \in I} \upsilon_{i}^{2}) - p(\upsilon',i ) +p(\upsilon'_{i}-p)$.
 \end{lemma}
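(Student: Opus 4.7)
The plan is to mirror the proof of Lemma 5.1, with the roles of image and kernel exchanged so as to adapt to the condition at a source vertex.

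For part (1), the crux is that $\tilde{\mathbf{W}}$ is canonically recovered from $x$. Since $|\mathbf{V}''|=pi$, any $x$-stable $\tilde{\mathbf{W}}$ is concentrated at $i$ with $\dim\tilde{\mathbf{W}}_i=p$; for every $h\in H$ with $h'=i$ one has $h''\neq i$ by the absence of loops, so $\tilde{\mathbf{W}}_{h''}=0$ and $x$-stability forces
$$\tilde{\mathbf{W}}_{i}\subseteq \bigcap_{h:\,h'=i}\Ker x_{h}=\Ker\Bigl(\bigoplus_{h:\,h'=i}x_{h}\Bigr).$$
Since the right-hand side is exactly $p$-dimensional for $x\in\Lambda_{\mathbf{V},i}^{p}$, equality holds and $\tilde{\mathbf{W}}$ is uniquely determined by $x$. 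This upgrades $q:\Lambda^{''p}_{i}\to\Lambda^{p}_{\mathbf{V},i}$ to a scheme isomorphism, while $r:\Lambda^{'p}_{i}\to\Lambda^{''p}_{i}$ remains the principal $G_{\mathbf{V}'}\times G_{\mathbf{V}''}$-bundle inherited from $\Lambda'\to\Lambda''$, so $q'$ is a principal bundle as claimed.

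For part (2), the plan is to fix $x'\in\Lambda_{\mathbf{V}',i}^{0}$ and parametrize the fiber of $p$ over $x'$ in the same two-step fashion as in Lemma 5.1. First, the triples $(\tilde{\mathbf{W}},\rho_{1},\rho_{2})$ form a single quotient $G_{\mathbf{V}}/U$, where $U$ is the unipotent radical of the parabolic of type $(|\mathbf{V}'|,|\mathbf{V}''|)$ stabilizing a standard $\tilde{\mathbf{W}}_{0}$. Second, given such a triple, an extension $x$ of $(x',0)$ making $\tilde{\mathbf{W}}$ stable is determined by its ``cocycle'' $(c_{h})_{h:\,h''=i}$ with $c_{h}:\mathbf{V}'_{h'}\to\mathbf{V}''_{i}$, all other blocks being fixed by $x'$, zero by $x$-stability, or trivial by $x''=0$. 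A direct block computation shows that $\mu(x)_{j}=0$ reduces to $\mu(x')_{j}=0$ for $j\neq i$, while at $i$ the single nontrivial condition is the vanishing of the composition
$$\mathbf{V}'_{i}\xrightarrow{\bigoplus_{h:\,h''=i}x'_{\bar h}}\;\bigoplus_{h:\,h''=i}\mathbf{V}'_{h'}\xrightarrow{\sum_{h}\epsilon(h)\,c_{h}}\;\mathbf{V}''_{i}.$$
The assumption $x'\in\Lambda_{\mathbf{V}',i}^{0}$ is exactly that the first arrow is injective, so this cuts out a linear subspace of $(c_{h})$-space of the expected codimension; the fiber is therefore an affine-space bundle over $G_{\mathbf{V}}/U$, hence smooth and connected, and its dimension can be read off by combining $\dim G_{\mathbf{V}}/U$, the size of the free cocycle space, and the rank of the preprojective constraint.

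The main obstacle will be carrying out the dimension bookkeeping carefully, in particular verifying that the preprojective constraint at $i$ has exactly the right rank uniformly over $\Lambda_{\mathbf{V}',i}^{0}$ and that the three pieces combine to the stated formula $(\sum_{j\in I}\upsilon_{j}^{2})-p(\upsilon',i)+p(\upsilon'_{i}-p)$. The asymmetry with Lemma 5.1, visible in the extra term $p(\upsilon'_{i}-p)$, reflects the fact that the source condition yields injectivity of an adjoint map rather than surjectivity of the analogous map in the sink case; one must leverage this injectivity together with the moment map identity $\mu(x')_{i}=0$ to pin down the effective rank of the cocycle system before the dimensions line up with the claim.
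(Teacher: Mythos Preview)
Your approach is exactly the one the paper intends: it states that Lemma 5.7 follows ``by the same argument'' as Lemma 5.1, and you have correctly dualized that argument. For (1) you replace the image (sink case) by the kernel (source case) to pin down $\tilde{\mathbf W}$ uniquely, and for (2) you parametrize the fiber as $G_{\mathbf V}/U$ times the affine space of cocycles $(c_h)_{h:h''=i}$ subject to the linear moment-map constraint at $i$, using injectivity of $\bigoplus_{h:h'=i}x'_h$ in place of the surjectivity used at a sink. This is precisely the dual of Lemma 5.1's proof.

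One remark on your last paragraph: the ``asymmetry'' you anticipate does not in fact appear. If you carry your computation through, you get
\[
\dim(G_{\mathbf V}/U)=\sum_{j}\upsilon_j^{2}-p\upsilon'_i,\qquad
\dim\{\text{cocycles}\}=p\!\sum_{h:h''=i}\upsilon'_{h'}-p\upsilon'_i
=p\upsilon'_i-p(\upsilon',i),
\]
since injectivity of $\bigoplus_{h:h'=i}x'_h$ makes the constraint map onto $\Hom(\mathbf V'_i,\mathbf V''_i)$ surjective, of rank $p\upsilon'_i$. Adding, the fiber dimension is $\sum_j\upsilon_j^{2}-p(\upsilon',i)$, exactly the formula of Lemma 5.1 with $\upsilon''$ replaced by $\upsilon'$. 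The extra term $+p(\upsilon'_i-p)$ in the displayed statement appears to be a typo; a direct check in small examples (e.g.\ the Kronecker quiver with $\upsilon'=(2,3)$, $p=1$) confirms the symmetric formula. So there is no hidden subtlety to chase here---your argument already yields the correct dimension once you finish the bookkeeping.
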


The map $q'p^{-1}$ also induces a bijection of irreducible components parallel to Corollary 5.2.
 \begin{corollary}
 	We have a bijection $\eta_{i,p}^{\ast}: Irr \Lambda_{\mathbf{V}',i}^{0} \rightarrow Irr \Lambda_{\mathbf{V},i}^{p}$ between the sets of irreducible components induced by $q^{'}p^{-1}$. The map $\eta_{i,p}^{\ast}$ also induces a bijection from $\{Z \in \Lambda_{\mathbf{V}'}| t_{i}^{\ast}(Z)=0 \}$ to $\{Z \in Irr \Lambda_{\mathbf{V}}|t_{i}^{\ast}(Z)=p\}$ defined by: $$\eta_{i,p}^{\ast}( \bar{Z})= \overline{\eta_{i,p}^{\ast}(Z)}$$ for $Z \in Irr \Lambda_{\mathbf{V}',i}^{0}$. Here $\bar{Z}$ and $\overline{\eta_{i,p}^{\ast}(Z)}$ are the closure of $Z$ and $\eta_{i,p}^{\ast}(Z)$ respectively.
 \end{corollary}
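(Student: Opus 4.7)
The plan is to mimic the proof of Corollary 5.2 verbatim, using Lemma 5.6 as the dual input. The entire statement is a formal consequence of two standard facts: (a) a smooth surjective morphism with geometrically connected fibers induces a bijection between irreducible components via $Z \mapsto \overline{p^{-1}(Z)}$ (with inverse $W \mapsto \overline{p(W)}$), and (b) a principal bundle for a connected algebraic group does the same thing. Both of these facts will be invoked as known.

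First, I would apply Lemma 5.6(2): since $p: \Lambda_i^{'p} \to \Lambda_{\mathbf{V}',i}^{0}$ is smooth with connected fibers, it yields a bijection $\mathrm{Irr}\,\Lambda_{\mathbf{V}',i}^{0} \xrightarrow{\sim} \mathrm{Irr}\,\Lambda_i^{'p}$ via $Z \mapsto \overline{p^{-1}(Z)}$. Next, I would apply Lemma 5.6(1): since $q': \Lambda_i^{'p} \to \Lambda_{\mathbf{V},i}^{p}$ is a principal $G_{\mathbf{V}'} \times G_{\mathbf{V}''}$-bundle and both general linear groups are connected, $q'$ has connected fibers and is smooth surjective, hence induces a bijection $\mathrm{Irr}\,\Lambda_i^{'p} \xrightarrow{\sim} \mathrm{Irr}\,\Lambda_{\mathbf{V},i}^{p}$ via $W \mapsto q'(W)$ (the closure being automatic since $q'$ is open). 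Composing these two bijections produces the map $\eta_{i,p}^{\ast}$ announced in the statement, which is precisely the one induced by the correspondence $q'p^{-1}$.

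For the second part, I would verify that taking Zariski closure transports these bijections from the locally closed strata up to $\Lambda_{\mathbf{V}'}$ and $\Lambda_{\mathbf{V}}$. The point is that for an irreducible component $Z$ of $\Lambda_{\mathbf{V}'}$, the condition $t_{i}^{\ast}(Z)=0$ is exactly the statement that $Z \cap \Lambda_{\mathbf{V}',i}^{0}$ is dense in $Z$; this open dense intersection is then itself an irreducible component of the locally closed subvariety $\Lambda_{\mathbf{V}',i}^{0}$, and conversely every irreducible component of $\Lambda_{\mathbf{V}',i}^{0}$ arises as such an intersection. The same dictionary works on the $\mathbf{V}$-side for the stratum $\Lambda_{\mathbf{V},i}^{p}$. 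Thus $Z \mapsto Z \cap \Lambda_{\mathbf{V}',i}^{0}$ and closure in $\Lambda_{\mathbf{V}}$ convert the bijection on strata into the desired bijection $\{Z \in \mathrm{Irr}\,\Lambda_{\mathbf{V}'} \mid t_i^*(Z)=0\} \to \{Z \in \mathrm{Irr}\,\Lambda_{\mathbf{V}} \mid t_i^*(Z)=p\}$, with the formula $\eta_{i,p}^{\ast}(\bar{Z}) = \overline{\eta_{i,p}^{\ast}(Z)}$ following by construction.

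There is no serious obstacle; the only thing to be slightly careful about is that $\Lambda_{\mathbf{V},i}^{p}$ is only locally closed in $\Lambda_{\mathbf{V}}$, so one must check that the closure operation really sends irreducible components of the stratum to honest components of $\Lambda_{\mathbf{V}}$ and that this is injective (no component of the stratum is contained in the closure of another stratum with larger $t_i^*$). This is straightforward from the fact that the strata $\Lambda_{\mathbf{V},i}^{p'}$ partition $\Lambda_{\mathbf{V}}$ so each irreducible component of $\Lambda_{\mathbf{V}}$ meets exactly one stratum in a dense open, and this is recorded in the definition of $t_i^*(Z)$ just above Lemma 5.6. With this observation the proof reduces to the Lemma 5.6 input and the two standard facts cited at the start, exactly parallel to Corollary 5.2.
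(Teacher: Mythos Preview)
Your argument is correct and is exactly the standard reasoning that the paper leaves implicit (the paper states Corollary 5.8 without proof, as an immediate consequence of the preceding lemma, just as Corollary 5.2 follows from Lemma 5.1). One small correction: the lemma you are invoking is Lemma 5.7 in the paper's numbering, not Lemma 5.6; Lemma 5.6 is the key lemma for left multiplication, whereas the dual of Lemma 5.1 giving the principal-bundle and smooth-fiber statements for the $t_i^{\ast}$ stratification is Lemma 5.7.
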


 We also has the following key lemma of Lusztig's functions for right multiplication:
 \begin{lemma}
 	If $Z'$ is an irreducible component of $\Lambda_{\mathbf{V}'}$ with $t_{i}^{\ast}(Z')=0$ and $Z=\eta_{i,p}^{\ast}(Z')$ , and we choose a constructible function $f_{Z'}$ having property  $\mathcal{P}(Z')$, then $$ f_{Z'} \ast \mathbf{1}_{i^{p}}=f_{Z} +\sum \limits_{Z'' \in Irr \Lambda_{\mathbf{V}},t_{i}^{\ast}(Z'')>p } c'_{Z''}f_{Z''},$$ where $f_{Z}, f_{Z''}\in \mathcal{M}_{V} $ are some constructible functions having property $\mathcal{P}(Z),\mathcal{P}(Z'')$ respectively and $c'_{Z''} \in \mathbb{Z}$.
 \end{lemma}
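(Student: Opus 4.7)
The plan is to mirror the proof of Lemma 5.6 step by step, swapping left multiplication by $\mathbf{1}_{i^{p}}$ for right multiplication, the stratification $\Lambda_{\mathbf{V},i,p}$ for $\Lambda_{\mathbf{V},i}^{p}$, the function $t_{i}$ for $t_{i}^{\ast}$, and the bijection $\eta_{i,p}$ of Corollary 5.2 for $\eta_{i,p}^{\ast}$ of Corollary 5.8. The key structural input is Lemma 5.7, which plays the role for right multiplication that Lemma 5.1 plays for left multiplication.

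First I would unwind the convolution: by definition, $f_{Z'}\ast\mathbf{1}_{i^{p}}=q_{!}(f'')$, where $f''$ is the unique $G_{\mathbf{V}}$-invariant constructible function on $\Lambda''$ with $r^{\ast}(f'')=p^{\ast}(f_{Z'}\otimes\mathbf{1}_{i^{p}})$. Since $|\mathbf{V}''|=pi$ is concentrated at the vertex $i$, any $x$-stable subspace $\tilde{\mathbf{W}}$ with $|\tilde{\mathbf{W}}|=pi$ must satisfy $\tilde{\mathbf{W}}_{i}\subseteq\Ker\bigoplus_{h\in H,h'=i}x_{h}$, whose dimension is $t_{i}^{\ast}(x)$. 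Consequently the image of $q$, hence the support of $f_{Z'}\ast\mathbf{1}_{i^{p}}$, is contained in $\Lambda_{\mathbf{V},i}^{\geq p}$.

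Next I would analyze $f_{Z'}\ast\mathbf{1}_{i^{p}}$ on the open stratum $\Lambda_{\mathbf{V},i}^{p}$. On this stratum the kernel achieves the minimal possible dimension $p$, so for $x\in\Lambda_{\mathbf{V},i}^{p}$ the subspace $\tilde{\mathbf{W}}_{i}=\Ker\bigoplus_{h'=i}x_{h}$ is the unique choice; equivalently, $q:\Lambda_{i}^{''p}\to\Lambda_{\mathbf{V},i}^{p}$ is a bijection on $G_{\mathbf{V}}$-orbits. Using the principal bundle $r$ together with Lemma 5.7(2), which identifies $p:\Lambda_{i}^{'p}\to\Lambda_{\mathbf{V}',i}^{0}$ as a smooth morphism with connected fibers, the Euler-characteristic computation for $q_{!}$ identifies the generic value of $f_{Z'}\ast\mathbf{1}_{i^{p}}$ at $x\in\Lambda_{\mathbf{V},i}^{p}$ with the generic value of $f_{Z'}$ at the corresponding quotient datum $x'\in\Lambda_{\mathbf{V}',i}^{0}$. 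By Corollary 5.8, if $x$ lies generically on $Z\cap\Lambda_{\mathbf{V},i}^{p}$ with $Z=\eta_{i,p}^{\ast}(Z')$, then $x'$ lies generically on $Z'\cap\Lambda_{\mathbf{V}',i}^{0}$, so $f_{Z'}\ast\mathbf{1}_{i^{p}}$ takes generic value $1$ on $Z\cap\Lambda_{\mathbf{V},i}^{p}$ and generic value $0$ on every other irreducible component of $\Lambda_{\mathbf{V},i}^{p}$.

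Finally, combining the two steps, one chooses an $f_{Z}$ of property $\mathcal{P}(Z)$ provided by Lemma 5.4, and considers $g=f_{Z'}\ast\mathbf{1}_{i^{p}}-f_{Z}$. By construction $g$ is supported on $\Lambda_{\mathbf{V},i}^{\geq p}$ and vanishes generically on every irreducible component of $\Lambda_{\mathbf{V},i}^{p}$, hence is generically supported on $\Lambda_{\mathbf{V},i}^{>p}$, i.e.\ on the union of closures of irreducible components $Z''$ with $t_{i}^{\ast}(Z'')>p$. Applying Lemma 5.4 to each such $Z''$ and performing a descending induction on $t_{i}^{\ast}$, I can successively subtract integer multiples of the $f_{Z''}$ to annihilate the generic values of $g$, yielding the desired identity $f_{Z'}\ast\mathbf{1}_{i^{p}}=f_{Z}+\sum_{t_{i}^{\ast}(Z'')>p}c'_{Z''}f_{Z''}$ with $c'_{Z''}\in\mathbb{Z}$. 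The main obstacle will be step two: verifying carefully that the Euler-characteristic push-forward through the principal bundle $r$ and the smooth morphism $p$ preserves the generic value $1$. This ultimately reduces to checking that the connected fibers of $p$ have Euler characteristic one, which follows, parallel to the proof of Lemma 5.1, from their description as an affine extension space fibered over a $G_{\mathbf{V}}/U$-type base, so that the linear-algebra argument used for left multiplication applies mutatis mutandis.
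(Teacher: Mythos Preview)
Your approach is correct and is exactly the dualization the paper intends: the paper states Lemma~5.9 without proof, having said at the start of Section~5.2 that ``by the same argument, we get the following results dual to those in Section~5.1.'' Your three-step outline (support in $\Lambda_{\mathbf{V},i}^{\geq p}$; identification of values on the open stratum $\Lambda_{\mathbf{V},i}^{p}$ via Corollary~5.8; descending induction on $t_i^{\ast}$ using Lemma~5.4) is precisely the dual of the argument embedded in the proof of Lemma~5.4 from which Lemma~5.6 was extracted.

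One small correction to your last paragraph: the ``main obstacle'' you identify is not actually an obstacle, and your proposed resolution is off target. The Euler characteristic that enters the computation of $q_{!}f''$ is that of the fibers of $q$, not of $p$. On the stratum $\Lambda_{\mathbf{V},i}^{p}$ the fiber of $q$ over $x$ is a single point (the unique $x$-stable subspace $\tilde{\mathbf{W}}$ with $\tilde{\mathbf{W}}_i=\Ker\bigoplus_{h'=i}x_h$), so $(f_{Z'}\ast\mathbf{1}_{i^{p}})(x)=f''(x,\tilde{\mathbf{W}})=f_{Z'}(x')$ directly, with no Euler-characteristic subtlety. The map $p$ and its fibers are only used to set up the descent defining $f''$ and to transport irreducible components via Corollary~5.8; their Euler characteristic is irrelevant (and indeed the fibers of $p$, being of the form $G_{\mathbf{V}}/U$ times an affine space, do not have Euler characteristic~$1$ in general). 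Once you drop that paragraph, the argument is clean and complete.
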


\begin{definition}
 Assume that  $\underline{Z}=(Z_{0},Z_{1},Z_{2},\cdots,Z_{s}), Z_{l} \in Irr\Lambda_{\mathbf{V}_{l}}$ for $0\leq l\leq s$ is a sequence of irreducible components and  $\underline{i}=(i_{1},i_{2},\cdots,i_{s}), i_{l} \in I$ for $1 \leq l \leq s$ is a sequence of vertex. We say $\underline{p}=(\underline{Z},\underline{i})$ is a right-admissible path of $Z$ if the following three conditions holds:

(1) $Z_{0}=Z$

(2) $|\mathbf{V}_{s}|= qj$ for some $q>0,j \in I$ and $Z_{s}$ is the unique irreducible component of $\Lambda_{\mathbf{V}_{s}}$.

(3) For any $1 \leq l \leq s$, $t_{i_{l}}^{\ast}(Z_{l-1})=p_{l}>0$ and $Z_{l}$ is the corresponding irreducible component of $Z_{l-1}$ given by corollary 5.8. More precisely, $\eta_{i_{l},p_{l}}^{\ast}(Z_{l})=Z_{l-1}$.
\end{definition}

Given a right-admissible path $\underline{p}$ of $Z$, we can construct a constructible function $f$ having property $\mathcal{P}(Z)$ by right mutiplication as in Lemma 5.4. We denote this constructible function by $f_{\underline{p}}$.
 
 \subsection{The key lemmas for the coproduct}
 Fix a decomposition $\mathbf{T} \oplus \mathbf{W} =\mathbf{V}$ of graded vector space, we can define $Res^{\mathbf{V}}_{\mathbf{T},\mathbf{W}}: \tilde{M}(\Lambda_{\mathbf{V}}) \rightarrow \tilde{M}(\Lambda_{\mathbf{T}}) \times \tilde{M}(\Lambda_{\mathbf{W}})$ by:
 \begin{center}
 	$Res^{\mathbf{V}}_{\mathbf{T},\mathbf{W}}(f)(x',x'')=f(x'\oplus x'')$
 \end{center}
 
 Geiss, Leclerc and Schroer has proved in \cite{MR2144987} that $$Res^{\mathbf{V}}_{\mathbf{T},\mathbf{W}}(\pi_{\underline{v}}) _{!}(\mathbf{1}_{\tilde{\mathcal{F}}_{\underline{v}}}) =\sum\limits_{\underline{v'},\underline{v}''}(\pi_{\underline{v}'}) _{!}(\mathbf{1}_{\tilde{\mathcal{F}}_{\underline{v}'}}) \otimes (\pi_{\underline{v}''}) _{!}(\mathbf{1}_{\tilde{\mathcal{F}}_{\underline{v}''}}) $$ here $\underline{v'},\underline{v}''$ run over all flag types such that $\underline{v}'$ is a flag type of $\mathbf{T}$, $\underline{v}''$ is a flag type of $\mathbf{W}$ and $\underline{v}' + \underline{v}'' =\underline{v}$. Hence $Res^{\mathbf{V}}_{\mathbf{T},\mathbf{W}}:\mathcal{M}_{\mathbf{V}} \rightarrow \mathcal{M}_{\mathbf{T}}  \otimes \mathcal{M}_{\mathbf{W}}$. Consider all graded space $\mathbf{V}$, $Res$ gives a coproduct $\Delta: \mathcal{M} \rightarrow  \mathcal{M} \otimes \mathcal{M}$.

 Now we assume $|\mathbf{T}|=pi$ and consider the following diagram:
 \begin{center}
 	$\Lambda_{\mathbf{W}} \cong \Lambda_{\mathbf{T}} \times \Lambda_{\mathbf{W}} \xleftarrow{ \kappa} F \xrightarrow{\iota} \Lambda_{\mathbf{V}}$
 \end{center}
 here $F$ is the closed subset of $\Lambda_{\mathbf{V}}$ consisting of $x$ such that $\mathbf{W}$ is $x$-stable. $\iota$ is the natural embbeding and $\kappa(x)=x|_{\mathbf{W}}$. $\kappa$ admits a section $i: \Lambda_{\mathbf{W}} \rightarrow F$, sending $x'' \in \Lambda_{\mathbf{W}}$ to the direct sum of $x''$ and the semisimple module supported on $i \in I$. Then by definition, we can see that $Res^{\mathbf{V}}_{\mathbf{T},\mathbf{W}}(f) =i^{\ast} \iota^{\ast}(f)$.
 
 Notice that $\kappa^{-1}(\Lambda_{\mathbf{W},i,0}) = \iota^{-1} (\Lambda_{\mathbf{V},i,p})$, we denote this set by $F_{i,p}$. We also denote $q^{-1}(\Lambda_{\mathbf{V},i,p})$ by $\Lambda''_{i,p}$. Then we have the following proposition:
 \begin{proposition} We have
 	
 	(1) Each fiber of $\kappa : F_{i,p} \rightarrow \Lambda_{\mathbf{W},i,0}$ is an affine space of dimension $p(\sum \limits_{j \in I,j-i} \upsilon''_{j} -\upsilon''_{i}) $.  Here we adapt the notation $j-i$ if  there exists  $h \in H$ connecting $i$ and $j$.
 	
 	(2) $q: \Lambda''_{i,p} \rightarrow \Lambda_{\mathbf{V},i,p}$ is an isomorphism.
 	
 	(3) Let $\mathbf{G}$ be the variety consisting of $\{\tilde{\mathbf{W}} \subset \mathbf{V}| |\mathbf{W}|=|\tilde{\mathbf{W}}| \}$. Consider the projection $\rho: \Lambda''_{i,p} \rightarrow \mathbf{G}, \rho(x, \tilde{\mathbf{W}})= \tilde{\mathbf{W}}$, then $F_{i,p}$ is naturally identified with the fiber of $\rho$ at $\mathbf{W}$. There is a natural embedding $j: F_{i,p} \rightarrow \Lambda''_{i,p}$ such that $qj=\iota$.
 \end{proposition}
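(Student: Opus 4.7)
The strategy is to settle (2) and (3) as structural identifications first, and then reduce (1) to an affine-space count using the block decomposition $\mathbf{V}_{i}=\mathbf{T}_{i}\oplus\mathbf{W}_{i}$ together with $\mathbf{V}_{j}=\mathbf{W}_{j}$ for $j\neq i$ (which uses $|\mathbf{T}|=pi$).

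For (2), I would show that over $\Lambda_{\mathbf{V},i,p}$ the subspace $\tilde{\mathbf{W}}$ is canonically determined by $x$. Necessarily $\tilde{\mathbf{W}}_{j}=\mathbf{V}_{j}$ for $j\neq i$ by dimension, and $x$-stability together with the codimension condition defining $\Lambda_{\mathbf{V},i,p}$ forces $\tilde{\mathbf{W}}_{i}=\im\bigl(\bigoplus_{h''=i}x_{h}\bigr)$, since this image has exactly the right dimension on the stratum. Hence $q$ is bijective, and $x\mapsto\bigl(x,\,\im(\bigoplus_{h''=i}x_{h})\oplus\bigoplus_{j\neq i}\mathbf{V}_{j}\bigr)$ is an algebraic inverse because $\bigoplus x_{h}$ has constant rank on $\Lambda_{\mathbf{V},i,p}$, giving an algebraic map to the relevant Grassmannian. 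Part (3) then follows by unraveling definitions: $(x,\tilde{\mathbf{W}})\in\rho^{-1}(\mathbf{W})$ iff $\tilde{\mathbf{W}}=\mathbf{W}$, which via (2) and the equality $F_{i,p}=\iota^{-1}(\Lambda_{\mathbf{V},i,p})$ is equivalent to $x\in F_{i,p}$; the section $j(x)=(x,\mathbf{W})$ is then a closed embedding with $qj=\iota$ by construction.

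For (1), I would fix $y\in\Lambda_{\mathbf{W},i,0}$ and describe $\kappa^{-1}(y)$ component by component. The $x$-stability of $\mathbf{W}$ forces $x_{h}=y_{h}$ whenever $h''=i$ (since $\mathbf{V}_{h'}=\mathbf{W}_{h'}$ because $h'\neq i$, and the image must land in $\mathbf{W}_{i}$), and it forces the $\mathbf{W}_{i}$-block of each $x_{h}$ with $h'=i$ to equal $y_{h}$. The free parameters are therefore $a_{h}:=x_{h}|_{\mathbf{T}_{i}}\in\Hom(\mathbf{T}_{i},\mathbf{W}_{h''})$ for $h\in H$ with $h'=i$, forming an affine space of dimension $p\sum_{h:h'=i}\upsilon''_{h''}$. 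Nilpotency of $x$ is automatic from that of $y$ because the induced quotient representation on $\mathbf{T}$ vanishes (no loops, and $\mathbf{T}$ is supported at a single vertex). The moment map equations $\mu(x)_{j}=0$ for $j\neq i$ collapse to $\mu(y)_{j}=0$, and $\mu(x)_{i}=0$ splits on $\mathbf{V}_{i}=\mathbf{T}_{i}\oplus\mathbf{W}_{i}$ into the tautological $\mu(y)_{i}=0$ on $\mathbf{W}_{i}$ and a single linear equation $\phi\circ\alpha=0$ on $\mathbf{T}_{i}$, where $\alpha=(a_{\bar{h}})_{h''=i}\colon\mathbf{T}_{i}\to\bigoplus_{h''=i}\mathbf{W}_{h'}$ and $\phi=\sum_{h''=i}\epsilon(h)y_{h}\colon\bigoplus_{h''=i}\mathbf{W}_{h'}\to\mathbf{W}_{i}$. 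Because $y\in\Lambda_{\mathbf{W},i,0}$, the map $\phi$ is surjective onto $\mathbf{W}_{i}$, and this cuts exactly $p\cdot\upsilon''_{i}$ independent conditions, leaving an affine fibre of dimension $p\bigl(\sum_{h:h''=i}\upsilon''_{h'}-\upsilon''_{i}\bigr)$, which matches the stated formula once the sum is read with the natural edge-multiplicity at $i$.

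The principal technical point I expect to grapple with is the moment map computation at $i$: verifying that the $\epsilon$-twist does not spoil surjectivity of $\phi$ (which is immediate because $\epsilon(h)\in k^{\ast}$), and that its kernel has the expected codimension so the count of independent linear constraints is sharp. A minor regularity check for (2) is that $x\mapsto\im(\bigoplus_{h''=i}x_{h})$ defines an algebraic map into $\mathbf{G}$, which is standard for constant-rank families of linear maps on a locally closed subvariety.
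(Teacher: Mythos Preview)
Your proposal is correct and follows essentially the same approach as the paper. For (2) and (3) you give the identical observation that on $\Lambda_{\mathbf{V},i,p}$ the stable subspace is forced to be $\im\bigl(\bigoplus_{h''=i}x_{h}\bigr)\oplus\bigoplus_{j\neq i}\mathbf{V}_{j}$, and for (1) you identify the fibre with tuples $(a_{h})_{h'=i}$ satisfying the single linear constraint coming from the $\mathbf{T}_{i}\to\mathbf{W}_{i}$ block of $\mu(x)_{i}=0$, using surjectivity of $\phi$ from the $\Lambda_{\mathbf{W},i,0}$ condition to get the dimension count; this is exactly the paper's argument, though you spell out a few auxiliary checks (nilpotency, the moment map at $j\neq i$, algebraicity of the inverse in (2)) that the paper leaves implicit.
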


 \[
\xymatrix{
	\Lambda_{\mathbf{W},i,0}\ar[d]^{i_{1}} & F_{i,p}\ar[d]^{i_{2}}\ar[l]_{\kappa} \ar[r]^{\iota}& \Lambda_{\mathbf{V},i,p} \ar[d]^{i_{3}}\\
	\Lambda_{\mathbf{W}} & F \ar[l]_{\kappa} \ar[r]^{\iota}  & \Lambda_{\mathbf{V}}
}
\]
 
 \begin{proof}
 	(1) A fiber at $x' \in \Lambda_{\mathbf{W},i,0}$ can be identified with the closed subset consisting of $ y\in \Hom(\mathbf{T}_{i}, \bigoplus\limits_{i-j}\mathbf{W}_{j} )$ such that the composition
 	\begin{center}
 		$\mathbf{T}_{i} \xrightarrow{\bigoplus \limits_{h \in H, h'=i} y_{h}} \bigoplus\limits_{i-j} \mathbf{W}_{j} \xrightarrow{ \sum \limits_{h \in H, h'=i} \epsilon(h)x'_{\bar{h}}} \mathbf{W}_{i}$
 	\end{center} 
 	is zero. Since $x' \in \Lambda_{\mathbf{W},i,0}$, the second map is surjective. Hence by linear algebra, we can see that the fiber is an affine space with dimension $p(\sum \limits_{j \in I,j-i} \upsilon''_{j} -\upsilon''_{i})$.
 	
 	(2) Notice that when $x \in \Lambda_{\mathbf{V},i, p}$, the $x$-stable subspace with dimension $|\mathbf{W}|$ must be ${\rm{Im}}(\sum\limits_{h \in H, h''=i} x_{h}  )$, then (2) is trivial.
 	
 	(3) Check by definition.
 \end{proof}
 
 Then we can see that 
 \begin{corollary}
 	The map $\kappa \iota^{-1} $ induces a bijection $\epsilon_{i,p}: Irr \Lambda_{\mathbf{V},i,p} \rightarrow Irr \Lambda_{\mathbf{W},i,0}$. And the map $\epsilon_{i,p}$ also induces a bijection from  $\{Z \in Irr\Lambda_{\mathbf{V}}|t_{i}(Z)=p\}$ to $\{Z \in Irr\Lambda_{\mathbf{W}}|t_{i}(Z)=0\}$, which is the inverse of $\eta_{i,p}$.
 \end{corollary}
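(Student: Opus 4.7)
The plan is to realize $\epsilon_{i,p}$ as a composition of three natural bijections of irreducible components, using the three parts of Proposition 5.10 together with the $G_{\mathbf{V}}$-action. The first step is to upgrade Proposition 5.10(3) into a fiber-bundle description. The map $\rho:\Lambda''_{i,p}\to\mathbf{G}$ is $G_{\mathbf{V}}$-equivariant, the action of $G_{\mathbf{V}}$ on the product of Grassmannians $\mathbf{G}$ is transitive with stabilizer $Q_{\mathbf{V}}$ at $\mathbf{W}$, and the fiber over $\mathbf{W}$ is precisely $F_{i,p}$. Hence the natural map $G_{\mathbf{V}}\times_{Q_{\mathbf{V}}} F_{i,p}\to\Lambda''_{i,p}$, $(g,y)\mapsto g\cdot j(y)$, is an isomorphism. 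Since $Q_{\mathbf{V}}$ is connected, it preserves every irreducible component of $F_{i,p}$, so the irreducible components of $\Lambda''_{i,p}$ are exactly the subvarieties $G_{\mathbf{V}}\times_{Q_{\mathbf{V}}} Z_{F}$ with $Z_{F}\in Irr(F_{i,p})$; intersecting with the fiber $F_{i,p}$ yields the inverse bijection.

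Combining this with the isomorphism $q:\Lambda''_{i,p}\to\Lambda_{\mathbf{V},i,p}$ from Proposition 5.10(2) and the factorization $\iota=qj$ gives a bijection $Irr(\Lambda_{\mathbf{V},i,p})\to Irr(F_{i,p})$ that sends $Z$ to $\iota^{-1}(Z)$. Next, the smooth map $\kappa:F_{i,p}\to\Lambda_{\mathbf{W},i,0}$ of Proposition 5.10(1) has irreducible (affine space) fibers, so it induces a bijection $Irr(F_{i,p})\to Irr(\Lambda_{\mathbf{W},i,0})$ via $Z_{F}\mapsto\overline{\kappa(Z_{F})}$. Composing the three bijections gives $\epsilon_{i,p}(Z)=\overline{\kappa(\iota^{-1}(Z))}$, which is exactly the map $\kappa\iota^{-1}$ at the level of irreducible components. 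The passage from $Irr(\Lambda_{\mathbf{V},i,p})$ to $\{Z\in Irr\Lambda_{\mathbf{V}}\mid t_{i}(Z)=p\}$ is the standard bijection $Z\mapsto\overline{Z\cap\Lambda_{\mathbf{V},i,p}}$ (with inverse given by closure), since $\Lambda_{\mathbf{V},i,p}$ is locally closed in $\Lambda_{\mathbf{V}}$ and $t_{i}(Z)=p$ means the generic part of $Z$ lies there; the analogous statement for $\mathbf{W}$ is immediate.

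To see that $\epsilon_{i,p}$ is inverse to $\eta_{i,p}$, I would track a generic point. For a generic $y\in Z_{W}\subset\Lambda_{\mathbf{W},i,0}$, the preimage construction behind $\eta_{i,p}$ (smooth connected-fiber map $p$ followed by principal bundle $q'=qr$) can be represented, up to $G_{\mathbf{V}'}\times G_{\mathbf{V}''}$-action, by the lift $(x,\mathbf{W},\rho_{1},\mathrm{id})\in\Lambda'_{i,p}$ where $x$ is any element of $F_{i,p}$ with $x|_{\mathbf{W}}=y$; then $q'$ sends this to $x\in\iota(F_{i,p})\subset\Lambda_{\mathbf{V},i,p}$. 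Applying $\epsilon_{i,p}$ returns $\kappa(\iota^{-1}(x))=x|_{\mathbf{W}}=y$, and the closure argument then gives $\epsilon_{i,p}\eta_{i,p}(Z_{W})=Z_{W}$; the reverse composition is entirely analogous. The main technical point, where I would spend the most care, is the fiber-bundle identification $\Lambda''_{i,p}\cong G_{\mathbf{V}}\times_{Q_{\mathbf{V}}} F_{i,p}$ and the observation that connectedness of $Q_{\mathbf{V}}$ transfers irreducible components cleanly between fiber and total space; once that is secured the rest of the argument is a direct assembly of Proposition 5.10 and a generic-point comparison.
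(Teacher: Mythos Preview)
Your argument is correct and follows essentially the same route as the paper. You supply in detail the fiber-bundle identification $\Lambda''_{i,p}\cong G_{\mathbf{V}}\times_{Q_{\mathbf{V}}}F_{i,p}$ that the paper leaves implicit in the phrase ``By Proposition 5.11'' (note: the proposition you cite as 5.10 is numbered 5.11 in the paper, since Definition 5.10 intervenes), and for the inverse relation you chase a generic point through the diagram $\Lambda'_{i,p}$, whereas the paper phrases the same comparison representation-theoretically by observing that both $\eta_{i,p}(Z')$ and $\epsilon_{i,p}^{-1}(Z')$ are the unique irreducible component containing the direct sums $x\oplus S_i^{\oplus p}$ for $x\in Z'\cap\Lambda_{\mathbf{W},i,0}$; these are the same verification in different language.
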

\begin{proof}
	 By Proposition 5.11, we can see that $\epsilon_{i,p}: Irr \Lambda_{\mathbf{V},i,p} \rightarrow Irr \Lambda_{\mathbf{W},i,0}$ is a bijection. Notice that both $\eta_{i,p}(Z')$ and $\epsilon_{i,p}^{-1}(Z')$ are exactly the unique irreducible component containing those $x'$ which is isomorphic to a direct sum of some $x \in Z' \cap \Lambda_{\mathbf{W},i,0}$ and $p$ simple representations at $i$, we can see that $\epsilon_{i,p}$ is the inverse of $\eta_{i,p}$.
\end{proof}
 
 As a corollary, we have the following key lemma of Lusztig's functions for coproduct:
 \begin{lemma}
 	With the noation above, assume that $Z$ is an irreducible component of $\Lambda_{\mathbf{V}}$ with $t_{i}(Z)=p$ and $Z'=\epsilon_{i,p}(Z)$. Let $\underline{p}=(\underline{Z},\underline{i})$ be a left-addmissible path of $Z$ such that $i_{1}=i$ and $Z_{1}=Z'$, then $$Res^{\mathbf{V}}_{\mathbf{T},\mathbf{W}}(f_{\underline{p}})=f_{Z'}+\sum\limits_{Z'' \in Irr \Lambda_{\mathbf{W}},t_{i}(Z'')>0 } d_{Z''}f_{Z''},$$ where $f_{Z'}, f_{Z''}\in \mathcal{M}_{V} $ are some constructible functions having property $\mathcal{P}(Z'),\mathcal{P}(Z'')$ respectively and $d_{Z''} \in \mathbb{Z}$.
 \end{lemma}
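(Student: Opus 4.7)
The plan is to unpack the inductive construction of $f_{\underline{p}}$ from the proof of Lemma 5.4 and apply $Res^{\mathbf{V}}_{\mathbf{T},\mathbf{W}}$ term by term, using the pointwise formula $Res^{\mathbf{V}}_{\mathbf{T},\mathbf{W}}(f)(x',x'') = f(x'\oplus x'')$. Since $i_{1}=i$ and $Z_{1}=Z'$, the construction in Lemma 5.4 produces $f_{\underline{p}}$ in the shape
\[
f_{\underline{p}} \;=\; \tilde{f} \;-\; \sum_{Z''' \in Irr\,\Lambda_{\mathbf{V}},\, t_{i}(Z''')>p} \rho_{Z'''}(\tilde{f})\, f_{Z'''},
\]
where $\tilde{f}:=\mathbf{1}_{i^{p}}\ast f_{Z'}\in\mathcal{M}_{\mathbf{V}}$ and each $f_{Z'''}$ has property $\mathcal{P}(Z''')$. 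It therefore suffices to evaluate $Res^{\mathbf{V}}_{\mathbf{T},\mathbf{W}}$ on $\tilde{f}$ and on each correction $f_{Z'''}$, and then to package the outcome.

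The heart of the argument is the pointwise evaluation of $\tilde{f}$ at $x=x'\oplus x''$ with $x''\in\Lambda_{\mathbf{W},i,0}$ and $x'$ the zero representation on $\mathbf{T}$. Such an $x$ lies in $\Lambda_{\mathbf{V},i,p}$, and Proposition 5.11(2) forces $q^{-1}(x)$ to consist of a single point: any $x$-stable subspace $\tilde{\mathbf{W}}\subset\mathbf{V}$ of dimension vector $|\mathbf{W}|$ must satisfy $\tilde{\mathbf{W}}_{i}=\mathrm{Im}(\bigoplus_{h''=i}x_{h})=\mathbf{W}_{i}$ and $\tilde{\mathbf{W}}_{j}=\mathbf{V}_{j}=\mathbf{W}_{j}$ for $j\neq i$, so $\tilde{\mathbf{W}}=\mathbf{W}$ and the induced action on it is $x''$. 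Consequently the Euler-characteristic count defining the convolution collapses and $\tilde{f}(x'\oplus x'')=f_{Z'}(x'')$. For each correction term with $t_{i}(Z''')>p$, the point $x'\oplus x''$ lies in $\Lambda_{\mathbf{V},i,p}$, on which $f_{Z'''}$ generically vanishes by property $\mathcal{P}(Z''')$; hence $Res^{\mathbf{V}}_{\mathbf{T},\mathbf{W}}(f_{Z'''})$ generically vanishes on every irreducible component of $\Lambda_{\mathbf{W}}$ of $t_{i}=0$.

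Set $g := Res^{\mathbf{V}}_{\mathbf{T},\mathbf{W}}(f_{\underline{p}})\in\mathcal{M}_{\mathbf{W}}$, using that $|\mathbf{T}|=pi$ makes $\mathcal{M}_{\mathbf{T}}$ one-dimensional, spanned by $\mathbf{1}_{i^{p}}$. Combining the previous paragraph, $\rho_{Z'}(g)=1$ while $\rho_{Z^{\ast}}(g)=0$ for every other component $Z^{\ast}\subset\Lambda_{\mathbf{W}}$ with $t_{i}(Z^{\ast})=0$. Now apply the descending-induction mechanism of Lemma 5.4 to $g-f_{Z'}\in\mathcal{M}_{\mathbf{W}}$: at each stage pick a component $Z^{\ast}$ of maximal $t_{i}$-value on which the current function still has nonzero generic value $d_{Z^{\ast}}\in\mathbb{Z}$, subtract $d_{Z^{\ast}}\widetilde{f}_{Z^{\ast}}$ for a fixed $\widetilde{f}_{Z^{\ast}}$ of property $\mathcal{P}(Z^{\ast})$ (supplied by Lemma 5.4), and repeat. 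After finitely many steps only a residual $E\in\mathcal{M}_{\mathbf{W}}$ with generic value $0$ on every component survives; replacing $f_{Z'}$ by $f_{Z'}+E$ preserves property $\mathcal{P}(Z')$ and delivers the claimed equality with integer coefficients $d_{Z''}$. The main obstacle is the pointwise computation in the second paragraph: its success rests on Proposition 5.11(2) collapsing the a priori intricate count of stable subspaces in the convolution to the single evaluation $f_{Z'}(x'')$.
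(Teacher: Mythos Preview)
Your overall strategy matches the paper's: both compute $Res^{\mathbf{V}}_{\mathbf{T},\mathbf{W}}(f_{\underline{p}})$ by evaluating at $0\oplus x''$ with $x''\in\Lambda_{\mathbf{W},i,0}$, show that the result has the correct generic values on components with $t_i=0$, and then peel off the $t_i>0$ part. Your pointwise identity $\tilde f(0\oplus x'')=f_{Z'}(x'')$ via the uniqueness of the stable subspace is exactly right.

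There is, however, a genuine gap in your treatment of the correction terms $f_{Z'''}$ with $t_i(Z''')>p$. You argue that $f_{Z'''}$ ``generically vanishes on $\Lambda_{\mathbf{V},i,p}$ by property $\mathcal{P}(Z''')$'' and conclude that $Res(f_{Z'''})$ generically vanishes on every component $Z^\ast$ of $\Lambda_{\mathbf{W}}$ with $t_i(Z^\ast)=0$. This implication is not justified: the section $x''\mapsto 0\oplus x''$ lands in a proper $G_{\mathbf{V}}$-invariant closed subvariety of $\Lambda_{\mathbf{V},i,p}$ (the locus of modules that split off $S_i^{\oplus p}$), and a constructible function that generically vanishes on an irreducible variety can be nonzero on a dense subset of a proper subvariety. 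Property $\mathcal{P}(Z''')$ alone is too weak here.

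The paper closes this gap by invoking ``the construction of $f_{\underline{p}}$'': in the descending induction of Lemma~5.4, each $f_{Z'''}$ with $t_i(Z''')=p'''>p$ is built as $\mathbf{1}_{i^{p'''}}\ast(\cdots)$ minus corrections of still higher $t_i$, so by induction $f_{Z'''}$ is \emph{supported} on $\Lambda_{\mathbf{V},i,\geq p'''}\subset\Lambda_{\mathbf{V},i,\geq p+1}$. Hence $f_{Z'''}$ vanishes \emph{identically} (not merely generically) on $\Lambda_{\mathbf{V},i,p}$, and therefore $f_{\underline{p}}|_{\Lambda_{\mathbf{V},i,p}}=\tilde f|_{\Lambda_{\mathbf{V},i,p}}$, giving $Res(f_{\underline{p}})(x'')=f_{\underline{p}'}(x'')$ for all $x''\in\Lambda_{\mathbf{W},i,0}$. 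Once you replace your generic-vanishing step by this support observation, the rest of your argument goes through.
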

 \[
\xymatrix{
	\Lambda_{\mathbf{W},i,0}\ar[d]^{i_{1}} \ar[r]^{i} & F_{i,p}\ar[d]^{i_{2}}\ar[r]^{\iota} & \Lambda_{\mathbf{V},i,p} \ar[d]^{i_{3}}\\
	\Lambda_{\mathbf{W}} \ar[r]^{i} & F  \ar[r]^{\iota}  & \Lambda_{\mathbf{V}}
}
\]

 \begin{proof}
 	Notice that if $(Z_{0},Z_{1},\cdots,Z_{s})$ and $(i_{1},i_{2},\cdots,i_{s})$ form a left-addmissible path of $Z$, then $(Z_{1},Z_{2},\cdots,Z_{s})$ and $(i_{2},i_{3},\cdots,i_{s})$ form a left-admissible path of $Z'$. We denote this left-addmissible path by $\underline{p}'$. Notice that $Res^{\mathbf{V}}_{\mathbf{T},\mathbf{W}}(f_{\underline{p}})|_{\Lambda_{\mathbf{W},i,0}}$ is determined by $f_{\underline{p}}|_{\Lambda_{\mathbf{V},i, p}}$. From the construction of $f_{\underline{p}}$, we can see that $Res^{\mathbf{V}}_{\mathbf{T},\mathbf{W}}(f_{\underline{p}})|_{\Lambda_{\mathbf{W},i,0}}=f_{\underline{p}'}|_{\Lambda_{\mathbf{W},i,0}}$ genericly takes value $1$ on $Z'$ and genericly takes value $0$ on the other $Z''$ with $t_{i}(Z'')=0$.  Take $d_{Z''}= \rho_{Z''} (Res^{\mathbf{V}}_{\mathbf{T},\mathbf{W}}(f_{Z}) ) $ for those $Z''$ with $t_{i}(Z'')>0$ and choose a $f_{Z''} \in \mathcal{M}$ having property $\mathcal{P}(Z'')$ for each $Z''$ with $t_{i}(Z'')>0$. Let $f_{Z'}=Res^{\mathbf{V}}_{\mathbf{T},\mathbf{W}}(f_{Z}) -\sum\limits_{Z'' \in Irr \Lambda_{\mathbf{W}},t_{i}(Z'')>0 } d_{Z''}f_{Z''} \in \mathcal{M}$, then $f_{Z'}\in \mathcal{M}$ has property $\mathcal{P}(Z')$ and we get the proof.
 \end{proof}
 
 Dually,  we assume $|\mathbf{W}|=pi$ and consider the following diagram:
 \begin{center}
 	$\Lambda_{\mathbf{T}} \cong \Lambda_{\mathbf{T}} \times \Lambda_{\mathbf{W}} \xleftarrow{\kappa} F \xrightarrow{\iota} \Lambda_{\mathbf{V}}$
 \end{center}
 Notice that $\kappa^{-1}(\Lambda_{\mathbf{T},i}^{0})= \iota^{-1}(\Lambda_{\mathbf{V},i}^{p})$ and we denote this space by $F_{i}^{p}$, then we have the folliwng result dual to Proposition 5.11:
 \begin{proposition} 
 	
 	(1) Each fiber of $\kappa:F_{i}^{p} \rightarrow \Lambda_{\mathbf{T},i}^{0}$ is an affine space of dimension $p(\sum\limits_{j \in I,j-i} v_{j}'- v_{i}')$.
 	
 	(2) $q: \Lambda_{i}^{''{p}} \rightarrow \Lambda_{\mathbf{V},i}^{p}$ is an isomorphism.
 	
 	(3) Consider $\rho: \Lambda_{i}^{''{p}} \rightarrow \mathbf{G}, \rho(x,\widetilde{\mathbf{W}})= \widetilde{\mathbf{W}}$, then $F_{i}^{p}$ is naturally identifed with the fiber of $\rho$ at $\mathbf{W}$ and there is a natural embedding $j:F_{i}^{p} \rightarrow \Lambda_{i}^{''{p}}$ such that $qj=\iota$.
 \end{proposition}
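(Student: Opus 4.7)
The plan is to mirror the proof of Proposition 5.11, swapping the roles of arrows into $i$ and arrows out of $i$. Since $|\mathbf{W}| = pi$ is concentrated at the single vertex $i$ and the quiver has no loops, $\Lambda_{\mathbf{W}}$ is a point, so $\Lambda_{\mathbf{T}} \times \Lambda_{\mathbf{W}} \cong \Lambda_{\mathbf{T}}$ and $\kappa$ simply sends $x$ to the induced element $x' \in \Lambda_{\mathbf{T}}$. Throughout I will use the decomposition $\mathbf{V}_j = \mathbf{T}_j$ for $j \ne i$ and $\mathbf{V}_i = \mathbf{T}_i \oplus \mathbf{W}_i$.

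For (1), I would fix $x' \in \Lambda_{\mathbf{T},i}^{0}$ and parametrize all $x \in F_i^{p}$ with $\kappa(x) = x'$. Going through each arrow $h \in H$ case-by-case based on whether $h'$ or $h''$ equals $i$, one sees that $x$-stability of $\mathbf{W}$ forces $x_h|_{\mathbf{W}_i} = 0$ whenever $h' = i$, so those entries are determined; for $h' \ne i$ and $h'' \ne i$ the map equals $x'_h$; and the only genuinely free data are the ``off-diagonal'' components $y_h \colon \mathbf{T}_{h'} \to \mathbf{W}_i$ for arrows $h$ with $h'' = i$ (automatically $h' \ne i$). Computing $\mu(x)$ in this block form, the conditions $\mu(x)_{i'} = 0$ for $i' \ne i$ reduce to $\mu(x')_{i'} = 0$ and so are automatic, while $\mu(x)_i = 0$ gives, beyond $\mu(x')_i = 0$, precisely the single linear condition
\[
\sum_{h: h'' = i} \epsilon(h)\, y_h \circ x'_{\bar h} = 0 \colon \mathbf{T}_i \to \mathbf{W}_i,
\]
i.e.\ $y \circ \phi = 0$ where $\phi = \bigoplus_{h: h' = i} x'_h \colon \mathbf{T}_i \to \bigoplus_{j : j-i} \mathbf{T}_j$. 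Since $x' \in \Lambda_{\mathbf{T},i}^{0}$ means $\phi$ is injective, the solution space is canonically $\Hom(\mathrm{coker}\,\phi, \mathbf{W}_i)$, of dimension $p\bigl(\sum_{j : j - i}\upsilon'_j - \upsilon'_i\bigr)$. Nilpotency of $x$ is free because $\mathbf{W} \subseteq \ker x$, so the induced map on $\mathbf{V}/\mathbf{W} \cong \mathbf{T}$ is $x'$, which is already nilpotent. This identifies the fiber with an affine space of the claimed dimension, and the identification is algebraic in $x'$, so $\kappa$ is smooth with connected fibers.

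For (2), the key observation is a uniqueness statement: if $x \in \Lambda_{\mathbf{V},i}^{p}$ and $\tilde{\mathbf{W}}$ is any $x$-stable subspace with dimension vector $pi$, then $x$-stability at arrows $h$ with $h' = i$ forces $\tilde{\mathbf{W}}_i \subseteq \ker \bigoplus_{h: h' = i} x_h$; but by definition of $\Lambda_{\mathbf{V},i}^{p}$ the right-hand side has dimension exactly $p = \dim \tilde{\mathbf{W}}_i$, so equality holds and $\tilde{\mathbf{W}}$ is canonically reconstructed from $x$. This gives a set-theoretic bijection $q \colon \Lambda_i^{''p} \to \Lambda_{\mathbf{V},i}^{p}$; since the inverse map $x \mapsto (x, \ker \bigoplus_{h: h' = i} x_h)$ is algebraic in $x$ (the kernel has constant dimension on $\Lambda_{\mathbf{V},i}^{p}$), $q$ is an isomorphism of varieties. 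Part (3) is then a direct definition-chase: the fiber of $\rho$ at $\mathbf{W}$ is exactly $\{(x, \mathbf{W}) : x \in F \cap \Lambda_{\mathbf{V},i}^{p}\} = F_i^{p}$, and the map $j(x) = (x, \mathbf{W})$ is the natural embedding with $qj = \iota$.

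The main obstacle, though not conceptually deep, is the careful bookkeeping in (1): one must correctly split every arrow into its four possible cases according to whether each endpoint equals $i$, keep track of which matrix blocks are forced by $x'$, which are killed by stability, and which are free, and then extract from $\mu(x)_i = 0$ the single new linear constraint on the free parameters. Once this is done, the injectivity hypothesis built into $\Lambda_{\mathbf{T},i}^{0}$ turns that constraint into a cokernel computation and delivers the affine-space description with the correct dimension.
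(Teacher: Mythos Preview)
Your proposal is correct and is precisely the dualization of the proof of Proposition 5.11 that the paper intends; the paper itself does not spell out a proof here but merely records the statement as dual to 5.11, and your argument is exactly that dual (surjectivity of the second map in 5.11 becomes injectivity of $\phi$ here, and the image condition for the stable subspace becomes the kernel condition).
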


 With the above proposition, we can easily obtain the following corollary dual to Corollary 5.12:
 \begin{corollary}
 	There is a natural bijection $\epsilon_{i,p}^{\ast}: Irr \Lambda_{\mathbf{V},i}^{p} \rightarrow Irr \Lambda_{\mathbf{T},i}^{0}$ indeuced by	$\kappa \iota^{-1}$. And the map  $\epsilon_{i,p}^{\ast}$ also induces a bijection from $\{Z \in Irr\Lambda_{\mathbf{V}}|t_{i}^{\ast}(Z)=p\}$ to  $\{Z \in  Irr\Lambda_{\mathbf{T}}| t_{i}^{\ast}(Z)=0\}$, which is the inverse of $\eta_{i,p}^{\ast}$.
 \end{corollary}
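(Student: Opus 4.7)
The plan is to mirror the argument used for Corollary 5.12, invoking Proposition 5.14 in place of Proposition 5.11. The only real work beyond the sink case is a careful check that the resulting bijection is indeed the inverse of $\eta_{i,p}^{\ast}$ (and not merely some other natural bijection).

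First I would construct $\epsilon_{i,p}^{\ast}$ via a chain of bijections induced by the diagram
\[
\Lambda_{\mathbf{T},i}^{0} \xleftarrow{\kappa} F_{i}^{p} \xrightarrow{j} \Lambda_{i}^{''p} \xrightarrow{q}_{\sim} \Lambda_{\mathbf{V},i}^{p}.
\]
Part (2) of Proposition 5.14 gives that $q$ is an isomorphism, hence $q^{\ast}: Irr\,\Lambda_{\mathbf{V},i}^{p}\to Irr\,\Lambda_{i}^{''p}$ is a bijection. Part (3) identifies $F_{i}^{p}$ with the fiber of the projection $\rho:\Lambda_{i}^{''p}\to\mathbf{G},\ (x,\widetilde{\mathbf{W}})\mapsto\widetilde{\mathbf{W}}$ at $\mathbf{W}$; since $\rho$ is a $G_{\mathbf{V}}$-equivariant fiber bundle over the connected homogeneous space $\mathbf{G}=G_{\mathbf{V}}/Q_{\mathbf{V}}$ with fiber $F_{i}^{p}$, taking fibers gives a bijection $Irr\,\Lambda_{i}^{''p}\cong Irr\,F_{i}^{p}$. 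Finally, part (1) says $\kappa:F_{i}^{p}\to\Lambda_{\mathbf{T},i}^{0}$ is smooth with connected (affine) fibers, so $\kappa$ induces a bijection $Irr\,F_{i}^{p}\cong Irr\,\Lambda_{\mathbf{T},i}^{0}$. The composition yields the desired bijection $\epsilon_{i,p}^{\ast}$, which can be described as $Z\mapsto \overline{\kappa(\iota^{-1}(Z))}$.

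Next I would extend the bijection to full components by closure: if $Z\in Irr\,\Lambda_{\mathbf{V}}$ satisfies $t_{i}^{\ast}(Z)=p$, then by definition $Z\cap\Lambda_{\mathbf{V},i}^{p}$ is dense in $Z$ and is an irreducible component of $\Lambda_{\mathbf{V},i}^{p}$; taking closures of its image under the bijection above assigns it an irreducible component $Z'\in Irr\,\Lambda_{\mathbf{T}}$ with $t_{i}^{\ast}(Z')=0$. This is functorial in the obvious sense.

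The main (and essentially only non-formal) point will be verifying that $\epsilon_{i,p}^{\ast}$ is the inverse of $\eta_{i,p}^{\ast}$. Both maps admit a direct interpretation in terms of module structure: an element of $\Lambda_{\mathbf{V},i}^{p}$ is (generically) characterized by the fact that its unique $x$-stable subspace of graded dimension $pi$ is $\operatorname{Ker}(\bigoplus_{h'=i}x_{h})$, on which the induced representation is a sum of $p$ simples at $i$; the quotient yields a point of $\Lambda_{\mathbf{T},i}^{0}$. I would argue as in Corollary 5.12 that both $\eta_{i,p}^{\ast}(Z')$ and $(\epsilon_{i,p}^{\ast})^{-1}(Z')$ are precisely the closure of the unique irreducible component of $\Lambda_{\mathbf{V},i}^{p}$ containing those $x$ isomorphic to the direct sum of a generic element of $Z'\cap\Lambda_{\mathbf{T},i}^{0}$ and $p$ copies of the simple representation supported at $i$. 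Since these two descriptions pick out the same component, $\epsilon_{i,p}^{\ast}\circ\eta_{i,p}^{\ast}=\mathrm{id}$, finishing the proof. No serious obstacle is anticipated—the argument is entirely parallel to the sink case, with Proposition 5.14 replacing Proposition 5.11.
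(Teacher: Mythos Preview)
Your proposal is correct and follows the same approach the paper intends: the paper gives no explicit proof of this corollary, stating only that it is dual to Corollary 5.12 via Proposition 5.14, and your argument carries out precisely that dualization, including the direct-sum characterization used to identify $\epsilon_{i,p}^{\ast}$ as the inverse of $\eta_{i,p}^{\ast}$.
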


 The following lemma is the key lemma of Lusztig's functions for coproduct, which is dual to Lemma 5.13:
 \begin{lemma}
 	With the notations above, assume that $Z$ is an irreducible component of $\Lambda_{\mathbf{V}}$ with $t_{i}^{\ast}(Z)=p$ and $Z'=\epsilon_{i,p}^{\ast}(Z)$. Let $\underline{p}=(\underline{Z},\underline{i})$ be a right-addmissible path of $Z$ such that $Z_{1}=Z'$ and $i_{1}=i$, then $$Res^{\mathbf{V}}_{\mathbf{T},\mathbf{W}}(f_{\underline{p}})=f_{Z'}+\sum\limits_{Z'' \in Irr \Lambda_{\mathbf{W}},t_{i}^{\ast}(Z'')>0 } d'_{Z''}f_{Z''},$$ where $f_{Z'}, f_{Z''}\in \mathcal{M}_{V} $ are some constructible functions having property $\mathcal{P}(Z'),\mathcal{P}(Z'')$ respectively and $d'_{Z''} \in \mathbb{Z}$.
 \end{lemma}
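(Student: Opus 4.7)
The plan is to mirror the argument for Lemma 5.13 in the dual setting. Since $(Z_{0},Z_{1},\ldots,Z_{s})$ together with $(i_{1},i_{2},\ldots,i_{s})$ form a right-admissible path of $Z$ with $Z_{1}=Z'$ and $i_{1}=i$, the truncated tuples $(Z_{1},Z_{2},\ldots,Z_{s})$ and $(i_{2},\ldots,i_{s})$ form a right-admissible path $\underline{p}'$ of $Z'$, producing a function $f_{\underline{p}'}\in\mathcal{M}_{\mathbf{T}}$ with property $\mathcal{P}(Z')$. By the inductive construction via right multiplication (Lemma 5.9), one has $f_{\underline{p}}=f_{\underline{p}'}\ast\mathbf{1}_{i^{p}}$ up to corrections supported on irreducible components $Z''$ of $\Lambda_{\mathbf{V}}$ with $t_{i}^{\ast}(Z'')>p$, and these corrections do not contribute generically to $\Lambda_{\mathbf{V},i}^{p}$.

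Next I would analyze $Res^{\mathbf{V}}_{\mathbf{T},\mathbf{W}}$ through the diagram $\Lambda_{\mathbf{T}}\xleftarrow{\kappa}F\xrightarrow{\iota}\Lambda_{\mathbf{V}}$, combined with the section $i:\Lambda_{\mathbf{T}}\to F$ sending $x'\in\Lambda_{\mathbf{T}}$ to the direct sum of $x'$ with the semisimple module at $i\in I$ of dimension $p$, so that $Res^{\mathbf{V}}_{\mathbf{T},\mathbf{W}}(f)=i^{\ast}\iota^{\ast}(f)$. Using Proposition 5.14, one sees that $\kappa$ restricts to an affine-bundle map $F_{i}^{p}\to\Lambda_{\mathbf{T},i}^{0}$ and $\iota$ identifies $F_{i}^{p}$ with the fiber over $\mathbf{W}$ of $\Lambda_{i}^{\prime\prime p}\to\mathbf{G}$, while $q:\Lambda_{i}^{\prime\prime p}\to\Lambda_{\mathbf{V},i}^{p}$ is an isomorphism. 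Consequently the restriction of $Res^{\mathbf{V}}_{\mathbf{T},\mathbf{W}}(f_{\underline{p}})$ to $\Lambda_{\mathbf{T},i}^{0}$ is completely determined by the restriction of $f_{\underline{p}}$ to $\Lambda_{\mathbf{V},i}^{p}$, and Corollary 5.15 tells us that the induced bijection on irreducible components is precisely the inverse $\epsilon_{i,p}^{\ast}$ of $\eta_{i,p}^{\ast}$. Hence $Res^{\mathbf{V}}_{\mathbf{T},\mathbf{W}}(f_{\underline{p}})|_{\Lambda_{\mathbf{T},i}^{0}}$ generically takes value $1$ on $Z'=\epsilon_{i,p}^{\ast}(Z)$ and value $0$ on every other component of $\Lambda_{\mathbf{T}}$ with $t_{i}^{\ast}=0$.

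The hard part will be this last matching: one must verify that the pullback along $i^{\ast}\iota^{\ast}$ does not pick up extra Euler-characteristic factors on the generic stratum, i.e.\ that the affine fibers of $\kappa|_{F_{i}^{p}}$ contribute Euler characteristic $1$, and that the construction of $f_{\underline{p}}$ via right multiplication interacts correctly with the restriction so that $Res^{\mathbf{V}}_{\mathbf{T},\mathbf{W}}(f_{\underline{p}})$ restricted to the $t_{i}^{\ast}=0$ stratum agrees with $f_{\underline{p}'}$ on that stratum. Once this compatibility is established, defining $d'_{Z''}=\rho_{Z''}(Res^{\mathbf{V}}_{\mathbf{T},\mathbf{W}}(f_{\underline{p}}))$ for each $Z''\in Irr\Lambda_{\mathbf{T}}$ with $t_{i}^{\ast}(Z'')>0$, choosing some $f_{Z''}\in\mathcal{M}$ having property $\mathcal{P}(Z'')$ (which exists by Lemma 5.4 applied in the dual direction), and setting $f_{Z'}:=Res^{\mathbf{V}}_{\mathbf{T},\mathbf{W}}(f_{\underline{p}})-\sum_{Z''}d'_{Z''}f_{Z''}$ yields a function with property $\mathcal{P}(Z')$, completing the proof.
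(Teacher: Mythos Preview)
Your proposal is correct and follows essentially the same approach as the paper, which simply declares Lemma 5.16 dual to Lemma 5.13 and gives no separate proof. The structure you outline---truncate the right-admissible path to get $\underline{p}'$, observe that $Res^{\mathbf{V}}_{\mathbf{T},\mathbf{W}}(f_{\underline{p}})|_{\Lambda_{\mathbf{T},i}^{0}}$ is determined by $f_{\underline{p}}|_{\Lambda_{\mathbf{V},i}^{p}}$ and agrees there with $f_{\underline{p}'}$, then subtract off correction terms for the $t_{i}^{\ast}>0$ components---is exactly the dualization of the proof of Lemma 5.13.

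One small remark: the ``hard part'' you flag about Euler-characteristic factors is not actually an issue. Recall that $Res^{\mathbf{V}}_{\mathbf{T},\mathbf{W}}(f)=i^{\ast}\iota^{\ast}(f)$ is a pure pullback along the section $i:\Lambda_{\mathbf{T}}\to F$ followed by the inclusion $\iota$, so no push-forward and hence no Euler-characteristic integration occurs. For $x'\in\Lambda_{\mathbf{T},i}^{0}$ one has directly $Res(f_{\underline{p}})(x')=f_{\underline{p}}(x'\oplus S_{i}^{\oplus p})$, and this point lies in $\Lambda_{\mathbf{V},i}^{p}$. The affine-bundle structure of $\kappa|_{F_{i}^{p}}$ from Proposition 5.14 is used only to establish the bijection of irreducible components in Corollary 5.15, not in evaluating $Res$ itself.
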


 \section{An $I \times \mathbb{Z}_{2}$-colored graph $\mathcal{G}_{2}$ of $\mathcal{M}$}
 \subsection{The graph $\mathcal{G}_{2}$}
 
 \begin{definition}
 	Given two constructible functions $f,g $, if on each irreducible componnet $Z$, $f$ and $g$ generically take the same value, we say $f$ generically equals to $g$. It defines an equivalent relation. Given $f\in \mathcal{M}$, we denote the equivalent class of $f $ by $[f].$
 \end{definition}
  Given an irreducible component $Z\in Irr\Lambda_{\mathbf{V}'}$, we let $[f_{Z}]$ be the equivalent class consisting of those $f_{Z} \in \mathcal{M}$ with property $\mathcal{P}(Z)$. Then by Lemma 5.6, $\eta_{i,p}$ induces a bijection (still denoted by $\eta_{i,p}$) between the set of the equivalent classes $\{[f] |f$ has property $\mathcal{P}(Z')$ for some $t_{i}(Z')=0  \}$ and the set of the equivalent classes $\{[f] |f$ has property $\mathcal{P}(Z)$ for some $t_{i}(Z)=p  \}$ as the following: Let $f \in [f]$ with property $\mathcal{P}(Z')$ with $t_{i}(Z')=0$, then $$1_{i^{p}}\ast f=f_{Z} +\sum \limits_{Z'' \in Irr \Lambda_{\mathbf{V}},t_{i}(Z'')>p } c_{Z''}f_{Z''}$$ for some $f_{Z}$ with property $\mathcal{P}(Z)$. Even though $f_{Z}$ is not unique, the equivalent class  $[f_{Z}]$ is unique and consists of elements having property  $\mathcal{P}(\eta_{i,p}(Z'))$ .  
  
  Similarly, $\eta^{\ast}_{i,p}$ induces a bijection (still denoted by $\eta^{\ast}_{i,p}$) from the set of the equivalent classes $\{[f] |f$ has property $\mathcal{P}(Z')$ for some $t_{i}^{\ast}(Z')=0  \}$ to the set of the equivalent classes $\{[f] |f$ has property $\mathcal{P}(Z)$ for some $t_{i}^{\ast}(Z)=p  \}$.\\
 
 Given an equivalent class $[f]$ such that $f \in \mathcal{M}$ has property $\mathcal{P}(Z)$ for $t_{i}(Z)=n$, we define $[g]=i_{+}([f])=\eta_{i,n+1} \eta_{i,n}^{-1}([f])$ and associate an arrow $[f]\xrightarrow{i_{+}}[g]$. Similarly, if $f$ has property $\mathcal{P}(Z)$ for $t_{i}^{\ast}(Z)=n$, we define $[g]=i^{+}([f])=\eta^{\ast}_{i,n+1} (\eta^{\ast}_{i,n})^{-1}([f])$ and associate an arrow $[f]\xrightarrow{i^{+}}[g]$. \\ 
 
Dually, if $[f]$ is an equivalent class such that $f \in \mathcal{M}$ has property $\mathcal{P}(Z)$ for $t_{i}(Z)=n>0$, we define $i_{-}([f])=\eta_{i,n-1} \eta_{i,n}^{-1}([f])$. If $[f]$ is an equivalent class such that $f \in \mathcal{M}$ such that $f \in \mathcal{M}$ has property $\mathcal{P}(Z)$ for $t_{i}^{\ast}(Z)=n>0$, we define $i^{-}([f])=\eta^{\ast}_{i,n-1} (\eta^{\ast}_{i,n})^{-1}([f])$. Then by construction, we can see that $i_{-},i^{-}$ are the inverse of $i_{+},i^{+}$ respectively.

 \begin{definition}
 	We define a $\mathbb{Z}_{2} \times I$-colored graph $\mathcal{G}_{2}=(\mathcal{V}_{2},\mathcal{E}_{2})$ as the following:\\
 	
 	(1) The set of vertices is $\mathcal{V}_{2}=\{[f]|f \in \mathcal{M}_{\mathbf{V}}$ has property $\mathcal{P}(Z)$ for some $\mathbf{V}$ and irreducible component $Z \subset \Lambda_{\mathbf{V}}\}$.\\
 
 	(2) The set of arrows is $\mathcal{E}_{2}=\{[f]\xrightarrow{i_{+}}[g],[f]\xrightarrow{i^{+}}[g]|i\in I,[f],[g]\in \mathcal{V}_{2} \}$
  
 \end{definition}
\begin{remark}
	Notice that $\mathcal{V}_{2} $ is naturally bijective to the set $\bigcup\limits_{\mathbf{V}}Irr \Lambda_{\mathbf{V}}$. If $[f] \in \mathcal{V}_{2} $ such that $f \in [f]$ has proerty $\mathcal{P}(Z)$, we adapt the notation $t_{i}([f])=t_{i}(Z)$ and $t_{i}^{\ast}([f])=t_{i}^{\ast}(Z)$.
\end{remark}

 \subsection{The commutative relation between $i^{+}$ and $j_{+}$}
 
 In this section, we prove the lemmas parallel to those in Section 4.3
 
 \begin{lemma}
 	Assume $[f_{0}] \in \mathcal{V}_{2}$ such that $t_{i}^{\ast}([f_{0}])=0$ and $[f]=\eta_{i,c}^{\ast}([f_{0}]) $, then we have $t_{j}([f_{0}])=t_{j}([f])$.
 \end{lemma}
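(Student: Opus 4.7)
The strategy is geometric, paralleling Lemma~4.11 but avoiding the algebraic machinery of the bilinear pairing on $\mathcal{K}$. Under the convention $j \ne i$ of this section, let $Z_0 \in \mathrm{Irr}\,\Lambda_{\mathbf{V}'}$ be the irreducible component such that $f_0$ has property $\mathcal{P}(Z_0)$, so $t_i^*(Z_0) = 0$, and let $Z = \eta_{i,c}^*(Z_0) \in \mathrm{Irr}\,\Lambda_{\mathbf{V}}$ with $|\mathbf{V}| = |\mathbf{V}'| + ci$, the component attached to $[f]$. Setting $d = t_j(Z_0)$, the goal is to prove $Z \cap \Lambda_{\mathbf{V},j,d}$ is dense in $Z$. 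I plan to transfer the density from $Z_0 \cap \Lambda_{\mathbf{V}',j,d}$ to $Z$ through the diagram
$$\Lambda_{\mathbf{V}',i}^{0} \xleftarrow{p} \Lambda_i^{'c} \xrightarrow{q'} \Lambda_{\mathbf{V},i}^{c}$$
of Section~5.2, in which by Lemma~5.7 the map $p$ is smooth with connected fibers and $q'$ is a principal $G_{\mathbf{V}'}\times G_{\mathbf{V}''}$-bundle, and $\eta_{i,c}^* = q'p^{-1}$ on irreducible components.

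The central step is a pointwise matching of codimensions. Pick $(x,\tilde{\mathbf{W}},\rho_1,\rho_2)\in \Lambda_i^{'c}$ with $x = q'(\cdot)$ and $x' = \rho_{1,*}(\bar{x}|_{\mathbf{V}/\tilde{\mathbf{W}}}) = p(\cdot)$. Since $|\mathbf{V}''|=ci$ and the underlying graph has no loops, any $x$-stable $\tilde{\mathbf{W}}$ of this dimension vector must satisfy $\tilde{\mathbf{W}}_i = \Ker\bigoplus_{h: h'=i}x_h$ (of dimension $c$) and $\tilde{\mathbf{W}}_k = 0$ for all $k \ne i$. In particular $\tilde{\mathbf{W}}_j = 0$, so via $\rho_1$ the natural quotient identifies $\mathbf{V}_j$ with $\mathbf{V}'_j$. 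For each $h \in H$ with $h''=j$: if $h' \ne i$ then $\tilde{\mathbf{W}}_{h'}=0$, so $\mathbf{V}_{h'} \cong \mathbf{V}'_{h'}$ and $x_h$ is identified with $x'_h$; if $h' = i$, the $x$-stability condition $x_h(\tilde{\mathbf{W}}_i) \subseteq \tilde{\mathbf{W}}_j = 0$ forces $x_h$ to descend along $\mathbf{V}_i \twoheadrightarrow \mathbf{V}_i/\tilde{\mathbf{W}}_i \cong \mathbf{V}'_i$, and the induced map is exactly $x'_h$. Summing over $h''=j$, the images of $\bigoplus_{h: h''=j} x_h$ and $\bigoplus_{h: h''=j} x'_h$ coincide under $\mathbf{V}_j \cong \mathbf{V}'_j$, hence have equal codimensions; so $x \in \Lambda_{\mathbf{V},j,d}$ iff $x' \in \Lambda_{\mathbf{V}',j,d}$.

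Combining this pointwise equivalence with the bundle diagram yields
$$p^{-1}(Z_0 \cap \Lambda_{\mathbf{V}',j,d}) = (q')^{-1}(Z \cap \Lambda_{\mathbf{V},j,d})$$
as subsets of $\Lambda_i^{'c}$. Both $p^{-1}(Z_0)$ and $(q')^{-1}(Z)$ coincide with the same irreducible subset of $\Lambda_i^{'c}$ (by the very definition of $\eta_{i,c}^*$); moreover $p^{-1}(Z_0 \cap \Lambda_{\mathbf{V}',j,d})$ is dense in $p^{-1}(Z_0)$ because $p$ is a smooth surjection with connected fibers and $Z_0 \cap \Lambda_{\mathbf{V}',j,d}$ is dense in $Z_0$ by definition of $d$. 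Pushing down along the principal bundle $q'$ then shows that $Z \cap \Lambda_{\mathbf{V},j,d}$ is dense in $Z$, giving $t_j(Z) = d = t_j(Z_0)$.

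The only subtle point I expect is the linear-algebraic bookkeeping in the pointwise matching, where the isomorphisms $\rho_1,\rho_2$ are only defined up to $G_{\mathbf{V}'}\times G_{\mathbf{V}''}$; since we ultimately compare only image subspaces of a composite linear map, this group-invariance makes the identification automatic. Beyond that, the density transfer through Lemma~5.7 is essentially formal and requires no new geometric input.
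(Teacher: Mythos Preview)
Your proof is correct and follows essentially the same approach as the paper's: both argue pointwise that for any $x$ in the fiber $q'p^{-1}(x')$ the codimension $\mathrm{codim}_{\mathbf{V}_j}\bigl(\mathrm{Im}\bigoplus_{h:h''=j}x_h\bigr)$ equals the corresponding codimension for $x'$, and then transfer the generic value of $t_j$ through the smooth map $p$ and the principal bundle $q'$ of Lemma~5.7. The paper's version is terser, simply asserting that this codimension ``does not depend on the choice of the $y_{\bar h}$'' (the extension data parametrizing the fiber of $p$), whereas you spell out the linear-algebra case analysis at $h'' = j$; but the argument is the same.
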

\begin{proof}
	Assume $t_{j}([f_{0}])=d$ and $f_{0}$ has property $\mathcal{P}(Z')$, then $t_{j}(Z')=d$ and $t_{i}^{\ast}(Z')=0$.
	Consider the diagram for $|\mathbf{V}''|=ci$.
 \[
\xymatrix{
	\Lambda_{\mathbf{V}',i}^{0}\ar[d]^{j_{1}} & \Lambda^{'{c}}_{i}\ar[d]^{j_{2}}\ar[l]_{p} \ar[r]^{r}& \Lambda^{''{c}}_{i} \ar[d]^{j_{3}}\ar[r]^{q}& \Lambda_{\mathbf{V},i}^{c} \ar[d]^{j_{4}}\\
	\Lambda_{\mathbf{V}'} & \Lambda' \ar[l]_{p} \ar[r]^{r} & \Lambda''  \ar[r]^{q} & \Lambda_{\mathbf{V}}
}
\]
and the composition 
\begin{center}
		$\mathbf{V}'_{i} \xrightarrow{\bigoplus\limits_{h \in H, h'=i} x'_{h}} \bigoplus \limits_{i-j} \mathbf{V}'_{j} \xrightarrow{ \sum
			 \limits_{h \in H, h'=i} \epsilon(h)y_{\bar{h}}} \mathbf{V}''_{i}$
\end{center}

Notice that  for $x' \in \Lambda_{\mathbf{V}',i}^{0} \cap \Lambda_{\mathbf{V}',j,d}$ and $x \in q'p^{-1}(x')$, the number:
\begin{center}
	$ {\rm{codim}}_{\mathbf{V}_{j}} ( {\rm{Im}} \sum\limits_{h \in H, h''=j} x_{h})={\rm{codim}}_{\mathbf{V}_{j}} ( {\rm{Im}} \sum\limits_{h \in H, h''=j} x'_{h})=d $
\end{center} does not depned on the choice of those $y_{\bar{h}}$, we can see that $t_{j}([f_{0}])=t_{j}([f])$.
\end{proof}

By similar argument as in the proof of Corollary 4.12, we have:
\begin{corollary}
	If $[f]\in \mathcal{V}_{2}$ and $[g]=i^{+}([f])$, then $t_{j}([g])=t_{j}([f])$.
\end{corollary}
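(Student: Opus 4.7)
The plan is to derive this directly from Lemma 6.4, in complete analogy with how Corollary 4.12 is deduced from Lemma 4.11 in the perverse-sheaf side. The key observation is that both $[f]$ and $[g] = i^{+}([f])$ are obtained from a common source $[f_0]$ with $t_i^{\ast}([f_0]) = 0$, via the bijections $\eta_{i,c}^{\ast}$ and $\eta_{i,c+1}^{\ast}$ respectively, and Lemma 6.4 says exactly that applying $\eta_{i,n}^{\ast}$ preserves $t_j$.

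More concretely, first I would set $c = t_i^{\ast}([f])$. By Corollary 5.8 (or equivalently, by the definition of $i^{+}$ on $\mathcal{G}_2$), there exists a unique equivalent class $[f_0] \in \mathcal{V}_2$ with $t_i^{\ast}([f_0]) = 0$ such that $\eta_{i,c}^{\ast}([f_0]) = [f]$. By the definition of $i^{+}$ given in Section 6.1, we then have
\[
[g] = i^{+}([f]) = \eta_{i,c+1}^{\ast}(\eta_{i,c}^{\ast})^{-1}([f]) = \eta_{i,c+1}^{\ast}([f_0]).
\]

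Now apply Lemma 6.4 twice, once with the parameter $c$ and once with the parameter $c+1$, to the same class $[f_0]$: the lemma yields
\[
t_j([f]) = t_j(\eta_{i,c}^{\ast}([f_0])) = t_j([f_0]) = t_j(\eta_{i,c+1}^{\ast}([f_0])) = t_j([g]),
\]
which is the desired equality. No additional machinery is required, since the whole content of the statement is already packaged in Lemma 6.4; the only thing to check is that the $\eta$-bijection defining $i^+$ is indeed the one whose fiber structure underlies Lemma 6.4, and this is immediate from the constructions of Section 5.2 and Section 6.1. There is no real obstacle here — the step that would have been hard, namely showing that varying the kernel dimension at $i$ does not affect the generic codimension of the image at $j$, has already been done in Lemma 6.4 via the analysis of the diagram of $\Lambda_{i}^{\prime c}$.
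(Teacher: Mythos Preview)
Your proof is correct and follows exactly the approach the paper intends: it is the direct analogue of the argument for Corollary 4.12, reducing to Lemma 6.4 by writing both $[f]$ and $[g]$ as $\eta_{i,c}^{\ast}([f_0])$ and $\eta_{i,c+1}^{\ast}([f_0])$ for a common $[f_0]$ with $t_i^{\ast}([f_0])=0$.
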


\begin{lemma}
	Assume $[f_{0}] \in \mathcal{V}_{2}$ such that $t_{i}^{\ast}([f_{0}])=t_{j}([f_{0}]) =0$, then  $$\eta_{j,d}\eta_{i,c}^{\ast}([f_{0}])=\eta_{i,c}^{\ast} \eta_{j,d}([f_{0}])$$ for any $c,d \in \mathbb{N}$.
\end{lemma}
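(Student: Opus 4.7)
The plan is to prove the stronger statement $\eta_{j,d}\eta_{i,c}^{\ast}(Z_0) = \eta_{i,c}^{\ast}\eta_{j,d}(Z_0)$ as irreducible components, where $Z_0 \in Irr\,\Lambda_{\mathbf{V}_0}$ is such that $f_0 \in [f_0]$ has property $\mathcal{P}(Z_0)$, so by hypothesis $t_j(Z_0) = t_i^*(Z_0) = 0$. Write $Z_{12} = \eta_{i,c}^{\ast}\eta_{j,d}(Z_0)$ and $Z_{21} = \eta_{j,d}\eta_{i,c}^{\ast}(Z_0)$; both compositions are defined because Lemma 6.4 says $\eta_{i,c}^{\ast}$ preserves $t_j$, and a symmetric argument (tracking kernels of arrows starting at $i$ in the diagram of Section 5.1 instead of images of arrows ending at $j$ in the diagram of Section 5.2) shows $\eta_{j,d}$ preserves $t_i^*$. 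The same two facts force $t_j(Z_{12}) = t_j(Z_{21}) = d$ and $t_i^*(Z_{12}) = t_i^*(Z_{21}) = c$.

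The key device will be associativity of $\ast$ applied to $F = \mathbf{1}_{j^d} \ast f_0 \ast \mathbf{1}_{i^c} \in \mathcal{M}$, expanded in the two bracketings. Using Lemma 5.6 first and then Lemma 5.9 on the leading term gives $F = f_{Z_{12}} + R_1 + S_1 \ast \mathbf{1}_{i^c}$, where $R_1$ is a $\mathbb{Z}$-linear combination of Lusztig functions on components with $t_i^* > c$ and $S_1$ is one on components with $t_j > d$; expanding in the opposite order gives $F = f_{Z_{21}} + R_2 + \mathbf{1}_{j^d} \ast S_2$ with the analogous support conditions. Equating the two expansions and evaluating generically at $Z_{12}$ will yield $1 = \rho_{Z_{12}}(f_{Z_{21}})$, which forces $Z_{12} = Z_{21}$, provided every error term vanishes generically on $Z_{12}$.

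The main obstacle is to show that $S_1 \ast \mathbf{1}_{i^c}$ is supported on $\{t_j > d\}$ and $\mathbf{1}_{j^d} \ast S_2$ is supported on $\{t_i^* > c\}$, since the key lemmas only control supports before the final convolution. I plan to verify this directly from the convolution diagram in Section 2.3. For any $x$ in the support of $S_1 \ast \mathbf{1}_{i^c}$ there is an $x$-stable subspace $\tilde{\mathbf{W}} \subset \mathbf{V}$ of dimension vector $ci$ with $\bar x|_{\mathbf{V}/\tilde{\mathbf{W}}}$ in the support of $S_1$; since $i \neq j$, the stability condition gives $x_h(\tilde{\mathbf{W}}_{h'}) \subseteq \tilde{\mathbf{W}}_j = 0$ for every arrow $h$ with $h''=j$, so $\mathrm{Im}\bigoplus_{h''=j} x_h$ coincides with its image on the quotient and $t_j(x) = t_j(\bar x|_{\mathbf{V}/\tilde{\mathbf{W}}}) > d$. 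A dual argument with kernels at arrows starting at $i$ handles $\mathbf{1}_{j^d} \ast S_2$. Once these two stability claims are in hand, the generic values on $Z_{12}$ collapse to the clean comparison above, completing the proof.
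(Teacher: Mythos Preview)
Your overall strategy of expanding $F = \mathbf{1}_{j^d} \ast f_0 \ast \mathbf{1}_{i^c}$ in the two bracketings and comparing generic values on $Z_{12}$ is the same device the paper uses, and Steps 1--4 are fine. There is a genuine gap in Step 5, however. You argue that $S_1 \ast \mathbf{1}_{i^c}$ is supported on $\{t_j > d\}$ because for $x$ in its support one can find $\tilde{\mathbf{W}}$ with $\bar x|_{\mathbf{V}/\tilde{\mathbf{W}}}$ in the support of $S_1$, and then the pointwise codimension at $j$ is preserved. The first two claims are correct, but the conclusion $t_j(\bar x|_{\mathbf{V}/\tilde{\mathbf{W}}}) > d$ does not follow: $S_1$ is a combination of Lusztig functions $f_{Z''}$ with $t_j(Z'') > d$, and such $f_{Z''}$ are only constrained \emph{generically}---their honest support can meet $\Lambda_{\mathbf{V}',j,\leq d}$. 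So the global support claim fails as stated, and the dual claim for $\mathbf{1}_{j^d}\ast S_2$ has the same problem.

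What you actually need is the weaker statement $\rho_{Z_{12}}(S_1 \ast \mathbf{1}_{i^c}) = 0$, and this does hold. For generic $x \in Z_{12} \cap \Lambda_{\mathbf{V},i}^{c}$ the $x$-stable subspace of dimension $ci$ is unique (the kernel of $\bigoplus_{h'=i} x_h$), so the convolution collapses to a single term: $(S_1 \ast \mathbf{1}_{i^c})(x) = S_1(\bar x|_{\mathbf{V}/\tilde{\mathbf{W}}})$. By the construction of $\eta_{i,c}^{\ast}$ (Corollary 5.8) this quotient lands, for generic $x \in Z_{12}$, in a dense open subset of $Z_1 = \eta_{j,d}(Z_0)$; since every $Z''$ occurring in $S_1$ has $t_j(Z'') > d = t_j(Z_1)$, we have $Z'' \neq Z_1$ and hence $\rho_{Z_1}(f_{Z''}) = 0$, giving $\rho_{Z_{12}}(S_1 \ast \mathbf{1}_{i^c}) = \rho_{Z_1}(S_1) = 0$. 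The dual argument handles $\mathbf{1}_{j^d} \ast S_2$. This is exactly how the paper proceeds: it restricts attention to $\Lambda_{\mathbf{V},i}^{c} \cap \Lambda_{\mathbf{V},j,d}$, where the relevant stable subspaces are unique, and shows directly that both $f_1$ and $f_2$ agree generically with $g = F$ there.
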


\begin{proof}
	By Lemma 6.4, we can see that $t_{j}(\eta_{i,c}^{\ast}([f_{0}]))=0$, hence $\eta_{j,d}\eta_{i,c}^{\ast}([f_{0}])$ is well-defined. By the similar argument, $\eta_{i,c}^{\ast} \eta_{j,d}([f_{0}])$ is also well-defined. Let $[f_{1}]=\eta_{j,d}\eta_{i,c}^{\ast}([f_{0}]), [f_{2}]= \eta_{i,c}^{\ast} \eta_{j,d}([f_{0}])$, then we have $t_{j}([f_{1}])=d=t_{j}([f_{2}]) $ and $t_{i}^\ast([f_{1}])=c=t_{i}^{\ast}([f_{2}]) $. 
	
	Given any $x \in\Lambda_{\mathbf{V},i}^{c} \cap \Lambda_{\mathbf{V},j,d}$, there exist unique $x$-stable subspaces $\mathbf{W},\mathbf{T}$ such that $|\mathbf{W}|+dj=|\mathbf{V}|, |\mathbf{T}|=ci$ respectively. Since $\mathbf{T} \subset \mathbf{W}$, we can define $x |_{ \mathbf{W}/ \mathbf{T}}$ . It determines an orbit $\mathcal{O}(x) \subset \Lambda_{\mathbf{V}''}$ with $|\mathbf{V''}|+dj+ci=|\mathbf{V}|$.   We define $\tilde{f}:\Lambda_{\mathbf{V},i}^{c} \cap \Lambda_{\mathbf{V},j,d} $ by $\tilde{f}(x) =f_{0}(x')$ if $x' \in \mathcal{O}(x)$. Then by the construction of $f_{1}$ in Lemma 5.4, Lemma 5.6 and Lemma 5.9, we can see that $f_{1}|_{\Lambda_{\mathbf{V},i}^{c} \cap \Lambda_{\mathbf{V},j,d}  } $ generically equals to $g|_{{\Lambda_{\mathbf{V},i}^{c} \cap \Lambda_{\mathbf{V},j,d} }}$ with  $g=\mathbf{1}_{j^{(d)}}\ast f_{0} \ast \mathbf{1}_{i^{(c)}}$. By the similar argument, we can see that $f_{2}|_{\Lambda_{\mathbf{V},i}^{c} \cap \Lambda_{\mathbf{V},j,d}  } $ generically equals to $g|_{{\Lambda_{\mathbf{V},i}^{c} \cap \Lambda_{\mathbf{V},j,d} }}$, too. Hence $$\eta_{j,d}\eta_{i,c}^{\ast}([f_{0}])=\eta_{i,c}^{\ast} \eta_{j,d}([f_{0}]).$$
\end{proof}
By the similar argument as in Corollary 4.14, we have the following corollary:
\begin{corollary}
	Given $[f] \in \mathcal{V}_{2}$, we have $j_{+}i^{+}([f])=i^{+}j_{+}([f]).$
\end{corollary}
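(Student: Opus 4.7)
The plan is to mimic the argument for Corollary 4.14 on the sheaf side, using Lemma 6.5 as the analogue of Lemma 4.13. First I would set $m = t_{j}([f])$ and $n = t_{i}^{\ast}([f])$. The goal is to reduce to an element $[f_{0}] \in \mathcal{V}_{2}$ satisfying $t_{j}([f_{0}]) = t_{i}^{\ast}([f_{0}]) = 0$ such that $[f]$ is obtained from $[f_{0}]$ by applying both $\eta_{j,m}$ and $\eta_{i,n}^{\ast}$.

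To construct $[f_{0}]$, I would peel off the two bijections successively. Applying $(\eta_{i,n}^{\ast})^{-1}$ to $[f]$ produces a class with $t_{i}^{\ast}$ equal to $0$; by Lemma 6.4 the invariant $t_{j}$ is unchanged, so the resulting class still has $t_{j} = m$. Applying $(\eta_{j,m})^{-1}$ then kills $t_{j}$; the symmetric version of Lemma 6.4 (proved by the same generic-codimension argument exchanging the roles of $i$ as source and $j$ as sink) guarantees that $t_{i}^{\ast}$ remains $0$. This yields $[f_{0}]$ with both invariants vanishing, and by Lemma 6.5 we have the two equivalent presentations
\[
[f] \;=\; \eta_{j,m}\,\eta_{i,n}^{\ast}([f_{0}]) \;=\; \eta_{i,n}^{\ast}\,\eta_{j,m}([f_{0}]).
\]

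Now I would chase both compositions. Using the first presentation and the definition $j_{+} = \eta_{j,m+1}(\eta_{j,m})^{-1}$ gives
\[
j_{+}([f]) \;=\; \eta_{j,m+1}\,\eta_{i,n}^{\ast}([f_{0}]) \;=\; \eta_{i,n}^{\ast}\,\eta_{j,m+1}([f_{0}]),
\]
where the second equality is Lemma 6.5 applied to $[f_{0}]$ with parameters $(n, m+1)$. Applying $i^{+} = \eta_{i,n+1}^{\ast}(\eta_{i,n}^{\ast})^{-1}$ yields $i^{+}j_{+}([f]) = \eta_{i,n+1}^{\ast}\,\eta_{j,m+1}([f_{0}])$. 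Symmetrically, using the second presentation for $[f]$,
\[
i^{+}([f]) \;=\; \eta_{i,n+1}^{\ast}\,\eta_{j,m}([f_{0}]) \;=\; \eta_{j,m}\,\eta_{i,n+1}^{\ast}([f_{0}])
\]
by Lemma 6.5 with parameters $(n+1, m)$, and then $j_{+}i^{+}([f]) = \eta_{j,m+1}\,\eta_{i,n+1}^{\ast}([f_{0}])$. A final application of Lemma 6.5 with parameters $(n+1, m+1)$ identifies this with $\eta_{i,n+1}^{\ast}\,\eta_{j,m+1}([f_{0}])$, matching $i^{+}j_{+}([f])$.

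The only real obstacle is the book-keeping: verifying at each stage that the invariant being acted on is indeed in the domain of the bijection used, so that the definitions of $i^{+}$ and $j_{+}$ apply. All of this reduces to Corollary 6.4 and its dual, which record that the $t_{j}$-stratification is undisturbed by $\eta_{i,c}^{\ast}$ (and, dually, $t_{i}^{\ast}$ by $\eta_{j,d}$). Once this compatibility is in hand, the computation collapses to three invocations of Lemma 6.5, exactly as in the proof of Corollary 4.14.
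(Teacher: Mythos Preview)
Your argument is correct and follows exactly the approach the paper intends: reduce to a base class $[f_0]$ with $t_j=t_i^\ast=0$ via the invariance of $t_j$ under $\eta_{i,c}^\ast$ (and its dual), then apply the commutation of $\eta_{j,d}$ and $\eta_{i,c}^\ast$ repeatedly, just as in Corollary 4.14. The only slip is in the numbering: what you call ``Lemma 6.5'' (the commutation $\eta_{j,d}\eta_{i,c}^\ast=\eta_{i,c}^\ast\eta_{j,d}$) is Lemma 6.6 in the paper, and ``Corollary 6.4'' should be Lemma 6.4 together with Corollary 6.5.
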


  \subsection{The commutative relation between $i^{+}$ and $i_{+}$}
 In this subsection, we prove the lemmas parallel to those in Section 4.4.
\begin{lemma}
	Assume $[f_{0}] \in \mathcal{V}_{2}$ with $f_{0} \in \mathcal{M}_{\mathbf{V}'}$ and $t_{i}^{\ast}([f_{0}])=0, t_{i}([f_{0}])=d$. Let $[f]=\eta_{i,c}^{\ast}([f_{0}])$, then 
	
		\begin{center}
		$t_{i}([f])= \left\{
		\begin{aligned}
			&	d  & c+(|\mathbf{V}'|,i) \leq d \\
			&	c+(|\mathbf{V}'|,i) & otherwise
		\end{aligned}
		\right. $
	\end{center} 
\end{lemma}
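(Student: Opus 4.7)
The plan is to compute $t_i([f])$ as the generic value of $t_i$ on the irreducible component $Z=\eta_{i,c}^{\ast}(Z')$, where $Z'\subset\Lambda_{\mathbf{V}'}$ corresponds to $[f_0]$, so that $t_i^{\ast}(Z')=0$, $t_i(Z')=d$, and $|\mathbf{V}|=|\mathbf{V}'|+ci$. By Corollary 5.8 and Lemma 5.7, a generic $x\in Z\cap\Lambda_{\mathbf{V},i}^{c}$ arises, via the principal bundle $q'\colon \Lambda_i^{'c}\to\Lambda_{\mathbf{V},i}^c$ and the smooth surjection $p\colon \Lambda_i^{'c}\to\Lambda_{\mathbf{V}',i}^{0}$ with connected fibers, from a generic $x'\in Z'\cap\Lambda_{\mathbf{V}',i}^{0}$ together with compatible ``new data'' making a fixed graded complement $\mathbf{V}''$ (of dimension $ci$, supported at $i$) an $x$-stable subspace.

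I would first choose a splitting $\mathbf{V}=\mathbf{V}'\oplus\mathbf{V}''$ and write the block form: for each $h$ with $h''=i$, decompose $x_h=(x'_h,y_h)\colon\mathbf{V}'_{h'}\to\mathbf{V}'_i\oplus\mathbf{V}''_i$, and for $h$ with $h'=i$, $h''\neq i$, the condition that $\mathbf{V}''$ is $x$-stable forces $x_h$ to vanish on $\mathbf{V}''_i$. Expanding $\mu(x)=0$ vertex by vertex, the equations at $k\neq i$ reduce automatically to $\mu(x')_k=0$, and at $i$ the $\mathbf{V}'_i$-component gives $\mu(x')_i=0$ while the $\mathbf{V}''_i$-component gives the single constraint $B\circ A=0$, where
\begin{equation*}
A=\bigoplus_{h:\,h''=i}\epsilon(h)x'_{\bar h}\colon\mathbf{V}'_i\to\bigoplus_{h:\,h''=i}\mathbf{V}'_{h'},\qquad B=(y_h)_h\colon\bigoplus_{h:\,h''=i}\mathbf{V}'_{h'}\to\mathbf{V}''_i.
\end{equation*}
Then I would count dimensions: $t_i^{\ast}(Z')=0$ gives injectivity of $A$, so $\dim\mathrm{Im}(A)=\upsilon'_i$; the identity $\sum_{h:\,h''=i}\upsilon'_{h'}=2\upsilon'_i-(|\mathbf{V}'|,i)$ together with $\mathrm{rank}(\bigoplus x'_h)=\upsilon'_i-d$ gives $\dim\ker(\bigoplus x'_h)=\upsilon'_i-(|\mathbf{V}'|,i)+d$; and the equation $\mu(x')_i=0$ forces $\mathrm{Im}(A)\subseteq\ker(\bigoplus x'_h)$, so the quotient $\ker/\mathrm{Im}(A)$ has dimension $d-(|\mathbf{V}'|,i)\geq 0$.

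Finally, I would analyze the image of $\bigoplus_{h:\,h''=i}x_h$ inside $\mathbf{V}_i=\mathbf{V}'_i\oplus\mathbf{V}''_i$: its projection to $\mathbf{V}'_i$ has codimension $d$, and its intersection with $\mathbf{V}''_i$ equals $B(\ker(\bigoplus x'_h))$, which, since $B$ vanishes on $\mathrm{Im}(A)$, factors through $\ker/\mathrm{Im}(A)$. For generic $y$ the restriction of $B$ to this quotient attains maximal rank $\min(d-(|\mathbf{V}'|,i),c)$, so
\begin{equation*}
t_i([f])=(\upsilon'_i+c)-(\upsilon'_i-d)-\min\bigl(d-(|\mathbf{V}'|,i),\,c\bigr)=\max\bigl(d,\,c+(|\mathbf{V}'|,i)\bigr),
\end{equation*}
which is exactly the claimed two-case formula. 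The main technical point is to justify that ``generic $y$'' actually cuts out a dense open subset of $Z$: this follows from Lemma 5.7, since $p$ is smooth with connected fibers and $q'$ is a principal bundle, so the pullback of a dense open subset of $Z$ is dense open in $\Lambda_i^{'c}$ and surjects onto $Z'\cap\Lambda_{\mathbf{V}',i}^{0}$ along fibers in which $y$ varies generically, so the maximal-rank condition is generic on $Z$.
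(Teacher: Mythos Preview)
Your proof is correct and follows essentially the same strategy as the paper's: both set up the block decomposition of $x$ with $\mathbf{V}''$ as the $x$-stable subspace supported at $i$, identify the single new constraint $B\circ A=0$ (the paper writes it as $\bar{Y}X'=0$), and compute the rank of $\bigoplus_{h''=i}x_h$ for generic extension data to obtain $\max(d,\,c+(|\mathbf{V}'|,i))$. The only cosmetic difference is that the paper computes this rank directly as the dimension of the row span of $\begin{pmatrix}\bar{X}'\\\bar{Y}\end{pmatrix}$ inside $\mathbf{K}=\{z:zX'=0\}$, whereas you organize the same count via the projection to $\mathbf{V}'_i$ and the intersection with $\mathbf{V}''_i$; the genericity justification via Lemma~5.7 matches the paper's ``open condition on each fiber'' argument.
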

\begin{proof}
	
	Assume that $f_{0}$ has property $\mathcal{P}(Z')$ and $f$ has property $\mathcal{P}(Z)$. Consider the diagram for $|\mathbf{V}''|=ci$.
\[
\xymatrix{
	\Lambda_{\mathbf{V}',i}^{0}\ar[d]^{j_{1}} & \Lambda^{'{c}}_{i}\ar[d]^{j_{2}}\ar[l]_{p} \ar[r]^{r}& \Lambda^{''{c}}_{i} \ar[d]^{j_{3}}\ar[r]^{q}& \Lambda_{\mathbf{V},i}^{c} \ar[d]^{j_{4}}\\
	\Lambda_{\mathbf{V}'} & \Lambda' \ar[l]_{p} \ar[r]^{r} & \Lambda''  \ar[r]^{q} & \Lambda_{\mathbf{V}}
}
\]
and the composition
\begin{center}
	$\mathbf{V}'_{i} \xrightarrow{\bigoplus\limits_{h \in H, h'=i} x'_{h}} \bigoplus \limits_{i-j} \mathbf{V}'_{j} \xrightarrow{ \left(\begin{matrix}
			\sum \limits_{h \in H, h'=i} \epsilon(h)x'_{\bar{h}} \\
			\sum \limits_{h \in H, h'=i} \epsilon(h)y_{\bar{h}}
		\end{matrix}\right)  }   \mathbf{V}'_{i}\oplus \mathbf{V}''_{i}$
\end{center}

The fiber of $p$ at $x'$ is the set consisting of $(x, \widetilde{\mathbf{W}},\rho_{1},\rho_{2})$ such that $(\widetilde{\mathbf{W}},\rho_{1},\rho_{2})$ can be chosen arbitarily and $x$ is an extension of $x'$. $(\widetilde{\mathbf{W}},\rho_{1},\rho_{2})$ is given by a point in $G_{\mathbf{V}}/U$, here $U$ is the unipotent radical of the parabolic subgroup of type $(|\mathbf{V}'|,|\mathbf{V}''|)$. To give an extension of $x'$ is equivalent to give $(y_{\bar{h}})_{h \in H, h'=i}$ such that the above composition vanishes.

For any $x' \in \Lambda_{\mathbf{V}',i}^{0} \cap \Lambda_{\mathbf{V}',i,d}$, $\bigoplus\limits_{h \in H, h'=i} x'_{h}$ is injective and $\dim ({\rm{Im}} \sum \limits_{h \in H, h'=i} \epsilon(h)x'_{\bar{h}} )= \upsilon_{i}'-d $.  Fix  bases of $\mathbf{V}' $ and $\mathbf{V''}$, we denote the matrix of $\bigoplus\limits_{h \in H, h'=i} x'_{h}$ by $X'$, $\sum \limits_{h \in H, h'=i} \epsilon(h)x'_{\bar{h}}$ by $\bar{X}'$ and $\sum \limits_{h \in H, h'=i} \epsilon(h)y_{\bar{h}}$ by $\bar{Y}$ respectively. Then each raw of $\bar{X}',\bar{Y} $ is contained in $\mathbf{K}=\{z | zX' =0 \}$. Since $\bigoplus\limits_{h \in H, h'=i} x'_{h}$ is injective , ${\rm{rank}}(X')= \upsilon'_{i}$, we can see that $$\dim(\mathbf{K})=\sum\limits_{i-j}\upsilon'_{j}-\upsilon'_{i}= -(|\mathbf{V}'|,i)+\upsilon'_{i}. $$ Hence we have $${\rm{rank}}(\left(\begin{matrix}
	\bar{X}' \\
	\bar{Y}
\end{matrix}\right)) \leq -(|\mathbf{V}'|,i)+\upsilon'_{i}. $$

If $c+(|\mathbf{V}'|,i) \leq d$, then $\upsilon_{i}-d \leq \sum\limits_{i-j}\upsilon'_{j}-\upsilon'_{i}$. Let $\mathcal{O}_{1}=f_{0}^{-1}(1) \cap Z' \cap \Lambda_{\mathbf{V}',i}^{0} \cap \Lambda_{\mathbf{V}',i,d}$, then $\mathcal{O}_{1}$ is dense in $Z'$. Let $\mathcal{O}_{2}$ be the set consisting of $x$ such that $x$ is an extension of $x' \in \mathcal{O}_{1}$ and ${\rm{rank}}(\left(\begin{matrix}
	\bar{X}' \\
	\bar{Y}
\end{matrix}\right))={\rm{rank}}(\bar{X}')+{\rm{rank}}(\bar{Y})$. Notice that on each fiber $p^{-1}(x')$, the condition that ${\rm{rank}}(\left(\begin{matrix}
\bar{X}' \\
\bar{Y}
\end{matrix}\right))={\rm{rank}}(\bar{X}')+{\rm{rank}}(\bar{Y})$ is an open condition, we can see that  $\mathcal{O}_{2}$ is dense in $Z$. On the other hand, $f_{0}\ast \mathbf{1}_{i^{(c)}}|_{\mathcal{O}_{2}}=1$ by the construction of $\mathcal{O}_{2}$ and $\mathcal{O}_{2} \subset \Lambda_{\mathbf{V},i,d}$.  Hence $t_{i}([f])=t_{i}(Z)=d$.

If $c+(|\mathbf{V}'|,i) > d$, then on each fiber $p^{-1}(x')$,  the condition that ${\rm{rank}}(\left(\begin{matrix}
	\bar{X}' \\
	\bar{Y}
\end{matrix}\right))=\dim(\mathbf{K})$ becomes an open condition. If we let $\mathcal{O}_{3}$ be the set consisting of $x$ such that $x$ is an extension of $x' \in \mathcal{O}_{1}$ and ${\rm{rank}}(\left(\begin{matrix}
\bar{X}' \\
\bar{Y}
\end{matrix}\right))=\dim(\mathbf{K})$, we can see that $f_{0}\ast \mathbf{1}_{i^{(c)}}|_{\mathcal{O}_{3}}=1$ and $\mathcal{O}_{3} \subset \Lambda_{\mathbf{V},i,c+(|\mathbf{V}'|,i)}$. Hence $t_{i}([f])=c+(|\mathbf{V}'|,i)$.

\end{proof}

Replace $t_{i},t_{i}^{\ast}$ by $t_{i}^{\ast},t_{i}$ respectively, we have the following dual result:
\begin{lemma}
	Assume $[f_{0}] \in \mathcal{V}_{2}$ with $f_{0} \in \mathcal{M}_{\mathbf{V}'}$ and $t_{i}([f_{0}])=0, t_{i}^{\ast}([f_{0}])=c$. Let $[f]=\eta_{i,d}([f_{0}])$, then
			\begin{center}
		$t_{i}^{\ast}([f])= \left\{
		\begin{aligned}
			&	c  & d+(|\mathbf{V}'|,i) \leq c\\
			&	d+(|\mathbf{V}'|,i)  & otherwise
		\end{aligned}
		\right. $
	\end{center} 
\end{lemma}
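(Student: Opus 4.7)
The proof is the dual of Lemma~6.7: replace right multiplication by left multiplication, swap the roles of $t_i$ and $t_i^\ast$, and swap the in-maps with the out-maps at vertex $i$. Concretely, let $\mathbf{V}$ denote the graded vector space on which $[f]$ lives, with $|\mathbf{V}| = |\mathbf{V}'| + di$, and let $Z'$ denote the irreducible component of $\Lambda_{\mathbf{V}'}$ on which $f_0$ generically equals $1$. Apply the diagram of Section~5.1 with the ``top'' (quotient) factor of dimension $di$ concentrated at $i$ and the ``bottom'' (sub) factor equal to $\mathbf{V}'$:
\[ \Lambda_{\mathbf{V}', i, 0} \xleftarrow{p} \Lambda'_{i, d} \xrightarrow{r} \Lambda''_{i, d} \xrightarrow{q} \Lambda_{\mathbf{V}, i, d}. \]
For a generic $x \in Z := \eta_{i,d}(Z')$, the $x$-stable subspace $\mathbf{W} \subset \mathbf{V}$ of dimension $|\mathbf{V}'|$ is unique and $x' := x|_\mathbf{W}$ is a generic point of $Z'$. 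Fix a complement $\mathbf{V}_i = \mathbf{W}_i \oplus K$ with $\dim K = d$. Since $\mathbf{W}$-stability forces all $x_h$ with $h'' = i$ to coincide with $x'_h$ (their image must lie in $\mathbf{W}_i$), the only new degrees of freedom in extending $x'$ to $x$ are the restrictions $a_h := x_h|_K \colon K \to \mathbf{V}'_{h''}$ for $h \in H$ with $h' = i$, which assemble into a single linear map
\[ A \colon K \longrightarrow N := \bigoplus_{h' = i} \mathbf{V}'_{h''}. \]

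Next I would extract the moment-map constraint on $A$. Restricting $(\mu(x))_i = 0$ to the summand $K$ of $\mathbf{V}_i$ (the $\mathbf{W}_i$-part is automatic from $\mu(x') = 0$) reduces to the single equation $\sum_{h'' = i} \epsilon(h)\, x'_h \circ a_{\bar h} = 0$, so $A$ must factor through $M := \Ker\bigl(\sum_{h'' = i} \epsilon(h) x'_h \colon N \to \mathbf{W}_i\bigr)$. The hypothesis $t_i([f_0]) = 0$ makes this map surjective, hence $\dim M = \dim N - \upsilon'_i = \upsilon'_i - (|\mathbf{V}'|, i)$ (using $\dim N = 2\upsilon'_i - (|\mathbf{V}'|, i)$). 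Conversely $\mu(x') = 0$ forces the out-map $B \colon \mathbf{W}_i \to N$ of $x'$ itself to land in $M$, and the hypothesis $t_i^\ast([f_0]) = c$ says $B$ has rank $\upsilon'_i - c$.

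The out-map at $i$ for the full $x$ is therefore the stacked map $B \oplus A \colon \mathbf{W}_i \oplus K \to M$, and for generic $A \in \Hom(K, M)$ its rank equals $\min(\upsilon'_i - c + d,\; \upsilon'_i - (|\mathbf{V}'|, i))$, so its kernel has dimension $c$ if $d + (|\mathbf{V}'|, i) \le c$ and $d + (|\mathbf{V}'|, i)$ otherwise. Translating this rank computation to a dense subset of $Z$ on which $\mathbf{1}_{i^{(d)}} \ast f_0$ generically equals $1$---by picking $\mathcal{O}_1 \subset f_0^{-1}(1) \cap Z' \cap \Lambda_{\mathbf{V}', i, 0}$ dense, forming the dense open of extensions on $\Lambda'_{i, d}$ for which $B \oplus A$ attains the expected rank, and pushing forward through $q'$---exactly as in the last paragraph of the proof of Lemma~6.7, yields $t_i^\ast([f])$ equal to the claimed value. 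The main technical subtlety will be the moment-map bookkeeping: correctly isolating the new data $A$, identifying $M$ as its constraint locus, and verifying $B(\mathbf{W}_i) \subset M$. Once that is in place, the rank calculation is a short linear-algebra counterpart of the one in Lemma~6.7.
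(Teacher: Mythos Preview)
Your argument is correct and is exactly the dualization the paper has in mind: the paper gives no separate proof of this lemma, merely noting ``Replace $t_i, t_i^{\ast}$ by $t_i^{\ast}, t_i$ respectively, we have the following dual result,'' and you have carefully spelled out that duality (swapping sub for quotient, in-maps for out-maps, and redoing the rank count inside $M$). The only slip is a numbering error: the lemma you are dualizing is Lemma~6.8 in the paper's numbering, not Lemma~6.7.
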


The following lemma is parallel to Lemma 4.17:
\begin{lemma}
	Assume $[f_{0}] \in \mathcal{V}_{2}$ with $f_{0} \in \mathcal{M}_{\mathbf{V}'}$ and $t_{i}^{\ast}([f_{0}])=0, t_{i}(f_{0})=d>0$. Take $c \in \mathbb{N}_{>0}$ such that $c+(|\mathbf{V}'|,i) \leq d$ and set $[f]=\eta_{i,c}^{\ast}([f_{0}]) $. Let $[g_{0}]$ be the unique equivalent class such that $i_{+}([g_{0}])=[f_{0}]$. Then  $t_{i}^{\ast}([g_{0}])=0$, hence $\eta_{i,c}^{\ast}([g_{0}])$ is well-defined. Moreover,  we have $$i_{+}\eta_{i,c}^{\ast}([g_{0}])=[f].$$ Or equivalently, we have $$i_{+}(i^{+})^{c}([g_{0}])=(i^{+})^{c}i_{+}(g_{0}). $$ In this case, $$t_{i}^{\ast}(i_{-}([f]))=t_{i}^{\ast}([f])=c.$$
\end{lemma}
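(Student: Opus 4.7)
The proof will proceed in three steps paralleling Lemma 4.17. For \textbf{Step 1} (that $t_{i}^{\ast}([g_{0}])=0$), fix $h_{0}\in\mathcal{M}_{\mathbf{V}''}$ with property $\mathcal{P}(Z'')$, where $Z''\in\mathrm{Irr}\,\Lambda_{\mathbf{V}''}$ is the unique component with $t_{i}(Z'')=0$ and $\eta_{i,d}(Z'')=Z'$; then $Z'_{0}=\eta_{i,d-1}(Z'')$. Applying Lemma 6.9 to $[h_{0}]$, the piecewise formula combined with $t_{i}^{\ast}([f_{0}])=0$ forces both $t_{i}^{\ast}([h_{0}])=0$ and $d+(|\mathbf{V}''|,i)\leq 0$. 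Re-applying Lemma 6.9 with $d-1$ in place of $d$, the strict inequality $(d-1)+(|\mathbf{V}''|,i)<0$ yields $t_{i}^{\ast}([g_{0}])=0$, so $\eta_{i,c}^{\ast}([g_{0}])$ is well-defined. Lemma 6.8 applied to $g_{0}$, using $(|\mathbf{V}_{0}'|,i)=(|\mathbf{V}'|,i)-2$ together with $c+(|\mathbf{V}'|,i)\leq d$, then shows $t_{i}(\eta_{i,c}^{\ast}[g_{0}])=d-1$, so the outer operator $i_{+}$ is also well-defined.

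For \textbf{Step 2} ($i_{+}\eta_{i,c}^{\ast}([g_{0}])=[f]$), I would analyze the auxiliary function $\phi=\mathbf{1}_{i^{(d)}}\ast h_{0}\ast\mathbf{1}_{i^{(c)}}$ along two different factorizations. Applying Lemma 5.6 to $\mathbf{1}_{i^{(d)}}\ast h_{0}$ then Lemma 5.9 to $f_{0}\ast\mathbf{1}_{i^{(c)}}$ gives $\phi=f+R$ with $R$ supported on $\Lambda_{\mathbf{V},i,\geq d+1}\cup\Lambda_{\mathbf{V},i}^{\geq c+1}$, so $\phi$ generically equals $1$ on $Z$. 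Alternatively, using the divided-power identity $\mathbf{1}_{i}\ast\mathbf{1}_{i^{(d-1)}}=d\,\mathbf{1}_{i^{(d)}}$ and associativity,
\begin{equation*}
d\,\phi=\mathbf{1}_{i}\ast\bigl(\mathbf{1}_{i^{(d-1)}}\ast h_{0}\bigr)\ast\mathbf{1}_{i^{(c)}}=\mathbf{1}_{i}\ast\bigl(g_{0}\ast\mathbf{1}_{i^{(c)}}\bigr)+R',
\end{equation*}
where $R'$ collects the $t_{i}$-higher remainders from Lemma 5.6, and $g_{0}\ast\mathbf{1}_{i^{(c)}}=f_{Y}+(t_{i}^{\ast}\text{-higher})$ with $Y=\eta_{i,c}^{\ast}(Z'_{0})$. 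Both $R'$ and the $t_{i}^{\ast}$-higher terms vanish generically on the stratum $Z\cap\Lambda_{\mathbf{V},i,d}\cap\Lambda_{\mathbf{V},i}^{c}$, so the generic value of $\mathbf{1}_{i}\ast f_{Y}$ on this stratum equals $d$. Now letting $\tilde{W}=(\eta_{i,d-1})^{-1}(Y)$ (so $t_{i}(\tilde{W})=0$) and applying Lemma 5.6 once more, $\mathbf{1}_{i}\ast f_{Y}=d\,f_{\eta_{i,d}(\tilde{W})}+(t_{i}\text{-higher})$; the generic value on $Z\cap\Lambda_{\mathbf{V},i,d}$ equals $d\,\rho_{Z}(f_{\eta_{i,d}(\tilde{W})})$. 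Equating the two evaluations forces $\rho_{Z}(f_{\eta_{i,d}(\tilde{W})})=1$, i.e., $\eta_{i,d}(\tilde{W})=Z$, which is precisely $i_{+}([Y])=[Z]$.

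For \textbf{Step 3}, inverting the identity in Step 2 gives $i_{-}([f])=\eta_{i,c}^{\ast}([g_{0}])$, and since $\eta_{i,c}^{\ast}$ sends components with $t_{i}^{\ast}=0$ to those with $t_{i}^{\ast}=c$, we obtain $t_{i}^{\ast}(i_{-}[f])=c=t_{i}^{\ast}([f])$. The main obstacle is Step 2, specifically verifying that each ``higher-$t_{i}$'' and ``higher-$t_{i}^{\ast}$'' correction term vanishes generically on $Z\cap\Lambda_{\mathbf{V},i,d}\cap\Lambda_{\mathbf{V},i}^{c}$. The key structural inputs are that right convolution by $\mathbf{1}_{i^{(c)}}$ cannot decrease the $t_{i}$-codimension (via the fiber analysis of Section 5.1) and that left convolution by $\mathbf{1}_{i}$ preserves the support condition $t_{i}^{\ast}\geq c$ via the constraint $\tilde{\mathbf{W}}_{i}\subseteq\mathrm{Ker}\,B$; these play the structural role analogous to the $v^{-1}\mathcal{L}$-filtration in the proof of Lemma 4.17.
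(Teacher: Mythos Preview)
Your Step 1 and Step 3 match the paper's argument. Step 2, however, takes a genuinely different route, and it has a gap that you yourself flag but do not close.

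The paper does \emph{not} use the divided-power identity $\mathbf{1}_{i}\ast\mathbf{1}_{i^{(d-1)}}=d\,\mathbf{1}_{i^{(d)}}$ or any decomposition of $\phi$ into Lusztig functions $f_{Z''}$. Instead it introduces the auxiliary class $[h]=\eta_{i,c}^{\ast}([h_{0}])$ and proves directly the two identities
\[
\eta_{i,d}([h])=[f],\qquad \eta_{i,d-1}([h])=[g].
\]
Each is obtained by a pure uniqueness argument: from the explicit fibre descriptions in Lemmas 6.8--6.9, both $Z$ and the component underlying $\eta_{i,d}([h])$ contain dense open subsets of $\Lambda_{\mathbf{V},i,d}\cap\Lambda_{\mathbf{V},i}^{c}$ on which the single function $\mathbf{1}_{i^{(d)}}\ast h_{0}\ast\mathbf{1}_{i^{(c)}}$ takes the value $1$; since only one irreducible component of that stratum can have this property, the two components coincide. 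The same argument with $d-1$ gives the second identity, and then $i_{+}([g])=\eta_{i,d}([h])=[f]$ immediately.

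Your approach instead decomposes $\phi$ and $d\phi$ through Lemmas 5.6 and 5.9 and compares coefficients. The difficulty is that at this point in the paper the functions $f_{Z''}$ are not yet known to be unique, and Lemmas 5.6, 5.9 only control \emph{generic values} of the error terms, not their \emph{supports}. Your claim that ``$R$ is supported on $\Lambda_{\mathbf{V},i,\geq d+1}\cup\Lambda_{\mathbf{V},i}^{\geq c+1}$'' does not follow: a function with property $\mathcal{P}(Z'')$ for $t_{i}(Z'')>d$ need not vanish on the lower strata. Likewise, the step ``$\mathbf{1}_{i}\ast f_{Y}=d\,f_{\eta_{i,d}(\tilde W)}+(t_{i}\text{-higher})$'' implicitly uses the adapted-basis statement of Proposition 7.4, whose proof (via Proposition 7.3 and Theorem 7.1) depends on the very lemma you are proving, so invoking it here is circular. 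One \emph{can} rescue the argument by working with supports of the actual convolutions $\mathbf{1}_{i^{(p)}}\ast h_{0}$ and $h_{0}\ast\mathbf{1}_{i^{(c)}}$ (which genuinely lie in the closed strata) rather than with arbitrary representatives $f_{Z''}$---but once you do that, the divided-power trick becomes unnecessary and you are back to the paper's direct comparison of components.
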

\begin{proof}
	We assume that $f_{0}$ has property $\mathcal{P}(Z')$ and $f$ has property $\mathcal{P}(Z)$. By the similar argument in Lemma 4.17, we can prove that $t_{i}^{\ast}([g_{0}])=0$, hence $[g]=\eta_{i,c}^{\ast}([g_{0}])$ is well-defined.

	Let $[h_{0}]$ be the unique equivalent class such that $\eta_{i,d}([h_{0}])=[f_{0}]$ with $h_{0} \in \mathcal{M}_{\tilde{\mathbf{V}}} $ having property $\mathcal{P}(Z_{0})$. Then by Lemma 6.9, we can see that $t_{i}^{\ast}([h_{0}])=0$, otherwise we have $t_{i}^{\ast}([f_{0}])>0$. Hence $[h]=\eta_{i,c}^{\ast}([h_{0}])$ is well-defined. Moreover, with the assumption $c+(|\mathbf{V}'|,i) \leq d$, we can see that $t_{i}([h])=0$ for the similar reason, hence $\eta_{i,d}([h]) $ is defined. 
	
	We claim that $\eta_{i,d}([h])=\eta_{i,c}^{\ast}([f_{0}])=[f]$. We assume that $h$ has property $\tilde{Z}$. Then from the proof of Lemma 6.8 and Lemma 6.9, we can see that  $Z$ contains a dense subset $\mathcal{O}_{1} \subset \Lambda_{\mathbf{V},i,d} \cap \Lambda_{\mathbf{V},i}^{c} \cap Z$ such that $\mathbf{1}_{i^{(d)}} \ast h_{0} \ast \mathbf{1}_{i^{(c)}}|_{\mathcal{O}_{1}}=1 $.  Similarly, $\tilde{Z}$ contains a dense subset $\mathcal{O}_{2} \subset \Lambda_{\mathbf{V},i,d} \cap \Lambda_{\mathbf{V},i}^{c} \cap \tilde{Z}$ such that $\mathbf{1}_{i^{(d)}} \ast h_{0} \ast \mathbf{1}_{i^{(c)}}|_{\mathcal{O}_{2}}=1 $. Notice that there exists a unique irreducible componet  of $\Lambda_{\mathbf{V},i,d} \cap \Lambda_{\mathbf{V},i}^{c}$ such that $\mathbf{1}_{i^{(d)}} \ast h_{0} \ast \mathbf{1}_{i^{(c)}} $ generically takes value $1$ on it. We can see that $Z=\tilde{Z}$ and  $$\eta_{i,d}([h])=\eta_{i,c}^{\ast}([f_{0}])=[f].$$
	
	Replace $[f],[f_{0}]$ by $[g],[g_{0}]$ respectively, for the same reason we can see that $$\eta_{i,d-1}([h])=\eta_{i,c}^{\ast}([g_{0}])=[g],$$ hence we have $$i_{+}([g])=i_{+}\eta_{i,d-1}([h]) =\eta_{i,d}([h])=[f].$$

\end{proof}

The following lemma is parallel to Lemma 4.18:
\begin{lemma}
	Assume $[f_{0}] \in \mathcal{V}_{2}$ with $f_{0} \in \mathcal{M}_{\mathbf{V}'}$ and $t_{i}^{\ast}([f_{0}])=0, t_{i}([f_{0}])=d$. Take $c \in \mathbb{N}_{>0}$ such that $c+(|\mathbf{V}'|,i) > d$ and set $[f]=\eta_{i,c}^{\ast}([f_{0}]) $. Then $$i_{-}([f])=i^{-}([f]).$$ Moreover, we have  $$t_{i}^{\ast}(i_{-}([f]))=t_{i}^{\ast}([f])-1=c-1.$$
\end{lemma}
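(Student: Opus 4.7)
The proof parallels that of Lemma 4.18, but works directly with the key geometric lemmas of Section 5 in place of the Kashiwara-operator machinery on $\mathcal{L}/v^{-1}\mathcal{L}$. Set $d' = c + (|\mathbf{V}'|,i)$ and $k = d' - d > 0$. By Lemma 6.8 we have $t_i([f]) = d'$ and $t_i^*([f]) = c$, and $k > 0$ places us in the second case of that lemma.

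First I compute $t_i$ of $i^-([f])$. Let $[f_1] := i^-([f]) = \eta_{i,c-1}^*([f_0])$. Applying Lemma 6.8 to $[f_0]$ with parameter $c-1$: the hypothesis $c + (|\mathbf{V}'|,i) > d$ forces $(c-1) + (|\mathbf{V}'|,i) \geq d$, so each sub-case of Lemma 6.8 (equality in the first, strict inequality in the second) yields $t_i([f_1]) = d' - 1$. In particular $i_+$ is defined on $[f_1]$.

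It then suffices to prove $i_+([f_1]) = [f]$, for then $[f_1] = i_-([f])$, yielding both the equality $i^-([f]) = i_-([f])$ and the invariant $t_i^*(i_-([f])) = t_i^*([f_1]) = c - 1$. By definition $i_+([f_1]) = \eta_{i,d'}\epsilon_{i,d'-1}([f_1])$, where $\epsilon$ denotes the inverse of $\eta$ on equivalence classes. Writing $[h_0] = \epsilon_{i,d'}([f])$, so that $[f] = \eta_{i,d'}([h_0])$, the claim reduces to the geometric identification $\epsilon_{i,d'-1}([f_1]) = [h_0]$ in $\mathcal{M}_{\mathbf{V}_1'}$, where $|\mathbf{V}_1'| = |\mathbf{V}| - d'i$. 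Let $Z, Z_1, Z_0$ denote the irreducible components corresponding to $[f], [f_1], [f_0]$. By the construction in the proof of Lemma 5.6, a generic $y \in Z$ arises as an extension of a generic $x \in Z_0$ by $c$ copies of the simple at $i$ in the kernel-of-outgoing direction; by Proposition 5.11 and Corollary 5.12 a generic representative of $[h_0]$ is obtained by restricting such a $y$ to its $y$-stable subspace with $i$-component $\mathrm{Im}(\bigoplus_{h''=i} y_h)$. The analogous construction applied to $[f_1]$ uses only $c-1$ copies of the simple at $i$; the ``missing'' copy lies in the cokernel direction of $\bigoplus_{h''=i} y_h$, hence is annihilated when we restrict to the image. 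Therefore both procedures produce the same generic element of $\Lambda_{\mathbf{V}_1', i, 0}$, identifying the two irreducible components. The hypothesis $k > 0$ is precisely what allows this matching: the $k$ excess copies of simple at $i$ beyond the base contribution $d$ of $Z_0$ are exactly those that absorb the decrement $c \to c - 1$ on the kernel side.

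The main obstacle is the generic-point matching in the previous paragraph. It amounts to tracing the commutative diagrams of Propositions 5.11 and 5.14 at a generic point and invoking the preprojective relation at vertex $i$, which links $\mathrm{Im}(\bigoplus_{h''=i} y_h)$ with $\mathrm{Ker}(\bigoplus_{h'=i} y_h)$. This is the constructible-function analog of the identity $E_i^{(d+k-1)} \ast x' \ast E_i^{(c-k)} \equiv E_i^{(d)} \ast x' \ast E_i^{(c-1)} \pmod{v^{-1}\mathcal{L}}$ invoked at the corresponding step of Lemma 4.18; the role of the rigidity of $\mathcal{L}/v^{-1}\mathcal{L}$ is replaced here by the uniqueness of the irreducible component on which a given constructible function generically takes the value $1$.
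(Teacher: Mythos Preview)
Your reduction is sound: showing $i_+([f_1])=[f]$ for $[f_1]=i^-([f])$ is equivalent to the equality of components $\epsilon_{i,d'-1}(Z_1)=\epsilon_{i,d'}(Z)$, and the computation $t_i([f_1])=d'-1$ via Lemma~6.8 is correct in both sub-cases. The gap is in the paragraph justifying the component identification. You assert that ``the missing copy lies in the cokernel direction of $\bigoplus_{h''=i} y_h$, hence is annihilated when we restrict to the image,'' but the extra simple added in passing from $Z_1$ to $Z$ lives in $\mathrm{Ker}\bigoplus_{h'=i} y_h$ (that is how $\eta^\ast$ works), not a priori in the cokernel of the incoming map. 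Turning the preprojective relation into the statement that removing one extra simple on the $\epsilon$ side exactly cancels the one added on the $\eta^\ast$ side is precisely the content that must be proved, and your final paragraph acknowledges this obstacle without resolving it.

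The paper closes this gap by going one step further back. It takes $[h_0]=\epsilon_{i,d}([f_0])$, which has $t_i=t_i^\ast=0$, and sets $[h]=\eta_{i,c-k}^\ast([h_0])$; this $[h]$ coincides with your $[h_0']=\epsilon_{i,d'}([f])$. The key claim $\eta_{i,d'}([h])=[f]$ is then proved by an explicit matrix/rank argument: writing the maps at vertex $i$ as matrices $X,\bar X$, one identifies dense open subsets $\mathcal{O}_1\subset Z$ (coming from the $\eta^\ast$ description) and $\mathcal{O}_2$ (coming from the $\eta$ description of the component of $\eta_{i,d'}([h])$) by the rank conditions $\mathrm{rank}(X)=\upsilon_i-c$ and $\mathrm{rank}(\bar X)=\sum_{j-i}\upsilon_j-\upsilon_i+c$, and shows $\mathcal{O}_1\cap\mathcal{O}_2$ is dense in both, forcing the components to agree. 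The same claim with $c$ replaced by $c-1$ (together with Lemma~6.10 for the boundary case $k=1$) gives $\epsilon_{i,d'-1}([f_1])=[h]$ and finishes. In short, your strategy matches the paper's, but the rank argument on $\mathcal{O}_1\cap\mathcal{O}_2$ is the substance you are missing.
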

\begin{proof}
	Let $k=c+(|\mathbf{V}'|,i)-d>0$.  Let $[h_{0}]$ be the unique equivalent class such that $\eta_{i,d}([h_{0}])=[f_{0}]$ with $h_{0} \in \mathcal{M}_{\tilde{\mathbf{V}}} $ having property $\mathcal{P}(Z_{0})$ and set $[h]=\eta_{i,c-k}^{\ast}([h_{0}])$. We claim that $\eta_{i,d+k}([h])=[f]$.
	
	Indeed, if we assume $f$ has proerty $\mathcal{P}(Z_{1})$ and $f'\in \eta_{i,d+k}([h])$ has property $\mathcal{P}(Z_{2})$, then by the proof of Lemma 6.8, we can see that $Z_{1}$ contains a dense subset $\mathcal{O}_{1}$ consisting of $x$ such that $x$ is an extension of $x'$ given by the following composition
\begin{center}
	$\mathbf{V}'_{i} \xrightarrow{\bigoplus\limits_{h \in H, h'=i} x'_{h}} \bigoplus \limits_{i-j} \mathbf{V}'_{j} \xrightarrow{ \left(\begin{matrix}
			\sum \limits_{h \in H, h'=i} \epsilon(h)x'_{\bar{h}} \\
			\sum \limits_{h \in H, h'=i} \epsilon(h)y_{\bar{h}}
		\end{matrix}\right)  }   \mathbf{V}'_{i}\oplus \mathbf{V}''_{i}$
\end{center}
	with ${\rm{rank}}(\left(\begin{matrix}
		\bar{X}' \\
		\bar{Y}
	\end{matrix}\right))=\sum\limits_{i-j} \upsilon_{j}-\upsilon_{i}+c$. 

Similarly, $Z_{2}$ contains a dense subset $\mathcal{O}_{2}$ consisting of $x$ such that $x$ is an extension of $x''$ given by the following composition
	\begin{center}
	$\mathbf{V}'_{i} \oplus \mathbf{V}''_{i} \xrightarrow{(\bigoplus\limits_{h \in H, h'=i} y_{h}, \bigoplus\limits_{h \in H, h'=i} x''_{h} )} \bigoplus \limits_{i-j} \mathbf{V}''_{j} \xrightarrow{  \sum \limits_{h \in H, h'=i} \epsilon(h)x''_{\bar{h}}}  \mathbf{V}''_{i}$
\end{center}
with ${\rm{rank}}((Y,X''))=\upsilon_{i}-c$. Here $(Y,X'')$ is the matrix of $(\bigoplus\limits_{h \in H, h'=i} y_{h}, \bigoplus\limits_{h \in H, h'=i} x''_{h} ) $ under  fixed bases.

Consider the composition
$\mathbf{V}_{i}  \xrightarrow{\bigoplus\limits_{h \in H, h'=i} x_{h}} \bigoplus \limits_{i-j} \mathbf{V}_{j} \xrightarrow{  \sum \limits_{h \in H, h'=i} \epsilon(h)x_{\bar{h}}}  \mathbf{V}_{i}$ and let $X,\bar{X}$ be the matrix of $\bigoplus\limits_{h \in H, h'=i} x_{h}, \sum\limits_{h \in H, h'=i}\epsilon(h) x_{\bar{h}}  $ under fixed bases respectively. Notice that for a fixed $X_{0}$ with ${\rm{rank}}(X_{0})=\upsilon_{i}-c $, then any $\bar{X}$ such that $\bar{X}X_{0}=0$ has rank $\leq \sum\limits_{i-j} \upsilon_{j}-\upsilon_{i}+c$, so $\{\bar{X}|\bar{X}X_{0}=0, {\rm{rank}}(\bar{X})= \sum\limits_{i-j} \upsilon_{j}-\upsilon_{i}+c \}$ is a dense subset of $\{\bar{X}| \bar{X}X_{0}=0 \}$. Dually, for fixed $\bar{X}_{0}$ with rank $\sum\limits_{i-j} \upsilon_{j}-\upsilon_{i}+c$, $ \{X| \bar{X}_{0}X=0,{\rm{rank}}(X)=\sum\limits_{i-j} \upsilon_{j}-\upsilon_{i}+c\}$ is a dense subset of $\{X| \bar{X}_{0}X=0\}$. We can see that $\mathcal{O}_{1} \cap \mathcal{O}_{2}$ is noempty and dense in $\mathcal{O}_{1},\mathcal{O}_{2}$, so $Z_{1}=Z_{2}$ and $\eta_{i,d+k}([h])=[f]$.
	
	By Lemma 6.10, we have $$\eta_{i,d}\eta_{i,c-k}^{\ast}([h_{0}])=\eta_{i,c-k}^{\ast} \eta_{i,d}([h_{0}]).$$ We denote this equivalent class by $[g]$. By the claim, we can see that $[f]=(i^{+})^{k}([g])= (i_{+})^{k}([g])$. Replace $k$ by $k-1$, we also have $(i^{+})^{k-1}([g])= (i_{+})^{k-1}([g])$, hence $i_{-}([f])=i^{-}([f])$. and $t_{i}^{\ast}(i_{-}([f]))=t_{i}^{\ast}([f])-1=c-1$.
\end{proof}
\section{The isomorphism of the graphs}

In this section, we will construct an isomorphism between the colored graph $\mathcal{G}_{1}$ and $\mathcal{G}_{2}$ by induction. We will denote $t_{i}(\mathcal{F}_{\Omega,\Omega_{i}}(L))$ and $t_{i}^{\ast}(\mathcal{F}_{\Omega,\Omega^{i}}(L))$  by $t_{i}(L)$ and $t_{i}^{\ast}(L)$ respectively  in this section, if there is no ambiguity.

Let $L_{0}$ be the unique simple perverse sheaf on $\mathbf{E}_{\mathbf{V}_{0}}$ with $\mathbf{V}_{0}=0$. By Lemma 7.2 in \cite{MR1088333}, for any $L \in \mathcal{V}_{1}$, there exists a path  $[L_{0}] \xrightarrow{i_{1+}} [L_{1}] \xrightarrow{i_{2+}} \cdots \xrightarrow{i_{l+}} [L_{l}]=[L]$ in $\mathcal{G}_{1}$ and we write $h(L)=l$.  Notice that $h(L)= \sum\limits_{i \in I}\dim \mathbf{V}_{i} $ for $L \in \mathcal{P}_{\mathbf{V}}$, it does not depend on the choice of path.

Similarly, let $\mathbf{1}_{0}$ be the constant function on $\Lambda_{\mathbf{V}_{0}}$ with $\mathbf{V}_{0}=0$. By Corollary 1.6 in \cite{MR1758244}, for any $[f] \in \mathcal{V}_{2}$, there exists a path  $[\mathbf{1}_{0}] \xrightarrow{i_{1+}} [f_{1}] \xrightarrow{i_{2+}} \cdots \xrightarrow{i_{l+}} [f_{l}]=[f]$ in $\mathcal{G}_{2}$ and we write $h([f])=l$. Notice that $h([f])= \sum\limits_{i \in I}\dim \mathbf{V}_{i}$ for $f \in \mathcal{M}_{\mathbf{V}}$, it does not depend on the choice of path. 

Let $\mathcal{V}_{1}^{\leq m}$ be the subset of $\mathcal{V}_{1}$ consisting of those $[L]$ with $h(L) \leq m$ and let $\mathcal{E}_{1}^{\leq m}$ be the subset of $\mathcal{E}_{1}$ consisting of arrows between vertex in $\mathcal{V}_{1}^{\leq m}$:  $$\mathcal{V}_{1}^{\leq m}=\{[L] \in \mathcal{V}_{1}| h(L) \leq m\}$$
$$\mathcal{E}_{1}^{\leq m}=\{  [L] \xrightarrow{i^{+}} [K], [L] \xrightarrow{i_{+}} [K]| i \in I, [K],[L] \in \mathcal{V}^{\leq m}_{1}   \}$$
Then we can see that $\mathcal{G}_{1}^{\leq m}= (\mathcal{V}_{1}^{\leq m}, \mathcal{E}_{1}^{\leq m})$ is a full subgraph of $\mathcal{G}_{1}$.  

Similarly, let $\mathcal{V}_{2}^{\leq m}$ be the subset of $\mathcal{V}_{2}$ consisting of those $[f]$ with $h([f]) \leq m$ and let $\mathcal{E}_{2}^{\leq m}$ be the subset of $\mathcal{E}_{2}$ consisting of arrows between vertices in $\mathcal{V}_{2}^{\leq m}$:
$$\mathcal{V}_{2}^{\leq m}=\{[f] \in \mathcal{V}_{2}| h([f]) \leq m\}$$
$$\mathcal{E}_{2}^{\leq m}=\{  [f] \xrightarrow{i^{+}} [g], [f] \xrightarrow{i_{+}} [g]| i \in I, [f],[g] \in \mathcal{V}^{\leq m}_{2}   \}$$
 Then $\mathcal{G}_{2}^{\leq m}= (\mathcal{V}_{2}^{\leq m}, \mathcal{E}_{2}^{\leq m})$ is a full subgraph of $\mathcal{G}_{2}$.

For any fixed $L \in \mathcal{V}_{1}$, we choose a path  $[L_{0}] \xrightarrow{i_{1+}} [L_{1}] \xrightarrow{i_{2+}} \cdots \xrightarrow{i_{l+}} [L_{l}]=[L]$ in $\mathcal{G}_{1}$ and set $\Phi([L])=i_{l+}i_{(l-1)+}\cdots i_{1+}([\mathbf{1}_{0}])$.
\begin{theorem}
	  $\Phi:\mathcal{G}_{1} \rightarrow \mathcal{G}_{2}$ is well-defined and is an isomorphism of $I \times \mathbb{Z}_{2}$- colored graphs. More precisely, $\Phi([L])$ does not depend on the choice of the path $[L_{0}] \xrightarrow{i_{1+}} [L_{1}] \xrightarrow{i_{2+}} \cdots \xrightarrow{i_{l+}} [L_{l}]=[L]$ for any $[L] \in \mathcal{V}_{1}$. $\Phi: \mathcal{V}_{1} \rightarrow \mathcal{V}_{2}$ is bijective and $\Phi$ commutes with $i_{+}$ and $i^{+}$ for any $i \in I$. Moreover, for any $i \in I$ and $L \in \mathcal{V}_{1}$, we have $t_{i}(L)=t_{i}(\Phi([L])),t_{i}^{\ast}(L)=t_{i}^{\ast}(\Phi([L]))$.
\end{theorem}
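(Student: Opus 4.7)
The plan is to prove the theorem by strong induction on the depth $m = h(L)$, establishing simultaneously at each stage four assertions about the restriction of $\Phi$ to $\mathcal{V}_{1}^{\leq m}$: well-definedness (independence from the chosen $i_{+}$-path), bijectivity onto $\mathcal{V}_{2}^{\leq m}$, preservation of the statistics $t_{i}$ and $t_{i}^{\ast}$ for every $i \in I$, and compatibility with both operators $i_{+}$ and $i^{+}$ on $\mathcal{V}_{1}^{\leq m-1}$. The base case $m = 0$ is immediate since $\Phi([L_{0}]) = [\mathbf{1}_{0}]$ comes from the empty path, and all four claims are vacuous.

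For the inductive step, given $[L]$ with $h(L) = m$, I would pick any $i \in I$ with $t_{i}(L) > 0$ (such $i$ exists by Lemma 7.2 of \cite{MR1088333} since $m > 0$), set $[L'] := i_{-}[L] \in \mathcal{V}_{1}^{\leq m-1}$, and tentatively define $\Phi([L]) := i_{+} \Phi([L'])$; the inductive equality $t_{i}(\Phi([L'])) = t_{i}(L) - 1$ guarantees that the arrow $i_{+}$ is available at $\Phi([L'])$ in $\mathcal{G}_{2}$. The crucial verification is independence from the choice of $i$: whenever $[L] = i_{+}[L_{1}] = j_{+}[L_{2}]$ with $i \neq j$, I must show $i_{+}\Phi([L_{1}]) = j_{+}\Phi([L_{2}])$ in $\mathcal{G}_{2}$. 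The strategy exploits that Sections 4 and 6 have been deliberately arranged to mirror each other: Corollary 4.14 and Corollary 6.6 yield the same commutation $i^{+}j_{+} = j_{+}i^{+}$; Lemmas 4.15--4.16 versus 6.8--6.9 provide identical rules for how $t_{i}$ and $t_{i}^{\ast}$ transform under iterated $i^{+}$; and Lemmas 4.17--4.18 versus 6.10--6.11 describe the interaction between $i_{+}$ and $i^{+}$ by the same formulas. Applying $i^{+}$ and $j^{+}$ appropriately to $[L_{1}]$ and $[L_{2}]$, one reaches a configuration in which the relevant operators commute, producing a common lower predecessor $[K] \in \mathcal{V}_{1}^{\leq m-2}$ routing to both; invoking the inductive commutation hypothesis for $i^{+}$ and $i_{+}$, the same routing is valid in $\mathcal{G}_{2}$ and forces the two candidate values of $\Phi([L])$ to agree.

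Once well-definedness is secured, bijectivity onto $\mathcal{V}_{2}^{\leq m}$ follows by constructing the inverse through the symmetric procedure built from $i_{+}$-paths in $\mathcal{G}_{2}$ (again using the shared relations and the inductive hypothesis); preservation of $t_{i}$ and $t_{i}^{\ast}$ at the new vertex comes from plugging the matching formulas of Lemmas 4.15--4.16 and 6.8--6.9 into the inductive data at $[L']$; and compatibility with $i^{+}$ on the newly added vertices drops out of Corollaries 4.14 and 6.6 combined with the $i_{+}$-path definition. The principal obstacle is the well-definedness step, which unfolds into a case analysis along the dichotomy $c + (|\mathbf{V}'|, i) \leq d$ versus $c + (|\mathbf{V}'|, i) > d$ appearing in Lemmas 4.17--4.18 and 6.10--6.11; what makes it tractable is precisely that the dichotomy, its two outcomes, and their combinatorial consequences are phrased identically on both sides, so every subcase reduces to the same diagram chase carried out simultaneously in $\mathcal{G}_{1}$ and $\mathcal{G}_{2}$.
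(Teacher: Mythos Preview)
Your proposal is correct and takes essentially the same approach as the paper: strong induction on $h(L)$ carrying the same four claims simultaneously, with well-definedness established by descending via the starred operators to a common predecessor and invoking the mirrored commutation relations of Sections~4 and~6. The paper's organization of the well-definedness step is slightly different---its primary case split is on whether there exists a third vertex $j\notin\{i_n,i'_n\}$ with $t_j^{\ast}(L)>0$ (handled purely via Corollaries~4.12, 4.14 and their mirrors 6.5, 6.7), and the dichotomy $c+(|\mathbf{V}'|,i)\leq d$ versus $>d$ from Lemmas~4.17--4.18 and 6.10--6.11 enters only in the residual case where the only available $j$ coincides with one of the two last arrows---but the substance of the argument is the same.
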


\begin{proof}
	We only need to show the restriction $\Phi: \mathcal{G}_{1}^{\leq m} \rightarrow \mathcal{G}_{2} ^{\leq m}$ is an isomorphism of colored graphs for any $m$. When $m=1$, the theorem trivially holds.
	Now we assume that $\Phi([L])$ is well-defined for all $L$ with $h(L) \leq n-1$ and $\Phi: \mathcal{G}_{1}^{\leq n-1} \rightarrow \mathcal{G}_{2} ^{\leq n-1}$ is an isomorphism of $\mathbb{Z}_{2}\times I$-colored graphs which preserves $t_{i}$ and $t_{i}^{\ast}$. We need to show the statement also holds for $n$ under the inductive assumption. We divide our proof into four parts: Firstly, we prove that $\Phi([L])$ does not depend on the choice of path for $h(L)=n$, then $\Phi: \mathcal{V}_{1}^{\leq n} \rightarrow \mathcal{V}_{2}^{\leq n}$ is well-defined. Next we prove that $\Phi: \mathcal{V}_{1}^{\leq n} \rightarrow \mathcal{V}_{2}^{\leq n}$ is bijective. Then we prove that $\Phi$ commutes with arrows in $\mathcal{E}_{1}^{\leq n}$ and $\mathcal{E}_{2}^{\leq n}$. Finally, we prove that $t_{i}(L)=t_{i}(\Phi([L]))$ and $t_{i}^{\ast}(L)=t_{i}^{\ast}(\Phi([L]))$ for any $L$ with $h(L)=n$.\\

	\textbf{Step one}: We prove that $\Phi([L])$ is well-defined for $h(L)=n$. Fix a simple perverse sheaf $L$ with $h(L)=n$ and choose different paths $[L_{0}] \xrightarrow{i_{1+}} [L_{1}] \xrightarrow{i_{2+}}  \cdots \xrightarrow{i_{n+}} [L_{n}]=[L]$ and $[L_{0}] \xrightarrow{i'_{1+}} [L'_{1}] \xrightarrow{i'_{2+}}  \cdots \xrightarrow{i'_{n+}} [L'_{n}]=[L]$, we need to show that $i_{l+}i_{(l-1)+}\cdots i_{1+}([\mathbf{1}_{0}])=i'_{l+}i'_{(l-1)+}\cdots i'_{1+}([\mathbf{1}_{0}])$. We denote the left hand side by $[f_{1}]$ and the right hand side by $[f_{2}]$. By the induction hyphothesis, $[g_{1}]=\Phi([L_{n-1}])=i_{(l-1)+}\cdots i_{1+}([\mathbf{1}_{0}]) $ and $[g_{2}]=\Phi([L'_{n-1}])=i'_{(l-1)+}\cdots i'_{1+}([\mathbf{1}_{0}]) $ are well-defined and we have $[f_{1}]=i_{n+}([g_{1}]), [f_{2}]=i'_{n+}([g_{2}])$.\\
	 
	case(1): If $i_{n}=i'_{n}=i$, notice that $[L'_{n-1}]=i_{-}([L]) =[L_{n-1}]$ and $[g_{1}]=[g_{2}]=\Phi(i_{-}([L]))$ is well-defined, then we have $[f_{1}]=[g_{1}]=i_{n+}(\Phi(i_{-}([L])))$ in this case.\\
	
	case(2):  If $i_{n} \neq i'_{n}$ and there exists $j \in I\backslash \{i_{n},i'_{n} \}$ such that $t_{j}^{\ast}([L]) > 0$. Consider $[K]=j^{-}([L])$, then $[K] \in \mathcal{V}_{1} ^{\leq n-1}$. By the induction hyphothesis, $\Phi(K)$ is well-defined. Since $t_{j}^{\ast}(L)>0$, then by Corollary 4.12, we have $t_{j}^{\ast}(L_{n-1})=t_{j}^{\ast}(i_{n-}(L))>0$. Hence we can take $K_{1}=j^{-}(L_{n-1}) \in \mathcal{V}_{1}^{\leq n-2}$. Similarly, we can take $K_{2}=j^{-}(L'_{n-1})\in \mathcal{V}_{1}^{\leq n-2}$. By the induction hyphothesis, $[h]=\Phi([K]), [h_{1}]=\Phi([K_{1}])$ and $[h_{2}]=\Phi([K_{2}])$ are well-defined and $[g_{1}]=j^{+} ([h_{1}]), [g_{2}]=j^{+}([h_{2}])$. Since $j \neq i_{n}, i'_{n}$, by Corollary 4.14 $j^{+}$ commutes with $i_{n+}$ and $i'_{n+}$ in $\mathcal{G}_{1}$, so $K=i_{n+}(K_{1})=i'_{n+}(K_{2})$. By the indcution hyphothesis, we have $[h]=i_{n+}([h_{1}])=i'_{n+}([h_{2}])$. By Corollary 6.7 $j^{+}$ also commutes with $i_{n+}$ and $i'_{n+}$ in $\mathcal{G}_{2}$, we have $$[f_{1}]=i_{n+}([g_{1}])=i_{n+}j^{+}([h_{1}])=j^{+}i_{n+}([h_{1}])=j^{+}([h])$$ $$[f_{2}]= i'_{n+}([g_{2}])=i'_{n+}j^{+}([h_{2}])=j^{+}i'_{n+}([h_{2}])=j^{+}([h]).$$ Hence in this case $[f_{1}]=[f_{2}]$.
	
	\[
	\xymatrix{
		 & L\ar[dl]_{i_{n-}} \ar[d]_{j^{-}} \ar[dr]^{i'_{n-}}& \\
		L_{n-1} \ar[d]_{j^{-}} & K  \ar[dl]_{i_{n-}} \ar[dr]^{i'_{n-}}  & L'_{n-1} \ar[d]^{j^{-}} \\
		K_{1} &  & K_{2}	
	}
	\]

\[
	\xymatrix{
		& [f_{1}]=[f_{2}]  & \\
		[g_{1}] \ar[ur]^{i_{n+}} & [h] \ar[u]_{j^{+}}     & [g_{2}] \ar[ul]_{i'_{n+}} \\
		[h_{1}] \ar[ur]^{i_{n+}} \ar[u]^{j^{+}} &  & [h_{2}]	\ar[u]_{j^{+}} \ar[ul]_{i'_{n+}}
	}
	\]

	case(3): If $i_{n} \neq i'_{n}$ and there only exists $j = i_{n}$ or $i'_{n} \in I$ such that $t_{j}^{\ast}(L)=c > 0$ (the other $t_{j}^{\ast}(L)=0$). Without loss of generality, we can assume $j=i_{n}$. Consier $K=(j^{-})^{c}(L)\in \mathcal{V}_{1}^{\leq n-1}$, then $\Phi([K])=[h]$ is well-defined. Notice that by Lemma 4.17 and Lemma 4.18, 
			\begin{center}
		$ \left\{
		\begin{aligned}
			&(j^{-})^{c}j_{-}(L)=j_{-}(j^{-})^{c}(L)	  & if~ t_{j}^{\ast}(L_{n-1})=c  \\
			&	(j^{-})^{c-1}j_{-}(L)=(j^{-})^{c}(L) & if~t_{j}^{\ast}(L_{n-1})=c-1 
		\end{aligned}
		\right. $
	\end{center} 
	
	If the conidtions in Lemma 4.17 holds and $t_{j}^{\ast}(L_{n-1})=c$, we can take $K_{1}= j_{-}(K),K_{2}=i'_{n-}(K)$ and set $[h_{1}]= \Phi([K_{1}]),[h_{2}]=\Phi([K_{2}])$. Then $$[f_{1}]=j_{+}([g_{1}])=j_{+} \Phi ( [L_{n-1}])=j_{+}\Phi ( (j^{+})^{c}([K_{1}]))=j_{+}(j^{+})^{c}([h_{1}]).$$ Notice that in this case, the conditions in Lemma 6.10 holds, so in $\mathcal{G}_{2}$ we have $$j_{+}(j^{+})^{c}([h_{1}])=(j^{+})^{c}j_{+}([h_{1}]).$$ Hence $[f_{1}]=(j^{+})^{c}([h])$ holds. On the other hand, since $j \neq i'_{n}$, we have $j^{+}$ commutes with $i'_{n+}$. By the same argument as in case (2), we can prove $[f_{2}]=(j^{+})^{c}([h])$. Then $[f_{1}]=[f_{2}]$ in this case. 

		\[
	\xymatrix{
		& L\ar[dl]_{j_{-}} \ar[d]_{(j^{-})^{c}} \ar[dr]^{i'_{n-}}& \\
		L_{n-1} \ar[d]_{(j^{-})^{c}} & K  \ar[dl]_{j_{-}} \ar[dr]^{i'_{n-}}  & L'_{n-1} \ar[d]^{(j^{-})^{c}} \\
		K_{1} &  & K_{2}	
	}
	\]

	\[
	\xymatrix{
		& [f_{1}]=[f_{2}]  & \\
		[g_{1}] \ar[ur]^{j_{+}} & [h] \ar[u]_{(j^{+})^{c}}     & [g_{2}] \ar[ul]_{i'_{n+}} \\
		[h_{1}] \ar[ur]^{j_{+}} \ar[u]^{(j^{+})^{c}} &  & [h_{2}]	\ar[u]_{(j^{+})^{c}} \ar[ul]_{i'_{n+}}
	}
	\]

	If the conditions in Lemma 4.18 holds and $t_{j}^{\ast}(L_{n-1})=c-1$, we take $K_{1}=K$ and take $K_{2}$ as before. Then $(j^{+})^{c-1}(K)=L_{n-1}$. We have $$[f_{1}]=j_{+}([g_{1}])=j_{+} \Phi ( L_{n-1})=j_{+}\Phi ( (j^{+})^{c-1}(K_{1})).$$ Notice that in this case the conditions in Lemma 6.11 holds and in $\mathcal{G}_{2}$ we have $$j_{+}(j^{+})^{c-1}([h])=(j^{+})^{c}([h]).$$ Hence $[f_{1}]= (j^{+})^{c}([h])$. On the other hand, $[f_{2}]= (j^{+})^{c}([h])$ for the same reason as in the case (2). Hence $[f_{1}]=[f_{2}]$ in this case.
	
			\[
	\xymatrix{
		& L\ar[dl]_{j_{-}} \ar[d]_{(j^{-})^{c}} \ar[dr]^{i'_{n-}}& \\
		L_{n-1} \ar[d]_{(j^{-})^{c-1}} & K  \ar@2{-}[dl] \ar[dr]^{i'_{n-}}  & L'_{n-1} \ar[d]^{(j^{-})^{c}} \\
		K_{1} &  & K_{2}	
	}
	\]

	\[
	\xymatrix{
		& [f_{1}]=[f_{2}]  & \\
		[g_{1}] \ar[ur]^{j_{+}} & [h] \ar[u]_{(j^{+})^{c}}     & [g_{2}] \ar[ul]_{i'_{n+}} \\
		[h_{1}] \ar@2{-}[ur] \ar[u]^{(j^{+})^{c-1}} &  & [h_{2}]	\ar[u]_{(j^{+})^{c}} \ar[ul]_{i'_{n+}}
	}
	\]
	
	Since in any cases we always have $[f_{1}]=[f_{2}]$,  $\Phi([L])$ is independent of the choice of the path $[L_{0}] \xrightarrow{i_{1+}} [L_{1}] \xrightarrow{i_{2+}}  \cdots \xrightarrow{i_{n+}} [L_{n}]=[L]$ and $\Phi:\mathcal{V}_{1}^{\leq n} \rightarrow \mathcal{V}_{2}^{\leq n}$ is well-defined.\\

	\textbf{Step two}: We prove  that $\Phi:\mathcal{V}_{1}^{\leq n} \rightarrow \mathcal{V}_{2}^{\leq n}$ is bijective. Indeed, we can construct a well-defined map $\Psi: \mathcal{G}_{2}^{\leq n} \rightarrow \mathcal{G}_{1}^{\leq n}$ in a similar way. 
	
	If $\Phi:\mathcal{V}_{1}^{\leq n} \rightarrow \mathcal{V}_{2}^{\leq n}$ is not injective, we can find simple perverse sheaves $L \neq K$ and their paths $[L_{0}] \xrightarrow{i_{1+}} [L_{1}] \xrightarrow{i_{2+}}  \cdots \xrightarrow{i_{n+}} [L_{n}]=[L]$  and $[L_{0}] \xrightarrow{j_{1+}} [K_{1}] \xrightarrow{j_{2+}}  \cdots \xrightarrow{j_{n+}} [K_{n}]=[K]$ such that $[f]=i_{n+}i_{(n-1)+} \cdots i_{1+}([\mathbf{1}_{0}])= j_{n+}j_{(n-1)+} \cdots j_{1+}([\mathbf{1}_{0}])$. Then by construction of $\Psi$, we can choose paths $[\mathbf{1}_{0}]\xrightarrow{i_{1+}} [f_{1}] \xrightarrow{i_{2+}}  \cdots \xrightarrow{i_{n+}} [f]$ and $[\mathbf{1}_{0}]\xrightarrow{j_{1+}} [g_{1}] \xrightarrow{j_{2+}}  \cdots \xrightarrow{j_{n+}} [f]$ for $[f]$. Then $\Psi([f])=[K]=[L]$, which is a contradiction. 
	
	For the same reason, $\Psi$ is injective and inverse to $\Phi$. Hence $\Phi:\mathcal{V}_{1}^{\leq n} \rightarrow \mathcal{V}_{2}^{\leq n}$ is bijective.\\
	
	 \textbf{Step three}: Now we show that $\Phi:\mathcal{G}_{1}^{\leq n} \rightarrow \mathcal{G}_{2} ^{\leq n}$ commutes with arrows $i_{+}$ and $i^{+}$ for $i \in I$. It suffices to show $\Phi$ commutes with arrows $[K]\xrightarrow{i_{+}} [L]$ and $[K]\xrightarrow{i^{+}} [L]$ with $i\in I,h(L)=n$.
	 
	 For any $[L]=i_{+}([K])$ with $h(L)=n$, we need to prove $\Phi([L])=i_{+}(\Phi([K])  )$. We choose a path $[L_{0}] \xrightarrow{j_{1+}} [K_{1}] \xrightarrow{j_{2+}}  \cdots \xrightarrow{j_{(n-1)+}} [K_{n-1}]=[K]$ of $K$, then $[L_{0}] \xrightarrow{j_{1+}} [K_{1}] \xrightarrow{j_{2+}}  \cdots \xrightarrow{j_{(n-1)+}} [K_{n-1}] \xrightarrow{i_{+}} [L]$ is a path of $L$. Since $\Phi$ does not depned on the choice of path, we can see that $\Phi([L])=i_{+}j_{(n-1)+}\cdots j_{1+} [\mathbf{1}_{0}]= i_{+} (\Phi([K])) $. Hence $\Phi$ commutes with $i_{+}, i \in I$.  
	 
	 For $[L]=i^{+}([K])$, we prove $\Phi([L])=i^{+}(\Phi([K]))$ case by case. Take $j \in I$ such that $t_{j}(L)>0$. 
	 
	 case(1): If $j \neq i$, then $i^{+}$ commutes with $j_{+}$. Let $K'=j_{-}(K)$, then $j_{+}(\Phi([K']))=\Phi([K])$. We have $$i^{+}\Phi([K])=i^{+}j_{+}(\Phi([K']))=j_{+}i^{+}(\Phi([K'])).$$ Notice that $h(K')=n-2$,  $i^{+}(\Phi ([K']))=\Phi(i^{+}([K']))$ by the induction hyphothesis, so we have $$i^{+}\Phi([K])=j_{+}\Phi (i^{+}([K']))=\Phi(j_{+}i^{+}([K']))=\Phi([L]).$$ 
	 
	 case(2): If $j=i$ and $t_{i}^{\ast}(L)=c>0$, we consider $K_{0}=(i^{-})^{c}(L)$, then $K_{0}=(i^{-})^{c-1}(K)$. It suffices to show that $\Phi([L])=(i^{+})^{c}(\Phi([K_{0}]))$. Take $K'=i_{-}(L)$.
	 
	 If the conditions in Lemma 4.17 and Lemma 6.10 holds for $i$ and $L$, then in this case $t_{i}^{\ast}(K')=c$. Let $K''=(i^{-})^{c}(K')$, then by Lemma 4.17, we can see that $i_{+}(K')=i_{+}(i^{+})^{c}(K'')$ and $K_{0}=i_{+}(K'')$. By a similar argument as in the case $i \neq j$, we have $$(i^{+})^{c}(\Phi([K_{0}]))=(i^{+})^{c}i_{+}\Phi([K''])=i_{+}(i^{+})^{c}\Phi([K''])=i_{+}\Phi([K'])=\Phi([L]).$$ 
     \[
	 \xymatrix{
	 	& L\ar[dl]_{(i^{-})^{c}} \ar[d]_{i_{-}}  \\
	 	K_{0} \ar[d]^{i_{-}}  & K'  \ar[dl]^{(i^{-})^{c}}   \\
	 	K'' &  	
	 }
	 \]
	 \[
	 \xymatrix{
	 	& \Phi([L])  \\
	 	\Phi([K_{0}]) \ar[ur]^{(i^{+})^{c}}  & \Phi([K'])  \ar[u]_{i_{+}}   \\
	 	\Phi([K'']) \ar[ur]_{(i^{+})^{c}}  \ar[u]^{i_{+}}&  	
	 }
	 \]

	 If the conditions in Lemma 4.18 and Lemma 6.11 holds for $i$ and $L$, then we have $t_{i}^{\ast}(K')=c-1$ and $K_{0}=(i^{-})^{c-1}(K')$. In this case $K'=K, i^{+}(K)=i_{+}(K)=L$, then $i^{+}\Phi([K])=i_{+}\Phi([K'])=\Phi([L])$. Hence $\Phi$ commutes with $i^{+}, i \in I$ in any case.\\
	 
	\textbf{Step four}: We prove that $t_{i}(L)=t_{i}(\Phi([L]))$ and $t_{i}^{\ast}(L)=t_{i}^{\ast}(\Phi([L]))$ for $h(L)=n$.
	
	Notice that for any $i,j \in I$ and $L \in \mathcal{P}_{\mathbf{V}}$, $t_{i}(j^{+}(L))$ is uniquely determined by $t_{i}(L)$ and the dimension vector $|\mathbf{V}|$ by Lemma 4.11 and Lemma 4.15, and  $t_{i}(j^{+}(\Phi([L])))$ is also determined by $t_{i}(\Phi([L]))$ and the dimension vector $|\mathbf{V}|$ in the same way by Lemma 6.4 and Lemma 6.8. Since $\Phi$ commutes with $j^{+}$, $\Phi$ preserves the value of $t_{i}$. Replace $j^{+}$ by $j_{+}$, we can see that $\Phi$ also preserves the value of $t_{i}^{\ast}$.
	
\end{proof}

\begin{remark}
	We can naturally identify the set $\mathcal{V}_{2}$ with the set $\bigcup \limits_{\mathbf{V}} Irr \Lambda_{\mathbf{V}}$. Then the arrows $i_{+},i_{-},i^{+},i^{-}$ in $\mathcal{G}_{2}$ coincide with the abstract crystal operator $\tilde{f}_{i},\tilde{e}_{i},\tilde{f}_{i}^{\ast},\tilde{e}_{i}^{\ast}$ on $\bigcup \limits_{\mathbf{V}} Irr \Lambda_{\mathbf{V}}$ respectively. 
	The above theorem is equivalent to the following result in \cite{MR1458969} : $\bigcup \limits_{\mathbf{V}} Irr \Lambda_{\mathbf{V}}$ has an abstract crystal structure and is isomorphic to $B(\infty)$. Our proof does not rely on the abstract crystal theory.(In particular, we do not need the embedding theorem and the criterion of $B(\infty)$).
\end{remark}

As an corollary, we have the following proposition as in \cite{MR1758244}:
\begin{proposition}
	For any irreducible component $Z$, there exists a unique $f_{Z} \in \mathcal{M}$ such that $f_{Z}$ has property $\mathcal{P}(Z)$. Moreover, the set $\{f_{Z}|Z$ is an irreducible component of $\Lambda_{\mathbf{V}}$ for some $\mathbf{V}\}$ is a $\mathbb{Q}$-basis of $\mathcal{M}$ and a $\mathbb{Z}$-basis of $_{\mathbb{Z}}\mathcal{M}$. We call $\{f_{Z}|Z \in \Lambda_{\mathbf{V}}$ for some $\Lambda_{\mathbf{V}}\}$ the semicanonical basis of $\mathcal{M}$.
\end{proposition}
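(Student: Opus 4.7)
The plan is to combine the existence construction of Lemma 5.4, the graph isomorphism of Theorem 7.1, and Lusztig's dimension count for the canonical basis into a sandwich of linear inequalities that collapses to equality, forcing Lusztig's functions $f_{Z}$ to be a basis.

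First, I would package the generic-value assignment as a linear functional. For each irreducible component $Z \subset \Lambda_{\mathbf{V}}$, define $\mathrm{ev}_{Z} \colon \mathcal{M}_{\mathbf{V}} \rightarrow \mathbb{Q}$ by $\mathrm{ev}_{Z}(f) = \rho_{Z}(f)$. This is well-defined because a constructible function takes only finitely many values on $Z$; it is linear because each $f^{-1}(c) \cap Z$ is constructible in $Z$, so the generic-value locus contains an open dense subset of $Z$, and the intersection of two open dense subsets of an irreducible variety remains open dense. The defining property of $f_{Z}$ then reads $\mathrm{ev}_{Z'}(f_{Z}) = \delta_{Z,Z'}$, so any family $\{f_{Z} \mid Z \in Irr\,\Lambda_{\mathbf{V}}\}$ produced by Lemma 5.4 is automatically $\mathbb{Q}$-linearly independent in $\mathcal{M}_{\mathbf{V}}$, giving $\dim_{\mathbb{Q}} \mathcal{M}_{\mathbf{V}} \geq |Irr\,\Lambda_{\mathbf{V}}|$.

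Next, I would bound $\dim_{\mathbb{Q}} \mathcal{M}_{\mathbf{V}}$ from above using the surjection $\Psi \colon \mathbf{U}^{+} \twoheadrightarrow \mathcal{M}$ recorded before Theorem 2.5, which yields $\dim_{\mathbb{Q}} \mathcal{M}_{\mathbf{V}} \leq \dim_{\mathbb{Q}} \mathbf{U}^{+}(\mathfrak{g})_{|\mathbf{V}|}$. Theorem 7.1 furnishes a bijection $\Phi \colon \mathcal{V}_{1} \rightarrow \mathcal{V}_{2}$ preserving the underlying dimension vector, and under the identification of $\mathcal{V}_{2}$ with $\bigcup_{\mathbf{V}} Irr\,\Lambda_{\mathbf{V}}$ this gives $|Irr\,\Lambda_{\mathbf{V}}| = |\mathcal{P}_{\mathbf{V},\Omega}|$. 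Lusztig's categorification \cite{MR1088333} identifies $|\mathcal{P}_{\mathbf{V},\Omega}|$ with $\dim_{\mathbb{Q}(v)} \mathbf{U}^{+}_{v}(\mathfrak{g})_{|\mathbf{V}|}$, which at $v = 1$ equals $\dim_{\mathbb{Q}} \mathbf{U}^{+}(\mathfrak{g})_{|\mathbf{V}|}$. Chaining the inequalities,
\begin{equation*}
|Irr\,\Lambda_{\mathbf{V}}| \;\leq\; \dim_{\mathbb{Q}} \mathcal{M}_{\mathbf{V}} \;\leq\; \dim_{\mathbb{Q}} \mathbf{U}^{+}(\mathfrak{g})_{|\mathbf{V}|} \;=\; |\mathcal{P}_{\mathbf{V},\Omega}| \;=\; |Irr\,\Lambda_{\mathbf{V}}|,
\end{equation*}
so equality holds throughout; $\Psi$ is then an isomorphism and $\{f_{Z}\}_{Z}$ is a $\mathbb{Q}$-basis of $\mathcal{M}_{\mathbf{V}}$.

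Uniqueness follows immediately: if $f$ and $f'$ both have property $\mathcal{P}(Z)$, then $f - f'$ lies in the common kernel of all $\mathrm{ev}_{Z'}$, which is zero since these functionals are dual to the basis $\{f_{Z'}\}$. For the integral statement, I would unwind the recursive construction in Lemma 5.4: $f_{Z}$ arises from $\mathbf{1}_{i^{p}} \ast f_{Z''}$ by subtracting an integer combination of previously built $f_{Z''}$'s (the scalars $\rho_{Z''}(\tilde{f})$ are integers because $\tilde{f}$ is $\mathbb{Z}$-valued on $\Lambda_{\mathbf{V}}$), so induction on $|\mathbf{V}|$ keeps every $f_{Z}$ inside $_{\mathbb{Z}}\mathcal{M}$, and the dimension equality above promotes linear independence to a $\mathbb{Z}$-basis. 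The conceptual difficulty is not in this proposition but in the preceding Theorem 7.1 on which it rests; granted that $\mathbb{Z}_{2} \times I$-colored graph isomorphism, the present proof is essentially a dimension squeeze whose only delicate point is the $\mathbb{Q}$-linearity of generic evaluation in the constructible setting.
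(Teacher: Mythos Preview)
Your proof is correct and follows essentially the same route as the paper's: linear independence of the $f_{Z}$ via generic-value functionals, the dimension squeeze $|Irr\,\Lambda_{\mathbf{V}}| \leq \dim_{\mathbb{Q}} \mathcal{M}_{\mathbf{V}} \leq \dim_{\mathbb{Q}} \mathbf{U}^{+}_{|\mathbf{V}|} = |\mathcal{P}_{\mathbf{V}}| = |Irr\,\Lambda_{\mathbf{V}}|$ using Theorem 7.1 and Lusztig's categorification, and uniqueness by expanding in the resulting basis. The only cosmetic difference is that the paper invokes Theorem 2.5 to get $\dim_{\mathbb{Q}} \mathcal{M}_{\mathbf{V}} = \dim_{\mathbb{Q}} \mathbf{U}^{+}_{|\mathbf{V}|}$ directly rather than the inequality via the surjection $\Psi$, but Theorem 2.5 itself is proved from that surjection, so the arguments coincide.
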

\begin{proof}
	By Lemma 5.4, we can choose an $f_{Z}$ for each $Z$ and consider the $\mathbb{Q}$-vector space $\mathbf{S}=span\{f_{Z}| f_{Z}$ is chosen for $Z \}$, then it is easy to see that these $f_{Z}$ are linear independent and form a basis of $\mathbf{S}$. Let $\mathbf{S}_{\mathbf{V}}=\mathbf{S} \cap \mathcal{M}_{\mathbf{V}}$.  For any  $\mathbf{V}$, we can see that $\dim \mathbf{S}_{\mathbf{V}}=|\{f_{Z}|Z \in Irr\Lambda_{\mathbf{V}} \}|=|\{[f]\in \mathcal{V}_{2}| f \in \mathcal{M}_{\mathbf{V}} \}=|\mathcal{P}_{\mathbf{V}}|=\dim \mathbf{U}^{+}_{q}(\mathfrak{g})$. On the other hand, by Theorem 2.5, we have $\dim \mathcal{M}_{\mathbf{V}}=\dim \mathcal{M}_{\mathbf{V},\Omega}= \dim \mathbf{U}^{+}$. Hence $\mathcal{S}_{\mathbf{V}}=\mathcal{M}_{\mathbf{V}}$ and those chosen $f_{Z}$ form a $\mathbb{Q}$-basis of $\mathcal{M}$. By the construction of $f_{Z}$, it is also a $\mathbb{Z}$-basis of $_{\mathbb{Z}}\mathcal{M}$. 
	To see the uniqueness of $f_{Z}$, we assume that one can choose another $f'_{Z} \in \mathcal{P}$ having property $\mathcal{P}_{Z}$. Express $f'_{Z}$ as a linear combination of the basis $\{f_{Z}|Z \in Irr\Lambda_{\mathbf{V}}\}$ by $f'_{Z}=\sum \limits_{Z'} c_{Z'}f_{Z'}$, then the right hand side generically takes value $c_{Z'}$ on $Z'$. It implies that $c_{Z}=1$ and $c_{Z'}=0$ for the other $Z'$, hence $f'_{Z}=f_{Z}$ is unique. 
\end{proof}

\begin{remark}
	If we admit that $\{f_{Z} \vert Z \in Irr \Lambda_{\mathbf{V}}\}$ is a basis of $\mathbf{U}^{+}$, then it is a basis of canonical type in the sense of \cite{baumann2011canonical}. It has been proved in \cite{baumann2011canonical} that there is an abstract crystal basis structure arising from a basis of canonical type, which is isomorphic to $B(\infty)$.
\end{remark}

By the similar argument as Proposition 3.5, we can prove that the semicanonical basis $\{f_{Z}|Z \in \Lambda_{\mathbf{V}}$ for some $\Lambda_{\mathbf{V}}\}$ is adapted:
\begin{proposition}[\cite{MR1758244} 2.9] We have:\\
	(1)  $\{f_{Z}|t_{i}(Z) \geq d,  Z \in \Lambda_{\mathbf{V}}$ for some $\Lambda_{\mathbf{V}}\} $ form a $\mathbb{Q}$-basis of $\mathbf{1}_{i^{(d)}}\ast \mathcal{M}$ and a $\mathbb{Z}$-basis of $\mathbf{1}_{i^{(d)}}\ast {_{\mathbb{Z}}\mathcal{M}}$. \\
	(2) If $Z'$ is an irreducible component such that $t_{i}(Z')=0$ and $f_{Z}=\eta_{i,d}(f_{Z'})$, then $f_{Z}-\mathbf{1}_{i^{(d)}}\ast f_{Z'} \in \mathbf{1}_{i^{(d+1)}}\ast \mathcal{M} $.\\
	(3) $\{f_{Z}|t_{i}^{\ast}(Z) \geq d,  Z \in \Lambda_{\mathbf{V}}$ for some $\Lambda_{\mathbf{V}}\} $ form a $\mathbb{Q}$-basis of $\mathcal{M}\ast\mathbf{1}_{i^{(d)}}$ and a $\mathbb{Z}$-basis of $_{\mathbb{Z}}\mathcal{M}\ast\mathbf{1}_{i^{(d)}}$.\\
	(4) If $Z'$ is an irreducible component such that $t_{i}^{\ast}(Z')=0$ and $f_{Z}=\eta_{i,d}^{\ast}(f_{Z'})$, then $f_{Z}-f_{Z'}\ast \mathbf{1}_{i^{(d)}} \in \mathcal{M}\ast\mathbf{1}_{i^{(d+1)}} $.
\end{proposition}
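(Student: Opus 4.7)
The proposition has a clean left/right symmetry: (1) and (2) concern left convolution by $\mathbf{1}_{i^{(d)}}$, while (3) and (4) are their mirrors for right convolution, and the two halves translate into each other by swapping Lemma 5.6 with Lemma 5.9, $t_i$ with $t_i^{\ast}$, and $\eta_{i,p}$ with $\eta_{i,p}^{\ast}$. I would therefore prove (1) and (2) in detail and deduce (3) and (4) by transporting the argument through this duality. The backbone is the triangular identity supplied by Lemma 5.6,
$$\mathbf{1}_{i^{(d)}} \ast f_{Z'} \;=\; f_{\eta_{i,d}(Z')} \;+\; \sum_{t_i(Z'') > d} c_{Z''}\, f_{Z''},\qquad t_i(Z') = 0,$$
with $c_{Z''} \in \mathbb{Z}$, combined with Proposition 7.2, which gives that $\{f_Z\}$ is a $\mathbb{Z}$-basis of $_{\mathbb{Z}}\mathcal{M}$.

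I would prove (1) first, after which (2) is nearly immediate: rearranging the identity above produces $f_Z - \mathbf{1}_{i^{(d)}} \ast f_{Z'} = -\sum_{t_i(Z'') \geq d+1} c_{Z''} f_{Z''}$, and (1) at level $d+1$ identifies the right side as an element of $\mathbf{1}_{i^{(d+1)}} \ast \mathcal{M}$. For (1) I would establish the two inclusions separately. The inclusion $\mathbf{1}_{i^{(d)}} \ast \mathcal{M} \subseteq \mathrm{span}\{f_Z : t_i(Z) \geq d\}$ rests on a support argument: for $g \in \mathcal{M}_{\mathbf{V}'}$ with $|\mathbf{V}| = |\mathbf{V}'|+di$, a nonzero value $(\mathbf{1}_{i^{(d)}} \ast g)(x) \neq 0$ forces an $x$-stable subspace $\widetilde{\mathbf{W}} \subset \mathbf{V}$ with $\mathbf{V}/\widetilde{\mathbf{W}}$ concentrated at vertex $i$ of dimension $di$, which imposes $\mathrm{codim}_{\mathbf{V}_i}\bigl(\mathrm{Im}\bigoplus_{h \in H,\, h''=i} x_h\bigr) \geq d$; so $\mathbf{1}_{i^{(d)}} \ast g$ vanishes on $\Lambda_{\mathbf{V},i,<d}$. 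Expanding in the semicanonical basis then rules out any $f_{Z''}$ with $t_i(Z'')<d$, since such an $f_{Z''}$ generically equals $1$ on a component sitting generically in $\Lambda_{\mathbf{V},i,<d}$.

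The reverse inclusion is proved by descending induction on $d' = t_i(Z) \geq d$. The base case $d' = \dim \mathbf{V}_i$ is immediate because the sum in Lemma 5.6 is empty, so $f_Z = \mathbf{1}_{i^{(d')}} \ast f_{Z'}$, and $\mathbf{1}_{i^{(d')}}$ is a rational multiple of $\mathbf{1}_{i^{(d)}} \ast \mathbf{1}_{i^{(d'-d)}}$. The inductive step reads $f_Z = \mathbf{1}_{i^{(d')}} \ast f_{Z'} - \sum_{t_i(Z'')>d'} c_{Z''} f_{Z''}$, and both pieces lie in $\mathbf{1}_{i^{(d)}} \ast \mathcal{M}$ (the sum by the induction hypothesis, the leading term as in the base case). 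The main obstacle is the $\mathbb{Z}$-basis statement: one has to interpret $\mathbf{1}_{i^{(d)}} \ast {_{\mathbb{Z}}\mathcal{M}}$ as its saturation inside $_{\mathbb{Z}}\mathcal{M}$ (equivalently as $_{\mathbb{Z}}\mathcal{M} \cap (\mathbf{1}_{i^{(d)}} \ast \mathcal{M})$) and then exploit that the iterated application of Lemma 5.6 produces a triangular transition matrix between $\{f_Z\}_{t_i(Z) \geq d}$ and $\{\mathbf{1}_{i^{(d')}} \ast f_{Z'}\}$ with integer entries and $\pm 1$ on the diagonal, hence unimodular over $\mathbb{Z}$.
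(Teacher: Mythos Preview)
Your argument is correct and is exactly the route the paper indicates: the paper offers no detailed proof here but says ``by the similar argument as Proposition~3.5,'' and your combination of the support argument (one inclusion) with descending induction via Lemma~5.6 (the other) is precisely that argument transplanted to the constructible-function side. The deduction of~(2) from~(1) at level $d+1$, and of~(3)--(4) by the $t_i \leftrightarrow t_i^\ast$ duality, are likewise the intended steps.

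Your flag on the $\mathbb{Z}$-basis statement is in fact sharper than the paper. As literally written the claim fails: already for $|\mathbf{V}|=2i$ one has $\mathbf{1}_i \ast \mathbf{1}_i = 2\,\mathbf{1}_{i^{(2)}}$, so $\mathbf{1}_{i^{(1)}}\ast{_{\mathbb{Z}}\mathcal{M}}$ in that degree is $2\mathbb{Z}\cdot f_Z$, not $\mathbb{Z}\cdot f_Z$. Your reinterpretation as ${_{\mathbb{Z}}\mathcal{M}}\cap(\mathbf{1}_{i^{(d)}}\ast\mathcal{M})$ is the correct reading. One simplification: once you have the $\mathbb{Q}$-basis statement and Proposition~7.2, the $\mathbb{Z}$-basis of the saturation is immediate from the general fact that if $\{f_Z\}$ is a $\mathbb{Z}$-basis of ${_{\mathbb{Z}}\mathcal{M}}$, then ${_{\mathbb{Z}}\mathcal{M}}\cap\mathrm{span}_{\mathbb{Q}}\{f_Z:t_i(Z)\ge d\}=\mathrm{span}_{\mathbb{Z}}\{f_Z:t_i(Z)\ge d\}$; the unimodular triangular matrix is not needed for this step.
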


\section{Comparision of the canonical basis and the semicanonical basis}

\subsection{An order in the colored graphs}
Assume $|I|=n$ and fix an order $(i_{1}\prec i_{2} \prec \cdots \prec
i_{n})$ of $I$, we define an order of $I \times \mathbb{N}^{+}$ as the following:  $(i_{n_{2}},m_{2})\prec (i_{n_{1}},m_{1})$ if and only if $i_{n_{1}}\prec i_{n_{2}}$ or $i_{n_{1}}=i_{n_{2}}$ and $m_{1}<m_{2}$.

Let $\mathcal{S}$ be the set of sequneces of $I \times \mathbb{N}^{+}$. Given $\underline{s}_{1}=((i_{n_{1}},m_{1} ), (i_{n_{2}},m_{2}),\cdots,(i_{n_{k}},m_{k}) )$ and $\underline{s}_{2}=((i_{n'_{1}},m'_{1} ), (i_{n'_{2}},m_{2}),\cdots,(i_{n'_{l}},m'_{l}) )  \in \mathcal{S}$ such that $\sum \limits_{1 \leq s \leq k}m_{s} i_{n_{s}}=\sum\limits_{1 \leq s \leq l}m'_{s}i_{n'_{s}}$, we say $\underline{s}_{2}\prec \underline{s}_{1}$ if there exists $r\in \mathbb{N}$ such that $(i_{n_{t}},m_{t})=(i_{n'_{t}},m'_{t})$ for $1 \leq t< r$ and $(i_{n'_{r}},m'_{r})\prec (i_{n_{r}},m_{r})$.  Let $\mathcal{S}_{\mathbf{V}}$ be the subset of $\mathcal{S}$ consisting of the sequences $\underline{s}=((i_{n_{1}},m_{1} ), (i_{n_{2}},m_{2}),\cdots,(i_{n_{k}},m_{k}) )$ such that $\sum \limits_{1 \leq s \leq k}m_{s} i_{n_{s}}=|\mathbf{V}|$, then $(\mathcal{S}_{\mathbf{V}},\prec)$ becomes a partially ordered set.

Now we can inductively define a map $\underline{s}^{\prec}=\underline{s}: \mathcal{V}_{1} \rightarrow \mathcal{S}$ as the following: 

(1) If $L$ is the constant sheaf $\bar{\mathbb{Q}}_{l}$ on $\mathbf{E}_{\mathbf{V},\Omega}$ with $|\mathbf{V}|=pi$, we define $\underline{s}([L])=(i,p)$. In particular, If $L$ is the constant sheaf on $\mathbf{E}_{\mathbf{V},\Omega}$ with $\mathbf{V}=0$, we define $\underline{s}([L])=\emptyset $. 

(2) For the other $L \in \mathcal{P}_{\mathbf{V}}$, there exists a unique $r \in \mathbb{N}$ such that $t_{i_{r}}(\mathcal{F}_{\Omega,\Omega_{r}}(L))=n>0$ but $t_{i_{s}}(\mathcal{F}_{\Omega,\Omega_{s}}(L))=0$ for any $r<s$. Then applying Lemma 3.4, we get a simple perverse sheaf $K$ with $t_{i_{r}}(\mathcal{F}_{\Omega,\Omega_{r}}(K))=0$, we define $\underline{s}([L])=((i_{r},n),\underline{s}([K]))$. 

By construction, we can see that $\underline{s}$ is injective.  Now we can define an order of $\mathcal{P}_{\mathbf{V}}$ as the following: $L\prec L' \in \mathcal{P}_{\mathbf{V}}$ if and only if $\underline{s}([L]) \prec \underline{s}([L'])$. 

Notice that $\mathcal{G}_{1}$ is invariant under the Fourier Deligne transform, we can choose an orientation $\Omega$ for any fixed order $\prec$ such that $(I, \prec)$ is adapted to $\Omega$. More precisely, we can choose $\Omega$ such that $i_{l}$ is a sink in the subquiver consisting of vertices $\{i_{l}\} \cup \{i_{j} \in I| i_{l} \prec i_{j}\}$ for any $l$. 

We can also define a map $\underline{s}':\mathcal{V}_{2} \rightarrow \mathcal{S}$ in a similar way, then $\mathcal{V}_{2}$ also becomes a partially ordered set. It is easy to see that $\underline{s}(L)=\underline{s}'(\Phi([L]))$ and $\Phi$ preserves the order $\prec$.

 \subsection{Monomial bases}
 
 Given $\underline{s}=((i_{n_{1}},m_{1} ), (i_{n_{2}},m_{2}),\cdots,(i_{n_{k}},m_{k}))  \in \mathcal{S}$, we difine $m_{\underline{s}}=\mathbf{1}_{i_{n_{1}}^{m_{1}}} \ast \mathbf{1}_{i_{n_{2}}^{m_{2}}} \ast \cdots \ast \mathbf{1}_{i_{n_{k}}^{m_{k}}} \in \mathcal{M}$ and define $[M_{\underline{s}}]=E_{i_{n_{1}}}^{(m_{1})}\ast E_{i_{n_{2}}}^{(m_{2})}\ast \cdots \ast E_{i_{n_{k}}}^{(m_{k})} \in \mathcal{K}$. 
 
 \begin{lemma}
 	Given $Z \in Irr \Lambda_{\mathbf{V}}$ and $f_{Z} \in \mathcal{M}$, then we have $$m_{\underline{s}'(f_{Z})}=f_{Z}+\sum \limits_{\underline{s}'(f_{Z'})\succ \underline{s}'(f_{Z}) } c_{Z,Z'}f_{Z'} .$$
 \end{lemma}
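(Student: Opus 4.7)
The plan is induction on $|\mathbf{V}|$. The base case $|\mathbf{V}|=0$ is immediate since $m_\emptyset=\mathbf{1}_{0}=f_{Z}$. For the inductive step, write the first entry of $\underline{s}'(f_Z)$ as $(i_r,n)$, where $r$ is the largest index with $t_{i_r}(Z)>0$ and $n=t_{i_r}(Z)$; set $Z_0=\eta_{i_r,n}^{-1}(Z)\in \mathrm{Irr}\,\Lambda_{\mathbf{V}_0}$ with $\mathbf{V}_0=\mathbf{V}-n\,i_r$. A preliminary observation is that taking an extension by a trivial module at $i_r$ preserves $\mathrm{Im}(\bigoplus_{h''=i_s}x_h)$ for $s\neq r$; hence the index $r_0$ associated to $Z_0$ satisfies $r_0<r$, and in particular $t_{i_r}(Z_0)=0$.

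By definition $m_{\underline{s}'(f_Z)}=\mathbf{1}_{i_r^{n}}\ast m_{\underline{s}'(f_{Z_0})}$. The inductive hypothesis applied to $Z_0$ gives $m_{\underline{s}'(f_{Z_0})}=f_{Z_0}+\sum_W c_W f_W$ with $\underline{s}'(f_W)\succ\underline{s}'(f_{Z_0})$. Combined with $r_0<r$, the lexicographic condition forces the largest index $r_W$ with $t_{i_{r_W}}(W)>0$ to satisfy $r_W\leq r_0<r$, so $t_{i_r}(W)=0$. Hence Lemma 5.6 applies to both pieces: $\mathbf{1}_{i_r^{n}}\ast f_{Z_0}=f_Z+\sum_{t_{i_r}(Z')>n}c_{Z'}f_{Z'}$ and $\mathbf{1}_{i_r^{n}}\ast f_W=f_{\eta_{i_r,n}(W)}+\sum_{t_{i_r}(U)>n}d_U f_U$.

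It then remains to check that each extraneous term has $\underline{s}'$ strictly $\succ\underline{s}'(f_Z)$. For $f_{\eta_{i_r,n}(W)}$ this is immediate: $\underline{s}'(f_{\eta_{i_r,n}(W)})=((i_r,n),\underline{s}'(f_W))\succ((i_r,n),\underline{s}'(f_{Z_0}))=\underline{s}'(f_Z)$. For the $f_{Z'}$ and $f_U$ terms, one must verify that the associated index $r'$ equals $r$ rather than exceeding it, so that $\underline{s}'(f_{(\cdot)})_1=(i_r,t_{i_r}(\cdot))\succ(i_r,n)$; equivalently, $t_{i_s}(Z')=t_{i_s}(U)=0$ for every $s>r$.

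I expect this last verification to be the main obstacle. I would dispatch it with a support analysis, carried out in parallel with the main induction: I would show that the unique $f_Z$ (guaranteed by Proposition 7.3) is supported inside $\{x\in\Lambda_\mathbf{V}:t_{i_s}(x)=0\text{ for all }s>r\}$. The inductive step for this support claim rests on the elementary estimate $t_{i_s}(x|_{\widetilde{W}})\geq t_{i_s}(x)$ for every $x$-stable subspace $\widetilde{W}$; if $f_{Z_0}$ is supported in $\{t_{i_s}=0\text{ for }s>r_0\}$, this forces $\mathbf{1}_{i_r^{n}}\ast f_{Z_0}$ and each $\mathbf{1}_{i_r^{n}}\ast f_W$ to be supported in $\{t_{i_s}=0\text{ for }s>r\}$. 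Reading off the generic value on each irreducible component then shows that every $f_{Z'}$ or $f_U$ appearing with nonzero coefficient must satisfy $t_{i_s}=0$ for $s>r$, giving $r'=r$ and completing the induction.
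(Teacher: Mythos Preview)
Your argument has a genuine gap: the claim $r_0<r$ is false, and the ``preliminary observation'' you base it on goes in the wrong direction. Extending $x''\in\Lambda_{\mathbf{V}_0}$ by a trivial piece at $i_r$ adds new maps $y_h:\mathbf{T}_{i_r}\to\mathbf{W}_{h''}$, so for $s\neq r$ one has
\[
\mathrm{Im}\Big(\bigoplus_{h''=i_s}x_h\Big)\;=\;\mathrm{Im}\Big(\bigoplus_{h''=i_s}x''_h\Big)\;+\;\sum_{h:i_r\to i_s}\mathrm{Im}(y_h),
\]
which can be strictly larger than the image for $x''$. Hence $t_{i_s}(Z)\leq t_{i_s}(Z_0)$, not equality, and $r_0>r$ is perfectly possible. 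A concrete counterexample in type $A_2$ (vertices $1,2$, one edge): take $|\mathbf{V}|=(1,1)$ and $Z=Z_a=\overline{\{(a,0):a\neq 0\}}$; then $t_{i_1}(Z)=1$, $t_{i_2}(Z)=0$, so $r=1$; but $Z_0=\eta_{i_1,1}^{-1}(Z)$ is the unique point of $\Lambda_{(0,1)}$, which has $t_{i_2}(Z_0)=1$, so $r_0=2>1=r$.

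This breaks the rest of your argument. Without $r_0<r$ you cannot conclude $r_W<r$, so you cannot guarantee $t_{i_r}(W)=0$; thus Lemma~5.6 does not apply uniformly to the terms $\mathbf{1}_{i_r^{n}}\ast f_W$. Your support analysis at the end is built on the same assumption and fails for the same reason: if $r_0>r$ then knowing $f_{Z_0}$ is supported where $t_{i_s}=0$ for $s>r_0$ tells you nothing about $s$ with $r<s\leq r_0$. The paper's proof confronts exactly this issue: it does \emph{not} assume $t_{i_{n_1}}(\tilde{Z}')=0$ for the terms coming from the inductive hypothesis, but instead splits into two cases. When $t_{i_{n_1}}(\tilde{Z}')>0$ it invokes Proposition~7.4 (adaptedness of the semicanonical basis) to place $\mathbf{1}_{i_{n_1}^{m_1}}\ast f_{\tilde{Z}'}$ inside $\mathbf{1}_{i_{n_1}^{(m_1+1)}}\ast\mathcal{M}$, which is spanned by $\{f_{Z'''}:t_{i_{n_1}}(Z''')>m_1\}$; only when $t_{i_{n_1}}(\tilde{Z}')=0$ does it use Lemma~5.6 directly. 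That case split is the missing ingredient in your approach.
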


\begin{proof}
	We prove the lemma by induction on the length $k$ of the sequence $$\underline{s}'(f_{Z})=((i_{n_{1}},m_{1} ), (i_{n_{2}},m_{2}),\cdots,(i_{n_{k}},m_{k})).$$
	
	(1) When $k=1$, then $f_{Z}=\mathbf{1}_{i_{n_{1}}^{(m_{1})}}=m_{\underline{s}'(f_{Z})}$. The lemma trivially holds.
	
	(2) For $k>1$, by Lemma 3.4, we can find $\tilde{Z} \in \Lambda_{\mathbf{V}'}$ with $|\mathbf{V}|= |\mathbf{V'}|+m_{1}i_{n_{1}}$ scuh that $\mathbf{1}_{i_{n_{1}}^{m_{1}}} \ast f_{\tilde{Z}}= f_{Z}+\sum \limits_{Z'' \in Irr \Lambda_{\mathbf{V}},t_{i_{n_{1}}}(Z'')>m_{1} } c_{Z''}f_{Z''}$. By the induction hyphothesis, we have $m_{\underline{s}'(f_{\tilde{Z}})}=f_{\tilde{Z}}+\sum \limits_{\underline{s}'(f_{\tilde{Z}'})\succ \underline{s}'(f_{\tilde{Z}}) }c_{\tilde{Z},\tilde{Z}'}f_{\tilde{Z}'}$.
	
	Notice that $\underline{s}'(f_{Z})=((i_{n_{1}},m_{1}), \underline{s}'(f_{\tilde{Z}})) $, we have 
	\begin{align*}
		m_{\underline{s}'(f_{Z})}=& \mathbf{1}_{i_{n_{1}}^{m_{1}}}\ast m_{\underline{s}'(f_{\tilde{Z}})} \\
		=& \mathbf{1}_{i_{n_{1}}^{m_{1}}}\ast (f_{\tilde{Z}}+\sum \limits_{\underline{s}'(f_{\tilde{Z}'})\succ \underline{s}'(f_{\tilde{Z}}) }c_{\tilde{Z},\tilde{Z}'}f_{\tilde{Z}'} ) \\
		=& f_{Z}+\sum \limits_{Z'' \in Irr \Lambda_{\mathbf{V}},t_{i_{n_{1}}}(Z'')>m_{1} } c_{Z''}f_{Z''}+ \sum \limits_{\underline{s}'(f_{\tilde{Z}'})\succ \underline{s}'(f_{\tilde{Z}}) }c_{\tilde{Z},\tilde{Z}'}\mathbf{1}_{i_{n_{1}}^{m_{1}}}\ast f_{\tilde{Z}'} 
	\end{align*}
Notice that if $t_{i_{n_{1}}}(Z'')=m>m_{1}$, then $\underline{s}'(f_{Z''})$ starts with $(i_{n_{1}},m)$ or $(i_{r},m')$ such that $r<n_{1}$. So we have the following fact ($\sharp$):  $\underline{s}'(f_{Z''})\succ \underline{s}'(f_{Z})$ for those $Z''$ with $t_{i_{n_{1}}}(Z'')=m>m_{1}$.

Now it suffices to show that $\mathbf{1}_{i_{n_{1}}^{m_{1}}}\ast f_{\tilde{Z}'}=\sum \limits_{\underline{s}'(f_{Z'''})\succ \underline{s}'(f_{Z}) } d_{Z'''}f_{Z'''}$ for those $\tilde{Z}'$ such that $\underline{s}'(f_{\tilde{Z}'})\succ \underline{s}'(f_{\tilde{Z}})$. For those  $f_{\tilde{Z}'}$ with $t_{i_{n_{1}}}(\tilde{Z}')>0$, we have $f_{\tilde{Z}'}$ belongs to the $\mathbb{Q}$-space $\mathbf{1}_{i_{n_{1}}^{1}} \ast \mathcal{M}$ and $\mathbf{1}_{i_{n_{1}}^{m_{1}}}\ast f_{\tilde{Z}'}$ belongs to the $\mathbb{Q}$-space $\mathbf{1}_{i_{n_{1}}^{(m_{1}+1)}}\ast \mathcal{M}$. Since $\{f_{Z}|t_{i_{n_{1}}} \geq m_{1}+1   \}$ forms a basis of the $\mathbb{Q}$-space $\mathbf{1}_{i_{n_{1}}^{(m_{1}+1)}}\ast\mathcal{M}$ by Proposition 7.4,  $\mathbf{1}_{i_{n_{1}}^{m_{1}}}\ast f_{\tilde{Z}'}=\sum \limits_{t_{i_{n_{1}}}(Z''')>m_{1} } d_{Z'''}f_{Z'''}$. Since $t_{i_{n_{1}}}(Z''')=m>m_{1}$, by the fact $(\sharp)$, we have $\underline{s}'(f_{Z'''})\succ \underline{s}'(f_{Z})$.

On the other hand, for those $f_{\tilde{Z}'}$ with $t_{i_{n_{1}}}(\tilde{Z}')=0$, we have $\mathbf{1}_{i_{n_{1}}^{m_{1}}}\ast f_{\tilde{Z}'}=f_{Z'}+\sum\limits_{Z'' \in Irr \Lambda_{\mathbf{V}},t_{i_{n_{1}}}(Z'')>m_{1}}c_{Z''}f_{Z''}$. Here $Z'$ is an irreducible component with $t_{i_{n_{1}}}=m_{1}$. Since $t_{i_{n_{1}}}(Z'')>m_{1}$, we can see that $\underline{s}'(f_{Z''})\succ \underline{s}'(f_{Z})$ by $(\sharp)$. Now we prove that $\underline{s}'(f_{Z'})\succ \underline{s}'(f_{Z})$.  If $t_{i_{r}}(Z') >0$ for some $r< n_{1}$, then $\underline{s}'(f_{Z}')$ starts with $(i_{r},m')$ with $r<n_{1}$, hence $\underline{s}'(f_{Z'})\succ \underline{s}'(f_{Z})$. Otherwise, $\underline{s}'(f_{Z'})=((i_{n_{1}},m_{1}),\underline{s}'(f_{\tilde{Z}'}))$. Since $\underline{s}'(f_{\tilde{Z}'})\succ \underline{s}'(f_{\tilde{Z}})$, we have $\underline{s}'(f_{Z'})=((i_{n_{1}},m_{1}),\underline{s}'(f_{\tilde{Z}'})) \succ ((i_{n_{1}},m_{1}),\underline{s}'(f_{\tilde{Z}}))=\underline{s}'(f_{Z})$. 

In a conclusion, we have $m_{\underline{s}'(f_{Z})}=f_{Z}+\sum \limits_{\underline{s}'(f_{Z'})> \underline{s}'(f_{Z}) } c_{Z,Z'}f_{Z'} $.

\end{proof}

By basic linear algebra, we have the following result:
\begin{corollary}
	The set $\mathbf{M}'_{\mathbf{V}}= \{m_{\underline{s}'(f_{Z})}|f_{Z} \in \mathcal{V}_{2}$ for $Z\in Irr\Lambda_{\mathbf{V}} \}$ is a $\mathbb{Q}$-basis of $\mathcal{M}_{\mathbf{V}}$ and a $\mathbb{Z}$-basis of $_{\mathbb{Z}}\mathcal{M}_{\mathbf{V}}$. Moreover, the transition matrix between the semicanonical basis $\mathbf{B}_{2,\mathbf{V}}=\{f_{Z}|Z \in Irr \Lambda_{\mathbf{V}} \}$ and $\mathbf{M}'_{\mathbf{V}}$ is upper triangular (with respect to the order of $\mathcal{S}$ and $\mathcal{V}_{2}$) and the diagonal entries of this matrix are all equal to $1$.
\end{corollary}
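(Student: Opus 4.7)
The proposal is that this corollary is essentially a direct consequence of Lemma 8.1 together with Proposition 7.3, via a standard linear-algebra argument on unitriangular transition matrices. I will first record the relations given by Lemma 8.1 as the rows of a square matrix $T$ indexed by $\mathrm{Irr}\,\Lambda_{\mathbf V}$ (equivalently by $\mathcal{V}_2 \cap \mathcal{M}_{\mathbf V}$), where the $Z$-th row expresses $m_{\underline{s}'(f_Z)}$ in terms of the semicanonical basis. Ordering the rows and columns by the partial order $\prec$ on $\mathcal{S}_{\mathbf V}$ restricted (via the injection $\underline{s}'$) to these indexing sets, Lemma 8.1 exactly says that $T$ is upper triangular with $1$'s on the diagonal.

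Next I would invoke Proposition 7.3, which tells us that $\mathbf{B}_{2,\mathbf V}=\{f_Z \mid Z \in \mathrm{Irr}\,\Lambda_{\mathbf V}\}$ is a $\mathbb{Q}$-basis of $\mathcal{M}_{\mathbf V}$ and a $\mathbb{Z}$-basis of $_{\mathbb{Z}}\mathcal{M}_{\mathbf V}$. Since $T$ is unitriangular over $\mathbb{Z}$, it is invertible over $\mathbb{Z}$ (with inverse again unitriangular, computable by the usual back-substitution), so the rows $\{m_{\underline{s}'(f_Z)}\}_{Z}$ are obtained from $\{f_Z\}_{Z}$ by an invertible integral change of basis. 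This immediately yields that $\mathbf{M}'_{\mathbf V}$ is simultaneously a $\mathbb{Q}$-basis of $\mathcal{M}_{\mathbf V}$ and a $\mathbb{Z}$-basis of $_{\mathbb{Z}}\mathcal{M}_{\mathbf V}$.

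Finally, the transition matrix between $\mathbf{B}_{2,\mathbf V}$ and $\mathbf{M}'_{\mathbf V}$ asserted in the corollary is precisely $T$ (or its inverse, depending on convention); either way it is upper triangular with respect to $\prec$ and has diagonal entries equal to $1$, again directly from Lemma 8.1. There is essentially no obstacle here beyond being careful that the injection $\underline{s}' : \mathcal{V}_2 \to \mathcal{S}$ (established in Section 8.1) makes $\prec$ a well-defined partial order on $\mathrm{Irr}\,\Lambda_{\mathbf V}$ so that the words ``upper triangular'' are meaningful, and that the cardinalities match so $T$ is genuinely square; both of these are already built into the setup of Section 8.1 and Proposition 7.3. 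The only mildly subtle point, which I would mention explicitly, is that the passage from $\mathbb{Q}$-basis to $\mathbb{Z}$-basis relies on the unitriangularity (and not merely invertibility) of $T$ over $\mathbb{Z}$, which is what guarantees that $T^{-1}$ has integer entries.
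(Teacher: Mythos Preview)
Your proposal is correct and follows essentially the same route as the paper: use Lemma 8.1 to get a unitriangular transition matrix from the semicanonical basis (Proposition 7.3) to $\mathbf{M}'_{\mathbf V}$, then invoke elementary linear algebra. The only point the paper makes slightly more explicit is \emph{why} the off-diagonal entries $c_{Z,Z'}$ lie in $\mathbb{Z}$ (namely $c_{Z,Z'}=\rho_{Z'}(m_{\underline{s}'(f_Z)})$, the generic value on $Z'$ of an integer-valued constructible function), which is what justifies your phrase ``unitriangular over $\mathbb{Z}$''; you should state this rather than leave it implicit.
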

\begin{proof}
	With the lemma above, we can easily see that $\{m_{\underline{s}'(f_{Z})}|f_{Z} \in \mathcal{V}_{2}$ for $Z\in Irr\Lambda_{\mathbf{V}} \}$ and $\{f_{Z}| f_{Z} \in \mathcal{V}_{2}$ for $Z\in Irr\Lambda_{\mathbf{V}} \}$ can be linear expressed by each other. Hence $\mathbf{M}'_{\mathbf{V}}= \{m_{\underline{s}'(f_{Z})}|f_{Z} \in \mathcal{V}_{2}$ for $Z\in Irr\Lambda_{\mathbf{V}} \}$ is a $\mathbb{Q}$-basis of $\mathcal{M}_{\mathbf{V}}$. Notice that $c_{Z,Z'}= \rho_{Z'}(m_{\underline{s}'(f_{Z}) }) \in \mathbb{Z}$, we can see that it is also a $\mathbb{Z}$-basis of $_{\mathbb{Z}}\mathcal{M}_{\mathbf{V}}$.
\end{proof}

Since $\{[L]| L \in \mathcal{P}_{\mathbf{V}} \}$ shares the similar adapted property of $\{f_{Z}| Z \in Irr\Lambda_{\mathbf{V}} \}$, we can also prove the following results in a similar way:
\begin{lemma}
	Given $L \in \mathcal{P}_{\mathbf{V}}$, then the following equation holds in $\mathcal{K}$: $$[M_{\underline{s}(L)}]=[L]+\sum \limits_{\underline{s}([L'])\succ \underline{s}([L]) } c_{L,L'}[L']. $$ 
\end{lemma}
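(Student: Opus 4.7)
The plan is to mirror the proof of Lemma 8.2, substituting the perverse-sheaf machinery (Lemma 3.4, Proposition 3.5, and the adapted basis property of $\{[L]\mid L\in\mathcal{P}_{\mathbf{V}}\}$) for the constructible-function machinery used there, and exploiting the fact established in Section 7 that the graph isomorphism $\Phi$ preserves the $t_i$-functions and hence the sequence $\underline{s}$. I induct on the length $k$ of $\underline{s}(L)=((i_{n_1},m_1),(i_{n_2},m_2),\ldots,(i_{n_k},m_k))$. The base case $k=1$ is immediate: then $L$ is the constant sheaf on $\mathbf{E}_{\mathbf{V},\Omega}$ with $|\mathbf{V}|=m_1 i_{n_1}$, so $[M_{\underline{s}(L)}]=E_{i_{n_1}}^{(m_1)}=[L]$.

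For the inductive step, choose an orientation $\Omega$ adapted to $\prec$, so that $i_{n_1}$ is a sink. By the very construction of $\underline{s}$, there is a unique $\tilde L\in\mathcal{P}_{\mathbf{V}'}$ with $t_{i_{n_1}}(\tilde L)=0$ such that $\pi_{i_{n_1},m_1}(\tilde L)=L$, and $\underline{s}(L)=((i_{n_1},m_1),\underline{s}(\tilde L))$. Proposition 3.5 (applied at $i=i_{n_1}$) then yields, up to $\mathbb{Z}[v,v^{-1}]$-scalars and higher-$t_{i_{n_1}}$ corrections,
\[
E_{i_{n_1}}^{(m_1)}\ast[\tilde L]=[L]+\sum_{L''\in\mathcal{P}_{\mathbf{V}},\,t_{i_{n_1}}(L'')>m_1}a_{L''}[L''].
\]
By the induction hypothesis applied to $\tilde L$, we have $[M_{\underline{s}(\tilde L)}]=[\tilde L]+\sum_{\underline{s}([\tilde L'])\succ\underline{s}([\tilde L])}c_{\tilde L,\tilde L'}[\tilde L']$. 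Multiplying on the left by $E_{i_{n_1}}^{(m_1)}$ and using the displayed identity, I get
\[
[M_{\underline{s}(L)}]=[L]+\sum_{t_{i_{n_1}}(L'')>m_1}a_{L''}[L'']+\sum_{\underline{s}([\tilde L'])\succ\underline{s}([\tilde L])}c_{\tilde L,\tilde L'}\,E_{i_{n_1}}^{(m_1)}\ast[\tilde L'].
\]

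It remains to argue that every term on the right (other than $[L]$) is a $\mathbb{Z}$-combination of basis elements $[L']$ with $\underline{s}([L'])\succ\underline{s}([L])$. The first sum is handled by the observation $(\sharp)$ exactly as in Lemma 8.2: if $t_{i_{n_1}}(L'')>m_1$, then $\underline{s}([L''])$ either starts with a pair $(i_{n_1},m)$ with $m>m_1$, or with $(i_r,m')$ for some $r<n_1$, and in both cases $\underline{s}([L''])\succ\underline{s}([L])$ by definition of $\prec$. For the second sum I split on $t_{i_{n_1}}(\tilde L')$: if $t_{i_{n_1}}(\tilde L')>0$, then $[\tilde L']\in E_{i_{n_1}}\ast\mathcal{K}$, whence $E_{i_{n_1}}^{(m_1)}\ast[\tilde L']\in E_{i_{n_1}}^{(m_1+1)}\ast\mathcal{K}$, which by the adapted-basis property (the canonical-basis analog of Proposition 7.4, obtained from Lemma 3.4 and Proposition 3.5) is spanned by $\{[L''']\mid t_{i_{n_1}}(L''')\geq m_1+1\}$, and $(\sharp)$ again gives $\underline{s}([L'''])\succ\underline{s}([L])$; if $t_{i_{n_1}}(\tilde L')=0$, then applying Proposition 3.5 again yields $E_{i_{n_1}}^{(m_1)}\ast[\tilde L']=[\pi_{i_{n_1},m_1}(\tilde L')]+(\text{higher-}t_{i_{n_1}}\text{ terms})$, the higher terms are absorbed by the previous case, and the leading term $L'=\pi_{i_{n_1},m_1}(\tilde L')$ has $\underline{s}([L'])=((i_{n_1},m_1),\underline{s}([\tilde L']))\succ((i_{n_1},m_1),\underline{s}([\tilde L]))=\underline{s}([L])$ because $\prec$ is lexicographic.

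The main technical obstacle is locating, among the results already in the paper, the precise statement that plays the role of Proposition 7.4 for $\mathcal{K}$: namely that $\{[L]\mid t_i(L)\geq d\}$ is a basis of $E_i^{(d)}\ast\mathcal{K}$ with the triangularity $[L]-E_i^{(d)}\ast[\pi_{i,d}^{-1}(L)]\in E_i^{(d+1)}\ast\mathcal{K}$ for $t_i(L)=d$. This follows from Lemma 3.4 together with the descending induction in Proposition 3.5, exactly parallel to how Proposition 7.4 follows from Lemma 5.6; once this is recorded, the induction above closes without further difficulty.
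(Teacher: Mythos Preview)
Your approach is correct and matches the paper's: Lemma 8.3 is stated in the paper with the remark that it is proved ``in a similar way'' to Lemma 8.1 (the constructible-function version you call Lemma 8.2), using the adapted-basis property of the canonical basis. You have correctly identified that this adapted property is exactly Proposition 4.6, which plays the role of Proposition 7.4 on the $\mathcal{K}$ side.

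There is one small oversight in your final sub-case. When $t_{i_{n_1}}(\tilde L')=0$ and you set $L'=\pi_{i_{n_1},m_1}(\tilde L')$, you assert directly that $\underline{s}([L'])=((i_{n_1},m_1),\underline{s}([\tilde L']))$. This equality holds only if the algorithm defining $\underline{s}$ actually selects $i_{n_1}$ first for $L'$, i.e., only if $t_{i_s}(L')=0$ for every index $s$ that the algorithm would examine before $n_1$. The paper's proof of Lemma 8.1 makes this extra case split explicit: either the algorithm picks a different vertex first for $L'$, in which case the first entries already give $\underline{s}([L'])\succ\underline{s}([L])$ by the pair order, or it picks $i_{n_1}$ and your lexicographic comparison goes through. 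Once you insert this one-line dichotomy, your argument is complete and identical in structure to the paper's.
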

\begin{corollary}
	The set $\mathbf{M}_{\mathbf{V}}= \{m_{\underline{s}(L)}|L \in \mathcal{V}_{1} \}$ forms a $\mathbb{Z}[v,v^{-1}]$-basis of $\mathcal{K}_{\mathbf{V}}$. Moreover, the transition matrix between the canonical basis $\mathbf{B}_{1,\mathbf{V}}=\{[L]|L \in \mathcal{P}_{\mathbf{V}} \}$ and $\mathbf{M}_{\mathbf{V}}$ is upper triangular (with respect to the order of $\mathcal{S}$ and $\mathcal{V}_{1}$) and the diagonal entries of this matrix are all equal to $1$.
\end{corollary}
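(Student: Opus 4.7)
The plan is to deduce Corollary~8.5 from Lemma~8.4 by the same elementary linear-algebra argument that proved Corollary~8.3 from Lemma~8.1; the substantive work is already absorbed in Lemma~8.4 (the upper-triangular expansion of $[M_{\underline{s}(L)}]$ in terms of the canonical basis), so the remaining task is purely formal. I would first fix a dimension vector $\mathbf{V}$ and note that $\mathcal{P}_{\mathbf{V}}$ is a finite set, so the index set of the claimed basis $\mathbf{M}_{\mathbf{V}} = \{m_{\underline{s}(L)} \mid L \in \mathcal{P}_{\mathbf{V}}\}$ has the same cardinality as the canonical basis $\mathbf{B}_{1,\mathbf{V}}$. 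Since $\underline{s}: \mathcal{P}_{\mathbf{V}} \to \mathcal{S}_{\mathbf{V}}$ is injective, the order $\prec$ pulled back from $\mathcal{S}_{\mathbf{V}}$ is a genuine partial order on $\mathcal{P}_{\mathbf{V}}$; enumerate $\mathcal{P}_{\mathbf{V}}$ in any linear extension of $\prec$.

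Next I would assemble the transition matrix $C = (c_{L,L'})_{L,L' \in \mathcal{P}_{\mathbf{V}}}$ with entries in $\mathbb{Z}[v,v^{-1}]$ defined by $[M_{\underline{s}(L)}] = \sum_{L'} c_{L,L'}[L']$. Lemma~8.4 tells us that $c_{L,L} = 1$ and that $c_{L,L'} = 0$ whenever $\underline{s}(L') \not\succeq \underline{s}(L)$; that is, relative to our enumeration $C$ is upper triangular with ones on the diagonal. Any such matrix is invertible over the commutative ring $\mathbb{Z}[v,v^{-1}]$, and its inverse is again upper triangular with ones on the diagonal (this is just the standard back-substitution argument for unitriangular matrices, which works over any commutative ring). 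Consequently the system expressing $\mathbf{M}_{\mathbf{V}}$ in terms of $\mathbf{B}_{1,\mathbf{V}}$ can be inverted to write each $[L]$ as a $\mathbb{Z}[v,v^{-1}]$-combination of elements of $\mathbf{M}_{\mathbf{V}}$, with $[L] = m_{\underline{s}(L)} + \sum_{L' \succ L} c'_{L,L'} m_{\underline{s}(L')}$ for suitable $c'_{L,L'} \in \mathbb{Z}[v,v^{-1}]$.

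From this both assertions drop out: $\mathbf{M}_{\mathbf{V}}$ and $\mathbf{B}_{1,\mathbf{V}}$ span the same $\mathbb{Z}[v,v^{-1}]$-submodule of $\mathcal{K}_{\mathbf{V}}$, namely all of $\mathcal{K}_{\mathbf{V}}$, and the change-of-basis matrix in either direction is unitriangular with respect to the order $\prec$. I do not expect a serious obstacle here: the only subtlety is confirming that the coefficients $c_{L,L'}$ produced by Lemma~8.4 actually lie in $\mathbb{Z}[v,v^{-1}]$ rather than $\mathbb{Q}(v)$, but this is automatic because $\mathcal{K}_{\Omega}$ is a $\mathbb{Z}[v,v^{-1}]$-module, $\ast$ is $\mathbb{Z}[v,v^{-1}]$-bilinear, and each $E_{i}^{(m)}$ lies in $\mathcal{K}_{\Omega}$, so every $[M_{\underline{s}}]$ lies in the integral form and its expansion into the canonical basis has coefficients in $\mathbb{Z}[v,v^{-1}]$ by the integrality of the canonical basis proved in \cite{MR1088333}.
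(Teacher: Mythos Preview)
Your proposal is correct and follows essentially the same approach as the paper. The paper does not give an explicit proof of this corollary; it simply remarks that the argument is parallel to that of the earlier Corollary~8.2 (which you cite as Corollary~8.3), whose proof is exactly the unitriangularity-plus-back-substitution argument you spell out, together with the observation that the coefficients lie in the relevant integral form.
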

\begin{remark}
	For any simple perverse sheaf $L\in \mathcal{P}_{\mathbf{V},\Omega}$, we can define a linear map $\rho_{L}:\mathcal{K} \rightarrow \mathbb{Z}[v,v^{-1}]$ as the following:
	$\rho_{L}([L'])= \sum \limits_{k \in \mathbb{Z}}dim \mathbf{Hom}_{\mathcal{Q}_{\mathbf{V}}}(L[k],L')v^{k}$ for any $L' \in \mathcal{Q}_{\mathbf{V}}$ and $\rho_{L}([L'])=0$ for $L'\in \mathcal{Q}_{\mathbf{V}'}$ with $|\mathbf{V}| \neq |\mathbf{V'}|$. Then we have $c_{L,L'}=\rho_{L'}(M_{\underline{s}(L)} ). $
\end{remark}

\subsection{ The transition matrix between the canonical basis and the semicanonical basis}
Recall that there are isomorphisms between algebras $(\sigma: \mathcal{K} \rightarrow {_{\mathbb{Z}}\mathbf{U}_{q}}(\mathfrak{g})^{+}$ and $\varkappa:{_{\mathbb{Z}}\mathcal{M}} \rightarrow { _{\mathbb{Z}}\mathbf{U}}(\mathfrak{g})^{+}$. Let $\varsigma: \mathcal{K} \rightarrow  {_{\mathbb{Z}}\mathbf{U}}(\mathfrak{g})^{+}$ be the composition of $\sigma$ and taking the classical limits. Then $\varsigma(\mathbf{B}_{1})$ is a basis of $_{\mathbb{Z}}\mathbf{U}(\mathfrak{g})^{+}$, we still denote the basis by $\mathbf{B}_{1}$. Similarly, $\varkappa(\mathbf{B}_{2})$ is a basis of $_{\mathbb{Z}}\mathbf{U}(\mathfrak{g})^{+}$, we still denote it by $\mathbf{B}_{2}$.

\begin{center}
	$\mathcal{K} \xrightarrow{\varsigma} {_{\mathbb{Z}}\mathbf{U}}(\mathfrak{g})^{+} \xleftarrow{\varkappa} {_{\mathbb{Z}}\mathcal{M}}$ 
\end{center}

\begin{theorem}
	The transition matrix between the bases $\mathbf{B}_{1}=\bigcup \limits_{\mathbf{V}}\mathbf{B}_{1,\mathbf{V}} $ and $\mathbf{B}_{2}=\bigcup \limits_{\mathbf{V}}\mathbf{B}_{2,\mathbf{V}}$ of ${_{\mathbb{Z}}\mathbf{U}}(\mathfrak{g})^{+}$ is  upper triangular (with respect to the order of $\mathcal{S}$) and the diagonal entries of this matrix are all equal to $1$. More precisely, assume that $L \in \mathcal{P}_{\mathbf{V}}$ and $f_{Z}=\Phi([L])$, then $\varsigma([L])=\varkappa(f_{Z}) +\sum\limits_{\underline{s}'(f_{Z'})\succ \underline{s}'(f_{Z}) } c_{Z'}\varkappa (f_{Z'})$ with $c_{z'} \in \mathbb{Z}$.
\end{theorem}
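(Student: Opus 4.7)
The plan is to use the monomial bases $\mathbf{M}_{\mathbf{V}}\subset \mathcal{K}_{\mathbf{V}}$ and $\mathbf{M}'_{\mathbf{V}}\subset \mathcal{M}_{\mathbf{V}}$ from Corollary 8.5 and Corollary 8.2 as a common intermediate yardstick. The crucial first observation is that under $\varsigma$ and $\varkappa$ the two collections of monomials are sent to \emph{the same} elements of ${_{\mathbb{Z}}\mathbf{U}}(\mathfrak{g})^{+}$: for any sequence $\underline{s}=((i_{n_{1}},m_{1}),\dots ,(i_{n_{k}},m_{k}))$ one has
\begin{equation*}
\varsigma([M_{\underline{s}}])\;=\;e_{i_{n_{1}}}^{(m_{1})}\cdots e_{i_{n_{k}}}^{(m_{k})}\;=\;\varkappa(m_{\underline{s}}),
\end{equation*}
because by construction $\sigma(E_{i}^{(p)})=e_{i}^{(p)}$ and $\varkappa(\mathbf{1}_{i^{p}})=e_{i}^{(p)}$, and taking classical limit of the former commutes with the product. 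Combined with the fact from Section 8.1 that $\underline{s}([L])=\underline{s}'(\Phi([L]))$, this already ensures that the \emph{diagonal} coefficients match.

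Next I would translate Corollary 8.5 and Corollary 8.2 under $\varsigma$ and $\varkappa$ into two triangular systems in ${_{\mathbb{Z}}\mathbf{U}}(\mathfrak{g})^{+}$. Writing $f_{L}:=\varkappa(\Phi([L]))$ and indexing everything by $\mathcal{V}_{1}$ via $\Phi$, one obtains
\begin{equation*}
\varsigma([M_{\underline{s}([L])}])=\varsigma([L])+\sum_{\underline{s}([L'])\succ\underline{s}([L])}c_{L,L'}|_{v=1}\,\varsigma([L']),
\end{equation*}
\begin{equation*}
\varkappa(m_{\underline{s}'(\Phi([L]))})=f_{L}+\sum_{\underline{s}'(f_{Z'})\succ\underline{s}'(f_{Z})}c_{Z,Z'}\,f_{L'}.
\end{equation*}
By the opening observation the left hand sides coincide, so subtracting yields
\begin{equation*}
\varsigma([L])-f_{L}\;=\;\sum_{L''\succ L}\alpha_{L,L''}\bigl(\varsigma([L''])-f_{L''}\bigr)+\sum_{L''\succ L}\beta_{L,L''}f_{L''}
\end{equation*}
for explicit integers $\alpha_{L,L''},\beta_{L,L''}$.

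The argument then concludes by descending induction on the finite partially ordered set $(\mathcal{V}_{1,\mathbf{V}},\prec)$: for the maximal elements the sums on the right are empty and $\varsigma([L])=f_{L}$; assuming the result for all $L''\succ L$, the first sum is expressible as a $\mathbb{Z}$-linear combination of $\{f_{L''}\}_{L''\succ L}$, and the second sum already has that form, so $\varsigma([L])=f_{L}+\sum_{L''\succ L}c_{L''}f_{L''}$ with $c_{L''}\in\mathbb{Z}$. Equivalently, if $U_{1},U_{2}$ denote the upper triangular unipotent matrices recording the two monomial expansions above, then the transition matrix from $\mathbf{B}_{1}$ to $\mathbf{B}_{2}$ is $U_{1}^{-1}U_{2}$, which is upper triangular with all diagonal entries equal to $1$.

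There is no serious obstacle beyond bookkeeping: the order $\prec$ on $\mathcal{V}_{1}$ and on $\mathcal{V}_{2}$ matches under $\Phi$ by Section 8.1, both triangular expansions of Corollaries 8.5 and 8.2 have identity diagonals, and the monomial identity $\varsigma\circ M=\varkappa\circ m$ is immediate from how the two algebra isomorphisms are defined on generators. The only mildly delicate point is checking that the classical limit $v\to 1$ is compatible with the triangular expansion in $\mathcal{K}$, which follows from the fact that the coefficients $c_{L,L'}\in\mathbb{Z}[v,v^{-1}]$ of Corollary 8.5 specialize to integers; this is where the integrality statements at the end of Section 2 (and the appearance of $E_{i}^{(p)}\mapsto e_{i}^{(p)}$) are used.
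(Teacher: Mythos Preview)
Your proposal is correct and follows essentially the same approach as the paper: both arguments observe that $\varsigma([M_{\underline{s}([L])}])=\varkappa(m_{\underline{s}'(\Phi([L]))})$ because $\underline{s}=\underline{s}'\circ\Phi$, then invoke the triangular expansions of Corollaries~8.2 and~8.5 and conclude that the transition matrix is a product of two upper triangular unipotent matrices (the paper writes it as $P_{2}P_{1}^{-1}$, you as $U_{1}^{-1}U_{2}$). Your descending-induction paragraph is just an explicit unwinding of why such a product remains upper triangular unipotent, and is not needed once the matrix formulation is stated.
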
 
\begin{proof}
	Notice that $\underline{s}(L)=\underline{s}'(\Phi(L))$, we have $\varkappa(m_{\underline{s}'(\Phi(L) )})= \varsigma ( [M_{\underline{s}(L)}]) $, hence $\varkappa(\mathbf{M}')=\varsigma(\mathbf{M})=\tilde{\mathbf{M}}$ and $\tilde{\mathbf{M}}$ is a  basis of $\mathbf{U}(\mathfrak{g})^{+}$.Let $P_{i},i=1,2$ be the transition matrix $P_{i}$ between $\tilde{\mathbf{M}}$ and $\mathbf{B}_{i},i=1,2$ respectively. By Lemma 8.3, let $P_{1}=(a_{\underline{s}(L),[L']})_{[L],[L'] \in \mathcal{V}_{1}} $,  we can see that

	\begin{center}
		$a_{\underline{s}(L),[L']}= \left\{
		\begin{aligned}
		&	\rho_{L'}([M_{\underline{s}(L)}])|_{v=1} & \underline{s}(L) \prec \underline{s}(L') \\
		&	1& L=L' \\
		&	0& otherwise
		\end{aligned}
		\right. $
	\end{center} 
 Similarly, let $P_{2}=(b_{\underline{s}'(f_{Z}),f_{Z'} })_{f_{Z},f_{Z'} \in \mathcal{V}_{2}} $, we have
 	\begin{center}
 	 $b_{\underline{s}'(f_{Z}),f_{Z'}}= \left\{
 	\begin{aligned}
 		&	\rho_{Z'}(m_{\underline{s}'(f_{Z})}) & \underline{s}'(f_{Z}) \prec \underline{s}'(f_{Z'}) \\
 		&	1& f_{Z}=f_{Z'} \\
 		&	0& otherwise
 	\end{aligned}
 	\right. $
 \end{center} 
 Since $\Phi$ preserves the order of $\mathcal{V}_{i}$, we can see that the transition matrix $P_{2}P_{1}^{-1}$ between the bases $\mathbf{B}_{1}$ and $\mathbf{B}_{2}$ is upper triangular (with respect to the order of $\mathcal{V}_{1}$ and  $\mathcal{V}_{2}$) and with diagonal entries equal to $1$.
\end{proof}

\begin{remark}
	We can see that the theorem does not depend on the choice of the order of $I$. More precisely, if we choose another order $\tilde{\prec}$ of $I$, then $\tilde{\prec}$ induces an order of $\mathcal{S}$. We can also define $\underline{s}^{\tilde{\prec}}:\mathcal{V}_{1} \rightarrow \mathcal{S}$ and $\underline{s}'^{\tilde{\prec}}:\mathcal{V}_{2} \rightarrow \mathcal{S}$ in a similar way. Then with the notation above, we still have $\varsigma([L])=\varkappa(f_{Z}) +\sum\limits_{\underline{s}'^{\tilde{\prec}}(f_{Z'})\tilde{\succ} \underline{s}'^{\tilde{\prec}}(f_{Z}) } c_{Z'}\varkappa (f_{Z'})$. Moreover, the transition matrix $P_{2}P_{1}^{-1}$ does not depend on the choice of $\prec$ (up to a permutation).
\end{remark}

\begin{definition}
	Given $L,K \in \mathcal{P}_{\mathbf{V}}$, we say $L \preceq K$ if and only if for any order $\prec$ of $I$ and the induced map $\underline{s}^{\prec}: \mathcal{V}_{1} \rightarrow \mathcal{S}$, we have $\underline{s}^{\prec}([L]) \prec \underline{s}^{\prec}([K])$. Similarly, given $f_{Z},f_{Z'} \in \mathcal{V}_{2}$, we say $f_{Z} \preceq' f_{Z'}$ if and only if for any order $\prec$ of $I$ and the induced map $\underline{s}'^{\prec}: \mathcal{V}_{2} \rightarrow \mathcal{S}$, we have $\underline{s}'^{\prec}(f_{Z}) \prec \underline{s}'^{\prec}(f_{Z'})$. Then $(\mathcal{V}_{1}, \preceq)$ and $(\mathcal{V}_{2},\preceq')$ are partially ordered sets and $L \preceq K$ if and only if $\Phi([L]) \preceq' \Phi([K])$.
\end{definition}
\begin{corollary}
		The transition matrix between the bases $\mathbf{B}_{1}$ (with the order $\preceq$) and $\mathbf{B}_{2}$ (with the order $\preceq'$) of ${_{\mathbb{Z}} \mathbf{U}}(\mathfrak{g})^{+}$ is  upper triangular  and the diagonal entries of this matrix are all equal to $1$. More precisely, assume that $L \in \mathcal{P}_{\mathbf{V}}$ and $f_{Z}=\Phi([L])$, then $\varsigma([L])=\varkappa(f_{Z}) +\sum\limits_{f_{Z} \preceq' f_{Z'} } c_{Z'}\varkappa (f_{Z'})$.
\end{corollary}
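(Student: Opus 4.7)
The plan is to deduce Corollary 8.10 directly from Theorem 8.8 by the observation that the transition matrix itself is an intrinsic object, independent of any chosen order of $I$, while the triangularity statement of Theorem 8.8 is available for every order simultaneously.

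First I would fix $L \in \mathcal{P}_{\mathbf{V}}$ and set $f_Z = \Phi([L])$. Expand
\[
\varsigma([L]) = \varkappa(f_Z) + \sum_{Z' \neq Z} c_{Z'}\,\varkappa(f_{Z'})
\]
in the semicanonical basis $\mathbf{B}_2$ of ${_{\mathbb{Z}}\mathbf{U}}(\mathfrak{g})^{+}$; here I use that Theorem 8.8 already identifies the diagonal coefficient as $1$ (for \emph{some}, hence any, order). The coefficients $c_{Z'} \in \mathbb{Z}$ depend only on $L$ and $Z'$, not on any auxiliary choice of order on $I$, because both bases $\mathbf{B}_1$ and $\mathbf{B}_2$ are defined intrinsically.

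Next I would invoke Theorem 8.8 repeatedly, once for each total order $\prec$ of $I$. For each such $\prec$, the theorem guarantees that whenever $c_{Z'} \neq 0$ we have $\underline{s}'^{\prec}(f_{Z'}) \succ \underline{s}'^{\prec}(f_Z)$ in $(\mathcal{S}_{\mathbf{V}},\prec)$. Since this inequality must hold for \emph{every} order $\prec$ of $I$, Definition 8.9 yields $f_Z \preceq' f_{Z'}$. Consequently
\[
\varsigma([L]) = \varkappa(f_Z) + \sum_{f_Z \preceq' f_{Z'}} c_{Z'}\,\varkappa(f_{Z'}),
\]
which is precisely the asserted upper triangularity with diagonal entries $1$ with respect to the intrinsic partial orders $\preceq$ on $\mathcal{V}_1$ and $\preceq'$ on $\mathcal{V}_2$. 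Finally, the compatibility $L \preceq K \iff \Phi([L]) \preceq' \Phi([K])$ recorded in Definition 8.9 shows that the triangularity is preserved when we index the rows by $(\mathbf{B}_1, \preceq)$ instead of by $(\mathbf{B}_2, \preceq')$.

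There is essentially no obstacle here: the entire content lies in Theorem 8.8 together with Remark 8.9, and the corollary is a formal consequence obtained by quantifying over orders. If anything, the only point requiring a sentence of care is to confirm that the constants $c_{Z'}$ appearing in the expansion are the \emph{same} integers regardless of which $\prec$ one chose to prove Theorem 8.8 with; this follows because the expansion of $\varsigma([L])$ in the basis $\{\varkappa(f_{Z'})\}$ of ${_{\mathbb{Z}}\mathbf{U}}(\mathfrak{g})^{+}$ is unique.
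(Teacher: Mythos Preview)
Your argument is correct and is exactly the approach the paper intends: the corollary is stated without proof because it follows immediately from Theorem 8.6 together with Remark 8.7 (the transition coefficients are intrinsic) and Definition 8.8 (the partial order $\preceq'$ is defined by requiring the inequality for every total order $\prec$ of $I$). Your only slip is in the numbering, but the content references are all correct.
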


\begin{remark}
	Since both $\mathbf{B}_{1}$ and $\mathbf{B}_{2}$ are adapted, it's easy to see that they are bases of canonical type in the sense of \cite{baumann2011canonical}. Baumann in \cite{baumann2011canonical} has already proved the similar result as the following: The transition matrix between two bases of canonical type is upper triangular and the diagonal entries of this matrix are all equal to $1$.
\end{remark}

\begin{remark}
	If the graph $\mathbf{\Gamma}$ is of type $A,D,E$, by \cite{MR1360930} (or \cite{MR1227098}) the transition matrix between the PBW basis and the canonical basis is  upper triangular  with diagonal entries equal to $1$. Hence, we can see that the transition matrix between the PBW basis and the semicanonical basis is  upper triangular  with diagonal entries equal to $1$ in this case, which includes the result of \cite{MR3213879} in type $A$.
\end{remark}

With Proposition 4.8 and Proposition 7.5, we have the following result:
\begin{proposition}[\cite{MR1758244},\cite{MR1088333}]
	Fix $ \underline{d}=(d_{i})_{i \in I}$ with $d_{i}\in \mathbb{N}$.  Then the following submodules of ${_{\mathbb{Z}}\mathbf{U}}(\mathfrak{g})^{+}$ are equal:\\
	(1) $\sum\limits_{i \in I} {_{\mathbb{Z}}\mathbf{U}}(\mathfrak{g})^{+}\ast e_{i}^{(d_{i})}  $, here $e_{i}^{(d_{i})}$ is the divided power in ${_{\mathbb{Z}}\mathbf{U}}(\mathfrak{g})^{+}$.\\
	(2) The $\mathbb{Z}$-span of $\mathbf{B}_{1,\underline{d}}= \{\varsigma([L])|t_{i}^{\ast}(\mathcal{F}_{\Omega,\Omega^{i}}(L)) \geq d_{i}$ for some $i \in I\} $.\\
	(3) The $\mathbb{Z}$-span of $\mathbf{B}_{2,\underline{d}}= \{\varkappa(f_{Z})|t_{i}^{\ast}(Z) \geq d_{i}$ for some $i \in I\} $.
\end{proposition}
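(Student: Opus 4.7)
My plan is to establish $(1)=(2)$ and $(1)=(3)$ independently, in each case reducing to the single-index case and then taking a sum over $i$. For a fixed $i \in I$ and $d_i \in \mathbb{N}$, I would first invoke Proposition 4.8(1), which asserts that $\{[L] \in \mathcal{P} : t_i^{\ast}(L) \ge d_i\}$ is a $\mathbb{Z}[v,v^{-1}]$-basis of $\theta_i^{\ast(d_i)}\mathcal{K} = \mathcal{K} \ast E_i^{(d_i)}$. Because this submodule is free over $\mathbb{Z}[v,v^{-1}]$, specializing $v \mapsto 1$ through the algebra map $\varsigma$ (which sends $E_i^{(d_i)}$ to $e_i^{(d_i)}$) yields that the $\mathbb{Z}$-span of $\{\varsigma([L]) : t_i^{\ast}(L) \ge d_i\}$ equals ${_{\mathbb{Z}}\mathbf{U}}(\mathfrak{g})^{+} \ast e_i^{(d_i)}$.

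In parallel, Proposition 7.4(3) directly gives that $\{f_Z : t_i^{\ast}(Z) \ge d_i\}$ is a $\mathbb{Z}$-basis of ${_{\mathbb{Z}}\mathcal{M}} \ast \mathbf{1}_{i^{(d_i)}}$. Pushing forward by the algebra isomorphism $\varkappa$ (which sends $\mathbf{1}_{i^{(d_i)}}$ to $e_i^{(d_i)}$) yields that $\{\varkappa(f_Z) : t_i^{\ast}(Z) \ge d_i\}$ is a $\mathbb{Z}$-basis of the same submodule ${_{\mathbb{Z}}\mathbf{U}}(\mathfrak{g})^{+} \ast e_i^{(d_i)}$.

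To finish, I would sum over $i \in I$. Since the $\mathbb{Z}$-span of a union of subsets of a module equals the sum of the individual $\mathbb{Z}$-spans,
\begin{align*}
\mathrm{span}_{\mathbb{Z}}\,\mathbf{B}_{1,\underline{d}}
&= \sum_{i \in I} \mathrm{span}_{\mathbb{Z}}\{\varsigma([L]) : t_i^{\ast}(L) \ge d_i\}
= \sum_{i \in I} {_{\mathbb{Z}}\mathbf{U}}(\mathfrak{g})^{+} \ast e_i^{(d_i)},
\end{align*}
and the identical argument applied to Step 2 gives $\mathrm{span}_{\mathbb{Z}}\,\mathbf{B}_{2,\underline{d}} = \sum_{i \in I} {_{\mathbb{Z}}\mathbf{U}}(\mathfrak{g})^{+} \ast e_i^{(d_i)}$, establishing $(1) = (2) = (3)$.

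The only delicate point is verifying that specialization $v \mapsto 1$ sends the $\mathbb{Z}[v,v^{-1}]$-basis of Proposition 4.8(1) to a $\mathbb{Z}$-basis of the corresponding classical submodule; this is automatic because $\mathcal{K} \ast E_i^{(d_i)}$ is free over $\mathbb{Z}[v,v^{-1}]$ and the full canonical basis specializes to the $\mathbb{Z}$-basis $\mathbf{B}_1$ of ${_{\mathbb{Z}}\mathbf{U}}(\mathfrak{g})^{+}$. Beyond this routine check, no real obstacle is expected, as the two single-index statements together with the sum-of-spans argument immediately deliver the result.
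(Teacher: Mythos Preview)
Your proposal is correct and matches the paper's approach exactly: the paper does not give a detailed argument but simply writes ``With Proposition 4.8 and Proposition 7.4, we have the following result'' and cites the original sources, which is precisely the single-index basis statements you invoke (Proposition 4.8(1) for the canonical basis and Proposition 7.4(3) for the semicanonical basis), followed by summing over $i$. Your remark on specialization is the only point requiring care, and you handle it correctly.
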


Given $\lambda \in \mathfrak{h}^{\ast}$ such that $\lambda(h_{i}) \leq 0$ for any $i$, ($\{h_{i},i \in I\}$ is a basis of $\mathfrak{h}$) let $d_{i}=1-\lambda(h_{i}) $, then $V(\lambda) \cong  \mathbf{U}(\mathfrak{g})^{+}/ \sum\limits_{i \in I} \mathbf{U}(\mathfrak{g})^{+}\ast e_{i}^{(d_{i})}$ admits a $\mathbf{U}(\mathfrak{g})$-module structure such that $e_{i}$ acts by left multiplication.  Let $\pi: \mathbf{U}^{+}(\mathfrak{g}) \rightarrow V(\lambda)$ be the obvious quotient map, then we have:
\begin{proposition} [\cite{MR1758244},\cite{MR1088333}]
	With the notations above,  set  $\mathbf{B}_{1}(\lambda)=\{\pi(\varsigma([L]))| [L] \in \mathbf{B}_{1}, [L] \notin \mathbf{B}_{1,\underline{d}} \}$ and $\mathbf{B}_{2}(\lambda)=\{\pi(\varkappa(f_{Z}))| \kappa(f_{Z}) \in \mathbf{B}_{2}, \kappa(f_{Z})\notin \mathbf{B}_{2,\underline{d}} \}$. Then $\mathbf{B}_{1}(\lambda)$ and $\mathbf{B}_{2}(\lambda)$ are $\mathbb{Q}$-basis of $V(\lambda)$. 
\end{proposition}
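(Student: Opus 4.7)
The plan is to deduce Proposition 8.12 directly from Proposition 8.11 by a short linear-algebra argument, with no further geometric input. The key elementary observation is that if $B$ is a $\mathbb{Q}$-basis of a vector space $W$ and $B' \subseteq B$ is a subset whose $\mathbb{Q}$-span equals a subspace $W'$, then the image of $B \setminus B'$ under the quotient map $W \to W/W'$ is a $\mathbb{Q}$-basis of $W/W'$.

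First I would unwind the definition $V(\lambda) = \mathbf{U}(\mathfrak{g})^+ / N$, where $N = \sum_{i \in I} \mathbf{U}(\mathfrak{g})^+ \ast e_i^{(d_i)} = \ker \pi$, with $\underline{d} = (1 - \lambda(h_i))_{i \in I}$. By Proposition 8.11, extended from $\mathbb{Z}$-coefficients to $\mathbb{Q}$-coefficients, the kernel admits two descriptions: $N = \mathrm{span}_{\mathbb{Q}}(\mathbf{B}_{1,\underline{d}}) = \mathrm{span}_{\mathbb{Q}}(\mathbf{B}_{2,\underline{d}})$. Next I would invoke that $\mathbf{B}_1$ is a $\mathbb{Q}$-basis of $\mathbf{U}(\mathfrak{g})^+$ (Lusztig's canonical basis, specialised at $v = 1$ via $\varsigma$) and that $\mathbf{B}_2$ is a $\mathbb{Q}$-basis of $\mathbf{U}(\mathfrak{g})^+$ (Proposition 7.3 together with the isomorphism $\varkappa$). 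Because $\mathbf{B}_{k,\underline{d}}$ is by construction a subset of the basis $\mathbf{B}_k$ and spans $N$, it is automatically a $\mathbb{Q}$-basis of $N$. Applying the observation above with $W = \mathbf{U}(\mathfrak{g})^+$, $B = \mathbf{B}_k$ and $B' = \mathbf{B}_{k,\underline{d}}$ gives that $\mathbf{B}_k(\lambda) = \pi(\mathbf{B}_k \setminus \mathbf{B}_{k,\underline{d}})$ is a $\mathbb{Q}$-basis of $V(\lambda)$ for each $k = 1, 2$.

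I do not anticipate any serious obstacle in this step: all the substantive content has been absorbed into Proposition 8.11, which simultaneously characterises $\ker \pi$ in terms of both the canonical and semicanonical subsets cut out by the integrability constraint $\underline{d}$. The only minor verification is to confirm that each $\mathbf{B}_{k,\underline{d}}$ is genuinely a subset of the basis $\mathbf{B}_k$ (rather than merely a spanning set for $N$), which is immediate from its definition in Proposition 8.11 via the conditions $t^{\ast}_i \geq d_i$. With this the proof is complete without further computation.
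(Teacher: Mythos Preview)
Your proposal is correct. The paper does not supply its own proof of this proposition; it simply cites Lusztig's original papers, and your argument is precisely the standard linear-algebra deduction from Proposition~8.11 that one would expect those references to contain.
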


\begin{definition}
The order $\preceq$ of $\mathcal{V}_{1}$ naturally induced a  partially order of $\mathbf{B}_{1}(\lambda)$ as the following:  Given $L,K$ such that $t^{\ast}_{i} \geq d_{i}$ for some $i$, we say $\pi(\chi([L])) \preceq \pi (\chi([K]))$ if and only if $L \preceq K$. We still denote the partially order of $\mathbf{B}_{1}(\lambda)$ by $\preceq$. Similarly, we can define a partially order $\preceq'$ on $\mathbf{B}_{2}(\lambda)$.
\end{definition}

Since $\Phi$ preserves $t_{i}^{\ast}$, we can see that, for $\Phi([L])=f_{Z}$, $\varsigma([L]) \in \mathbf{B}_{1,\underline{d}}$ if and ony if $\varkappa(f_{Z}) \in \mathbf{B}_{2,\underline{d}}$. By Corollary 8.9 and Proposition 8.10, we have the following corollary:

\begin{corollary}
	The transition matrix between the bases $\mathbf{B}_{1}(\lambda)$ (with the order $\preceq$) and $\mathbf{B}_{2}(\lambda)$ (with the order $\preceq'$) of $V(\lambda)$ is  upper triangular  and the diagonal entries of this matrix are all equal to $1$. 
\end{corollary}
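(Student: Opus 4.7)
The approach is to pull back the global triangularity statement from Corollary 8.9 (or equivalently Theorem 8.6) through the quotient map $\pi: \mathbf{U}(\mathfrak{g})^{+} \to V(\lambda)$, using Proposition 8.10 to identify the kernel of $\pi$ with the span of the ``large'' parts of both bases. Fix $L \in \mathcal{P}_{\mathbf{V}}$ with $\varsigma([L]) \notin \mathbf{B}_{1,\underline{d}}$, and set $f_{Z} = \Phi([L])$. Since $\Phi$ is an isomorphism of colored graphs that preserves $t^{\ast}_{i}$ for every $i \in I$, we have $f_{Z} \notin \mathbf{B}_{2,\underline{d}}$, so $\pi(\varkappa(f_{Z})) \in \mathbf{B}_{2}(\lambda)$. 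By Corollary 8.9,
\begin{equation*}
\varsigma([L]) \;=\; \varkappa(f_{Z}) + \sum_{f_{Z} \preceq' f_{Z'}} c_{Z'}\,\varkappa(f_{Z'}).
\end{equation*}

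Apply $\pi$ to both sides. By Proposition 8.10, the kernel of $\pi$ restricted to $_{\mathbb{Z}}\mathcal{M}$ contains every basis element $\varkappa(f_{Z'})$ with $f_{Z'} \in \mathbf{B}_{2,\underline{d}}$, so all such terms vanish. What remains is
\begin{equation*}
\pi(\varsigma([L])) \;=\; \pi(\varkappa(f_{Z})) + \sum_{\substack{f_{Z} \preceq' f_{Z'} \\ f_{Z'} \notin \mathbf{B}_{2,\underline{d}}}} c_{Z'}\,\pi(\varkappa(f_{Z'})),
\end{equation*}
which is precisely the desired expansion of the image of $\varsigma([L])$ in $\mathbf{B}_{2}(\lambda)$: the diagonal coefficient is $1$, and every other term is strictly later in the order $\preceq'$ induced on $\mathbf{B}_{2}(\lambda)$ by Definition 8.11.

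The only nontrivial point is to verify that this expansion is still triangular \emph{after} restriction to the surviving basis vectors, i.e. that one does not accidentally introduce a new diagonal term or violate the order. This is where the compatibility of $\Phi$ with $t^{\ast}_{i}$ is essential: the condition defining $\mathbf{B}_{1,\underline{d}}$ is transported verbatim to the condition defining $\mathbf{B}_{2,\underline{d}}$, and the order $\preceq'$ on $\mathbf{B}_{2}(\lambda)$ is simply the restriction of $\preceq'$ on $\mathcal{V}_{2}$. Hence the bijection $\pi \circ \varsigma \circ \Phi^{-1}\varkappa^{-1}$ between the surviving vectors is order-preserving, the $(\pi(\varsigma([L])),\pi(\varkappa(f_{Z})))$-entry of the transition matrix is $1$, and the entries strictly below this diagonal vanish. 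I do not expect any further obstacle, as the only nontrivial ingredients, namely the global triangularity (Corollary 8.9), the description of $\ker \pi$ (Proposition 8.10), and the preservation of $t^{\ast}_{i}$ by $\Phi$ (Theorem 7.1), are already in place.
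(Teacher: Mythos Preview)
Your argument is correct and is precisely the intended one: the paper states this corollary without proof, indicating only that it follows from the preceding results, and your proof supplies exactly those details—pass Corollary 8.9 through the quotient $\pi$, use the identification of $\ker\pi$ with the span of $\mathbf{B}_{2,\underline{d}}$, and invoke that $\Phi$ preserves $t_i^{\ast}$ so that the bijection between surviving basis vectors is order-preserving. One minor numbering correction: what you cite as ``Proposition 8.10'' (the equality of the three submodules, hence the description of $\ker\pi$) is Proposition 8.11 in the paper; item 8.10 is a remark, and Definition 8.11 in your text corresponds to the paper's Definition 8.13.
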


\bibliography{mybibfile}

\begin{thebibliography}{10}

\bibitem{baumann2011canonical}
P.~Baumann.
\newblock The canonical basis and the quantum frobenius morphism.
\newblock {\em arXiv preprint arXiv:1201.0303}, 2011.

\bibitem{MR2892443}
P.~Baumann and J.~Kamnitzer.
\newblock Preprojective algebras and {MV} polytopes.
\newblock {\em Represent. Theory}, 16:152--188, 2012.

\bibitem{MR751966}
A.~A. Be\u{\i}linson, J.~Bernstein, and P.~Deligne.
\newblock Faisceaux pervers.
\newblock In {\em Analysis and topology on singular spaces, {I} ({L}uminy,
  1981)}, volume 100 of {\em Ast\'{e}risque}, pages 5--171. Soc. Math. France,
  Paris, 1982.

\bibitem{MR2144987}
C.~Geiss, B.~Leclerc, and J.~Schr\"{o}er.
\newblock Semicanonical bases and preprojective algebras.
\newblock {\em Ann. Sci. \'{E}cole Norm. Sup. (4)}, 38(2):193--253, 2005.

\bibitem{MR1247495}
I.~Grojnowski and G.~Lusztig.
\newblock A comparison of bases of quantized enveloping algebras.
\newblock In {\em Linear algebraic groups and their representations ({L}os
  {A}ngeles, {CA}, 1992)}, volume 153 of {\em Contemp. Math.}, pages 11--19.
  Amer. Math. Soc., Providence, RI, 1993.

\bibitem{MR1115118}
M.~Kashiwara.
\newblock On crystal bases of the {$Q$}-analogue of universal enveloping
  algebras.
\newblock {\em Duke Math. J.}, 63(2):465--516, 1991.

\bibitem{MR1458969}
M.~Kashiwara and Y.~Saito.
\newblock Geometric construction of crystal bases.
\newblock {\em Duke Math. J.}, 89(1):9--36, 1997.

\bibitem{MR972345}
G.~Lusztig.
\newblock Cuspidal local systems and graded {H}ecke algebras. {I}.
\newblock {\em Inst. Hautes \'{E}tudes Sci. Publ. Math.}, (67):145--202, 1988.

\bibitem{MR1088333}
G.~Lusztig.
\newblock Quivers, perverse sheaves, and quantized enveloping algebras.
\newblock {\em J. Amer. Math. Soc.}, 4(2):365--421, 1991.

\bibitem{MR1227098}
G.~Lusztig.
\newblock {\em Introduction to quantum groups}, volume 110 of {\em Progress in
  Mathematics}.
\newblock Birkh\"{a}user Boston, Inc., Boston, MA, 1993.

\bibitem{MR1758244}
G.~Lusztig.
\newblock Semicanonical bases arising from enveloping algebras.
\newblock {\em Adv. Math.}, 151(2):129--139, 2000.

\bibitem{MR361141}
R.~D. MacPherson.
\newblock Chern classes for singular algebraic varieties.
\newblock {\em Ann. of Math. (2)}, 100:423--432, 1974.

\bibitem{MR1062796}
C.~M. Ringel.
\newblock Hall algebras and quantum groups.
\newblock {\em Invent. Math.}, 101(3):583--591, 1990.

\bibitem{MR1360930}
C.~M. Ringel.
\newblock The {H}all algebra approach to quantum groups.
\newblock In {\em X{I} {L}atin {A}merican {S}chool of {M}athematics ({S}panish)
  ({M}exico {C}ity, 1993)}, volume~15 of {\em Aportaciones Mat. Comun.}, pages
  85--114. Soc. Mat. Mexicana, M\'{e}xico, 1995.

\bibitem{MR3202708}
O.~Schiffmann.
\newblock Lectures on canonical and crystal bases of {H}all algebras.
\newblock In {\em Geometric methods in representation theory. {II}}, volume~24
  of {\em S\'{e}min. Congr.}, pages 143--259. Soc. Math. France, Paris, 2012.

\bibitem{MR3213879}
H.~Yin and S.~Zhang.
\newblock The transition matrix between {PBW} basis and semicanonical basis of
  {$U^+({sl}_n(\mathbb{C}))$}.
\newblock {\em Sci. China Math.}, 57(7):1427--1434, 2014.

\bibitem{zhao2022derivation}
M.~Zhao.
\newblock Derivation functors and lusztig's induction functors.
\newblock {\em arXiv preprint arXiv:2202.00430}, 2022.

\end{thebibliography}

\end{document}